\newcommand\org@maketitle{}
\newcommand\@authors{}
\let\org@maketitle\maketitle
\def\maketitle{%
	\let\@authors\authors
	\nxandlist{; }{ and }{; }\@authors
	\hypersetup{
		linktocpage=true,
		pdftitle={\@title},
                pdfauthor={\@authors},
                pdfsubject={\subjclassname. \@subjclass},
		pdfkeywords={\@keywords}
	}%
	\org@maketitle
}
\renewcommand{\PrintDOI}[1]{\doi{#1}}
\let\arXiv\arxiv
\numberwithin{equation}{section}
\newtheorem{maintheorem}{Theorem}
\newtheorem{theorem}{Theorem}[section]
\newtheorem{lemma}[theorem]{Lemma}
\newtheorem{proposition}[theorem]{Proposition}
\newtheorem{corollary}[theorem]{Corollary}
\theoremstyle{definition}
\newtheorem{definition}[theorem]{Definition}
\theoremstyle{remark}
\newtheorem{remark}[theorem]{Remark}
\newtheorem{example}[theorem]{Example}
\newcommand{\cR}{{\mathcal R}}
\newcommand{\cC}{{\mathcal C}}
\newcommand{\cK}{{\mathcal K}}
\newcommand{\cM}{{\mathcal M}}
\newcommand{\La}{\Delta}
\newcommand{\Ga}{\Gamma}
\newcommand{\Ld}{\Lambda}
\newcommand{\Si}{\Sigma}
\newcommand{\D}{\nabla}
\newcommand{\ra}{\rightarrow}
\newcommand{\pa}{\partial}
\newcommand{\R}{\mathbb{R}}
\newcommand{\vp}{\varphi}
\newcommand{\al}{\alpha}
\newcommand{\be}{\beta}
\newcommand{\g}{\gamma}
\newcommand{\de}{\delta}
\newcommand{\e}{\varepsilon}
\newcommand{\la}{\lambda}
\newcommand{\ka}{\kappa}
\newcommand{\si}{\sigma}
\newcommand{\om}{\omega}
\newcommand{\h}{\widetilde{h}}
\newcommand{\K}{\widetilde{K}}
\newcommand{\N}{\widetilde{N}}
\newcommand{\wDh}{\widehat{\D h}}
\newcommand{\wDu}{\widehat{\D u}}
\newcommand{\bx}{\bar{x}}
\newcommand{\by}{\bar{y}}
\newcommand{\loc}{\mathrm{loc}}
\newcommand{\dist}{\operatorname{dist}}
\newcommand{\mean}[1]{\langle{#1}\rangle}
\renewcommand{\Re}{\operatorname{Re}}
\def\Xint#1{\mathchoice
  {\XXint\displaystyle\textstyle{#1}}%
  {\XXint\textstyle\scriptstyle{#1}}%
  {\XXint\scriptstyle\scriptscriptstyle{#1}}%
  {\XXint\scriptscriptstyle\scriptscriptstyle{#1}}%
  \!\int}
\def\XXint#1#2#3{{\setbox0=\hbox{$#1{#2#3}{\int}$}
    \vcenter{\hbox{$#2#3$}}\kern-.5\wd0}}
\def\dashint{\Xint-}
\mathchardef\ordinarycolon\mathcode`\:
\author{Seongmin Jeon}
\address{Department of Mathematics, Purdue University, West Lafayette,
  IN 47907, USA}
\email[S.J.]{jeon54@purdue.edu}
\author{Arshak Petrosyan}
\address{Department of Mathematics, Purdue University, West Lafayette,
  IN 47907, USA}
\email[A.P.]{arshak@purdue.edu}
\thanks{The second author is supported in part by NSF Grant DMS-1800527.}
\title{Almost minimizers for the thin obstacle problem}
\subjclass[2010]{Primary 49N60, 35R35}
\keywords{Almost minimizers, thin obstacle problem, regualr set,
  singular set, Signorini problem,
  Weiss-type monotonicity formula, Almgren's frequency, epiperimetric inequality.
}
\begin{document}

\begin{abstract} We consider Anzellotti-type almost minimizers for the
  thin obstacle (or Signorini) problem with zero thin obstacle and
  establish their $C^{1,\beta}$ regularity on the either side of the
  thin manifold, the optimal growth away from the free boundary, the
  $C^{1,\gamma}$ regularity of the regular part of the free boundary,
  as well as a structural theorem for the singular set. The analysis
  of the free boundary is based on a successful adaptation of energy
  methods such as a one-parameter family of Weiss-type monotonicity
  formulas, Almgren-type frequency formula, and the epiperimetric and
  logarithmic epiperimetric inequalities for the solutions of the thin
  obstacle problem.
\end{abstract}
\maketitle

\tableofcontents

\newpage


\section{Introduction and main results}

\subsection{The thin obstacle (or Signorini) problem}

Let $D\subset\R^n$ be an open set and $\cM\subset\R^n$ a smooth
$(n-1)$-dimensional manifold (the \emph{thin space}) and consider the
problem of minimizing the Dirichlet energy
\begin{equation}\label{eq:DirEner}
  J_D(u):=\int_D |\nabla u(x)|^2dx
\end{equation}
among all functions $u\in W^{1,2}(D)$ satisfying
$$
u=g\text{ on }\partial D,\quad u\geq \psi\text{ on }\cM\cap D,
$$
where $\psi:\cM\to\R$ is the so-called \emph{thin obstacle} and
$g:\partial D\to \R$ is the prescribed boundary data with $g\geq\psi$
on $\cM\cap\partial D$. This problem is known as \emph{the thin
  obstacle problem}. In other words, it is a constrained minimization
problem for the energy functional $J_D$ on a closed convex set
$$
\mathfrak{K}_{\psi,g}(D,\mathcal{M}):=\{u\in W^{1,2}(D): \text{$u=g$
  on $\partial D$, $u\geq \psi$ on $\cM\cap D$}\}.
$$
This problem can be viewed as a scalar version of the Signorini
problem with unilateral constraint from elastostatics \cite{Sig59} and
is often referred to as \emph{the Signorini problem}.  It goes back to
the origins of variational inequalities and is considered as one of
the prototypical examples of such problems, see \cite{DuvLio76}. An
equivalent formulation is given in the form
\begin{align*}
  \Delta u=0&\quad\text{on }D\setminus\mathcal{M},\\
  u=g &\quad\text{on }\partial D,\\
  u\geq \psi,\quad \partial_{\nu^+}u+\partial_{\nu^-}u\geq 0,\quad
  (\partial_{\nu^+}u+\partial_{\nu^-}u)(u-\psi)=0&\quad\text{on
                                                   }\mathcal{M}\cap D,
\end{align*}
where the conditions on $\cM\cap D$ are known as the \emph{Signorini
  complementarity} (or \emph{ambiguous}) \emph{conditions}. Here,
$\partial_{\nu^\pm}$ are the exterior normal derivatives from the
either side of $\mathcal{M}$. In particular, at points on $\cM\cap D$
we must have one of the two boundary conditions satisfied: either
$u=\psi$ or $\partial_{\nu^+}u+\partial_{\nu^-}u=0$.  The set
\begin{equation}
  \label{eq:free-bdry}
  \Gamma(u):=\partial_{\cM}\{x\in\cM\cap D:u(x)=\psi(x)\},
\end{equation}
which separates the regions where different boundary conditions are
satisfied, is known as the \emph{free boundary} and plays a central
role in the analysis of the problem.

Because of the presence of the thin obstacle, it is not hard to
realize that the solutions $u$ of the Signorini problem are at most
Lipschitz across $\cM$, even if both $\cM$ and $\psi$ are smooth, as
we may have $\partial_{\nu^+}u+\partial_{\nu^-}u>0$ at some points on
$\cM$\footnote{This can be seen on the explicit example
  $u(x)=\Re(x_1+i|x_n|)^{3/2}$, which is a solution of the obstacle
  problem with $\psi=0$ on $\mathcal{M}=\{x_n=0\}$.}.  However, it has
been known since the works \cites{Caf79,Kin81,Ura85} that the
solutions of the thin obstacle problem are $C^{1,\beta}$ on
$\mathcal{M}$ and consequently on the either side of $\mathcal{M}$, up
to $\mathcal{M}$.  In recent years, there has been a renewed interest
in this problem, following the breakthrough result of Athanasopoulos
and Caffarelli \cite{AthCaf04} on the optimal $C^{1,1/2}$ regularity
of the minimizers (on the either side of $\mathcal{M}$) as well as its
relation to the obstacle-type problems for the fractional Laplacian
through the Caffarelli-Silvestre extension \cite{CafSil07}. There has
also been a significant effort in understanding the structure and the
regularity of the {free boundary}.  The results have been obtained in
many settings, such as for the equations with variable coefficients,
time-dependent versions, problems for fractional Laplacian and other
nonlocal equations, both regarding the regularity of minimizers, as
well as the properties of the free boundary; see e.g.,
\cites{Sil07,AthCafSal08,CafSalSil08,GarPet09,GarSVG14,KocPetShi15,PetPop15,PetZel15,DeSSav16,GarPetSVG16,KocRueShi16,BanSVGZel17,CafRosSer17,DanGarPetTo17,KocRueShi17a,KocRueShi17b,ColSpoVel17,AthCafMil18,DanPetPop18,FocSpa18}
and many others.

\subsection{Almost minimizers} In \cite{Anz83}, Anzellotti introduced
the notion of almost minimizers for energy functionals. Given $r_0>0$,
we say that $\omega:(0,r_0)\to [0,\infty)$ is a modulus of continuity
or a \emph{gauge function}, if $\omega(r)$ is monotone nondecreasing
in $r$ and $\omega(0+)=0$.

\begin{definition}[Almost minimizers]
  \label{def:alm-min}
  Given $r_0>0$ and a gauge function $\omega(r)$ on $(0,r_0)$, we say
  that $u\in W^{1,2}_\loc(D)$ is an \emph{almost minimizer} (or
  $\omega$-\emph{minimizer}) for the functional $J_D$, if, for any
  ball $B_r(x_0)\Subset D$ with $0<r<r_0$, we have
  \begin{equation}\label{eq:alm-min-Dir}
    J_{B_r(x_0)}(u)\leq (1+\omega(r))J_{B_r(x_0)}(v)\quad\text{for any }v\in u+
    W^{1,2}_0(B_{r}(x_0)).
  \end{equation}
\end{definition}

The idea is that the Dirichlet energy of $u$ on the ball $B_r(x_0)$ is
not necessarily minimal among all competitors
$v\in u+W^{1,2}_0(B_r(x_0))$ but \emph{almost minimal} in the sense
that it cannot decrease more than by a factor of $1+\omega(r)$. In the
specific case of the energy functional $J_D$ in \eqref{eq:DirEner},
i.e., the Dirichlet energy, we refer to the almost minimizers of $J_D$
as \emph{almost harmonic functions} in $D$.

Results on almost minimizers for more general energy functionals can
be found in \cites{DuzGasGro00,EspLeoMin04,EspMin99, Min06}.  Similar
notions were considered earlier in the context of the geometric
measure theory \cites{Alm76,Bom82}, see also \cite{Amb97}. Almost
minimizers are also related to quasiminimizers, introduced in
\cites{GiaGiu82,GiaGiu84}, see also \cites{Giu03}.  For energy
functionals exhibiting free boundaries, almost minimizers have been
considered only recently in \cites{DavTor15,DavEngTor17,deQTav18,DeSSav18,DeSSav19}.

Almost minimizers can be viewed as perturbations of minimizers of
various nature, but their study is motivated also by the observation
that the minimizers with certain constrains, such as the ones with
fixed volume or solutions of the obstacle problem, are realized as
almost minimizers of unconstrained problems, see e.g.\
\cites{Anz83}. Yet another motivation is that the study of almost
minimizers reveals a unique perspective on the problem and leads to
the development of methods relying on less technical assumptions, thus
allowing further generalization.

In this paper we extend the notion of almost minimizers to the thin
obstacle problem. Essentially, in \eqref{eq:alm-min-Dir}, we restrict
the function $u$ and its competitors $v$ to stay above the thin
obstacle $\psi$ on $\cM$.

\begin{definition}[Almost minimizer for the thin obstacle (or
  Signorini) problem]
  \label{def:alm-min-Sig}
  Given $r_0>0$ and a gauge function $\omega(r)$ on $(0,r_0)$, we say
  that $u\in W^{1,2}_\loc(D)$ is an \emph{almost minimizer for the
    thin obstacle (\emph{or} Signorini) problem}, if $u\geq\psi$ on
  $\cM\cap D$ and, for any ball $B_r(x_0)\Subset D$ with $0<r<r_0$, we
  have
  \begin{equation}\label{eq:alm-min-Sig}
    J_{B_r(x_0)}(u)\leq (1+\omega(r))J_{B_r(x_0)}(v),\quad\text{for
      any }v\in \mathfrak{K}_{\psi,u}(B_r(x_0),\cM).
  \end{equation}
\end{definition}

Note that in the case when $\cM\cap B_r(x_0)=\emptyset$, the condition
\eqref{eq:alm-min-Sig} is the same as \eqref{eq:alm-min-Dir} and thus
almost minimizers of the Signorini problem are almost harmonic in
$D\setminus \cM$. As in the case of the solutions of the Signorini
problem, we are interested in the regularity properties of almost
minimizers as well as the structure and the regularity of the free
boundary $\Gamma(u)\subset\cM$ as defined in \eqref{eq:free-bdry}.

Some examples of almost minimizers are given in
Appendix~\ref{sec:ex-drift}. We would also like to mention here that a
related notion of almost minimizers for the fractional obstacle
problem has been considered by the authors in \cite{JeoPet19b}.

\subsection{Main results} Because of the technical nature of the
problem, in this paper we restrict ourselves only to the case when
$\omega(r)=r^\alpha$ for some $\alpha>0$, $\cM$ is flat, specifically
$\cM= \R^{n-1}\times\{0\}$, and the thin obstacle $\psi=0$. As we are
mainly interested in local properties of almost minimizers and their
free boundaries, we assume that $D$ is the unit ball $B_1$,
$u\in W^{1,2}(B_1)$, and the constant $r_0=1$ in
Definition~\ref{def:alm-min-Sig}. We also assume that $u$ is even in
$x_n$-variable:
$$
u(x',x_n)=u(x',-x_n),\quad \text{for any }x=(x',x_n)\in B_1.
$$

\medskip Our first main result is then as follows.

\begin{maintheorem}[$C^{1,\beta}$-regularity of almost minimizers]
  \label{mthm:A}
  Let $u$ be an almost minimizer for the Signorini problem in $B_1$,
  under the assumptions above. Then,
  $u\in C^{1,\beta}_\loc(B_1^\pm\cup B_1')$ for
  $\beta=\beta(\alpha,n)$ and
  $$
  \|u\|_{C^{1,\beta}(K)}\leq C\|u\|_{W^{1,2}(B_1)},
$$
for any $K\Subset B_1^\pm\cup B_1'$ and $C=C(n,\alpha,K)$.
\end{maintheorem}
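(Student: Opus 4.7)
The plan is to combine comparison arguments with Campanato-type iterations, treating separately balls lying in $B_1^\pm$ and balls centered on the thin manifold $\cM\cap B_1=B_1'$. On any ball $B_r(x_0)\Subset B_1^\pm$, Definition~\ref{def:alm-min-Sig} reduces to the unconstrained almost-minimality~\eqref{eq:alm-min-Dir}, and Anzellotti's regularity theory for almost harmonic functions directly delivers the $C^{1,\beta}$ estimate on any $K\Subset B_1^\pm$. The substantive case is therefore that of points on $\cM$, where one must establish $C^{1,\beta}$ regularity up to $\cM$ from each side.

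For $x_0\in B_{1/2}'$ and small $r>0$, let $v$ be the solution of the Signorini problem in $B_r(x_0)$ with zero thin obstacle and boundary data $u|_{\partial B_r(x_0)}$. Using $v$ as a competitor in~\eqref{eq:alm-min-Sig} and $u$ as a competitor in the variational inequality characterizing $v$ (which yields $\int_{B_r(x_0)}\nabla v\cdot\nabla(u-v)\geq 0$), a short manipulation produces the key comparison bound
\begin{equation*}
  \int_{B_r(x_0)}|\nabla(u-v)|^2\leq \omega(r)\int_{B_r(x_0)}|\nabla u|^2.
\end{equation*}
By the Athanasopoulos--Caffarelli optimal regularity, $v$ is $C^{1,1/2}$ separately on each closed half-ball $\overline{B_{r/2}^\pm(x_0)}$, with its Hölder seminorm controlled by $Cr^{-(n+1)/2}\|\nabla v\|_{L^2(B_r(x_0))}$. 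A routine Campanato computation on each half-ball then gives, for $0<\rho\leq r/2$ and appropriate $P^\pm\in\R^n$,
\begin{equation*}
  \int_{B_\rho^\pm(x_0)}|\nabla v-P^\pm|^2\leq C(\rho/r)^{n+1}\int_{B_r(x_0)}|\nabla v|^2.
\end{equation*}
Combining the two displays via the triangle inequality yields the pivotal recursive inequality
\begin{equation*}
  \int_{B_\rho^\pm(x_0)}|\nabla u-P^\pm|^2\leq \bigl[C(\rho/r)^{n+1}+2\omega(r)\bigr]\int_{B_r(x_0)}|\nabla u|^2.
\end{equation*}

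From here the argument concludes via two Campanato iterations along dyadic scales $\rho=\lambda r$: a first pass with $P^\pm=0$ yields a Morrey-type decay of $\int|\nabla u|^2$ and hence preliminary Hölder continuity of $u$ and a uniform $L^\infty$ bound; a second iteration on the excess $\inf_{P^\pm}\int_{B_\rho^\pm}|\nabla u-P^\pm|^2$, combined with $\omega(r)=r^\alpha$, produces the Campanato-type decay $\inf_{P^\pm}\int_{B_\rho^\pm(x_0)}|\nabla u-P^\pm|^2\leq C\rho^{n+2\beta}$ for some $\beta=\beta(\alpha,n)\in(0,1/2)$, from which the Campanato embedding on half-balls gives $u\in C^{1,\beta}_\loc(B_1^\pm\cup B_1')$ together with the stated $W^{1,2}$-bound. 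The main technical obstacle I anticipate is the second iteration: to restart the recursion at each scale one would naturally shift by the approximating affine function and work with $u-P^\pm\cdot x$ as a new almost-minimizer, yet tangential shifts can violate the obstacle constraint $u\geq 0$ on $\cM$. Handling this subtlety, either by comparing directly with a translated Signorini minimizer whose thin obstacle is modified accordingly, or by absorbing the resulting defect into the $\omega$-term through a quadratic correction at each scale, will be the most delicate part of the proof.
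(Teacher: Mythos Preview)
Your comparison estimate $\int_{B_r(x_0)}|\nabla(u-v)|^2\leq\omega(r)\int_{B_r(x_0)}|\nabla u|^2$ is exactly right, and the Morrey step goes through as you say. The gap is in the second iteration. Your ``pivotal recursive inequality''
\[
  \int_{B_\rho^\pm(x_0)}|\nabla u-P^\pm|^2\leq \bigl[C(\rho/r)^{n+1}+2\omega(r)\bigr]\int_{B_r(x_0)}|\nabla u|^2
\]
bounds the excess at scale $\rho$ by the \emph{full energy} at scale $r$, not by the excess at scale $r$. That is enough for Morrey decay but not for Campanato: iterating it yields nothing better than $\int_{B_\rho^\pm}|\nabla u-P^\pm|^2\leq C\rho^{n-2+2\sigma}$, which is just the Morrey bound again. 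To run a genuine excess iteration you would need
\[
  \int_{B_\rho^\pm}|\nabla v-\langle\nabla v\rangle^\pm_\rho|^2\leq C(\rho/r)^{n+\gamma}\int_{B_r^\pm}|\nabla v-\langle\nabla v\rangle^\pm_r|^2,
\]
but the $C^{1,1/2}$ estimate for Signorini solutions controls $[\nabla v]_{1/2}$ by $\|\nabla v\|_{L^2}$, not by the excess of $\nabla v$. For harmonic functions this is harmless because $\nabla v-\mathrm{const}$ is again harmonic; for Signorini solutions, subtracting a tangential affine function destroys the constraint, exactly the obstruction you flag at the end. The two fixes you propose (modified thin obstacle, absorbing into $\omega$) do not obviously close: a modified obstacle is no longer zero, so the known $C^{1,1/2}$ theory and the clean comparison inequality both have to be redone at every scale, and a quadratic correction produces an error comparable to the excess itself rather than lower order.

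The paper sidesteps the affine-shift issue entirely. Instead of half-balls and one-sided means, it works on full balls with the even extension $\widehat{\nabla h}$ of $\nabla h$ from $B_r^+$ to $B_r$, and proves directly (Propositions~\ref{prop:sig-est-free-bdry}--\ref{prop:sig-est}) an excess-to-excess estimate of the form
\[
  \int_{B_\rho(x_0)}|\widehat{\nabla h}-\langle\widehat{\nabla h}\rangle_{x_0,\rho}|^2
  \leq C\Bigl(\frac{\rho}{s}\Bigr)^{n+\alpha}\int_{B_s(x_0)}|\widehat{\nabla h}-\langle\widehat{\nabla h}\rangle_{x_0,s}|^2
  + C\Bigl(\int_{B_r(x_0)}h^2\Bigr)\frac{s^{n+1}}{r^{n+3}}.
\]
The proof splits into cases according to whether $x_0$ lies on $\Gamma(h)$, in $\{h(\cdot,0)>0\}$, or in the interior of $\{h(\cdot,0)=0\}$, and exploits that either the even extension $h$ or the odd extension $\widetilde h$ is harmonic across $B'_d(x_0)$; the switch between the two extensions is what makes it work without ever subtracting an affine function. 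The additive error $\int h^2\cdot s^{n+1}/r^{n+3}$ is then handled in Theorem~\ref{thm:grad-holder} by taking the Signorini replacement on a slightly larger ball $B_R$ with $R=r^{2n/(2n+\alpha)}$ and splitting into two cases according to the size of $\sup_{\partial B_R}|u|$: when $u$ is small, the $C^{0,\sigma}$ bound from the Morrey step controls $\int h^2$; when $u$ is large it is strictly positive on $B_R'$, the replacement $h$ is harmonic, and the error term never appears. This case split, together with the even/odd extension device, is the missing idea.
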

The proof is obtained by using Morrey and Campanato space estimates,
following the original idea of Anzellotti \cite{Anz83}. However, in
our case the proof is much more elaborate and, in a sense, based on
the idea that the solutions of the Signorini problem are $2$-valued
harmonic functions, as we have to work with both even and odd
extensions of $u$ and $\nabla u$ from $B_{1}^+$ to $B_1$.

While the optimal regularity for the minimizer (or solutions) of the
Signorini problem is $C^{1,1/2}$, we do not expect such regularity for
almost minimizers. However, we are able to establish the optimal
growth for almost minimizers, which then allows to study the local
properties of the free boundary
$$
\Gamma(u)=\partial\{u(\cdot,0)=0\}\cap B_1'.
$$
\begin{maintheorem}[Optimal growth near free boundary]
  \label{mthm:B} Let $u$ be as in Theorem~\ref{mthm:A}. Then,
  $$
  \int_{\partial B_r(x_0)} u^2\leq
  C(n,\alpha)\|u\|^2_{W^{1,2}(B_1)}r^{n+2},
  $$
  for $x_0\in B_{1/2}'\cap \Gamma(u)$, $0<r<r_0(n,\alpha)$.
\end{maintheorem}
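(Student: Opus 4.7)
The plan is to control $H(r):=\int_{\pa B_r(x_0)}u^2\,d\sigma$ through an Almgren-type frequency argument, adapted to the almost-minimizer setting. With $D(r):=\int_{B_r(x_0)}|\D u|^2$ and $N(r):=rD(r)/H(r)$, the classical scheme for true Signorini minimizers combines (i)~the monotonicity of $N$ and (ii)~the lower bound $N(0+)\geq 3/2$ at every free-boundary point with the identity $(\log H)'=(n-1)/r+2N/r$, yielding that $r\mapsto H(r)/r^{n+2}$ is non-decreasing; the bound $H(r)\le Cr^{n+2}$ then follows from $H(R)\leq C\|u\|_{W^{1,2}(B_1)}^2$ at any fixed $R=1/4$ via the trace theorem. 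Here~(ii) rests on the classification of homogeneous global Signorini solutions, whose admissible homogeneities lie in $\{3/2\}\cup[2,\iy)$. I plan to implement this program for almost minimizers with all deficits of order $r^\al$.

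The main step is an \emph{Almgren-type almost-monotonicity}: there exist $A,C>0$ such that $e^{Cr^\al}(N(r)+A)$ is non-decreasing in $r\in(0,r_0)$. I would differentiate $N(r)$ via the standard Rellich/integration-by-parts identities; the deficit terms vanish for true Signorini solutions but for almost minimizers must be estimated using~\eqref{eq:alm-min-Sig}. The central tool is comparison of $u$ with its Signorini replacement $v$ on $B_r(x_0)$, for which \eqref{eq:alm-min-Sig} gives $\int_{B_r(x_0)}|\D(u-v)|^2\leq Cr^\al\int_{B_r(x_0)}|\D v|^2$---enough to produce a deficit of the desired size. The lower bound $N(0+,u,x_0)\geq 3/2$ then follows by blow-up: the Almgren rescalings $\tilde u_r(x):=u(x_0+rx)/\sqrt{H(r)/r^{n-1}}$ are almost minimizers on $B_{1/r}$ with vanishing gauge $(rs)^\al$, unit $L^2$-mass on $\pa B_1$, and uniformly bounded Dirichlet energy (from $N$); Theorem~\ref{mthm:A} and compactness yield a subsequential limit $\tilde u_0$---a nontrivial Signorini solution on $\R^n$ with $\tilde u_0(0)=0$ and constant Almgren frequency, hence $N(0+)$-homogeneous---so the classification forces $N(0+)\geq 3/2$.

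Combining the two steps, $N(r)\geq 3/2-Cr^{\al'}$ on $(0,r_0)$, and the perturbed ODE $\frac{d}{dr}\log(H(r)/r^{n+2})\geq -Cr^{\al'-1}$ integrates to
\[
\frac{H(r)}{r^{n+2}}\leq e^{CR^{\al'}}\cdot\frac{H(R)}{R^{n+2}},\qquad 0<r\leq R\leq r_0,
\]
after which the trace theorem at $R=1/4$ completes the proof. The main technical obstacle is the Almgren-type almost-monotonicity: in the absence of an Euler-Lagrange equation, it must be derived by perturbative comparison with Signorini replacements, and one must produce a deficit of order $r^\al$ that is \emph{uniform} in $N(r)$ (rather than of size $r^\al N(r)$, which could be unbounded). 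A secondary subtlety is the blow-up analysis for $N(0+)\geq 3/2$, which simultaneously invokes the almost-monotonicity, the $C^{1,\be}$ estimates of Theorem~\ref{mthm:A}, and the classification of homogeneous global Signorini solutions.
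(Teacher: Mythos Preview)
There is a genuine gap in your approach. Your plan hinges on the identity
\[
\frac{d}{dr}\log H(r)=\frac{n-1}{r}+\frac{2}{H(r)}\int_{\pa B_r(x_0)} u\,\pa_\nu u
=\frac{n-1}{r}+\frac{2N(r)}{r},
\]
but the second equality requires $\int_{\pa B_r(x_0)} u\,\pa_\nu u=\int_{B_r(x_0)}|\D u|^2$, which for Signorini minimizers follows from integration by parts together with $\La u=0$ off the thin space and the complementarity $u\,\pa_{x_n}^+u=0$ on it. Almost minimizers satisfy no Euler--Lagrange equation, and this identity simply fails; the paper says so explicitly at the start of Section~\ref{sec:growth-estimates}. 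Your proposed remedy---compare with the Signorini replacement $v$ and use $\int|\D(u-v)|^2\le r^\al\int|\D v|^2$---only gives $W^{1,2}$ closeness in the interior. It does not control the boundary normal-derivative mismatch $\int_{\pa B_r} u(\pa_\nu u-\pa_\nu v)$, and there is no trace estimate that turns $\|\D(u-v)\|_{L^2(B_r)}$ into a bound on $\pa_\nu u-\pa_\nu v$ on $\pa B_r$. The ``main technical obstacle'' you flag (a deficit uniform in $N(r)$) is not the actual obstruction; the obstruction is that you cannot differentiate $N$ at all in the usual way.

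The paper circumvents this entirely. Instead of differentiating $N$, it proves a \emph{Weiss-type} monotonicity (Theorem~\ref{thm:weiss}) by comparing $u$ with its $\ka$-homogeneous replacement $w(x)=(|x|/t)^\ka u(tx/|x|)$: since $\int_{B_t}|\D w|^2$ is an explicit quadratic form in the boundary data of $u$, the almost-minimizing inequality $\int|\D u|^2\le(1+t^\al)\int|\D w|^2$ yields the needed differential inequality directly, with no integration by parts. From the one-parameter family $\{W_\ka\}_{\ka<\ka_0}$ the truncated Almgren frequency $\widehat N_{\ka_0}$ is then shown monotone \emph{indirectly} (Theorem~\ref{thm:Almgren}), and your blow-up argument for $\widehat N(0+)\ge 3/2$ goes through (Lemma~\ref{lem:min-freq}). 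This route, however, produces only the weak bound $\int_{\pa B_t}u^2\le C(\log\frac1t)\,t^{n+2}$ (Lemma~\ref{lem:almost-opt-growth}); removing the logarithm requires the epiperimetric inequality to force $W_{3/2}(t,u)\le Ct^\de$ (Lemma~\ref{lem:W-gr-est}), after which a dyadic summation yields the sharp growth (Lemma~\ref{lem:opt-growth}). Neither the Weiss-based substitute for the differentiation formulas nor the epiperimetric step appears in your outline, and both are essential.
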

One of the ingredients in the proof is an Almgren-type monotonicity
formula, which we describe below. For an almost minimizer $u$,
\emph{Almgren's frequency} \cite{Alm00} is defined by
$$
N(r,u,x_0):=\frac{r\int_{B_r(x_0)}|\nabla u|^2}{\int_{\partial
    B_r(x_0)}u^2},\quad x_0\in \Gamma(u).
$$
It is one of the most important monotone quantities in the analysis of
the free boundary for the Signorini problem, see e.g.\
\cite{PetShaUra12}*{Chapter~9}. We show that for almost minimizers a
small modification of $N$ is monotone.

\begin{maintheorem}[Monotonicity of the truncated frequency]
  \label{mthm:C}
  Let $u$ be as in Theorem~\ref{mthm:A}. Then for any
  $\kappa_0\geq 2$, there is $b=b(n,\alpha,\kappa_0)$ such that
$$
r\mapsto\widehat{N}_{\kappa_0}(r,u,x_0) :=
\min\left\{\frac{1}{1-br^\alpha}N(r,u,x_0),\kappa_0\right\}
$$ 
is monotone for $x_0\in B_{1/2}'\cap\Gamma(u)$, and
$0<r<r_0(n,\alpha,\kappa_0)$. Moreover, we have that either
$$
\widehat{N}_{\kappa_0}(0+,u,x_0)=3/2\quad\text{or}\quad
\widehat{N}_{\kappa_0}(0+,u,x_0)\geq 2.
$$
\end{maintheorem}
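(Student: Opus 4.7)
The plan is to imitate the classical Almgren monotonicity argument for solutions of the Signorini problem, tracking the error terms produced by the almost--minimizer condition \eqref{eq:alm-min-Sig} and absorbing them via the correction factor $\tfrac{1}{1-br^\alpha}$ together with the truncation at height $\kappa_0$. Throughout I abbreviate $H(r)=\int_{\partial B_r(x_0)} u^2$ and $D(r)=\int_{B_r(x_0)}|\nabla u|^2$ so that $N(r,u,x_0)=rD(r)/H(r)$.

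The first step, and the main technical obstacle, is to establish differential identities for $H(r)$ and $D(r)$ with explicit error terms. The identity $H'(r)=\tfrac{n-1}{r}H(r)+2\int_{\partial B_r(x_0)} u\,\partial_\nu u$ is purely geometric. For the integration-by-parts and Rellich--Pohozaev identities I would compare $u$ on $B_r(x_0)$ with its Signorini-replacement $v$, i.e.\ the minimizer of $J_{B_r(x_0)}$ over $\mathfrak{K}_{0,u}(B_r(x_0),\mathcal{M})$. The almost--minimizer condition, together with quasi-orthogonality, yields an estimate of the form $\int_{B_r(x_0)}|\nabla(u-v)|^2\lesssim r^\alpha D(r)$, which transfers the exact identities valid for $v$ to $u$ with errors:
$$\int_{\partial B_r(x_0)} u\,\partial_\nu u = D(r)+E_1(r), \qquad D'(r)=\tfrac{n-2}{r}D(r)+2\int_{\partial B_r(x_0)}(\partial_\nu u)^2 + E_2(r),$$
with $|E_1(r)|\lesssim r^\alpha D(r)$ and $|E_2(r)|\lesssim r^{\alpha-1} D(r)$. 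The Pohozaev-type identity for $D'$ is the delicate one: for a true minimizer it comes from testing the Euler--Lagrange equation against a radial vector field, whereas for an almost minimizer the analogous computation must be extracted from the Weiss-type monotonicity formulas established earlier in the paper, or directly from the replacement $v$, carefully controlling the trace of $u-v$ on $\partial B_r(x_0)$ via the $C^{1,\beta}$ estimates of Theorem~\ref{mthm:A}.

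Once these identities are in place, a direct computation gives
$$\tfrac{d}{dr}\log N(r,u,x_0)=\tfrac{1}{r}+\tfrac{D'(r)}{D(r)}-\tfrac{H'(r)}{H(r)}\geq \tfrac{2\int_{\partial B_r}(\partial_\nu u)^2}{D(r)}-\tfrac{2D(r)}{H(r)}-C\bigl(1+N(r,u,x_0)\bigr)r^{\alpha-1},$$
and Cauchy--Schwarz $\bigl(\int_{\partial B_r} u\,\partial_\nu u\bigr)^2\leq H(r)\int_{\partial B_r}(\partial_\nu u)^2$, combined with the $E_1$-correction, shows the first two terms on the right are nonnegative up to a further $O(r^{\alpha-1}N(r,u,x_0))$ error. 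On the set where $N(r,u,x_0)\leq \kappa_0$ this becomes $\tfrac{d}{dr}\log N(r,u,x_0)\geq -C(\kappa_0)\,r^{\alpha-1}$. Integrating and choosing $b=b(n,\alpha,\kappa_0)$ so that $1-br^\alpha\leq e^{-C(\kappa_0)r^\alpha/\alpha}$ for $r<r_0(n,\alpha,\kappa_0)$ yields monotonicity of $\tfrac{N(r,u,x_0)}{1-br^\alpha}$ on this set; since $a\mapsto\min(a,\kappa_0)$ is order-preserving, $\widehat{N}_{\kappa_0}(r,u,x_0)$ is monotone throughout.

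For the final dichotomy, suppose $\widehat{N}_{\kappa_0}(0+,u,x_0)<2$. Then for all sufficiently small $r$ the truncation is inactive, so $N(0+,u,x_0)$ exists and is strictly less than $2$. Passing to the Almgren rescalings $u_r(x)=u(x_0+rx)/(H(r)/r^{n-1})^{1/2}$ and using the frequency monotonicity to get uniform $W^{1,2}_{\loc}$ bounds, I would extract a nontrivial blow-up $u_0$ which, by the $C^{1,\beta}$ estimate of Theorem~\ref{mthm:A} and the vanishing of the almost-minimizer defect under rescaling (the gauge $r^\alpha$ scales away), is a nontrivial global solution of the pure Signorini problem, homogeneous of degree $N(0+,u,x_0)$. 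The classification of such solutions forbids any homogeneity in the interval $(3/2,2)$, forcing $N(0+,u,x_0)=3/2$ and giving the claimed dichotomy $\widehat{N}_{\kappa_0}(0+,u,x_0)\in\{3/2\}\cup[2,\kappa_0]$.
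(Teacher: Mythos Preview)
Your blow-up argument for the dichotomy $\widehat N_{\kappa_0}(0+)\in\{3/2\}\cup[2,\kappa_0]$ is essentially the paper's: pass to Almgren rescalings, show the limit is a nonzero homogeneous solution of the pure Signorini problem (using that the gauge $(rt)^\alpha$ vanishes as $r\to0$), and invoke the classification of homogeneous solutions. That part is fine, modulo the small point that you must also use $u(x_0)=|\widehat{\nabla u}(x_0)|=0$ at free boundary points (the paper's Lemma~\ref{lem:comp-cond}) to rule out homogeneities below $3/2$.

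The monotonicity argument, however, has a genuine gap. Your scheme rests on the two error estimates
\[
\int_{\partial B_r}u\,\partial_\nu u = D(r)+E_1(r),\qquad
D'(r)=\tfrac{n-2}{r}D(r)+2\int_{\partial B_r}(\partial_\nu u)^2+E_2(r),
\]
with $|E_1|\lesssim r^\alpha D(r)$ and $|E_2|\lesssim r^{\alpha-1}D(r)$, to be obtained by transferring the exact identities from the Signorini replacement $v$. But the comparison with $v$ gives only the \emph{bulk} bound $\int_{B_r}|\nabla(u-v)|^2\leq r^\alpha D(r)$; it gives no control whatsoever on $\partial_\nu u-\partial_\nu v$ or $|\nabla u|^2-|\nabla v|^2$ restricted to $\partial B_r$, which is precisely what is needed to transfer the integration-by-parts and Rellich identities. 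A function in $W^{1,2}_0(B_r)$ with small Dirichlet energy can have arbitrarily large normal derivative on $\partial B_r$, and the $C^{1,\beta}$ regularity of $u$ alone does not force $\nabla v$ close to $\nabla u$ near $\partial B_r$. Since $u$ satisfies no Euler--Lagrange equation, there is also no direct route to these identities via testing against $u$ or $x\cdot\nabla u$. Your fallback ``extract $E_2$ from the Weiss-type monotonicity formulas'' is essentially an appeal to the paper's own method, and once Weiss is available the direct Almgren computation is unnecessary.

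The paper circumvents this entirely by a different comparison: in Theorem~\ref{thm:weiss} one competes $u$ not with its Signorini replacement but with its \emph{$\kappa$-homogeneous replacement} $w(x)=(|x|/t)^\kappa u(tx/|x|)$, whose Dirichlet energy is computed explicitly as a boundary integral of $u$ on $\partial B_t$ (see \eqref{eq:weiss-1}). The almost-minimizing inequality $\int_{B_t}|\nabla u|^2\leq(1+t^\alpha)\int_{B_t}|\nabla w|^2$ then becomes a single inequality \eqref{eq:weiss-3} linking $\int_{B_t}|\nabla u|^2$, $\int_{\partial B_t}|\nabla u|^2$, $\int_{\partial B_t}(\partial_\nu u)^2$ and $\int_{\partial B_t}u^2$ --- exactly the combination that yields $\frac{d}{dt}W_\kappa\geq 0$ without ever needing $E_1$ or $E_2$ separately. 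The Almgren-type statement (Theorem~\ref{thm:Almgren}) is then read off \emph{indirectly} from the one-parameter family $\{W_\kappa\}_{0<\kappa<\kappa_0}$: if $\widetilde N(t)<\kappa$ then $W_\kappa(t)<0$, hence $W_\kappa(s)<0$ and $\widetilde N(s)<\kappa$ for all $s<t$. This sublevel-set argument is what produces the truncated monotonicity, and it is the step your proposal is missing.
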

We give an indirect proof of this fact, based on an one-parametric
family of Weiss-type energy functionals
$\{W_\kappa\}_{0<\kappa<\kappa_0}$, see Theorem~\ref{thm:weiss}, that
go back to the work \cite{GarPet09} for the solutions of the Signorini
problem and Weiss \cite{Wei99b} for the classical obstacle
problem. The fact that $\widehat N\geq 3/2$ at free boundary points is
crucial for the proof of the optimal growth (Theorem~\ref{mthm:B}),
however, the proof of Theorem~\ref{mthm:B} requires also an
application of so-called \emph{epiperimetric inequality} for Weiss's
energy functional $W_{3/2}$ (see \cite{GarPetSVG16}), to remove a
remaining logarithmic term.

Our next result concerns the subset of the free boundary
$$
\mathcal{R}(u):=\{x_0\in \Gamma(u):\widehat{N}(0+,u,x_0)=3/2\},
$$
where Almgren's frequency is minimal, known as the \emph{regular set}
of $u$.

\begin{maintheorem}[Regularity of the regular set]
  \label{mthm:D}
  Let $u$ be as in Theorem~\ref{mthm:A}. Then $\cR(u)$ is a relatively
  open subset of the free boundary $\Gamma(u)$ and is a
  $(n-2)$-dimensional manifold of class $C^{1,\gamma}$.
\end{maintheorem}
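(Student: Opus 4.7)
The plan is to mimic the classical regularity theory for the regular free boundary of the Signorini problem (as developed in \cite{AthCafSal08,GarPet09,GarPetSVG16}) while keeping track of the $r^\alpha$-error produced by the almost-minimizing property. The argument has four ingredients: upper semicontinuity of $\widehat N(0+)$, classification of blow-ups at regular points, uniqueness of blow-up via the epiperimetric inequality, and a Whitney-type extension yielding $C^{1,\gamma}$ regularity.

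To establish that $\cR(u)$ is relatively open in $\Gamma(u)$, I first observe that for each fixed $r\in(0,r_0)$ the map $x_0\mapsto \widehat N_{\kappa_0}(r,u,x_0)$ is continuous on $\Gamma(u)\cap B_{1/2}'$, as an explicit ratio of quantities that depend continuously on $x_0$ by Theorem~\ref{mthm:A}. Theorem~\ref{mthm:C} then gives $\widehat N_{\kappa_0}(0+,u,x_0)=\inf_{0<r<r_0}\widehat N_{\kappa_0}(r,u,x_0)$, which is therefore upper semicontinuous as an infimum of continuous functions. Combined with the dichotomy $3/2$ versus ${\geq}2$ from Theorem~\ref{mthm:C}, any $x_0\in\cR(u)$ has a neighborhood in $\Gamma(u)$ where the limiting frequency is forced to equal $3/2$, proving openness.

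For any $x_0\in\cR(u)$, I would analyze the Almgren rescalings
\[
u_{x_0,r}(x):=\frac{u(x_0+rx)}{\left(r^{1-n}\int_{\partial B_r(x_0)}u^2\right)^{1/2}}.
\]
Theorem~\ref{mthm:B} gives a uniform growth bound making these bounded on each $B_R$, while Theorem~\ref{mthm:A} provides $C^{1,\beta}$ compactness. The $r^\alpha$-error in \eqref{eq:alm-min-Sig} vanishes in the limit, so any blow-up $u_{x_0,0}$ is a global Signorini minimizer of homogeneity $\widehat N(0+,u,x_0)=3/2$, and the classical classification yields $u_{x_0,0}(x)=c_*\Re(x'\cdot e+i|x_n|)^{3/2}$ for some $e=e(x_0)\in S^{n-2}\subset\R^{n-1}\times\{0\}$. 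Uniqueness of the blow-up is then obtained via the epiperimetric inequality for $W_{3/2}$ (cf.~\cite{GarPetSVG16}): plugging the epiperimetric competitor into \eqref{eq:alm-min-Sig} and absorbing the $r^\alpha$-error yields a differential inequality of the form $r\frac{d}{dr}W_{3/2}(r)\geq \gamma_0\, W_{3/2}(r)-Cr^{\alpha}$, which integrates to $W_{3/2}(r,u,x_0)\leq C r^{\min(\gamma_0,\alpha)}$ uniformly in $x_0$ on compact subsets of $\cR(u)$. Standard manipulations upgrade this energy decay to a rate $\|u_{x_0,r}-u_{x_0,0}\|_{L^2(\partial B_1)}\leq C r^{\gamma_1}$.

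The rate estimate immediately yields $|e(x_0)-e(y_0)|\leq C|x_0-y_0|^{\gamma}$ on any compact subset of $\cR(u)$. By a Whitney-type extension as in \cite{GarPet09}, or alternatively by applying the implicit function theorem to the coincidence set $\{u(\cdot,0)=0\}$ using the nondegeneracy implicit in the decay estimate, $\cR(u)$ is locally graphical and admits a $C^{1,\gamma}$ parametrization, establishing the $(n-2)$-dimensional $C^{1,\gamma}$ manifold structure. The most delicate step is the passage from the epiperimetric inequality, which is formulated for genuine minimizers, to the differential inequality for $W_{3/2}$ in the almost-minimizer setting: one must insert the epiperimetric competitor into \eqref{eq:alm-min-Sig}, carefully control the extra $\omega(r)$-perturbation so it does not swallow the gain $(1-\kappa)$ of the epiperimetric inequality, and match the exponents so that the resulting $\gamma$ is a genuine positive quantity depending only on $n$ and $\alpha$.
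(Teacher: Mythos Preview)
Your outline is correct and follows essentially the same route as the paper: upper semicontinuity of $\widehat N(0+)$ gives openness, the epiperimetric inequality for $W_{3/2}$ produces a power decay $W_{3/2}(r)\le Cr^\delta$, this yields a rotation estimate and hence uniqueness and H\"older dependence of blowups, and the $C^{1,\gamma}$ structure follows. Two places where the paper's execution differs from your sketch are worth flagging.

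First, the paper works with the \emph{almost-homogeneous} rescalings $u^\phi_{x_0,r}(x)=u(x_0+rx)/\phi(r)$, $\phi(r)=e^{-(3b/2\alpha)r^\alpha}r^{3/2}$, rather than Almgren rescalings. The rotation estimate $\int_{\partial B_1}|u^\phi_{x_0,t}-u^\phi_{x_0,s}|\le Ct^{\delta/2}$ is proved directly for these (Lemma~\ref{lem:rotation-est}), and then nondegeneracy at regular points (Lemma~\ref{lem:nondeg}) is deduced a posteriori to identify the $\phi$-blowup with a nonzero multiple of the Almgren blowup. If you insist on Almgren rescalings from the start, the rate $\|u^A_{x_0,r}-u^A_{x_0,0}\|_{L^2(\partial B_1)}\le Cr^{\gamma_1}$ requires controlling the rate of convergence of $\bigl(r^{1-n}\int_{\partial B_r(x_0)}u^2\bigr)^{1/2}/\phi(r)$ to its (positive) limit, which needs nondegeneracy as an input rather than a consequence.

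Second, the passage from ``$e(x_0)$ is H\"older'' to ``$\cR(u)$ is a $C^{1,\gamma}$ graph'' is not done by Whitney extension or the implicit function theorem (neither $u(\cdot,0)$ nor $\partial_{x_n}^+u(\cdot,0)$ has the required $C^1$ nondegeneracy at free boundary points). Instead, the paper uses a cone argument (Theorem~\ref{thm:C1g-regset}, Steps~2--5): the uniform $C^1$ closeness of $u^\phi_{\bar x,r}$ to the explicit half-plane solution forces $u(\cdot,0)>0$ in a forward cone and $-\partial^+_{x_n}u(\cdot,0)>0$ (hence $u(\cdot,0)=0$ by complementarity) in the backward cone, which first gives a Lipschitz graph and then, letting the cone aperture shrink and using the H\"older continuity of $\nu_{\bar x}$, the $C^{1,\gamma}$ regularity.
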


Our proof of this theorem is based on the use of the epiperimetric
inequality and is similar to the one for the solutions of the
Signorini problem in \cite{GarPetSVG16}.

Finally, we state our main result for the so-called \emph{singular
  set}. A free boundary point $x_0\in \Gamma(u)$ is called
\emph{singular} if the \emph{coincidence set}
$\Lambda(u):=\{u(\cdot,0)=0\}$ has $H^{n-1}$-density zero at $x_0$,
i.e.,
$$
\lim_{r\to 0+} \frac{H^{n-1}(\Lambda(u)\cap
  B_r'(x_0))}{H^{n-1}(B_r')}=0.
$$
If $\widehat{N}_{\kappa_0}(0+,u,x_0)=\kappa<\kappa_0$, then $x_0$ is
singular if and only if $\kappa=2m$, $m\in\mathbb{N}$ (see
Proposition~\ref{prop:sing-char}). For such $\kappa$, we then define
$$
\Sigma_{\kappa}(u):=\{x_0\in\Gamma(u):\widehat{N}_{\kappa_0}(0+,u,x_0)=\kappa\}.
$$

\begin{maintheorem}[Structure of the singular set]
  \label{mthm:E}
  Let $u$ be as in Theorem~\ref{mthm:A}. Then, for any
  $\kappa=2m<\kappa_0$, $m\in\mathbb{N}$, $\Sigma_\kappa(u)$ is
  contained in a countable union of $(n-2)$-dimensional manifolds of
  class $C^{1,\log}$.
\end{maintheorem}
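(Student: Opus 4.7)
The plan is to adapt the singular set analysis for solutions of the Signorini problem, as developed in \cite{GarPet09,GarPetSVG16} and further in \cite{ColSpoVel17}, to the almost-minimizer framework built in this paper. The argument has three main steps: classification of blowups at singular points, uniqueness of blowups with a logarithmic modulus, and a Whitney-type extension combined with stratification. For the first step, I would show that at each $x_0\in\Sigma_\kappa(u)$ with $\kappa=2m$, the rescalings $u_{x_0,r}(x):=u(x_0+rx)/r^{\kappa}$ are precompact in $W^{1,2}_\loc$, and that each subsequential limit is a $\kappa$-homogeneous global Signorini solution (the almost-minimizing gauge $r^\alpha$ vanishes in the limit). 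The singular characterization from Proposition~\ref{prop:sing-char} then forces each such blowup to belong to the convex cone $\mathcal{P}_\kappa$ of $\kappa$-homogeneous harmonic polynomials that are even in $x_n$ and nonnegative on $\{x_n=0\}$.

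For the second step, the key input is a \emph{logarithmic epiperimetric inequality} for the Weiss functional $W_\kappa$ at $\kappa=2m$, in the spirit of \cite{ColSpoVel17}. Coupling it with the almost-monotonicity of $r\mapsto W_\kappa(r,u,x_0)$ provided by Theorem~\ref{thm:weiss}, and absorbing the almost-minimizer defect terms (which decay like $r^\alpha$ and are dyadically summable), one should obtain
$$
\|u_{x_0,r}-p_{x_0}\|_{L^2(\partial B_1)}\leq C|\log r|^{-\tau},\qquad 0<r<r_0,
$$
for a unique $p_{x_0}\in\mathcal{P}_\kappa$ and some $\tau=\tau(n,\kappa,\alpha)>0$, together with a comparable modulus of continuity for the map $x_0\mapsto p_{x_0}$ on $\Sigma_\kappa(u)$.

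For the third step, I would stratify
$$
\Sigma_\kappa(u)=\bigcup_{d=0}^{n-2}\Sigma_\kappa^d(u),\qquad \Sigma_\kappa^d(u):=\{x_0\in\Sigma_\kappa(u):\dim L_{p_{x_0}}=d\},
$$
where $L_p:=\{\xi\in\R^{n-1}\times\{0\}:\xi\cdot\nabla p\equiv 0\}$, and apply the $C^{1,\log}$ version of Whitney's extension theorem to the $2m$-jet data $x_0\mapsto(\partial^\beta p_{x_0})_{|\beta|\leq 2m}$ on each stratum; the implicit function theorem in the $(n-1-d)$ directions transverse to $L_{p_{x_0}}$ then expresses $\Sigma_\kappa^d(u)$ locally as a $d$-dimensional $C^{1,\log}$ graph, and taking the union over $d\leq n-2$ gives the stated conclusion. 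The main obstacle lies in the interplay between the logarithmic epiperimetric inequality and the almost-minimizer defect: the $r^\alpha$ perturbations entering the Weiss and frequency formulas must be dyadically summable along the iteration and must not consume the logarithmic gain of the epiperimetric inequality. Heuristically this succeeds because $\sum_j 2^{-j\alpha}<\infty$ while log-decay is much slower than any power, but the precise bookkeeping --- particularly checking that the almost-monotonicity of $W_\kappa$ is robust enough to iterate and that the Whitney compatibility estimates across the stratum survive the absorption of the perturbative error --- is where the genuine technical work lies.
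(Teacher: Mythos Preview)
Your overall strategy---classification of blowups via $\kappa$-homogeneous rescalings, uniqueness with a logarithmic rate via the log-epiperimetric inequality, then Whitney extension and stratification---matches the paper's. However, there is a genuine gap at the very beginning that you do not acknowledge.

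You write that in Step~1 you would ``show that at each $x_0\in\Sigma_\kappa(u)$ \ldots\ the rescalings $u_{x_0,r}(x):=u(x_0+rx)/r^{\kappa}$ are precompact in $W^{1,2}_\loc$.'' For almost minimizers this is \emph{not} available a priori. The only growth control you have from the Weiss/Almgren machinery is the weak estimate of Lemma~\ref{lem:almost-opt-growth}, which carries an unremovable factor of $\log(1/t)$; this means $\int_{\partial B_1}(u_{x_0,r})^2$ may blow up like $\log(1/r)$ and the family is not bounded in $W^{1,2}$. The paper circumvents this by first working with \emph{Almgren} rescalings (which are normalized and hence automatically precompact) to classify blowups (Proposition~\ref{prop:sing-char}), and then---crucially---using the logarithmic epiperimetric inequality \emph{a first time}, in a bootstrapping loop (Lemmas~\ref{lem:W-est}--\ref{lem:opt-est} and Corollary~\ref{cor:boun-est}), to kill the logarithm and obtain the optimal growth $\int_{\partial B_t(x_0)}u^2\le Ct^{n+2\kappa-1}$. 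Only after this can one even define $\kappa$-homogeneous blowups and run your Step~2. So the log-epiperimetric inequality is not merely a tool for the convergence rate; it is already needed to make sense of the rescalings you take as a starting point.

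Two smaller points. First, the summation you describe as ``dyadic'' would not close: because the Weiss energy decays only like a negative power of $\log(1/t)$, the paper uses an \emph{exponentially} dyadic decomposition (scales $2^{-2^k}$) in Lemma~\ref{lem:boun-est} and Proposition~\ref{prop:blowup-rot-est} to make the telescoping sum converge. Second, you omit nondegeneracy (Lemma~\ref{lem:non-deg}), which is needed to ensure $p_{x_0}\not\equiv 0$ so that $d_{x_0}^{(\kappa)}\le n-2$ and the stratification is nonvacuous; the paper proves it by a contradiction argument that again uses the uniform logarithmic rate of convergence.
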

A more refined version of this theorem is given in
Theorem~\ref{thm:sing}. The proof is based on the \emph{logarithmic
  epiperimetric inequality} of Colombo-Spolaor-Velichkov
\cite{ColSpoVel17} for Weiss's energy functional $W_{\kappa}$, with
$\kappa=2m$, $m\in\mathbb{N}$. We also point out that this inequality
is instrumental in the proof of the optimal growth at singular points,
which is rather immediate for the solutions of the Signorini problem,
but far more complicated for the almost minimizers (see
Lemmas~\ref{lem:W-est}--\ref{lem:opt-est}).

\subsubsection{Proofs of Theorems~\ref{mthm:A}--\ref{mthm:E}} While we
don't give formal proofs of Theorems~\ref{mthm:A}--\ref{mthm:E}, in
the main body of the paper, they follow from the combination of
results there. More specifically,
\begin{enumerate}[label=$\circ$,leftmargin=*,labelindent=\parindent]
\item Theorem~\ref{mthm:A} follows by combining
  Theorems~\ref{thm:holder} and \ref{thm:grad-holder}.
\item The statement of Theorem~\ref{mthm:B} is contained in that of
  Lemma~\ref{lem:opt-growth}.
\item Theorem~\ref{mthm:C} follows by combining
  Theorem~\ref{thm:Almgren} and Corollary~\ref{cor:gap}.
\item The statement of Theorem~\ref{mthm:D} is contained in that of
  Theorem~\ref{thm:C1g-regset}.
\item The statement of Theorem~\ref{mthm:E} is contained in that of
  Theorem~\ref{thm:sing}.
\end{enumerate}

\subsection{Notation}

Throughout the paper we use the following notation. $\R^n$ stands for
the $n$-dimensional Euclidean space. We denote the points of $\R^n$ by
$x=(x', x_n)$, where $x'=(x_1, \ldots, x_{n-1})\in \R^{n-1}$. We
routinely identify $x'\in \R^{n-1}$ with
$(x', 0)\in \R^{n-1}\times \{0\}$. $\R^{n}_\pm$ stand for open
halfspaces $\{x\in\R^n: \pm x_n>0\}$.

For $x\in\R^n$, $r>0$, we use the following notations for balls of
radius $r$, centered at $x$.
\begin{alignat*}{2}
  B_r(x)&=\{y\in \R^n:|x-y|<r\},&\quad&\text{ball in $\R^n$,}\\
  B^{\pm}_r(x')&=B_r(x',0)\cap \{\pm x_n>0\},&& \text{half-ball in $\R^n$,}\\
  B'_r(x')&=B_r(x',0)\cap \{ x_n=0\}, &&\text{ball in $\R^{n-1}$, or
    thin ball.}
\end{alignat*}
We typically drop the center from the notation if it is the
origin. Thus, $B_r=B_r(0)$, $B'_r=B'_r(0)$, etc.

Next, for a direction $e\in\R^n$, we denote
$$
\partial_e u=\nabla u\cdot e,
$$
the directional derivative of $u$ in the direction $e$.  For the
standard coordinate directions $e_i$, $i=1,\ldots,n$, we simply
write $$ u_{x_i}=\pa_{x_i}u= \pa_{e_i}u.
$$
Moreover, by $\pa_{x_n}^{\pm}u(x', 0)$ we mean the limit of
$\pa_{x_n}u$ from within $B_r^{\pm}$, specifically,
\begin{align*}
  \pa^+_{x_n}u(x', 0)&=\lim_{\substack{y\ra (x', 0)\\y\in
  B_r^+}} \pa_{x_n}u(y)=-\pa_{\nu^+}u(x', 0),\\
  \pa^-_{x_n}u(x', 0)&=\lim_{\substack{y\ra (x', 0)\\y\in B_r^-}}\pa_{x_n}u(y)=\pa_{\nu^-}u(x', 0),
\end{align*}
where $\nu^{\pm}=\mp e_n$ are unit outward normal vectors for
$B_r^{\pm}$ on $B_r'$.

In integrals, we often drop the variable and the measure of
integration if it is with respect to the Lebesgue measure or the
surface measure. Thus,
$$
\int_{B_r} u=\int_{B_r} u(x)dx,\quad \int_{\partial B_r}
u=\int_{\partial B_r} u(x)dS_x,
$$
where $S_x$ stands for the surface measure.

We indicate by $\mean{u}_{x, r}$ the integral mean value of a function
$u$ over $B_r(x)$. That is,
$$
\mean{u}_{x, r}:=\dashint_{B_r(x)}u=\frac{1}{\om_nr^n}\int_{B_r(x)}u,
$$ where $\omega_n=|B_1|$ 
is the volume of unit ball in $\R^n$. Similarly to the other
notations, we drop the origin if it is $0$ and write $\mean{u}_r$ for
$\mean{u}_{0,r}$.


\section{Almost harmonic functions}
\label{sec:almost-harm-funct}

In this section we recall some results of Anzellotti \cite{Anz83} on
almost harmonic functions, i.e., almost minimizers of the Dirichlet
integral $J_D(v)=\int_D|\nabla v|^2$. We also state here some of the
relevant auxiliary results that we will need also in the treatment of
almost minimizers for the Signorini problem.

\begin{theorem}\label{thm:Anz} Let $u$ be an almost harmonic function in
  an open set $D$ with a gauge function $\omega$. Then
  \begin{enumerate}[label=\textup{(\roman*)},leftmargin=*,widest=ii]
  \item $u$ is locally almost Lipschitz, i.e.,
    $u\in C^{0, \sigma}_{\loc}(D)$ for all $\sigma\in (0, 1)$.
  \item If $\om(r)\le Cr^{\al}$ for some $\al\in (0, 2)$, then
    $u\in C^{1, \al/2}_{\loc}(D)$.
  \end{enumerate}
\end{theorem}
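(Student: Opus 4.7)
My plan is to prove both parts by comparing $u$ on each ball $B_r(x_0) \Subset D$ with its harmonic replacement $h$ — the harmonic function in $B_r(x_0)$ with $h = u$ on $\partial B_r(x_0)$ — and then iterating the resulting Morrey/Campanato-type decay inequalities, following Anzellotti's original strategy. The starting point is the orthogonality $\int_{B_r(x_0)} \nabla h \cdot \nabla(u-h) = 0$, which gives $J_{B_r(x_0)}(u) = J_{B_r(x_0)}(h) + J_{B_r(x_0)}(u-h)$. Combined with the almost-minimizing inequality \eqref{eq:alm-min-Dir} applied to the competitor $h$, and with $J_{B_r(x_0)}(h) \le J_{B_r(x_0)}(u)$, this yields the basic comparison
\[
\int_{B_r(x_0)} |\nabla(u-h)|^2 \le \omega(r) \int_{B_r(x_0)} |\nabla u|^2.
\]

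For part (i), I would combine this comparison with the harmonic decay $\int_{B_\rho(x_0)} |\nabla h|^2 \le C(\rho/r)^n \int_{B_r(x_0)} |\nabla h|^2$ (valid for $\rho \le r/2$, by the mean value property applied to the harmonic components of $\nabla h$) to obtain
\[
\int_{B_\rho(x_0)} |\nabla u|^2 \le \bigl[C(\rho/r)^n + C\omega(r)\bigr] \int_{B_r(x_0)} |\nabla u|^2.
\]
Choosing $\rho = \theta r$ with $\theta$ small and restricting to $r$ such that $\omega(r) \le \theta^n$, a standard geometric iteration (made possible by $\omega(0+)=0$) produces $\int_{B_\rho(x_0)} |\nabla u|^2 \le C \rho^{n - 2 + 2\sigma}$ for every $\sigma \in (0,1)$, uniformly for $x_0$ in compact subsets of $D$, and Morrey's embedding yields $u \in C^{0,\sigma}_{\loc}(D)$.

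For part (ii), I would upgrade to a Campanato-type estimate using the sharper harmonic decay $\int_{B_\rho(x_0)} |\nabla h - \mean{\nabla h}_{x_0,\rho}|^2 \le C(\rho/r)^{n+2} \int_{B_r(x_0)} |\nabla h - c|^2$, valid for every constant vector $c \in \R^n$ (since each $\partial_i h$ is harmonic, so its second derivatives satisfy the mean value property). Taking $c = \mean{\nabla u}_{x_0, r}$ and using the $L^2$-best constant property of the mean, a triangle-inequality manipulation together with the basic comparison above yields
\[
\int_{B_\rho(x_0)} |\nabla u - \mean{\nabla u}_{x_0,\rho}|^2 \le C(\rho/r)^{n+2} \int_{B_r(x_0)} |\nabla u - \mean{\nabla u}_{x_0,r}|^2 + C r^\alpha \int_{B_r(x_0)} |\nabla u|^2.
\]
Inserting the bound $\int_{B_r(x_0)} |\nabla u|^2 \le C r^{n-\epsilon}$ from part (i) with $\epsilon > 0$ chosen so that $n + \alpha - \epsilon < n + 2$ (possible since $\alpha < 2$), Giaquinta's iteration lemma pushes the decay up to $C\rho^{n+\alpha - \epsilon}$, and Campanato's characterization then forces $\nabla u \in L^\infty_{\loc}(D)$. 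Reinserting the sharper bound $\int_{B_r(x_0)} |\nabla u|^2 \le C r^n$ and applying Giaquinta's lemma a second time closes the Campanato iteration at the sharp rate $\rho^{n+\alpha}$, whence $u \in C^{1,\alpha/2}_{\loc}(D)$.

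The main delicate step is this two-stage iteration in part (ii): the first pass loses a small $\epsilon$ because the Morrey bound from (i) is only $r^{n-\epsilon}$, and one has to observe that the partial conclusion already provides the boundedness of $\nabla u$ needed to feed in the sharp growth $\int_{B_r} |\nabla u|^2 \le C r^n$ for the second pass. The hypothesis $\alpha < 2$ is structural — it is exactly the condition under which the source exponent $n+\alpha$ stays below the homogeneous Campanato rate $n+2$, so that Giaquinta's lemma can lift the decay all the way to $\rho^{n+\alpha}$.
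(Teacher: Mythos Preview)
Your proposal is correct and follows essentially the same approach as the paper: harmonic replacement plus the orthogonality identity yields the basic comparison estimate, which is then fed into the Morrey and Campanato decay estimates for harmonic functions (the paper's Proposition~\ref{prop:Anz-Mor-Camp-est}) and iterated via Lemma~\ref{lem:HL} (your ``Giaquinta's iteration lemma''), with the same two-pass bootstrap in part~(ii). The only cosmetic difference is that the paper phrases the intermediate exponent as $\alpha'=-2+2\sigma+\alpha>0$ rather than $n+\alpha-\epsilon$, and notes explicitly that one needs $\sigma$ close enough to $1$ (equivalently $\epsilon<\alpha$) so that the first Campanato pass already lands in a H\"older space, giving $\nabla u\in L^\infty_{\loc}$.
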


While we refer to \cite{Anz83} for the full proof of this theorem, we
would like to outline the key steps in Anzellotti's argument. The idea
to prove $C^{0,\sigma}$ and $C^{1,\alpha/2}$ regularity of $u$ is
through the Morrey and Campanato space estimates, namely, by
establishing that
\begin{align}
  \label{eq:Anz-Mor-est} &\int_{B_\rho(x_0)}|\nabla u|^2\leq C \rho^{n-2+2\sigma}\\
  \label{eq:Anz-Camp-est}&\int_{B_\rho(x_0)}|\nabla u -\mean{\nabla u}_{x_0,\rho}|^2\leq C \rho^{n+\alpha}
\end{align}
for $x_0\in K\Subset D$, and $0<\rho<\rho_0$, with $C$ and $\rho_0$
depending on $n$, $r_0$, $d=\dist(K,\partial D)$, the gauge function
$\omega$, and $\|u\|_{W^{1,2}(D)}$.

To obtain the estimates above, one starts by choosing a special
competitor $v$ in \eqref{eq:alm-min-Dir}. Namely, we take $v=h$ which
solves the Dirichlet problem
$$
\Delta h=0\quad\text{in }B_r(x_0),\quad h=u\quad\text{on }\partial
B_r(x_0).
$$
Equivalently, $h$ is the minimizer of the Dirichlet energy
$\int_{B_r(x_0)}|\nabla v|^2$ among all functions in
$u+W^{1,2}_0(B_r(x_0))$. We call this $h$ the \emph{harmonic
  replacement} of $u$ in $B_r(x_0)$.  We then have the following
concentric ball estimates for $h$.
\begin{proposition} Let $h$ be harmonic in $B_r(x_0)$ and
  $0<\rho<r$. Then
  \begin{align}
    \label{eq:harm-Mor-est}&\int_{B_\rho(x_0)}|\nabla h|^2\leq
                             \left(\frac{\rho}{r}\right)^n\int_{B_r(x_0)}|\nabla h|^2\\
    \label{eq:harm-Camp-est}&\int_{B_\rho(x_0)}|\nabla h-\mean{\nabla h}_{x_0,\rho}|^2\leq
                              \left(\frac{\rho}{r}\right)^{n+2}\int_{B_r(x_0)}|\nabla h-\mean{\nabla h}_{x_0,r}|^2.
  \end{align}
\end{proposition}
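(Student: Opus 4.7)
My plan is to prove the two estimates separately, with (2) following from (1) applied in a slightly disguised form after an important observation about mean values of harmonic functions.

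For \eqref{eq:harm-Mor-est}, the idea is simply that $|\nabla h|^2$ is subharmonic. Indeed, since $h$ is harmonic, each component $\pa_i h$ is also harmonic, so $|\pa_i h|^2$ is subharmonic, and hence so is their sum $|\nabla h|^2 = \sum_i |\pa_i h|^2$. The sub-mean value property on concentric balls then yields
\begin{equation*}
\dashint_{B_\rho(x_0)} |\nabla h|^2 \le \dashint_{B_r(x_0)} |\nabla h|^2,
\end{equation*}
which, multiplied by $|B_\rho|$, is exactly \eqref{eq:harm-Mor-est}.

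For \eqref{eq:harm-Camp-est}, the first key observation is that each $\pa_i h$ is itself harmonic, so by the mean value property $\mean{\pa_i h}_{x_0,\rho}=\pa_i h(x_0)=\mean{\pa_i h}_{x_0,r}$ for every $0<\rho<r$. Therefore the inequality reduces to
\begin{equation*}
\int_{B_\rho(x_0)}|\nabla h-\nabla h(x_0)|^2 \le \left(\frac{\rho}{r}\right)^{n+2}\int_{B_r(x_0)}|\nabla h-\nabla h(x_0)|^2,
\end{equation*}
and it suffices to establish the scalar statement: if $v$ is harmonic in $B_r(x_0)$ with $v(x_0)=0$, then $\int_{B_\rho(x_0)} v^2 \le (\rho/r)^{n+2}\int_{B_r(x_0)} v^2$. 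Applied componentwise to $v=\pa_i h-\pa_i h(x_0)$ and summed over $i$, this gives \eqref{eq:harm-Camp-est}.

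To prove the scalar claim I would expand $v$ in spherical harmonics around $x_0$,
\begin{equation*}
v(x_0+s\theta)=\sum_{k\ge 0} s^k Y_k(\theta), \qquad s\in(0,r),\ \theta\in S^{n-1},
\end{equation*}
where each $Y_k$ is a spherical harmonic of degree $k$. Since $v(x_0)=0$ we have $Y_0=0$, so the sum effectively starts at $k=1$. Using the $L^2(S^{n-1})$-orthogonality of distinct $Y_k$'s, a direct integration in polar coordinates gives
\begin{equation*}
\int_{B_\rho(x_0)} v^2 = \sum_{k\ge 1}\frac{\rho^{n+2k}}{n+2k}\,\|Y_k\|_{L^2(S^{n-1})}^2,
\end{equation*}
and likewise for $B_r(x_0)$. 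Comparing term by term, each coefficient is multiplied by $(\rho/r)^{n+2k}\le(\rho/r)^{n+2}$ since $k\ge 1$ and $\rho\le r$, which yields the desired estimate with the sharp constant.

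There is no real obstacle here; the only subtle point is recognizing that the extra factor $(\rho/r)^2$ in \eqref{eq:harm-Camp-est} compared with \eqref{eq:harm-Mor-est} comes precisely from the vanishing of $v$ at $x_0$, which kills the degree-zero spherical harmonic and forces all surviving terms to decay at least like $s$ away from $x_0$. This structural fact is what the mean-value identity $\mean{\pa_i h}_{x_0,\rho}=\pa_i h(x_0)$ is doing for us in the reduction step.
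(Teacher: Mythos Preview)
Your proof is correct and follows essentially the same approach as the paper's. The paper's proof notes the identity $\mean{\nabla h}_{x_0,\rho}=\nabla h(x_0)$ and then appeals to the decomposition of $h$ into homogeneous harmonic polynomials, which is exactly your spherical-harmonic expansion argument for \eqref{eq:harm-Camp-est}; your use of the subharmonicity of $|\nabla h|^2$ for \eqref{eq:harm-Mor-est} is a minor cosmetic variation on the same monotonicity statement.
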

\begin{proof} The estimates above follow from the monotonicity in
  $\rho$ of the quantities
  $$
  \frac{1}{\rho^n}\int_{B_\rho(x_0)}|\nabla
  h|^2,\quad\frac{1}{\rho^{n+2}}\int_{B_\rho(x_0)}|\nabla
  h-\mean{\nabla h}_{x_0,\rho}|^2.
  $$
  Noticing that $\mean{\nabla h}_{x_0,\rho}=\nabla h(x_0)$, an easy
  proof is obtained by decomposing $h$ into the sum of the series of
  homogeneous harmonic polynomials.
\end{proof}

We next use the almost minimizing property of $u$ to deduce perturbed
versions of the estimates above.
\begin{proposition}\label{prop:Anz-Mor-Camp-est} Let $u$ be an almost
  harmonic function in $D$. Then for any ball $B_r(x_0)\Subset D$ with
  $r<r_0$ and $0<\rho<r$ we have
  \begin{align}
    \label{eq:Anz-Mor-alm-min} &\int_{B_\rho(x_0)}|\nabla u|^2\leq
                                 2\left[\left(\frac{\rho}{r}\right)^n+\omega(r)\right]\int_{B_r(x_0)}|\nabla
                                 u|^2\\
                               &
                                 \label{eq:Anz-Camp-alm-min}\begin{multlined}[t]
                                   \int_{B_{\rho}(x_0)}|\D u-\mean{\D
                                     u}_{x_0, \rho}|^2 \le
                                   9\left(\frac{\rho}{r}\right)^{n+2}\int_{B_r(x_0)}|\D
                                   u-\mean{\D u}_{x_0,
                                     r}|^2\\+24\,\omega(r)\int_{B_r(x_0)}|\D
                                   u|^2.
                                 \end{multlined}
  \end{align}
\end{proposition}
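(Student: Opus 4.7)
The plan is to compare $u$ with its harmonic replacement $h$ in $B_r(x_0)$ (introduced right before the statement) and to combine the concentric ball estimates \eqref{eq:harm-Mor-est}--\eqref{eq:harm-Camp-est} for $h$ with a control of the discrepancy $u-h$ obtained from the almost minimizing property of $u$.

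The decisive preliminary step is to show
$$
\int_{B_r(x_0)}|\nabla(u-h)|^2\leq \omega(r)\int_{B_r(x_0)}|\nabla u|^2.
$$
Since $u-h\in W^{1,2}_0(B_r(x_0))$ and $\Delta h=0$, integration by parts gives the orthogonal decomposition
$$
\int_{B_r(x_0)}|\nabla u|^2=\int_{B_r(x_0)}|\nabla h|^2+\int_{B_r(x_0)}|\nabla(u-h)|^2,
$$
while the almost minimality \eqref{eq:alm-min-Dir} applied with the competitor $v=h$ yields $\int_{B_r(x_0)}|\nabla u|^2\leq (1+\omega(r))\int_{B_r(x_0)}|\nabla h|^2$. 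Subtracting and using $\int_{B_r(x_0)}|\nabla h|^2\leq \int_{B_r(x_0)}|\nabla u|^2$ produces the bound above.

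The Morrey estimate \eqref{eq:Anz-Mor-alm-min} is then immediate: writing $\nabla u=\nabla h+\nabla(u-h)$, using $(a+b)^2\leq 2a^2+2b^2$, and applying \eqref{eq:harm-Mor-est} on $B_\rho(x_0)$ together with the discrepancy bound produces the factor $2[(\rho/r)^n+\omega(r)]$.

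For the Campanato estimate \eqref{eq:Anz-Camp-alm-min}, the main task is to transfer averages between $u$ and $h$, and between the scales $\rho$ and $r$, without losing the power $(\rho/r)^{n+2}$. I would use the variance-minimizing property of the mean---namely, that $\mean{\nabla u}_{x_0,\rho}$ minimizes $c\mapsto \int_{B_\rho(x_0)}|\nabla u-c|^2$---to replace $\mean{\nabla u}_{x_0,\rho}$ by $\mean{\nabla h}_{x_0,\rho}$ in the left-hand side at no cost. A Young-type splitting $(a+b)^2\leq (1+\epsilon)a^2+(1+1/\epsilon)b^2$ with $\epsilon=2$ then separates a harmonic piece $\int_{B_\rho(x_0)}|\nabla h-\mean{\nabla h}_{x_0,\rho}|^2$, to which \eqref{eq:harm-Camp-est} applies, from an error piece controlled by $\int_{B_r(x_0)}|\nabla(u-h)|^2$. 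In the harmonic piece I would use the variance-minimizing property once more to pass from $\mean{\nabla h}_{x_0,r}$ to $\mean{\nabla u}_{x_0,r}$ and split again with $\epsilon=2$, producing the coefficient $(1+2)(1+2)=9$ on the main term and assembling all error contributions into a multiple of $\omega(r)\int_{B_r(x_0)}|\nabla u|^2$ via the discrepancy bound. There is no genuine conceptual obstacle; the only delicate point is the bookkeeping needed to meet the specific constants $9$ and $24$ stated.
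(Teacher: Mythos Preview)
Your proposal is correct and follows essentially the same route as the paper: compare with the harmonic replacement $h$, use the orthogonality/minimality to get the discrepancy bound $\int_{B_r(x_0)}|\nabla(u-h)|^2\le\omega(r)\int_{B_r(x_0)}|\nabla u|^2$, and then perturb the concentric ball estimates \eqref{eq:harm-Mor-est}--\eqref{eq:harm-Camp-est}. The only cosmetic difference is in the Campanato bookkeeping: the paper (as spelled out in Case~1.1 of the proof of Theorem~\ref{thm:grad-holder}) uses a three-term split together with Jensen's inequality $\int_{B_\rho}|\mean{\nabla u}_\rho-\mean{\nabla h}_\rho|^2\le\int_{B_\rho}|\nabla u-\nabla h|^2$, whereas you invoke the variance-minimizing property of the mean followed by two-term Young splits; your route in fact yields a smaller error constant than $24$, which is harmless.
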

\begin{proof} If $h$ is a harmonic replacement of $u$ in $B_r(x_0)$,
  we first note that
  \begin{align*}
    \int_{B_r(x_0)}|\nabla (u-h)|^2&=\int_{B_r(x_0)}|\nabla u|^2-|\nabla
                                     h|^2-2\int_{B_r(x_0)}\nabla h\nabla(u-h)\\
                                   &=\int_{B_r(x_0)}|\nabla u|^2-|\nabla
                                     h|^2\leq \omega(r)\int_{B_r(x_0)}|\nabla h|^2\leq \omega(r)\int_{B_r(x_0)}|\nabla u|^2. 
  \end{align*}
  Then, combined with \eqref{eq:harm-Mor-est}, we estimate
  \begin{align*}
    \int_{B_\rho(x_0)}|\nabla u|^2&
                                    \leq 2\int_{B_{\rho}(x_0)}|\nabla
                                    h|^2+2\int_{B_{\rho}(x_0)}|\nabla (u-h)|^2\\
                                  &\leq
                                    2\left[\left(\frac{\rho}{r}\right)^n +
                                    \omega(r)\right]\int_{B_r(x_0)}|\nabla u|^2,
  \end{align*}
  which gives \eqref{eq:Anz-Mor-alm-min}. To obtain
  \eqref{eq:Anz-Camp-alm-min}, we argue very similarly by using
  additionally that by Jensen's inequality
$$
\int_{B_\rho(x_0)}|\mean{\nabla u}_{x_0,\rho}-\mean{\nabla
  h}_{x_0,\rho}|^2\leq \int_{B_\rho(x_0)}|\nabla u-\nabla h|^2.
$$
For more details we refer to the proof of
Theorem~\ref{thm:grad-holder}, Case 1.1.
\end{proof}

From here, one deduces the estimates
\eqref{eq:Anz-Mor-est}--\eqref{eq:Anz-Camp-est} with the help of the
following useful lemma. The proof can be found e.g.\ in
\cite{HanLin97}.
\begin{lemma}\label{lem:HL}
  Let $r_0>0$ be a positive number and let
  $\vp:(0,r_0)\to (0, \infty)$ be a nondecreasing function. Let $a$,
  $\beta$, and $\gamma$ be such that $a>0$, $\gamma >\beta >0$. There
  exist two positive numbers $\e=\e(a,\gamma,\beta)$,
  $c=c(a,\gamma,\beta)$ such that, if
$$
\vp(\rho)\le
a\Bigl[\Bigl(\frac{\rho}{r}\Bigr)^{\gamma}+\e\Bigr]\vp(r)+b\, r^{\be}
$$ for all $\rho$, $r$ with $0<\rho\leq r<r_0$, where $b\ge 0$,
then one also has, still for $0<\rho<r<r_0$,
$$
\vp(\rho)\le
c\Bigl[\Bigl(\frac{\rho}{r}\Bigr)^{\be}\vp(r)+b\rho^{\be}\Bigr].
$$
\end{lemma}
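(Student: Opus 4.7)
The plan is to prove this by iterating the hypothesis at a geometric sequence of radii, which is a classical Campanato-type argument. The key idea is that, since $\g > \be$, we can afford to choose an intermediate exponent $\eta$ with $\be < \eta < \g$ and a ratio $\tau \in (0,1)$ so that the given inequality, when specialized to $\rho = \tau r$, becomes a genuine contraction estimate with rate $\tau^\eta$; the extra room between $\eta$ and $\be$ is then what allows the nonhomogeneous term to be controlled by a summable geometric series.

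Concretely, I would fix $\eta := (\be + \g)/2$, pick $\tau \in (0,1)$ small enough that $a\tau^\g \le \tau^\eta/2$ (possible since $\g > \eta$), and then set $\e := \tau^\eta/(2a)$, so that $a(\tau^\g + \e) \le \tau^\eta$. Applying the hypothesis with $\rho = \tau r$ yields
$$
\vp(\tau r) \le \tau^\eta \vp(r) + b r^\be.
$$
Iterating $k$ times (replacing $r$ by $\tau^{j-1} r$ at the $j$-th step) gives
$$
\vp(\tau^k r) \le \tau^{k\eta}\vp(r) + b r^\be \sum_{j=0}^{k-1}\tau^{j\eta}\,\tau^{(k-1-j)\be}.
$$
Factoring $\tau^{(k-1)\be}$ out of the sum leaves the geometric series $\sum_{j=0}^{k-1}\tau^{j(\eta-\be)}$, bounded by $1/(1-\tau^{\eta-\be})$ because $\eta > \be$. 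Combined with the trivial bound $\tau^{k\eta}\le \tau^{k\be}$, this gives
$$
\vp(\tau^k r) \le \tau^{k\be}\vp(r) + C\, b\, (\tau^k r)^\be,
$$
for some constant $C$ depending only on $\tau,\eta,\be$.

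To conclude, for arbitrary $\rho \in (0, r)$ I would pick $k \ge 0$ with $\tau^{k+1} r < \rho \le \tau^k r$ and invoke the monotonicity of $\vp$ to bound $\vp(\rho) \le \vp(\tau^k r)$, after which $\tau^{k\be} \le \tau^{-\be}(\rho/r)^\be$ and $(\tau^k r)^\be \le \tau^{-\be}\rho^\be$ absorb the lost factors of $\tau$ into a final constant $c = c(a,\g,\be)$. The main subtlety, and in my view the key step, is the use of a \emph{strict} intermediate exponent $\eta \in (\be,\g)$: iterating directly with rate $\tau^\be$ produces the sum $\sum_{j=0}^{k-1}\tau^{j\be + (k-1-j)\be} = k\tau^{(k-1)\be}$, whose extra factor of $k$ destroys the estimate. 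Slipping $\eta$ strictly between $\be$ and $\g$ is precisely what converts this otherwise divergent prefactor into a bounded geometric series, which is why the strict inequality $\g > \be$ is essential.
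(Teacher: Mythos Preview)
Your argument is correct and is exactly the standard iteration proof of this lemma. The paper does not supply its own proof but simply refers to \cite{HanLin97}, where precisely this geometric-series argument (choose an intermediate exponent, iterate at scales $\tau^k r$, sum, and use monotonicity to pass to arbitrary $\rho$) is carried out; so your approach coincides with the one the authors have in mind.
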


We can now give a formal proof of Theorem~\ref{thm:Anz}.

\begin{proof}[Proof of Theorem~\ref{thm:Anz}]
  (i) Taking $r_0$ small enough so that $\omega(r_0)<\e$, a direct
  application of Lemma~\ref{lem:HL} to \eqref{eq:Anz-Mor-alm-min}
  produces the estimate \eqref{eq:Anz-Mor-est}, which in turn implies
  that $u\in C^{0,\sigma}_\loc(D)$, by the Morrey space embedding
  theorem.

  \medskip
  \noindent (ii) Using that $\omega(r)\leq C r^\alpha$, combined with
  the estimate \eqref{eq:Anz-Mor-est}, we first obtain
  \begin{multline*}
    \int_{B_{\rho}(x_0)}|\D u-\mean{\D u}_{x_0, \rho}|^2 \le
    9\left(\frac{\rho}{r}\right)^{n+2}\int_{B_r(x_0)}|\D u-\mean{\D
      u}_{x_0, r}|^2+C r^{n-2+2\sigma+\alpha}.
  \end{multline*}
  If $\sigma$ is so that $\alpha'=-2+2\sigma+\alpha>0$,
  Lemma~\ref{lem:HL} implies that
$$
\int_{B_{\rho}(x_0)}|\D u-\mean{\D u}_{x_0, \rho}|^2 \le C
\rho^{n+\alpha'}.
$$
By the Campanato space embedding, we therefore obtain that
$\nabla u\in C_\loc^{0,\alpha'}(D)$. However, it is easy to bootstrap
the regularity up to $C_\loc^{0,\alpha/2}$ by noticing that we now
know that $\nabla u$ is locally bounded in $D$ and thus
$\int_{B_r(x_0)}|\nabla u|^2\leq C r^n$. Plugging that in the last
term of \eqref{eq:Anz-Camp-alm-min}, we obtain that
\begin{multline*}
  \int_{B_{\rho}(x_0)}|\D u-\mean{\D u}_{x_0, \rho}|^2 \le
  9\left(\frac{\rho}{r}\right)^{n+2}\int_{B_r(x_0)}|\D u-\mean{\D
    u}_{x_0, r}|^2+C r^{n+\alpha}
\end{multline*}
and repeating the arguments above conclude that
$u\in C^{1,\alpha/2}_\loc$.
\end{proof}


\section{Almost Lipschitz regularity of almost minimizers}
\label{sec:almost-lipsch-regul}

In this section we prove the first regularity results for the almost
minimizers for the Signorini problem, see
Definition~\ref{def:alm-min-Sig}. Recall that we assume $D=B_1$,
$\cM=\R^{n-1}\times\{0\}$, $\psi=0$, $r_0=1$, and $\omega(r)=r^\alpha$
for some $\alpha>0$. Furthermore we assume that $u$ is even symmetric
in $x_n$-variable.

\begin{theorem}\label{thm:holder}
  Let $u$ be an almost minimizer for the Signorini problem in $B_1$.
  Then $u\in C^{0, \sigma}(B_1)$ for all $0<\sigma<1$. Moreover, for
  any $K\Subset B_1$,
  \begin{align}
    \|u\|_{C^{0, \sigma}(K)} &\le C\|u\|_{W^{1, 2}(B_1)} \label{eq:holder-6}
  \end{align}
  with $C=C(n, \al, \sigma, K)$.
\end{theorem}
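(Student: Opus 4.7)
The plan is to imitate Anzellotti's proof of Theorem~\ref{thm:Anz}(i), replacing the harmonic replacement by a Signorini replacement. For each ball $B_r(x_0)\Subset B_1$ with $0<r<1$, I would let $v$ be the unique minimizer of $J_{B_r(x_0)}$ in the closed convex class $\mathfrak{K}_{0,u}(B_r(x_0),\cM)$, i.e., the solution of the Signorini problem in $B_r(x_0)$ with boundary data $u|_{\partial B_r(x_0)}$ and zero thin obstacle. Since $v\in\mathfrak{K}_{0,u}(B_r(x_0),\cM)$, it is an admissible competitor for $u$ in \eqref{eq:alm-min-Sig}, and since $u$ itself lies in $\mathfrak{K}_{0,u}(B_r(x_0),\cM)$, the variational inequality for $v$ reads $\int_{B_r(x_0)}\nabla v\cdot\nabla(u-v)\ge 0$. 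Expanding $\int_{B_r(x_0)}|\nabla(u-v)|^2$ and combining these two inequalities with the almost minimality of $u$ gives the energy decrement
$$
\int_{B_r(x_0)}|\nabla(u-v)|^2 \le \int_{B_r(x_0)}|\nabla u|^2-\int_{B_r(x_0)}|\nabla v|^2 \le \omega(r)\int_{B_r(x_0)}|\nabla u|^2,
$$
exactly as in the first display of the proof of Proposition~\ref{prop:Anz-Mor-Camp-est}.

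The only additional ingredient needed is a concentric-ball decay estimate for the Signorini replacement of the form
$$
\int_{B_\rho(x_0)}|\nabla v|^2 \le C\left(\frac{\rho}{r}\right)^n\int_{B_r(x_0)}|\nabla v|^2, \quad 0<\rho\le r/2,
$$
playing the role of \eqref{eq:harm-Mor-est}. When $B_r(x_0)\cap\cM=\emptyset$ this is nothing but the harmonic estimate; otherwise it is a consequence of the interior $C^{1,\beta}$ regularity of Signorini solutions (Caffarelli~\cite{Caf79}, Athanasopoulos--Caffarelli~\cite{AthCaf04}) together with a rescaling argument producing a scale-invariant bound of the form $\|\nabla v\|_{L^\infty(B_{r/2}(x_0))}^2\le Cr^{-n}\int_{B_r(x_0)}|\nabla v|^2$, possibly modulo an additive remainder controlled by $\|u\|_{W^{1,2}(B_1)}$ (needed to absorb the fact that, unlike in the harmonic case, additive constants are not allowed for Signorini). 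Combining with the energy decrement via $|\nabla u|^2\le 2|\nabla v|^2+2|\nabla(u-v)|^2$ and using $\int_{B_r(x_0)}|\nabla v|^2\le\int_{B_r(x_0)}|\nabla u|^2$ yields the perturbed Morrey-type estimate
$$
\int_{B_\rho(x_0)}|\nabla u|^2 \le C\left[\left(\frac{\rho}{r}\right)^n+\omega(r)\right]\int_{B_r(x_0)}|\nabla u|^2 + C\|u\|_{W^{1,2}(B_1)}^2\,\rho^n,
$$
which is in the form required by Lemma~\ref{lem:HL}.

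From here the argument reproduces Anzellotti's. Choosing $r_0$ so that $\omega(r_0)=r_0^\alpha$ falls below the threshold $\varepsilon(n,\sigma)$ of Lemma~\ref{lem:HL} and applying that lemma with $\gamma=n$, $\beta=n-2+2\sigma$, one obtains $\int_{B_\rho(x_0)}|\nabla u|^2\le C\rho^{n-2+2\sigma}$ for every $\sigma\in(0,1)$, and the Morrey space embedding then gives $u\in C^{0,\sigma}_{\loc}(B_1)$ with the bound \eqref{eq:holder-6}. The main obstacle is the concentric-ball decay for $v$: the monotonicity-based argument that proves \eqref{eq:harm-Mor-est} via the expansion of a harmonic function into homogeneous harmonic polynomials has no direct analog for Signorini solutions, so one must substitute a rescaling-plus-classical-regularity argument, taking care that the resulting bound is scale-invariant and does not blow up as $r\downarrow 0$. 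Once this ingredient is in hand, the remainder of the proof is a direct transcription of the argument of Section~\ref{sec:almost-harm-funct}.
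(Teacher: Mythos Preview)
Your overall strategy---use the Signorini replacement in place of the harmonic one, derive the energy decrement via the variational inequality, then combine with a concentric-ball decay for the replacement and feed into Lemma~\ref{lem:HL} and Morrey's embedding---is exactly what the paper does. The difference lies in how the concentric-ball decay for the Signorini replacement is obtained, and here your proposal has a gap.

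The paper (Proposition~\ref{prop:Sig-Mor-est}) observes that for a Signorini solution $h$ centered on $\cM$, the function $|\nabla h|^2$ is \emph{subharmonic} in $B_r(x_0)$: indeed $h_{x_i}^\pm$ for $i<n$, and the even extensions of $h_{x_n}^\pm$, are all subharmonic. The sub-mean-value property then gives $\int_{B_\rho(x_0)}|\nabla h|^2\le(\rho/r)^n\int_{B_r(x_0)}|\nabla h|^2$ with constant~$1$ and \emph{no additive remainder}. A short case analysis (Proposition~\ref{prop:alm-min-Sig-Mor-est}) extends this to arbitrary centers.

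Your route through $C^{1,\beta}$ regularity and rescaling does not deliver the additive term you write. The standard regularity estimate controls $\nabla v$ by $\|v\|_{L^2(B_r)}$ (or $\|v\|_{L^\infty}$), not by $\|\nabla v\|_{L^2(B_r)}$; converting via Poincar\'e and the trace condition $v=u$ on $\partial B_r(x_0)$ yields
\[
\|\nabla v\|_{L^\infty(B_{r/2}(x_0))}^2\le Cr^{-n}\int_{B_r(x_0)}|\nabla u|^2 + Cr^{-n-2}\|u\|_{L^2(B_r(x_0))}^2,
\]
so after integrating over $B_\rho$ the additive term is of order $(\rho/r)^n r^{-2}\|u\|_{L^2(B_r)}^2$, not $\|u\|_{W^{1,2}(B_1)}^2\rho^n$. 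This blows up as $r\to 0$ and does not fit the hypothesis of Lemma~\ref{lem:HL}. To salvage your route you would first need a uniform local $L^\infty$ bound on $u$ (so that $\|u\|_{L^2(B_r)}^2\le Cr^n$), which is essentially what you are trying to prove. The subharmonicity observation removes the issue at the source and is the clean fix; once you have it, the rest of your argument is identical to the paper's.
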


The idea of the proof is to follow that of Anzellotti \cite{Anz83}
that we outlined in Section~\ref{sec:almost-harm-funct} and to prove
an estimate similar to \eqref{eq:Anz-Mor-alm-min}. The proof of the
latter estimate followed by a perturbation argument from a similar
estimate for the harmonic replacement of $u$. However, in the case of
the Signorini problem, the harmonic replacements are not necessarily
admissible competitors. Instead, for $B_r(x_0)\Subset B_1$, we
consider the \emph{Signorini replacements} $h$ of $u$ in $B_r(x_0)$,
which solve the Signorini problem in $B_r(x_0)$ with the thin obstacle
$0$ on $\cM$ and boundary values $h=u$ on $\partial
B_r(x_0)$. Equivalently, Signorini replacements are the minimizers of
$J_{B_r(x_0)}$ on the constraint set
$\mathfrak{K}_{0,u}(B_r(x_0),\cM)$ and they also satisfy the
variational inequality\footnote{which follows from the inequality
  $ \int_{B_r(x_0)}|\nabla h|^2\leq \int_{B_r(x_0)}|\nabla( (1-\e)h+\e
  v)|^2$, $\e\in (0,1)$ by a first variation argument.}
\begin{equation}\label{eq:Sig-repl-var-ineq}
  \int_{B_r(x_0)}\nabla h\cdot\nabla(v-h)\geq 0\quad\text{for any }v\in\mathfrak{K}_{0,u}(B_r(x_0),\cM).
\end{equation}
We then have the following concentric ball estimates for Signorini
replacements similar to the one for harmonic replacements, at least
when the center of the balls is on $\cM=\R^{n-1}\times\{0\}$.

\begin{proposition}\label{prop:Sig-Mor-est}
  Let $x_0\in\cM$ and let $h$ be a solution of the Signorini problem
  in $B_r(x_0)$ with zero obstacle on $\cM$, even in
  $x_n$-variable. Then,
  \begin{align}\label{eq:holder-4}\int_{B_{\rho}(x_0)}|\D h|^2\le
    \Bigl(\frac{\rho}{r}\Bigr)^n\int_{B_r(x_0)}|\D h|^2,\quad
    0<\rho<r.
  \end{align}
\end{proposition}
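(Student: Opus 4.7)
The plan is to prove that $|\D h|^2$ is subharmonic in $B_r(x_0)$ in the distributional sense; once that is in hand, the sub-mean value property for nonnegative subharmonic functions immediately yields \eqref{eq:holder-4}, since $\rho\mapsto\rho^{-n}\int_{B_\rho(x_0)}|\D h|^2$ is nondecreasing on $(0,r]$ and multiplying by $\rho^n$ is exactly what we want. This parallels the proof of \eqref{eq:harm-Mor-est} for harmonic replacements (where $|\D h|^2$ is subharmonic because each component of $\D h$ is harmonic); the new content is verifying subharmonicity across the thin manifold.

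By the classical regularity theory for the Signorini problem (Caffarelli, Kinderlehrer, Uraltseva), $h$ is $C^{1,1/2}$ on either side of $\cM$ and harmonic in $B_r(x_0)\setminus\cM$, so $\La|\D h|^2=2|\mathrm{Hess}\,h|^2\ge 0$ classically in $B_r^\pm(x_0)$. The even symmetry of $h$ in $x_n$ makes $\pa_{x_i}h$ ($i<n$) even and $\pa_{x_n}h$ odd, so $|\D h|^2$ is continuous across $\cM$; hence the distributional Laplacian on $B_r(x_0)$ equals its classical part in $B_r^\pm(x_0)$ plus a singular measure on $\cM$ which, by the jump formula and even symmetry, takes the form $2\,\pa_{x_n}|\D h|^2|_{x_n=0^+}\,dH^{n-1}$. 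I would show this trace vanishes $H^{n-1}$-a.e.\ on $B_r'(x_0)$ by splitting into the non-contact set $\{h(\cdot,0)>0\}$, the interior of the contact set $\{h(\cdot,0)=0\}^\circ$, and the free boundary. On $\{h(\cdot,0)>0\}$ (relatively open in $\cM$), Signorini complementarity combined with even symmetry forces $\pa_{x_n}^+h\equiv 0$; differentiating tangentially yields $\pa_{x_ix_n}^+h=0$ for $i<n$, while harmonicity in $B_r^+(x_0)$ gives $\pa_{x_n}^2h|_{0^+}=-\La_{x'}h(\cdot,0)$, so the expansion $\pa_{x_n}|\D h|^2=2\sum_{i<n}\pa_{x_i}h\cdot\pa_{x_ix_n}h+2\pa_{x_n}h\cdot\pa_{x_n}^2h$ collapses to $0$ at $x_n=0^+$. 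On $\{h(\cdot,0)=0\}^\circ$, $h$ vanishes on an open subset of $\cM$, so both its tangential derivatives and tangential Laplacian vanish, and the same expansion again gives $0$. The free boundary has $H^{n-1}$-measure zero (its Hausdorff dimension is at most $n-2$ for Signorini solutions). Hence $\La|\D h|^2\ge 0$ distributionally on $B_r(x_0)$, and \eqref{eq:holder-4} follows.

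The main technical obstacle is that the pointwise second derivatives used above exist classically only on $\{h>0\}$ and on the interior of $\{h=0\}$, where $h$ is smooth up to $\cM$ by boundary elliptic regularity with homogeneous Neumann and Dirichlet data respectively; near free boundary points $h$ has only $C^{1,1/2}$ regularity. A clean implementation either restricts the pointwise computation to these two relatively open subsets and invokes the $H^{n-1}$-nullity of the free boundary (so the singular part of the Radon measure $\La|\D h|^2$ on $\cM$ is zero), or else recasts the nonnegativity of $\La|\D h|^2$ distributionally by testing the weak Signorini variational inequality against $\zeta\,\pa_{x_i}h$ with $\zeta\ge 0$ compactly supported, thereby bypassing pointwise traces altogether.
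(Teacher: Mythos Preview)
Your high-level strategy matches the paper's: both establish that $|\D h|^2$ is subharmonic in $B_r(x_0)$ and then invoke the sub-mean value property. The paper's execution is different and sidesteps the problematic step in yours. Rather than computing a jump of $\pa_{x_n}|\D h|^2$ across $\cM$, the paper observes that each $(h_{x_i})^\pm$ for $i<n$, and each even reflection of $(h_{x_n})^\pm$ from $B_r^+(x_0)$, is a continuous nonnegative function that is harmonic in its own positivity set: indeed $\D h$ vanishes on \emph{all} of $\Ld(h)$ (on the interior because $h\equiv 0$ there, and on $\Gamma(h)$ because $h$ attains a minimum along $\cM$ and $\pa_{x_n}^+h=0$ by continuity from $\{h>0\}$), while off $\Ld(h)$ the appropriate reflection of $h$ is harmonic. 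The elementary lemma ``a continuous nonnegative function subharmonic in its positivity set is subharmonic everywhere'' then makes each piece subharmonic, and squaring (convex increasing on $[0,\infty)$ applied to a nonnegative subharmonic function) and summing gives $|\D h|^2$ subharmonic. No information about the size of $\Gamma(h)$ enters.

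Your argument, by contrast, disposes of the free-boundary contribution by asserting that $\Gamma(h)$ has Hausdorff dimension at most $n-2$. That is essentially the Focardi--Spadaro structure theorem, a deep 2018 result, and is far too heavy a tool for this basic energy bound. Your fallback of testing the variational inequality against $\zeta\,h_{x_i}$ does not work as stated either, since $h+\e\zeta h_{x_i}$ need not remain $\ge 0$ on $\cM$ and so is not an admissible competitor. The simple fix you are missing is precisely the observation underlying the paper's approach: since $\D h=0$ on $\Gamma(h)$, one has $|\D h(x_0)|^2=0\le\dashint_{\pa B_\rho(x_0)}|\D h|^2$ at every $x_0\in\Gamma(h)$, so the local sub-mean value inequality holds trivially there; combined with your (correct) verification of subharmonicity off $\Gamma(h)$, this already yields subharmonicity on all of $B_r(x_0)$ with no measure-theoretic hypothesis on $\Gamma(h)$.
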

\begin{proof} We claim that $|\D h|^2$ is subharmonic in
  $B_r(x_0)$. This follows from the fact that $h_{x_i}^\pm$,
  $i=1,\ldots,n-1$, are subharmonic in $B_r(x_0)$, see
  \cite{PetShaUra12}, and similarly that the even extensions
  $\widetilde h_{x_n}^\pm $ of $h_{x_n}^\pm$ in $x_n$-variable from
  $B_R^+(x_0)$ to all of $B_R(x_0)$ are also subharmonic. These are
  all consequences of the fact that a continuous nonnegative function,
  subharmonic in its positivity set is subharmonic, see
  \cite{PetShaUra12}*{Ex.~2.6}.

  The subharmonicity of $|\nabla h|^2$ in $B_r(x_0)$ then implies, by
  the sub-mean value property, that the function
$$
\rho\mapsto \frac{1}{\rho^n}\int_{B_\rho(x_0)}|\nabla h|^2
$$
is monotone nondecreasing. This readily implies \eqref{eq:holder-4}.
\end{proof}

We next have the perturbed version of
Proposition~\ref{prop:Sig-Mor-est}.

\begin{proposition}
  \label{prop:alm-min-Sig-Mor-est}
  Let $u$ be an almost minimizer for the Signorini problem in $B_1$,
  and $B_r(x_0)\subset B_1$. Then, there is $C_1=C_1(n)>1$ such that
  \begin{equation} \int_{B_{\rho}(x_0)}|\D u|^2 \le
    C_1\left[\left(\frac{\rho}{r}\right)^n
      +r^{\al}\right]\int_{B_r(x_0)}|\D u|^2,\quad
    0<\rho<r.\label{eq:holder-1}
  \end{equation}
\end{proposition}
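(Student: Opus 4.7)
The plan is to mirror Anzellotti's argument leading to Proposition~\ref{prop:Anz-Mor-Camp-est}, substituting the harmonic replacement with the \emph{Signorini replacement} $h$ of $u$ on $B_r(x_0)$, namely the minimizer of $J_{B_r(x_0)}$ over the admissible set $\mathfrak{K}_{0,u}(B_r(x_0),\cM)$. The two key ingredients to assemble are (a) a concentric ball estimate
\[
\int_{B_\rho(x_0)}|\nabla h|^2\leq C_0(n)\left(\frac{\rho}{r}\right)^n\int_{B_r(x_0)}|\nabla h|^2,\qquad 0<\rho<r,
\]
for the Signorini replacement, and (b) the error bound $\int_{B_r(x_0)}|\nabla(u-h)|^2\leq r^\alpha\int_{B_r(x_0)}|\nabla u|^2$, coming from the almost minimality of $u$. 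Once both are in place, \eqref{eq:holder-1} follows at once from $|\nabla u|^2\leq 2|\nabla h|^2+2|\nabla(u-h)|^2$, the estimate for $|\nabla h|^2$, and the inequality $\int|\nabla h|^2\leq\int|\nabla u|^2$ (minimality of $h$), giving $C_1=2C_0$.

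For (b), I test the variational inequality \eqref{eq:Sig-repl-var-ineq} for $h$ with the competitor $v=u$, which is admissible since $u\geq 0$ on $\cM$ and $u=h$ on $\partial B_r(x_0)$; this yields $\int_{B_r(x_0)}\nabla h\cdot\nabla(u-h)\geq 0$, so expanding $|\nabla(u-h)|^2=|\nabla u|^2-2\nabla u\cdot\nabla h+|\nabla h|^2$ produces $\int_{B_r(x_0)}|\nabla(u-h)|^2\leq \int_{B_r(x_0)}(|\nabla u|^2-|\nabla h|^2)$. Combining with the almost minimality \eqref{eq:alm-min-Sig} applied to $v=h$, which gives $\int|\nabla u|^2\leq (1+r^\alpha)\int|\nabla h|^2$, together with $\int|\nabla h|^2\leq\int|\nabla u|^2$, delivers the stated bound.

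For (a), I split into cases. If $B_r(x_0)\cap\cM=\emptyset$, then $\mathfrak{K}_{0,u}(B_r(x_0),\cM)=u+W^{1,2}_0(B_r(x_0))$, so $u$ is almost harmonic on $B_r(x_0)$ and the conclusion \eqref{eq:holder-1} drops out directly from Anzellotti's \eqref{eq:Anz-Mor-alm-min}. If $B_r(x_0)\cap\cM\neq\emptyset$ and $x_0\in\cM$, then by the evenness of $u$ in $x_n$ and uniqueness of the Signorini problem, the replacement $h$ is also even in $x_n$, and Proposition~\ref{prop:Sig-Mor-est} supplies (a) with $C_0=1$. For the remaining case $x_0\notin\cM$ with $d:=\dist(x_0,\cM)<r$, I would reduce to the previous one by comparing $B_\rho(x_0)$ with balls $B_{\rho+d}(\bar{x}_0)$ centered at the projection $\bar{x}_0=(x_0',0)\in\cM$: when $\rho\leq r-2d$ one has $B_\rho(x_0)\subset B_{\rho+d}(\bar{x}_0)\subset B_r(x_0)$ so Proposition~\ref{prop:Sig-Mor-est} applies, while for $\rho>r-2d$ the inequality is trivial up to a dimensional factor. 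This off-axis case is the main obstacle, since Proposition~\ref{prop:Sig-Mor-est} crucially exploits the even symmetry of $h$ across $\cM$, which is not automatic when $B_r(x_0)$ is not symmetric about $\cM$; the covering anchored at $\bar{x}_0$ absorbs this asymmetry into the dimensional constant $C_0=C_0(n)$.
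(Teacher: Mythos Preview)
Your argument for $x_0\in\cM$ and for $B_r(x_0)\cap\cM=\emptyset$ is fine and matches the paper. The gap is in the off-axis case $x_0\notin\cM$, $d<r$. You take the Signorini replacement $h$ on the \emph{asymmetric} ball $B_r(x_0)$ and then try to apply Proposition~\ref{prop:Sig-Mor-est} on the centered ball $B_{\rho+d}(\bar x_0)\subset B_r(x_0)$. But Proposition~\ref{prop:Sig-Mor-est} requires $h$ to be even in $x_n$; its proof shows $|\D h|^2$ is subharmonic precisely via the symmetry (subharmonicity of $h_{x_i}^\pm$ and of the even reflection of $h_{x_n}^\pm$). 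When $B_r(x_0)$ is not symmetric about $\cM$, the boundary data $h=u$ on $\pa B_r(x_0)$ has no symmetry, so $h$ is generally not even, and you have no subharmonicity of $|\D h|^2$ and no concentric ball estimate. Covering with balls centered at $\bar x_0$ does not repair this: the hypothesis of Proposition~\ref{prop:Sig-Mor-est} simply fails for this $h$. Separately, your dichotomy $\rho\le r-2d$ versus $\rho>r-2d$ breaks down when $d\ge r/2$ (then $r-2d\le 0$ and every $\rho$ falls into the ``trivial'' branch, which gives nothing for small $\rho$).

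The paper avoids this by never taking a Signorini replacement on an off-axis ball. It first proves \eqref{eq:holder-1} in full for centered $x_0\in B_1'$ (where the replacement \emph{is} even), and then, for $x_0\in B_1^+$, transfers the estimate to $u$ itself---not to any $h$---by sandwiching $B_\rho(x_0)$ between balls centered at the projection $x_1\in B_1'$ and invoking the already-established centered case together with the almost-harmonic estimate \eqref{eq:Anz-Mor-alm-min} on $B_d(x_0)\subset B_1^+$. Your covering idea is salvageable if you redirect it this way: apply the centered version of \eqref{eq:holder-1} (for $u$) at $x_1$, and use almost harmonicity in $B_1^+$ to pass between $B_\rho(x_0)$ and $B_d(x_0)$, with a case split on the relative sizes of $\rho$, $d$, $r$ (the paper uses $\rho\lessgtr d$ and $d\lessgtr r/4$).
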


\begin{proof}
  By using the continuity argument, we may assume that
  $B_r(x_0)\Subset B_1$. We first prove the estimate when $x_0$ is in
  the thin space, i.e.,\ $x_0\in B_1'$ and then extend it to arbitrary
  $x_0\in B_1$.

  \medskip
  \noindent
  \emph{Case 1.} Suppose $x_0\in B'_1$ and let $h$ be the Signorini
  replacement of $u$ in $B_r(x_0)$. Recall that $h$ satisfies
  \eqref{eq:Sig-repl-var-ineq}. Then, plugging $v=u$, we obtain
  \begin{equation}\label{eq:holder-2}
    \int_{B_{r}(x_0)}\nabla h\cdot\nabla u-|\nabla h|^2\geq 0.
  \end{equation}
  Using this, we can estimate
  \begin{equation}
    \begin{split}
      \int_{B_r(x_0)}|\D(u-h)|^2&= \int_{B_r(x_0)}\Bigl(|\D u|^2+|\D h|^2-2\D u\cdot \D h\Bigr)\\
      &\le \int_{B_r(x_0)}|\D u|^2-\int_{B_r(x_0)}|\D h|^2 \\
      &\le \big(1+r^{\al}\big)\int_{B_r(x_0)}|\D h|^2-\int_{B_r(x_0)}|\D h|^2 \\
      &= r^{\al}\int_{B_r(x_0)}|\D h|^2 \le r^{\al}\int_{B_r(x_0)}|\D
      u|^2,\label{eq:holder-3}
    \end{split}
  \end{equation}
  where in the very last step we have used that $h$ minimizes the
  Dirichlet integral among all functions in
  $\mathfrak{K}_{0,u}(B_r(x_0),\cM)$.

  Next, we use the same perturbation argument as in the proof of
  \eqref{eq:Anz-Mor-alm-min}.  By using \eqref{eq:holder-4} and
  \eqref{eq:holder-3}, we estimate
  \begin{align*}
    \int_{B_{\rho}(x_0)}|\D u|^2
    &\le 2\int_{B_{\rho}(x_0)}|\D h|^2+2\int_{B_{\rho}(x_0)}|\D(u-h)|^2 \\
    &\le 2\Bigl(\frac{\rho}{r}\Bigr)^n\int_{B_r(x_0)}|\D h|^2+2r^{\al}\int_{B_r(x_0)}|\D u|^2\\
    &\le2\Bigl[\Bigl(\frac{\rho}{r}\Bigr)^n+r^{\al}\Bigr]\int_{B_r(x_0)}|\D u|^2. 
  \end{align*}
  Thus, \eqref{eq:holder-1} follows in this case.

  \medskip\noindent \emph{Case 2.} Consider now the case
  $x_0 \in B_1^+$. If $\rho\ge r/4$, then we simply have
$$
\int_{B_{\rho}(x_0)}|\D u|^2 \le
4^n\left(\frac{\rho}{r}\right)^n\int_{B_r(x_0)}|\D u|^2.$$ Thus, we
may assume $\rho<r/4$. Then, let $d:=\dist(x_0, B'_1)>0$ and choose
$x_1\in \pa B_d(x_0)\cap B'_1$.

\medskip\noindent \emph{Case 2.1.} If $\rho\ge d$, then we use
$B_{\rho}(x_0)\subset B_{2\rho}(x_1)\subset B_{r/2}(x_1)\subset
B_r(x_0)$ and the result of Case 1 to write
\begin{align*}
  \int_{B_{\rho}(x_0)}|\D u|^2
  &\le \int_{B_{2\rho}(x_1)}|\D u|^2
    \le C\left[\left(\frac{2\rho}{r/2}\right)^n +
    (r/2)^{\al}\right]\int_{B_{r/2}(x_1)}|\D u|^2\\
  &\le
    C\left[\left(\frac{\rho}{r}\right)^n+r^{\al}\right]\int_{B_r(x_0)}|\D u|^2.
\end{align*}

\medskip\noindent \emph{Case 2.2.}  Suppose now $d>\rho$. If $d>r$,
then $B_r(x_0)\Subset B_1^+$. Since $u$ is almost harmonic in $B_1^+$,
we can apply Proposition~\ref{prop:Anz-Mor-Camp-est} to obtain
$$
\int_{B_{\rho}(x_0)}|\D u|^2\le2\left[\left(\frac \rho
    r\right)^n+r^{\al}\right]\int_{B_r(x_0)}|\D u|^2.
$$
Thus, we may assume $d\le r$. Then we note that
$B_d(x_0)\subset B_1^+$ and by a limiting argument from the previous
estimate, we obtain
$$
\int_{B_{\rho}(x_0)}|\D u|^2 \le 2\left[\left(\frac \rho
    d\right)^n+r^{\al}\right]\int_{B_d(x_0)}|\D u|^2.
$$

\medskip\noindent \emph{Case 2.2.1.} If $r/4\le d$, then
$$
\int_{B_d(x_0)}|\D u|^2 \le
4^n\left(\frac{d}{r}\right)^n\int_{B_r(x_0)}|\D u|^2,
$$ 
which immediately implies \eqref{eq:holder-1}.

\medskip\noindent \emph{Case 2.2.2.}  It remains to consider the case
$\rho<d<r/4$. Using Case 1 again, we have
\begin{align*}
  \int_{B_d(x_0)}|\D u|^2
  &\le \int_{B_{2d}(x_1)}|\D u|^2 \le C\left[\left(\frac{2d}{r/2}\right)^n+(r/2)^{\al}\right]\int_{B_{r/2}(x_1)}|\D u|^2 \\
  &\le C\left[\left(\frac{d}{r}\right)^n+r^{\al}\right]\int_{B_r(x_0)}|\D u|^2,
\end{align*}
which also implies \eqref{eq:holder-1}.  This concludes the proof of
the proposition.
\end{proof}

We can now give the proof of the almost Lipschitz regularity of almost
minimizers.

\begin{proof}[Proof of Theorem~\ref{thm:holder}]
  Let $K\Subset B_1$ and $x_0\in K$. Take
  $\delta=\delta(n, \al,\sigma, K)>0$ such that
  $\delta<\dist(K, \pa B_1)$ and
  $\delta^{\al}\le \e(C_1, n, n+2\sigma-2)$, where
  $\e=\e(C_1, n, n+2\sigma-2)$ is as in Lemma~\ref{lem:HL}. Then for
  all $0<\rho<r<\delta$, by \eqref{eq:holder-1},
  \begin{align*}
    \int_{B_{\rho}(x_0)}|\D u|^2 &\le  C_1\Bigl[\Bigl(\frac{\rho}{r}\Bigr)^n+\e\Bigr]\int_{B_r(x_0)}|\D u|^2.
  \end{align*}
  By applying Lemma~\ref{lem:HL}, we obtain
  \begin{align*}
    \int_{B_{\rho}(x_0)}|\D u|^2
    &\le C(n, \sigma)\Bigl(\frac{\rho}{r}\Bigr)^{n+2\sigma-2}\int_{B_{r}(x_0)}|\D u|^2.
  \end{align*}
  Taking $r\nearrow \delta$, we can therefore conclude
  \begin{align}
    \int_{B_{\rho}(x_0)}|\D u|^2 &\le C(n, \al, \sigma, K)\|\D u\|_{L^2(B_1)}^2\rho^{n+2\sigma-2}.\label{eq:holder-5}
  \end{align}
  From here, we use the Morrey space embedding to obtain
  $u\in C^{0, \sigma}(K)$ with the norm estimate
  \begin{align*}
    \|u\|_{C^{0, \sigma}(K)} &\le C(n, \al, \sigma,K)\|u\|_{W^{1, 2}(B_1)},
  \end{align*}
  as required.
\end{proof}


\section{$C^{1, \beta}$ regularity of almost minimizers}
\label{sec:c1-beta-regularity}

In this section we establish the $C^{1, \be}$ regularity of almost
minimizers for some $\beta>0$. The idea is again to use Signorini
replacements and an appropriate version of the concentric ball
estimate \eqref{eq:harm-Camp-est} for solutions of the Signorini
problem.

As we saw in the proof of the almost Lipschitz regularity of almost
minimizers, it is enough to obtain such estimates when balls are
centered at $x_0$ on the thin space $\cM=\R^{n-1}\times\{0\}$. It
turns out that to prove a proper version of \eqref{eq:harm-Camp-est},
we have to work with both even and odd extensions in $x_n$-variable of
Signorini replacements $h$ from $B_r^+(x_0)$ to $B_r(x_0)$. The reason
is that even extensions are harmonic across the positivity set
$\{h(\cdot,0)>0\}$, while the odd extensions are harmonic across the
interior of the coincidence set $\{h(\cdot,0)=0\}$.

We start with the borderline case when the center $x_0$ of concentric
balls is on the free boundary
$\Gamma(h)= B_r'(x_0)\cap\partial_{\R^{n-1}}\{h(\cdot,0)=0\}$.

\begin{proposition}\label{prop:sig-est-free-bdry}
  Let $h$ be a solution of the Signorini problem in $B_r(x_0)$ with
  $x_0\in \cM$, even in $x_n$, and define
  $$
  \h(x',x_n):=\begin{cases}\phantom{-}h(x', x_n),& x_n\ge 0 \\-h(x',
    -x_n), &x_n<0,
  \end{cases}
  $$
  the odd extension in $x_n$-variable of $h$ from $B_r^+(x_0)$ to
  $B_r(x_0)$.
  
  Suppose that $x_0\in\Gamma(h)$. Then, for any $0<\alpha<1$, there
  exists $C=C(n, \al)$ such that for any $0<\rho<s<(3/4)r$ we have
  \begin{align}
    \begin{multlined}[t]
      \int_{B_{\rho}(x_0)}|\D h-\mean{\D h}_{x_0, \rho}|^2 \le
      \Bigl(\frac{\rho}{s}\Bigr)^{n+\al}\int_{B_s(x_0)}|\D h-\mean{\D
        h}_{x_0, s}|^2\\+C\biggl(\int_{B_r(x_0)}h^2\biggr)
      \frac{s^{n+1}}{r^{n+3}},
    \end{multlined}\\
    \begin{multlined}[t]\int_{B_{\rho}(x_0)}|\D\h-\mean{\D\h}_{x_0,
        \rho}|^2 \le
      \Bigl(\frac{\rho}{s}\Bigr)^{n+\al}\int_{B_s(x_0)}|\D\h-\mean{\D\h}_{x_0,
        s}|^2\\+C\biggl(\int_{B_r(x_0)}h^2\biggr)
      \frac{s^{n+1}}{r^{n+3}}.
    \end{multlined}
  \end{align}
\end{proposition}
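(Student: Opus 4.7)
The plan is to reduce the claimed inequality to a pointwise $C^{0,1/2}$ bound on $\D h$ near $x_0$ and then integrate. Such a bound comes from two classical facts about \emph{solutions} $h$ of the Signorini problem: the optimal $C^{1,1/2}$ interior regularity of $h$ on each side of $\cM$ (Athanasopoulos--Caffarelli, Athanasopoulos--Caffarelli--Salsa), and the identity $\D h(x_0)=0$ at every free boundary point, which follows from the at-least-$3/2$ homogeneity of Almgren blow-ups. Both can be quoted directly from the monograph \cite{PetShaUra12} rather than reproven here, since they apply to \emph{solutions} and not to the almost minimizers considered elsewhere in the paper.

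Granting these, standard scaling yields
\begin{equation*}
  [\D h]_{C^{0,1/2}(\overline{B_{3r/4}^{+}(x_0)}\cup B_{3r/4}'(x_0))} \le C\,r^{-(n+3)/2}\,\|h\|_{L^2(B_r(x_0))},
\end{equation*}
and by the even symmetry of $h$ the analogous bound holds on the lower half-ball. Combined with $\D h(x_0)=0$, this gives the pointwise estimate
\begin{equation*}
  |\D h(x)| \le C\,r^{-(n+3)/2}\,\|h\|_{L^2(B_r(x_0))}\,|x-x_0|^{1/2},\qquad x\in B_{3r/4}(x_0).
\end{equation*}
Squaring, integrating over $B_\rho(x_0)\subset B_{3r/4}(x_0)$, and using $\int_{B_\rho}|\D h-\mean{\D h}_{x_0,\rho}|^2\le 4\int_{B_\rho}|\D h|^2$, I obtain
\begin{equation*}
  \int_{B_\rho(x_0)}|\D h-\mean{\D h}_{x_0,\rho}|^2 \le C\,\|h\|_{L^2(B_r(x_0))}^2\,\frac{\rho^{n+1}}{r^{n+3}}.
\end{equation*}
Since $\rho<s$ and $0<\alpha<1$, we have $\rho^{n+1}\le s^{n+1}$, so this bound is already dominated by the claimed error term alone; the nonnegative Campanato term $(\rho/s)^{n+\alpha}\int_{B_s}|\D h-\mean{\D h}_{x_0,s}|^2$ may simply be added on the right to produce the first inequality. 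For the odd extension $\h$, a direct computation using the evenness of $h$ gives $|\D\h(x)|=|\D h(x)|$ a.e., so the same pointwise and integral bounds transfer to $\h$ verbatim and yield the second inequality.

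The main obstacle is nothing more than invoking the two classical inputs above, both of which are established for Signorini solutions. One bookkeeping subtlety is that $\D h$ may be discontinuous across $\cM$ (since $h_{x_n}$ is odd in $x_n$ and typically nonzero on the non-coincidence set), so the $C^{1,1/2}$ estimate is a priori only one-sided; however, because $\D h(x_0)=0$ at the free boundary point $x_0\in\cM$, the two one-sided $|x-x_0|^{1/2}$ bounds patch into a uniform pointwise bound on all of $B_{3r/4}(x_0)$. The iterative $(\rho/s)^{n+\alpha}$ formulation is preserved in the statement because it is the precise form required by the Campanato-type iteration used later to obtain $C^{1,\beta}$ regularity of the almost minimizer $u$.
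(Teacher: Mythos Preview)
Your proof is correct and uses the same two classical inputs as the paper---the scaled $C^{1,1/2}$ estimate $\|\D h\|_{C^{0,1/2}(\overline{B_{3r/4}^{\pm}}\cup B_{3r/4}')}\le C r^{-(n+3)/2}\|h\|_{L^2(B_r)}$ and the vanishing $\D h(x_0)=0$ at free boundary points---but organizes them more efficiently. The paper defines $\vp(t)=t^{-(n+\al)}\int_{B_t}|\D h-\mean{\D h}_t|^2$, differentiates it explicitly, bounds the negative terms in $\vp'(t)$ by the same pointwise estimate you use, and then integrates $\vp'$ over $(\rho,s)$ to obtain the inequality. You instead observe that the pointwise bound $|\D h(x)|\le C r^{-(n+3)/2}\|h\|_{L^2(B_r)}|x-x_0|^{1/2}$ already controls the entire left-hand side by the error term $C\|h\|_{L^2(B_r)}^2\rho^{n+1}/r^{n+3}\le C\|h\|_{L^2(B_r)}^2 s^{n+1}/r^{n+3}$, so the Campanato term on the right can simply be appended as a nonnegative quantity. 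Both arguments buy exactly the same conclusion; yours avoids the differentiation step and is shorter. Two incidental remarks: the inequality $\int_{B_\rho}|\D h-\mean{\D h}_\rho|^2\le 4\int_{B_\rho}|\D h|^2$ can be sharpened to $\le\int_{B_\rho}|\D h|^2$ (subtracting the mean only decreases the $L^2$ norm), and your justification that $|\D\h|=|\D h|$ a.e.\ is correct, since for $x_n<0$ one has $\D\h(x',x_n)=(-\D_{x'}h(x',-x_n),\partial_{x_n}h(x',-x_n))$ and $|\D h|$ is even in $x_n$.
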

\begin{remark} We note here that $\mean{\nabla h}_{x_0,\rho}$ has its
  $n$-th component zero because of odd symmetry of $h_{x_n}$, while
  $\mean{\nabla \h}_{x_0,\rho}$ has its first $(n-1)$ components zero
  because of odd symmetry of $\h_{x_i}$, $i=1,\ldots,n-1$.
\end{remark}

\begin{proof} Without loss of generality we may assume $x_0=0$.  For
  $0<t<(3/4)r$, define
  $$
  \vp(t):=\frac{1}{t^{n+\al}}\int_{B_t}|\D h-\mean{\D h}_{t}|^2.
  $$
  Then
  \begin{align*}
    \vp(t) &= \frac{1}{t^{n+\al}}\int_{B_t}|\D h-\mean{\D h}_t|^2\\
           &= \frac{1}{t^{n+\al}}\biggl[\int_{B_t}|\D h|^2-2\mean{\D h}_t\int_{B_t}\D h+\int_{B_t}\mean{\D h}_t^2\biggr]\\
           &= \frac{1}{t^{n+\al}}\biggl[ \int_{B_t}|\D h|^2-\frac{1}{\omega_nt^n}\Bigl(\int_{B_t}\D h\Bigr)^2\biggr].
  \end{align*}
  Thus,
  \begin{multline}\label{eq:sig-est-free-bdry-1}
    \vp'(t) = \frac{1}{t^{n+\alpha}}\biggl[-\frac{n+\al}{t}\int_{B_t}|\D h|^2+\int_{\pa B_t}|\D h|^2\\
    +\frac{2n+\al}{\omega_nt^{n+1}}\Bigl(\int_{B_t}\D
    h\Bigr)^2-\frac{2}{\omega_nt^{n}}\Bigl( \int_{B_t}\D
    h\Bigr)\Bigl(\int_{\pa B_t}\D h\Bigr)\biggr].
  \end{multline}
  We next recall that $h$ is $C^{1,1/2}$ regular in $B_r^\pm\cup B'_r$
  and we have the estimate
  \begin{align}\label{eq:sig-est-free-bdry-2}
    \|\D h\|_{C^{0,1/2}\left(B_{(3/4)r}^\pm\cup B'_{(3/4)r}\right)} \le C(n) r^{-\frac{n+3}{2}}\|h\|_{L^2(B_r^+)},
  \end{align}
  see e.g.\ Theorem~9.13 in \cite{PetShaUra12}. Then, using
  $\D h(0)=0$, we obtain
  \begin{align*}
    \frac{n+\al}{t^{n+\al+1}}\int_{B_t}|\D h|^2
    &\le \frac{C(n, \al) }{t^{\al}r^{n+3}}\Bigl(\int_{B_r}h^2\Bigr).
  \end{align*}
  We can similarly estimate the other term with a negative sign in
  \eqref{eq:sig-est-free-bdry-1} to obtain
  \begin{align*}
    \vp'(t) &\ge -\frac{C(n, \al)}{t^{\al}r^{n+3}}\Bigl(\int_{B_r}h^2\Bigr).
  \end{align*}
  Thus,
  \begin{align*}
    \vp(s)-\vp(\rho) &\ge -C\Bigl(\int_{B_r}h^2\Bigr)\frac{1}{r^{n+3}}\int_{\rho}^{s}t^{-\al}dt\\
                     &\ge -C\Bigl(\int_{B_r}h^2\Bigr)\frac{s^{1-\al}}{r^{n+3}}.
  \end{align*}
  Therefore
  \begin{align*}
    \int_{B_{\rho}}|\D h-\mean{\D h}_{\rho}|^2
    &= \rho^{n+\al}\vp(\rho)\le \rho^{n+\al}\biggl(\vp(s)+C\Bigl(\int_{B_r}h^2\Bigr)\frac{s^{1-\al}}{r^{n+3}}\biggr)\\
    &\le \Bigl(\frac{\rho}{s}\Bigr)^{n+\al}\int_{B_s}|\D h-\mean{\D h}_s|^2+C\Bigl(\int_{B_r}h^2\Bigr)\frac{s^{n+1}}{r^{n+3}}.
  \end{align*}
  This proves the first estimate in the statement of the lemma. For
  the second estimate, involving $\D\h$, we essentially repeat the
  above argument with
  \[
    \widetilde{\vp}(t):=\frac{1}{t^{n+\al}}\int_{B_t}|\D\h-\mean{\D\h}_t|^2.\qedhere
  \]
\end{proof}

We next consider the case when the center $x_0\notin \Gamma(h)$. We
have to distinguish the cases $x_0$ is in $\{h(\cdot,0)>0\}$ or the
interior of $\{h(\cdot,0)=0\}$.

\begin{proposition}\label{prop:sig-est-not-free-bdry}
  Let $h$ be a solution of the Signorini problem in $B_r(x_0)$ with
  $x_0\in\cM$, even in $x_n$-variable. Suppose
  $x_0\in B_r'(x_0)\setminus \Gamma(h)$.  Let
  $d:=\dist(x_0, \Gamma(h))>0$ and fix $\al\in(0, 1)$. Then there are
  $C_1=C_1(n, \al)$, $C_2=C_2(n, \al)$ such that for $0<\rho<s<r$ the
  following inequalities hold.
  \begin{enumerate}[label=\textup{(\roman*)},leftmargin=*,widest=ii,align=left]
  \item If $B'_d(x_0)\subset \{h(\cdot, 0)>0\}$, then
    \begin{multline*}
      \int_{B_{\rho}(x_0)}|\D h-\mean{\D h}_{x_0, \rho}|^2 \le
      C_1\Bigl(\frac{\rho}{s}\Bigr)^{n+\al}\int_{B_s(x_0)}|\D
      h-\mean{\D h}_{x_0, s}|^2\\+C_2
      \Bigl(\int_{B_r(x_0)}h^2\Bigr)\frac{s^{n+1}}{r^{n+3}}.
    \end{multline*}
  \item If $B'_d(x_0) \subset \{h(\cdot, 0)=0\}$, then
    \begin{multline*}
      \int_{B_{\rho(x_0)}}|\D\h-\mean{\D\h}_{x_0, \rho}|^2 \le
      C_1\Bigl(\frac{\rho}{s}\Bigr)^{n+\al}\int_{B_s(x_0)}|\D\h-\mean{\D\h}_{x_0,s}|^2\\
      +C_2 \Bigl(\int_{B_r(x_0)}h^2\Bigr)\frac{s^{n+1}}{r^{n+3}}.
    \end{multline*}
  \end{enumerate}
\end{proposition}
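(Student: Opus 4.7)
The plan is to reduce both cases (i) and (ii) to the application of the harmonic estimate \eqref{eq:harm-Camp-est} on an appropriate ball, plus a direct estimate via the $C^{1,1/2}$ bound \eqref{eq:sig-est-free-bdry-2} when the scale exceeds the distance to the free boundary. WLOG assume $x_0=0$ and write $H:=h$ in case (i), $H:=\h$ in case (ii).

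The main structural step, which I expect to be the crux, is the following observation: \emph{$H$ is harmonic in $B_d$}. In case (i), since $B_d'\subset\{h(\cdot,0)>0\}$, the Signorini complementarity gives $\pa_{\nu^+}h+\pa_{\nu^-}h=0$ on $B_d'$; combined with the evenness of $h$ in $x_n$ (which makes $\pa_{x_n}h$ odd in $x_n$), this forces $\pa^+_{x_n}h=0$ on $B_d'$, so the even extension $h$ has matching normal derivatives across $B_d'$ and is therefore $C^1$ across the thin manifold. Since $\Delta h=0$ in $B_d^\pm$, a standard removable-singularity argument (integration by parts against a test function, with boundary contributions on $B_d'$ cancelling thanks to $C^1$-matching) yields $\Delta h=0$ weakly in $B_d$. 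In case (ii), $h=0$ on $B_d'$, so $\h$ vanishes on $B_d'$ and is harmonic in $B_d^\pm$; a direct computation using the oddness of $\pa_{x_n}h$ in $x_n$ shows that $\pa_{x_n}\h(\cdot,0^+)=\pa_{x_n}\h(\cdot,0^-)$, hence $\h\in C^1(B_d)$ and the same argument gives harmonicity on $B_d$.

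With this in hand, consider first the easy case $s\le d$: then $B_s\subset B_d$, $H$ is harmonic there, and \eqref{eq:harm-Camp-est} with the elementary inequality $(\rho/s)^{n+2}\le(\rho/s)^{n+\al}$ (valid since $0<\al<1<2$ and $\rho<s$) gives
\[
\int_{B_\rho}|\D H-\mean{\D H}_\rho|^2\le \Bigl(\frac{\rho}{s}\Bigr)^{n+\al}\int_{B_s}|\D H-\mean{\D H}_s|^2,
\]
so the statement holds with any $C_1\ge 1$ and the nonnegative error term added trivially.

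In the harder case $s>d$, the idea is to bound the entire LHS by the error term, bypassing any comparison of main quantities. Pick $y_0\in\Ga(h)$ with $|y_0|=d$. The key point is that $\D H(y_0)=0$: indeed, $h\ge 0$ on $\cM$ with $h(y_0)=0$ gives $\D_{x'}h(y_0)=0$, and evenness of $h$ in $x_n$ gives $\pa_{x_n}h(y_0)=0$; the analogous vanishing for $\D\h(y_0)$ follows by direct computation using that $h=0$ on $B_d'$. Therefore \eqref{eq:sig-est-free-bdry-2} yields, for any $x\in B_s$ (assuming $s\le(3/4)r$, which is the regime of interest),
\[
|\D H(x)|^2=|\D H(x)-\D H(y_0)|^2\le C(n)\,r^{-(n+3)}\|h\|_{L^2(B_r^+)}^2\,|x-y_0|,
\]
and since $|x-y_0|\le |x|+d<s+d<2s$, integrating over $B_\rho$ gives
\[
\int_{B_\rho}|\D H-\mean{\D H}_\rho|^2\le \int_{B_\rho}|\D H|^2\le C\,\|h\|_{L^2(B_r^+)}^2\,\frac{s\,\rho^n}{r^{n+3}}\le C\,\Bigl(\int_{B_r}h^2\Bigr)\frac{s^{n+1}}{r^{n+3}},
\]
where the last inequality uses $\rho<s$. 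Absorbing this into $C_2$ and adding the (non-negative) main term on the right completes the estimate. The main obstacle is really Step~1: once we identify the harmonic extension on $B_d$, everything else reduces either to the classical harmonic Campanato estimate or to a crude pointwise bound on $\D H$ coming from the known optimal regularity at the free boundary.
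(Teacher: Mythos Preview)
Your approach is correct and considerably more direct than the paper's. In the regime $s>d$ the paper does not bound the left-hand side by the error term alone; instead it chains the harmonic Campanato estimate on $B_d$ together with Proposition~\ref{prop:sig-est-free-bdry} (the analogous estimate recentered at a free boundary point $x_1\in\pa B_d'\cap\Ga(h)$), which forces a five-way case split according to the relative sizes of $\rho$, $d$ and $s/4$. Your observation that $\D H(y_0)=0$ at the nearest free boundary point, together with the pointwise $C^{1,1/2}$ bound \eqref{eq:sig-est-free-bdry-2}, lets you absorb the whole left-hand side into the $s^{n+1}/r^{n+3}$ term in one stroke---and in fact shows that this proposition does not really need Proposition~\ref{prop:sig-est-free-bdry} as an intermediate step at all.

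Two points need attention, though. First, the statement is for all $0<\rho<s<r$, and your parenthetical ``$s\le(3/4)r$, which is the regime of interest'' does not discharge the full claim. The only issue is that \eqref{eq:sig-est-free-bdry-2} is stated on $B_{(3/4)r}$; when $s>(3/4)r$ you must also treat the subcases $\rho\ge(3/4)r$ (trivially, since then $(\rho/s)^{n+\al}\ge(3/4)^{n+\al}$ and the minimizing property of the mean suffices) and $d>(3/4)r$ (then $\rho<(3/4)r<d$, so harmonicity in $B_d$ together with $d>3s/4$ handles it), while the remaining subcase $\rho,d\le(3/4)r$ already falls under your argument since only $x\in B_\rho$ and $y_0$ need to lie in $\overline{B_{(3/4)r}}$. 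Second, evenness of $h$ alone yields only $\pa_{x_n}^+h(y_0)=-\pa_{x_n}^-h(y_0)$; to conclude $\pa_{x_n}^+h(y_0)=0$ you should use that $y_0\in\Ga(h)$ is a limit of points in $\{h(\cdot,0)>0\}$, where complementarity gives $\pa_{x_n}^+h=0$, and then invoke the continuity of $\pa_{x_n}^+h$ coming from the $C^{1,1/2}$ regularity.
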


\begin{proof}
  Without loss of generality we may assume $x_0=0$.

  \medskip\noindent (i) First consider the case
  $B'_d\subset \{h(\cdot, 0)>0\}$. If $d\ge s$, then $h$ is harmonic
  in $B_s$ and hence
  \begin{align*}
    \int_{B_{\rho}}|\D h-\mean{\D h}_{\rho}|^2 &\le\Bigl(\frac{\rho}{s}\Bigr)^{n+2}\int_{B_s}|\D h-\mean{\D h}_s|^2\le\Bigl(\frac{\rho}{s}\Bigr)^{n+\al}\int_{B_s}|\D h-\mean{\D h}_s|^2.
  \end{align*}
  We can therefore assume $0<d<s$
 
  \medskip\noindent\emph{Case 1.} $s/4\le d<s$.

  \medskip\noindent \emph{Case 1.1.} Suppose $0<\rho<d$. We first
  observe that
$$
\int_{B_d}|\D h-\mean{\D h}_d|^2 = \min_{C \in \R^n}\int_{B_d}|\D
h-C|^2 \le \int_{B_d}|\D h-\mean{\D h}_s|^2 \le \int_{B_s}|\D
h-\mean{\D h}_s|^2.
$$
Now using that $h$ is harmonic in $B_d$, we obtain
\begin{align*}
  \int_{B_{\rho}}|\D h-\mean{\D h}_{\rho}|^2
  &\le \Bigl(\frac{\rho}{d}\Bigr)^{n+2}\int_{B_d}|\D h-\mean{\D h}_d|^2\\
  &\le \Bigl(\frac{4\rho}{s}\Bigr)^{n+\al}\int_{B_s}|\D h-\mean{\D h}_s|^2\\
  &= 4^{n+\al}\Bigl(\frac{\rho}{s}\Bigr)^{n+\al}\int_{B_s}|\D h-\mean{\D h}_s|^2.
\end{align*}

\medskip\noindent \emph{Case 1.2.} If $\rho \ge d$, then we use
$\rho/s\ge 1/4$ to obtain
\begin{align*}
  \int_{B_{\rho}}|\D h-\mean{\D h}_{\rho}|^2
  &\le \int_{B_s}|\D h-\mean{\D h}_s|^2\\
  &\le 4^{n+\al}\Bigl(\frac{\rho}{s}\Bigr)^{n+\al}\int_{B_s}|\D h-\mean{\D h}_s|^2.
\end{align*}

\medskip\noindent \emph{Case 2.} $0<d<s/4$.

\medskip\noindent \emph{Case 2.1.} If $0<\rho<d$, take
$x_1\in \pa B'_d\cap\Gamma(h)$. We first use the harmonicity of $h$ in
$B_d$ and then applying Proposition~\ref{prop:sig-est-free-bdry} for
balls $B_{2d}(x_1) \subset B_{s/2}(x_1)\subset B_{(2/3)r}(x_1)$, to
obtain
\begin{align*}
  \int_{B_{\rho}}|\D h-\mean{\D h}_{\rho}|^2
  &\le \Bigl(\frac{\rho}{d}\Bigr)^{n+\al}\int_{B_d}|\D h-\mean{\D h}_d|^2\\
  &\le \Bigl(\frac{\rho}{d}\Bigr)^{n+\al}\int_{B_{2d}(x_1)}|\D h-\mean{\D h}_{x_1, 2d}|^2\\
  &\begin{multlined}\le \Bigl(\frac{\rho}{d}\Bigr)^{n+\al}\biggl[ \Bigl(\frac{4d}{s}\Bigr)^{n+\al}\int_{B_{s/2}(x_1)}|\D h-\mean{\D h}_{x_1, s/2}|^2\\+ C_2 \Bigl(\int_{B_{(2/3)r}(x_1)}h^2\Bigr)\frac{s^{n+1}}{r^{n+3}}\biggr]
  \end{multlined}\\
  &\le C\Bigl(\frac{\rho}{s}\Bigr)^{n+\al}\int_{B_s}|\D h-\mean{\D
    h}_s|^2+C_2 \Bigl(\int_{B_r}h^2\Bigr)\frac{s^{n+1}}{r^{n+3}},
\end{align*}
where is the last step we have used that $B_{s/2}(x_1)\subset B_s$ and
$B_{(2/3)r}(x_1)\subset B_r$.

\medskip\noindent \emph{Case 2.2.} If $d\le \rho<s/4$, then we apply
Proposition~\ref{prop:sig-est-free-bdry} with
$B_{2\rho}(x_1)\subset B_{s/2}(x_1)\subset B_{(2/3)r}(x_1)$ as in Case
2.1:
\begin{align*}
  \int_{B_{\rho}}|\D h-\mean{\D h}_{\rho}|^2
  &\le \int_{B_{2\rho}(x_1)}|\D h-\mean{\D h}_{x_1, 2\rho}|^2\\
  &\begin{multlined}\le
    \Bigl(\frac{4\rho}{s}\Bigr)^{n+\al}\int_{B_{s/2}(x_1)}|\D
    h-\mean{\D h}_{x_1, s/2}|^2\\+C_2
    \Bigl(\int_{B_{(2/3)r}(x_1)}h^2\Bigr)\frac{s^{n+1}}{r^{n+3}}
  \end{multlined}\\
  &\le C\Bigl(\frac{\rho}{s}\Bigr)^{n+\al}\int_{B_s}|\D h-\mean{\D
    h}_s|^2+C_2 \Bigl(\int_{B_r}h^2\Bigr)\frac{s^{n+1}}{r^{n+3}}.
\end{align*}

\medskip\noindent \emph{Case 2.3.} If $\rho\ge s/4$, then
$\rho/s\ge 1/4$ and can we repeat the argument in Case 1.2.

\medskip This completes the proof of part (i).

\medskip\noindent (ii) Now suppose that $h=0$ in $B_d'(x_0)$. Notice
that in the proof of part (i) only harmonicity of $h$ in $B_d$ and
Proposition~\ref{prop:sig-est-free-bdry} were used. In the present
case, it is the odd reflection $\h$ that is harmonic in $B_d$, and we
can repeat the same arguments as in part (i) with $\D h$ replaced by
$\D\h$.
\end{proof}

Now we have the following estimate combining the two preceding ones.

\begin{proposition}\label{prop:sig-est} Let $h$ be a solution of the
  Signorini problem in $B_r(x_0)$ with $x_0\in\cM$, even in
  $x_n$-variable.  Define
$$
\wDh:=
\begin{cases}\D h(x', x_n),& x_n\ge 0 \\
  \D h(x', -x_n), &x_n<0,
\end{cases}
$$
the even extension of $\D h$ from $B_r^+(x_0)$ to $B_r(x_0)$.  Then
for $0<\al <1$, there are $C_1=C_1(n, \al)$, $C_2=C_2(n, \al)$ such
that for all $0<\rho <s\le (3/4)r$,
\begin{multline*}\int_{B_{\rho}(x_0)}|\wDh-\mean{\wDh}_{x_0, \rho}|^2
  \le C_1
  \Bigl(\frac{\rho}{s}\Bigr)^{n+\al}\int_{B_s(x_0)}|\wDh-\mean{\wDh}_{x_0,
    s}|^2\\+C_2\biggl(\int_{B_r(x_0)}h^2\biggr)
  \frac{s^{n+1}}{r^{n+3}}.
\end{multline*}
\end{proposition}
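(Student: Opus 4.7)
The plan is to reduce to the two preceding propositions by exploiting that $\wDh$ coincides with $\D h$ in its first $n-1$ components and with $\D\h$ in its last (since $\wDh_n=\h_{x_n}$). Because $|\wDh|^2 = |\D h|^2 = |\D\h|^2$ pointwise, and because on any ball $B_t(x_0)$ with $x_0\in\cM$ we have $\mean{h_{x_n}}_{x_0,t}=0$ and $\mean{\h_{x_i}}_{x_0,t}=0$ for $i<n$ by odd symmetry in $x_n$, expanding the squared means yields
\begin{align*}
\int_{B_t(x_0)}|\wDh-\mean{\wDh}_{x_0,t}|^2 &= \int_{B_t(x_0)}|\D h-\mean{\D h}_{x_0,t}|^2 - |B_t|\,\mean{\wDh_n}_{x_0,t}^2,\\
\int_{B_t(x_0)}|\wDh-\mean{\wDh}_{x_0,t}|^2 &= \int_{B_t(x_0)}|\D\h-\mean{\D\h}_{x_0,t}|^2 - |B_t|\sum_{i<n}\mean{h_{x_i}}_{x_0,t}^2.
\end{align*}
In particular $\int_{B_\rho(x_0)}|\wDh-\mean{\wDh}_{x_0,\rho}|^2$ is bounded by each of the two corresponding quantities for $\D h$ and $\D\h$.

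I would then split into three cases by the position of $x_0$ on $\cM$: on $\Gamma(h)$, on the positivity set $\{h(\cdot,0)>0\}$, or on the interior of the coincidence set $\{h(\cdot,0)=0\}$. On $\Gamma(h)$, either estimate of Proposition~\ref{prop:sig-est-free-bdry} works; on the positivity set I would apply Proposition~\ref{prop:sig-est-not-free-bdry}(i) to $\D h$; on the coincidence-set interior, Proposition~\ref{prop:sig-est-not-free-bdry}(ii) to $\D\h$. Using the identities above on the left-hand-side $B_\rho$ to drop to $\wDh$ and on the right-hand-side $B_s$ to rewrite $\D h$ or $\D\h$ back in terms of $\wDh$, together with $(\rho/s)^{n+\al}\le 1$ to absorb the extra factor in the correction term, reduces the proof to showing
\[
|B_s|\,\mean{\wDh_n}_{x_0,s}^2 + |B_s|\sum_{i<n}\mean{h_{x_i}}_{x_0,s}^2 \;\le\; C\|h\|^2_{L^2(B_r(x_0))}\,\frac{s^{n+1}}{r^{n+3}}
\]
in the respective cases.

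This last bound follows from the $C^{0,1/2}$-estimate \eqref{eq:sig-est-free-bdry-2} together with vanishing of the correct derivative at $x_0$: on $\Gamma(h)$ the Signorini asymptotics give $\D h(x_0)=0$; on the positivity set the Signorini complementarity gives $\partial^+_{x_n}h(x_0)=0$, hence $\wDh_n(x_0)=0$; on the interior of the coincidence set $h\equiv 0$ in a thin neighborhood of $x_0$ gives $h_{x_i}(x_0)=0$ for $i<n$. The $C^{0,1/2}$-estimate combined with vanishing at $x_0$ controls the respective derivatives on the upper half-ball, and either the defining even reflection of $\wDh$ or the evenness of $h_{x_i}$ in $x_n$ for $i<n$ transfers these estimates to the lower half-ball, giving $|\wDh_n(x)|+\sum_{i<n}|h_{x_i}(x)| \le Cr^{-(n+3)/2}\|h\|_{L^2(B_r(x_0))}|x-x_0|^{1/2}$ throughout $B_{(3/4)r}(x_0)$. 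Averaging over $B_s(x_0)$, squaring, and multiplying by $|B_s|\sim s^n$ produces exactly the desired $s^{n+1}/r^{n+3}$-order error.

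The main obstacle is the case bookkeeping, and in particular tracking the vanishing of the correct derivative at $x_0$ in each case --- this is where the structure of the Signorini problem enters substantively; the remaining steps are a routine algebraic assembly of the preceding propositions.
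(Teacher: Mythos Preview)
Your proposal is correct and follows essentially the same approach as the paper: derive the identities relating the Campanato integrals of $\wDh$, $\D h$, and $\D\h$ via the odd/even symmetries, apply Proposition~\ref{prop:sig-est-not-free-bdry} in the appropriate case, and bound the resulting mean-square correction terms using the $C^{0,1/2}$ estimate \eqref{eq:sig-est-free-bdry-2} together with the vanishing of the relevant derivative at $x_0$. The only cosmetic difference is that the paper handles $x_0\in\Gamma(h)$ by a continuity argument from the case $d=\dist(x_0,\Gamma(h))>0$ rather than invoking Proposition~\ref{prop:sig-est-free-bdry} separately.
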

\begin{remark} We explicitly warn the reader that $\D\h$ should not be
  confused with $\wDh$. In $\D\h$, the first $n-1$ components are odd
  and the last one is even in $x_n$-variable, while in $\wDh$ all
  components are even in $x_n$.
\end{remark}

\begin{proof} Let $d:=\dist(x_0, \Gamma(h))$. Without loss of
  generality we may assume that $d>0$, as the case $d=0$ will follow
  by continuity. Also, without loss of generality, assume $x_0=0$.

  \medskip\noindent (i) First consider the case when
  $B'_d \subset \{h>0\}$. By the odd symmetry of $h_{x_n}$ in
  $x_n$-variable, we have $\mean{h_{x_n}}_{\rho}=0$. Thus, for
$$
\widehat{h_{x_n}}(x)=\begin{cases}h_{x_n}(x', x_n), &x_n\ge 0 \\
  h_{x_n}(x', -x_n),& x_n<0 \end{cases},
$$
we obtain
\begin{align*}
  \int_{B_{\rho}}|\widehat{h_{x_n}}-\langle{\widehat{h_{x_n}}}\rangle_{\rho}|^2&= \int_{B_{\rho}}\widehat{h_{x_n}}^2-\frac{1}{|B_{\rho}|}\biggl( \int_{B_{\rho}}\widehat{h_{x_n}}\biggr)^2\\
                                                                               &= \int_{B_{\rho}}|h_{x_n}-\langle{h_{x_n}}\rangle_{\rho}|^2-\frac{1}{|B_{\rho}|}\biggl(\int_{B_{\rho}}\widehat{h_{x_n}}\biggr)^2.
\end{align*}
Further, if $\widehat{h_{x_i}}$ denotes the $i$-th component of
$\widehat{\nabla h}$, we have $\widehat{h_{x_i}}=h_{x_i}$ for
$i=1,\ldots, n-1$, and hence arrive at
\begin{align}
  \int_{B_{\rho}}|\wDh-\mean{\wDh}_{\rho}|^2
  &=\int_{B_{\rho}}|\D h-\mean{\D h}_{\rho}|^2-\frac{1}{|B_{\rho}|}\biggl(\int_{B_{\rho}}\widehat{h_{x_n}}\biggr)^2.\label{lem:sig-est-1}
\end{align}
Similarly, we have
\begin{align}
  \int_{B_{s}}|\wDh-\mean{\wDh}_s|^2&=\int_{B_s}|\D h-\mean{\D
                                      h}_s|^2-\frac{1}{|B_s|}\biggl(\int_{B_s}\widehat{h_{x_n}}\biggr)^2.\label{lem:sig-est-2}
\end{align}
Then, by \eqref{lem:sig-est-1}, \eqref{lem:sig-est-2}, and
Proposition~\ref{prop:sig-est-not-free-bdry}, we obtain
\begin{align*}
  &\int_{B_{\rho}}|\wDh-\mean{\wDh}_{\rho}|^2 \le \int_{B_{\rho}}|\D h-\mean{\D h}_{\rho}|^2\\
  &\qquad\le C_1\Bigl(\frac{\rho}{s}\Bigr)^{n+\al}\int_{B_s}|\D h-\mean{\D h}_{s}|^2+C_2\biggl(\int_{B_r}h^2\biggr) \frac{s^{n+1}}{r^{n+3}}\\
  &\qquad\le C_1\Bigl(\frac{\rho}{s}\Bigr)^{n+\al}\int_{B_s}|\wDh-\mean{\wDh}_{s}|^2 + \frac{C_1}{|B_s|}\biggl(\int_{B_s}\widehat{h_{x_n}}\biggr)^2 + C_2\biggl(\int_{B_r}h^2\biggr) \frac{s^{n+1}}{r^{n+3}}.
\end{align*}
From $h(0)>0$, we have $h_{x_n}(0)=0$. Thus, using
\eqref{eq:sig-est-free-bdry-2}, we obtain
\begin{align*}
  \frac{1}{|B_s|}\biggl(\int_{B_s}\widehat{h_{x_n}}\biggr)^2 &\le C\biggl(\int_{B_r}h^2\biggr)\frac{s^{n+1}}{r^{n+3}}.
\end{align*}
This completes the proof in this case.

\medskip\noindent (ii) Suppose now $B_d'\subset\{h=0\}$. In this case,
we use Proposition~\ref{prop:sig-est-free-bdry} for $\D\h$, which
differs from $\wDh$ in the first $(n-1)$ components by their symmetry,
and has the same even $n$-th component. Arguing as above, we obtain
error terms
$$
\frac{1}{|B_s|}\biggl(\int_{B_s}h_{x_i}\biggr)^2,\quad i=1,\ldots,n-1,
$$
up to a factor of $C(n,\alpha)$.  Then, using that $h_{x_i}(0)=0$,
$i=1,\ldots,n-1$ and \eqref{eq:sig-est-free-bdry-2}, we conclude that
\begin{align*}
  \frac{1}{|B_s|}\biggl(\int_{B_s}{h_{x_i}}\biggr)^2 &\le C\biggl(\int_{B_r}h^2\biggr)\frac{s^{n+1}}{r^{n+3}}.
\end{align*}
This completes the proof.
\end{proof}

We now prove the $C^{1, \be}$ regularity of almost minimizers.

\begin{theorem}\label{thm:grad-holder}
  Let $u$ be an almost minimizer of the Signorini problem in
  $B_1$. Define
$$
\wDu(x', x_n):=\begin{cases}\D u(x', x_n), &x_n\ge 0\\
  \D u(x', -x_n),&x_n<0.
\end{cases}
$$
Then
$$
\wDu\in C^{0, \be}(B_1) \quad\text{with } \be=\frac{\al}{4(2n+\al)}.
$$
Moreover, for any $K\Subset B_1$ there holds
\begin{equation}\label{eq:grad-holder-1}
  \|\wDu\|_{C^{0, \be}(K)} \le C(n, \al, K)\|u\|_{W^{1, 2}(B_1)}.
\end{equation}
\end{theorem}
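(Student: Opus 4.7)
Following the strategy of Anzellotti's Theorem~\ref{thm:Anz}(ii) and its adaptation in the proof of Theorem~\ref{thm:holder}, the plan is to establish a Campanato-type decay estimate
\[
\varphi(\rho):=\int_{B_\rho(x_0)}|\wDu-\mean{\wDu}_{x_0,\rho}|^2\le C\rho^{n+2\beta}
\]
for $x_0\in K\Subset B_1$ and $\rho$ small, and then invoke the Campanato space embedding to conclude $\wDu\in C^{0,\beta}_{\loc}(B_1)$ with the stated norm bound. The key new ingredient relative to the proof of Theorem~\ref{thm:holder} is Proposition~\ref{prop:sig-est}, which supplies the concentric-ball estimate for Signorini replacements and plays the role that the harmonic concentric-ball estimate \eqref{eq:harm-Camp-est} plays in Theorem~\ref{thm:Anz}.

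The argument splits into the same two main cases as the proof of Theorem~\ref{thm:holder}. When $x_0\in B_1'$ and $B_r(x_0)\Subset B_1$, let $h$ be the Signorini replacement of $u$ in $B_r(x_0)$. Since both $u$ and $h$ are even in $x_n$, direct inspection shows $|\wDu-\wDh|=|\D(u-h)|$ pointwise, so the energy bound from Case~1 of the proof of Proposition~\ref{prop:alm-min-Sig-Mor-est} yields
\[
\int_{B_r(x_0)}|\wDu-\wDh|^2=\int_{B_r(x_0)}|\D(u-h)|^2\le r^\alpha\int_{B_r(x_0)}|\D u|^2.
\]
Applying Proposition~\ref{prop:sig-est} with $s$ a fixed fraction of $r$ and combining with the triangle inequality and Jensen's inequality on the difference of means (as in the proof of Proposition~\ref{prop:Anz-Mor-Camp-est}), I would obtain
\[
\varphi(\rho)\le C\Bigl[(\rho/r)^{n+\alpha}\varphi(r)+r^\alpha\int_{B_r(x_0)}|\D u|^2+\frac{1}{r^2}\int_{B_r(x_0)}h^2\Bigr].
\]
For $x_0\notin B_1'$ I would follow the same subcase decomposition as in the proof of Proposition~\ref{prop:alm-min-Sig-Mor-est} (Cases~2.1, 2.2.1, 2.2.2), invoking Proposition~\ref{prop:Anz-Mor-Camp-est} on balls compactly contained in $B_1^\pm$ (where $u$ is almost harmonic) and transferring the above estimate from a nearby thin-space point $x_1\in\partial B_d(x_0)\cap B_1'$ otherwise.

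To close the iteration, I would substitute the Morrey-type bound \eqref{eq:holder-5} from the proof of Theorem~\ref{thm:holder} (with $\sigma$ close to $1$) into $r^\alpha\int_{B_r}|\D u|^2$, and use the uniform $L^\infty$ bound on $h$ inherited from Theorem~\ref{thm:holder} via comparison for the Signorini problem to control $\int_{B_r}h^2$. A single application of Lemma~\ref{lem:HL} then yields the Campanato decay, after which Campanato embedding finishes the proof. The principal obstacle I anticipate is the extra error term $r^{-2}\int_{B_r}h^2$, which has no analog in Anzellotti's almost-harmonic setting and reflects the Schauder-type nature of the $C^{1,1/2}$ estimate \eqref{eq:sig-est-free-bdry-2} built into Proposition~\ref{prop:sig-est}. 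Balancing the decay of this term against the Morrey scaling of $\int_{B_r}|\D u|^2$ and the $(\rho/r)^{n+\alpha}$ contraction factor is what forces the relatively small final exponent $\beta=\alpha/(4(2n+\alpha))$, and optimizing these competing scales is the most delicate bookkeeping step.
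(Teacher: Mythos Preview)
Your outline follows the correct high-level strategy, but it has a genuine gap at precisely the point you flag as delicate: the control of the error term $r^{-2}\int_{B_r(x_0)}h^2$ coming from Proposition~\ref{prop:sig-est}. A uniform $L^\infty$ bound on $h$ (inherited from $u$ via the maximum principle) gives only $\int_{B_r(x_0)}h^2\le C r^n$, so this error term is $O(r^{n-2})$. Even at a point where $u(x_0)=0$, the $C^{0,\sigma}$ estimate from Theorem~\ref{thm:holder} yields $|h|\le C r^\sigma$ on $B_r(x_0)$ and hence an error of order $r^{n+2\sigma-2}$, which for any $\sigma<1$ is still strictly below $r^n$. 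Feeding such a term into Lemma~\ref{lem:HL} can never produce a Campanato exponent larger than $n$, so your iteration closes only to $\varphi(\rho)\le C\rho^{n+2\sigma-2}$ and gives no H\"older modulus for $\wDu$.

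The paper repairs this with two ideas absent from your sketch. First, the Signorini replacement $h$ is taken on a \emph{larger} ball $B_R(x_0)$ with $R=r^{2n/(2n+\alpha)}$, and Proposition~\ref{prop:sig-est} is applied with inner radii $\rho<r\le(3/4)R$; the error term then becomes $C\bigl(\int_{B_R}h^2\bigr)\,r^{n+1}/R^{n+3}$, and the favorable factor $r^{n+1}/R^{n+3}$ (in place of $r^{-2}$) is what allows the exponents to close. Second, for centers $x_0\in B_1'$ the argument splits according to whether $\sup_{\partial B_R}|u|$ lies below or above a threshold $C_3 R^{\alpha'}$ with $\alpha'=1-\alpha/(8n)$: in the small case subharmonicity of $h^2$ together with the $C^{0,\alpha'}$ bound on $u$ gives $\mean{h^2}_R\le C R^{2\alpha'}$; in the large case one concludes $u>0$ throughout $B_R$, so $h$ is actually harmonic there and one can use \eqref{eq:harm-Camp-est} directly, supplemented by a second-derivative bound to handle the $n$-th component of $\wDh$. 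Balancing the exponents across these two subcases via the specific choices of $R$ and $\alpha'$ is exactly what produces $\beta=\alpha/(4(2n+\alpha))$; without the two-radius trick and the dichotomy, no positive $\beta$ is attainable.
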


\begin{proof}
  Without loss of generality, we may assume that $K$ is a ball
  centered at $0$. Fix a small $r_0=r_0(n, \al, K)>0$ to be chosen
  later. Particularly, we will ask
  $R_0:=r_0^{\frac{2n}{2n+\al}}\le (1/2)\dist(K, \pa B_1)$, which will
  imply that
  $$
  \K:=\left\{x\in B_1: \dist(x, K)\le R_0\right\}\Subset B_1.
 $$
 Our goal now is to show that for $x_0\in K$, $0<\rho<r<r_0$,
 \begin{multline}\label{eq:grad-holder-estimate}
   \int_{B_{\rho}(x_0)}|\wDu-\mean{\wDu}_{x_0, \rho}|^2 \le C(n,
   \al)\left(\frac{\rho}{r}\right)^{n+\al}\int_{B_r(x_0)}|\wDu-\mean{\wDu}_{x_0,
     r}|^2\\+C(n, \al, K)\|u\|^2_{W^{1, 2}(B_1)}r^{n+2\be},
 \end{multline}
 which readily gives the estimate \eqref{eq:grad-holder-1} by applying
 Lemma~\ref{lem:HL} and using the Campanato space embedding.

 We first prove \eqref{eq:grad-holder-estimate} for
 $x_0\in K\cap B'_1$, by taking the advantage of the symmetry of
 $\wDu$, and then we argue as in the proof of
 Proposition~\ref{prop:alm-min-Sig-Mor-est} to extend it to all
 $x_0\in K$.

 \medskip\noindent \emph{Case 1}. Suppose $x_0\in K\cap B_1'$. For
 notational simplicity, we assume $x_0=0$ (by shifting the center of
 the domain $D=B_1$ to $-x_0$) and let $0<r<r_0$ be given. Let us also
 denote
$$
\al':=1-\frac{\al}{8n}\in (0, 1),\quad R:=r^{\frac{2n}{2n+\al}}.
$$
We then split our proof into two cases:
$$
\sup_{\pa B_{R}}|u|\le C_3 R ^{\al'}\quad\text{and}\quad\sup_{\pa
  B_{R}}|u|> C_3R ^{\al'},$$ with
$C_3=2[u]_{0, \al', \K}=2\sup_{\substack{x, y\in \K\\ x\neq
    y}}\frac{|u(x)-u(y)|}{|x-y|^{\al'}}$.

\medskip\noindent \emph{Case 1.1.}  Assume first that
$\sup_{\pa B_{R}}|u|\le C_3 R ^{\al'}$. Let $h$ be the Signorini
replacement of $u$ on $B_{R}$. Then, for any $0<\rho<r$, we have
\begin{multline*}
  \int_{B_{\rho}}|\wDu-\mean{\wDu}_{\rho}|^2 \le
  3\int_{B_{\rho}}|\wDh-\mean{\wDh}_{\rho}|^2\\+3\int_{B_{\rho}}|\wDu-\wDh|^2+3\int_{B_{\rho}}|\mean{\wDu}_{\rho}-\mean{\wDh}_{\rho}|^2.
\end{multline*}
Besides, by Jensen's inequality, we have
$$
\int_{B_{\rho}}|\mean{\wDu}_{\rho}-\mean{\wDh}_{\rho}|^2 \le
\int_{B_{\rho}}|\wDu-\wDh|^2.
$$
Hence, combining the estimates above, we obtain
\begin{equation}\label{eq:grad-holder-2}
  \int_{B_{\rho}}|\wDu-\mean{\wDu}_{\rho}|^2 \le 3\int_{B_{\rho}}|\wDh-\mean{\wDh}_{\rho}|^2+6\int_{B_{\rho}}|\wDu-\wDh|^2.
\end{equation}
Similarly
\begin{equation}\label{eq:grad-holder-3}
  \int_{B_r}|\wDh-\mean{\wDh}_r|^2 \le 3\int_{B_r}|\wDu-\mean{\wDu}_r|^2+6\int_{B_r}|\wDu-\wDh|^2.
\end{equation}
Next, note that if $r_0\le (3/4)^{\frac{2n+\al}{\al}}$, then
$r\le (3/4) R$, and thus by Proposition~\ref{prop:sig-est},
\begin{multline}\label{eq:grad-holder-4}
  \int_{B_{\rho}}|\wDh-\mean{\wDh}_{\rho}|^2 \le C(n,
  \al)\Bigl(\frac{\rho}{r}\Bigr)^{n+\al}\int_{B_r}|\wDh-\mean{\wDh}_r|^2\\+C(n,
  \al)\Bigl(\int_{B_{R}}h^2\Bigr)\frac{r^{n+1}}{R^{n+3}}.
\end{multline}
Then, using \eqref{eq:grad-holder-2}, \eqref{eq:grad-holder-3}, and
\eqref{eq:grad-holder-4}, we obtain
\begin{equation}\label{eq:grad-holder-5}
  \begin{split}
    \int_{B_{\rho}}|\wDu-\mean{\wDu}_{\rho}|^2 &\le 3\int_{B_{\rho}}|\wDh-\mean{\wDh}_{\rho}|^2 + 6\int_{B_{\rho}}|\wDu-\wDh|^2 \\
    &\begin{multlined} \le C(n,
      \al)\Bigl(\frac{\rho}{r}\Bigr)^{n+\al}\int_{B_r}|\wDh-\mean{\wDh}_r|^2\\+C(n,
      \al)\Bigl(\int_{B_{R}}h^2\Bigr)\frac{r^{n+1}}{R^{n+3}}
      +6\int_{B_{\rho}}|\wDu-\wDh|^2
    \end{multlined}
    \\
    &\begin{multlined} \le C(n,
      \al)\Bigl(\frac{\rho}{r}\Bigr)^{n+\al}\int_{B_r}|\wDu-\mean{\wDu}_r|^2\\+C(n,
      \al)\Bigl(\int_{B_{R}}h^2\Bigr)\frac{r^{n+1}}{R^{n+3}}+C(n,\alpha)\int_{B_r}|\wDu-\wDh|^2.
    \end{multlined}
  \end{split}
\end{equation}
Now take $\delta=\delta(n,\al, K)>0$ such that
$\delta<\dist(K, \pa B_1)$ and
$\delta^{\al} \le \e=\e(C_1, n, n+2\al'-2)$, where $C_1$ is as in
Theorem~\ref{thm:holder} and $\e$ is as in Lemma~\ref{lem:HL}. If
$r_0\le \delta^{\frac{2n+\al}{2n}}$, then $R<\delta$, and therefore by
\eqref{eq:holder-5},
\begin{align*}
  \int_{B_{R}}|\wDu|^2 \le C(n, \al, K)\|\D u\|^2_{L^2(B_1)} R^{n+2\al'-2}.
\end{align*}
Thus, using the above inequality, combined with \eqref{eq:holder-2},
we obtain
\begin{equation}\label{eq:grad-holder-6}
  \begin{split}
    \int_{B_r}|\wDu-\wDh|^2 & \le \int_{B_{R}}|\wDu-\wDh|^2  \le \int_{B_{R}}|\wDu|^2 -\int_{B_{R}}|\wDh|^2   \\
    & \le R^{\al}\int_{B_{R}}|\wDh|^2  \le R^{\al}\int_{B_{R}}|\wDu|^2 \\
    & \le  C(n, \al, K)\|\D u\|^2_{L^2(B_1)}R^{n+\al+2\al'-2} \\
    &= C(n, \al, K)\|\D
    u\|^2_{L^2(B_1)}r^{n+\frac{\al}{2n+\al}(n-\frac{1}{2})}.
  \end{split}
\end{equation}
We next use that $h^2$ is subharmonic in $B_{R}$. (This can be seen
for instance by a direct computation
$\La(h^2)=2\left(|\D h|^2+h\La h\right)=2|\D h|^2\ge 0$, or by using
the fact that $h^\pm$ are subharmonic.) Then,
\begin{equation}\label{eq:grad-holder-7}\mean{h^2}_R \le
  \sup_{B_{R}}h^2 = \sup_{\pa B_{R}}h^2 = \sup_{\pa B_{R}}u^2
  \le C_3^2 R^{2\al'}.
\end{equation}
Also note that by \eqref{eq:holder-6},
$C_3\le C(n, \al, K)\|u\|_{W^{1, 2}(B_1)}$. Hence,
\begin{equation}\label{eq:grad-holder-8}
  \begin{split}
    \biggl(\int_{B_{R}}h^2\biggr)\frac{r^{n+1}}{R^{n+3}}
    &= C(n)\mean{h^2}_R\frac{r^{n+1}}{R^{3}}\\
    &\le C(n, \al, K)\|u\|_{W^{1,
        2}(B_1)}^2r^{n+\frac{\al}{2(2n+\al)}}.
  \end{split}
\end{equation}
Now \eqref{eq:grad-holder-5}, \eqref{eq:grad-holder-6},
\eqref{eq:grad-holder-8} give
\begin{equation}
  \begin{aligned}\label{eq:grad-holder-9}
    \int_{B_{\rho}}|\wDu-\mean{\wDu}_{\rho}|^2 &\begin{multlined}[t]
      \le C(n,\al)
      \Bigl(\frac{\rho}{r}\Bigr)^{n+\al}\int_{B_r}|\wDu-\mean{\wDu}_r|^2\\+
      C(n, \al, K)\|u\|_{W^{1, 2}(B_1)}^2
      r^{n+\frac{\al}{2(2n+\al)}}\\\shoveright{+ C(n, \al, K)\|\D
        u\|^2_{L^2(B_1)}r^{n+\frac{\al}{2n+\al}(n-\frac{1}{2})}}
    \end{multlined}\\
    &\begin{multlined} \le C(n,
      \al)\Bigl(\frac{\rho}{r}\Bigr)^{n+\al}\int_{B_r}|\wDu-\mean{\wDu}_r|^2\\+
      C(n, \al, K)\|u\|_{W^{1, 2}(B_1)}^2r^{n+\frac{\al}{2(2n+\al)}}.
    \end{multlined}
  \end{aligned}
\end{equation}

\medskip\noindent \emph{Case 1.2.}  Now suppose
$\sup_{\pa B_{R}}|u|> C_3 R ^{\al'}$. By the choice of
$C_3=2[u]_{0, \al', \K}$, we have either $u\ge (C_3/2) R^{\al'}$ in
all of $B_{R}$ or $u\le -(C_3/2) R^{\al'}$ in all of $B_{R}$. However,
from the inequality $u(0)\ge 0$, the only possibility is
$$
u\ge \frac{C_3}{2}\,R^{\al'}\quad \text{in } B_{R}.
$$
Let $h$ again be the Signorini replacement of $u$ in $B_R$. Then from
positivity of $h=u>0$ on $\pa B_{R}$ and superharmonicity of $h$ in
$B_{R}$, it follows that $h>0$ in $B_{R}$ and is therefore harmonic
there. Thus,
$$
\int_{B_{\rho}}|\D h-\mean{\D h}_{\rho}|^2 \le
\Bigl(\frac{\rho}{r}\Bigr)^{n+2}\int_{B_r}|\D h-\mean{\D h}_r|^2,\quad
0<\rho<r.
$$
Using \eqref{lem:sig-est-1} and \eqref{lem:sig-est-2} with $r$ in lieu
of $s$, we have for all $0<\rho<r$
\begin{equation}
  \begin{split}\label{eq:grad-holder-10}
    \quad \int_{B_{\rho}}|\wDh-\mean{\wDh}_{\rho}|^2 &\le \int_{B_{\rho}}|\D h-\mean{\D h}_{\rho}|^2 \le \Bigl(\frac{\rho}{r}\Bigr)^{n+2}\int_{B_r}|\D h-\mean{\D h}_r|^2 \\
    &\le
    \Bigl(\frac{\rho}{r}\Bigr)^{n+2}\int_{B_r}|\wDh-\mean{\wDh}_r|^2+\frac{1}{|B_r|}\biggl(\int_{B_r}\widehat{h_{x_n}}\biggr)^2.
  \end{split}
\end{equation}
Next, note that if $r_0\le (1/2)^{\frac{2n+\al}{\al}}$, then
$r\le R/2$. Then, for $\g:=1-\frac{3\al}{8n}$,
\begin{align*}
  \sup_{B_{R/2}}|D^2h|
  &\le \frac{C(n)}{R}\sup_{B_{(3/4)R}}|\D h| \le \frac{C(n)}{R^{1+\frac{n}{2}}}\biggl(\int_{B_{R}}|\D h|^2\biggr)^{1/2} \\
  &\le \frac{C(n)}{R^{1+\frac{n}{2}}}\biggl(\int_{B_{R}}|\D
    u|^2\biggr)^{1/2} \le C(n, \al, K)\|\D u\|_{L^2(B_1)} R^{\g-2},
\end{align*}
where the last inequality follows from \eqref{eq:holder-5}. Thus, for
$x=(x', x_n)\in B_r$, we have
\begin{align*}
  |h_{x_n}| &\le |x_n|\sup_{B_{R/2}}|D^2h|\\
            &\le C(n, \al, K)\|\D u\|_{L^2(B_1)}rR^{\g-2}\\
            &\le C(n, \al, K)\|\D u\|_{L^2(B_1)}r^{1+\frac{2n}{2n+\al}(\g-2)},
\end{align*}
and hence
\begin{multline}
  \frac{1}{|B_r|}\biggl(\int_{B_r}\widehat{h_{x_n}}\biggr)^2 \le C(n,
  \al, K)\|\D u\|_{L^2(B_1)}^2r^{n+2+\frac{4n}{2n+\al}(\g-2)}\\=C(n,
  \al, K)\|\D
  u\|_{L^2(B_1)}^2r^{n+\frac{\al}{2(2n+\al)}}\label{eq:grad-holder-11}.
\end{multline}
Combining \eqref{eq:grad-holder-10} and \eqref{eq:grad-holder-11}, we
obtain
\begin{multline}\label{eq:grad-holder-12}
  \int_{B_{\rho}}|\wDh-\mean{\wDh}_{\rho}|^2 \le
  \Bigl(\frac{\rho}{r}\Bigr)^{n+2}\int_{B_r}|\wDh-\mean{\wDh}_r|^2\\+
  C(n, \al, K)\|\D u\|_{L^2(B_1)}^2r^{n+\frac{\al}{2(2n+\al)}}.
\end{multline}
Finally, \eqref{eq:grad-holder-2}, \eqref{eq:grad-holder-3},
\eqref{eq:grad-holder-6}, and \eqref{eq:grad-holder-12} give
\begin{equation}\label{eq:grad-holder-13}
  \begin{multlined}
    \hspace{-2em}\int_{B_{\rho}}|\wDu-\mean{\wDu}_{\rho}|^2\\
    \hspace{2em}
    \begin{aligned}
      &\le 3\int_{B_{\rho}}|\wDh-\mean{\wDh}_{\rho}|^2 + 6\int_{B_{\rho}}|\wDu-\wDh|^2 \\
      &\le 3\Bigl(\frac{\rho}{r}\Bigr)^{n+2}\int_{B_r}|\wDh-\mean{\wDh}_r|^2\\
      &\qquad+C(n, \al, K)\|\D
      u\|_{L^2(B_1)}^2 r^{n+\frac{\al}{2(2n+\al)}}+6\int_{B_{\rho}}|\wDu-\wDh|^2\\
      &\le
      9\Bigl(\frac{\rho}{r}\Bigr)^{n+2}\int_{B_r}|\wDu-\mean{\wDu}_r|^2\\
      &\qquad+C(n, \al, K)\|\D u\|_{L^2(B_1)}^2
      r^{n+\frac{\al}{2(2n+\al)}}+24\int_{B_r}|\wDu-\wDh|^2\\
      &\le
      9\Bigl(\frac{\rho}{r}\Bigr)^{n+2}\int_{B_r}|\wDu-\mean{\wDu}_r|^2\\
      &\qquad+ C(n, \al, K)\|\D
      u\|_{L^2(B_1)}^2r^{n+\frac{\al}{2(2n+\al)}}\\
      &\qquad+ C(n, \al, K)\|\D
      u\|_{L^2(B_1)}^2r^{n+\frac{\al}{2n+\al}(n-\frac{1}{2})}\\
      &\le
      9\Bigl(\frac{\rho}{r}\Bigr)^{n+2}\int_{B_r}|\wDu-\mean{\wDu}_r|^2\\
      &\qquad+ C(n, \al, K)\|\D
      u\|_{L^2(B_1)}^2r^{n+\frac{\al}{2(2n+\al)}}
    \end{aligned}
  \end{multlined}
\end{equation}
From \eqref{eq:grad-holder-9} and \eqref{eq:grad-holder-13} we obtain
\eqref{eq:grad-holder-estimate} for $x_0\in K\cap B'_1$.

\medskip\noindent \emph{Case 2.}  To extend
\eqref{eq:grad-holder-estimate} to any $x_0 \in K$, we now assume
$x_0\in K\cap B^+_1$. We use an argument similar to the one in Case~2
in the proof of Proposition~\ref{prop:alm-min-Sig-Mor-est}.

Now, if $\rho\ge r/4$, then
\begin{align*}
  \int_{B_{\rho}(x_0)}|\wDu-\mean{\wDu}_{x_0, \rho}|^2 &\le \int_{B_{\rho}(x_0)}|\wDu-\mean{\wDu}_{x_0, r}|^2\\&\le 4^{n+\al}\left(\frac \rho r\right)^{n+\al}\int_{B_r(x_0)}|\wDu-\mean{\wDu}_{x_0, r}|^2,
\end{align*}
and thus we may assume $\rho< r/4$.  Let $d:=\dist(x_0, B'_1)>0$ and
choose $x_1\in \pa B_d(x_0)\cap B'_1$. Note that from the assumption
that $K$ is a ball centered at $0$, we have $x_1\in K\cap B_1'$.

\medskip\noindent \emph{Case 2.1.} If $\rho\ge d$, then from
$B_{\rho}(x_0)\subset B_{2\rho}(x_1)\subset B_{r/2}(x_1)\subset
B_r(x_0)$, we have
\begin{align*}
  \int_{B_{\rho}(x_0)}|\wDu-\mean{\wDu}_{x_0, \rho}|^2
  &\le \int_{B_{2\rho}(x_1)}|\wDu-\mean{\wDu}_{x_1, 2\rho}|^2\\
  &\begin{multlined}\le C(n, \al)\left(\frac \rho r\right)^{n+\al}\int_{B_{r/2}(x_1)}|\wDu-\mean{\wDu}_{x_1, r/2}|^2 \\+C(n, \al, K)\|u\|^2_{W^{1, 2}(B_1)}r^{n+2\be}
  \end{multlined}\\
  &\begin{multlined}\le C(n, \al)\left(\frac \rho
     r\right)^{n+\al}\int_{B_r(x_0)}|\wDu-\mean{\wDu}_{x_0,
     r}|^2\\+C(n, \al, K)\|u\|^2_{W^{1, 2}(B_1)}r^{n+2\be},
 \end{multlined}
\end{align*}
which gives \eqref{eq:grad-holder-estimate} in this case.

\medskip\noindent \emph{Case 2.2.} Now we suppose $d>\rho$. If also
$d>r$, then $B_{r}(x_0)\subset B_1^+$ and since $u$ is almost harmonic
in $B_1^+$, we can apply Proposition~\ref{prop:Anz-Mor-Camp-est},
together with the growth estimate \eqref{eq:holder-5} in the proof of
Theorem~\ref{thm:holder}, to conclude
\begin{multline*}
  \int_{B_{\rho}(x_0)}|\wDu-\mean{\wDu}_{x_0, \rho}|^2 \le C(n,
  \al)\left(\frac \rho
    r\right)^{n+\al}\int_{B_r(x_0)}|\wDu-\mean{\wDu}_{x_0,
    r}|^2\\+C(n, \al, K)\|u\|^2_{W^{1, 2}(B_1)}r^{n+2\be}.
\end{multline*}
Thus, we may assume $d\leq r$. Then, $B_d(x_0)\subset B_1^{+}$, and
hence, again by the combination of
Proposition~\ref{prop:Anz-Mor-Camp-est} and the growth estimate
\eqref{eq:holder-5}, we have
\begin{multline*}
  \int_{B_{\rho}(x_0)}|\wDu-\mean{\wDu}_{x_0, \rho}|^2 \le C(n,
  \al)\left(\frac \rho
    d\right)^{n+\al}\int_{B_d(x_0)}|\wDu-\mean{\wDu}_{x_0,
    d}|^2\\+C(n, \al, K)\|u\|^2_{W^{1, 2}(B_1)}d^{n+2\be}.
\end{multline*}
We need to consider further subcases.

\medskip\noindent\emph{Case 2.2.1.} If $r/4\le d$, then (since also
$d\leq r$)
$$\int_{B_d(x_0)}|\wDu-\mean{\wDu}_{x_0, d}|^2 \le 4^{n+\al}\left(\frac{d}{r}\right)^{n+\al}\int_{B_r(x_0)}|\wDu-\mean{\wDu}_{x_0, r}|^2$$
and combined with the previous inequality, we obtain
\eqref{eq:grad-holder-estimate} in this subcase.

\medskip\noindent \emph{Case 2.2.2.} If $d<r/4$, then we also have
\begin{align*}
  \int_{B_d(x_0)}|\wDu-\mean{\wDu}_{x_0, d}|^2
  &\le \int_{B_{2d}(x_1)}|\wDu-\mean{\wDu}_{x_1, 2d}|^2 \\
  &\begin{multlined}
    \le C(n, \al)\left(\frac{d}r\right)^{n+\al}\int_{B_{r/2}(x_1)}|\wDu-\mean{\wDu}_{x_1, r/2}|^2\\
    +C(n, \al, K)\|u\|^2_{W^{1, 2}(B_1)}r^{n+2\be}
  \end{multlined}\\
  &\begin{multlined}\le C(n,
     \al)\left(\frac{d}{r}\right)^{n+\al}\int_{B_{r}(x_0)}|\wDu-\mean{\wDu}_{x_0,
     r}|^2\\+C(n, \al, K)\|u\|^2_{W^{1, 2}(B_1)}r^{n+2\be}.
 \end{multlined}
\end{align*}
Hence, the estimate \eqref{eq:grad-holder-estimate} has been
established in all possible cases.

\medskip

To complete the proof of the theorem, we now apply Lemma~\ref{lem:HL}
to the estimate \eqref{eq:grad-holder-estimate} to obtain
\begin{multline*}
  \int_{B_{\rho}(x_0)}|\wDu-\mean{\wDu}_{x_0, \rho}|^2 \le C(n,
  \al)\biggl[
  \Bigl(\frac{\rho}{r}\Bigr)^{n+2\be}\int_{B_{r}(x_0)}|\wDu-\mean{\wDu}_{x_0,
    r}|^2\\+C(n, \al, K)\|u\|_{W^{1, 2}(B_1)}^2\rho^{n+2\be}\biggr].
\end{multline*}
Taking $r\nearrow r_0=r_0(n, \al, K)$, we have
\begin{align*}
  \int_{B_{\rho}(x_0)}|\wDu-\mean{\wDu}_{x_0, \rho}|^2 &\le C(n, \al, K)\|u\|_{W^{1, 2}(B_1)}^2\rho^{n+2\be}.
\end{align*}
Then by the Campanato space embedding we conclude that
$$\wDu\in C^{0, \be}(K)$$ with
\begin{equation*}
  \|\wDu\|_{C^{0,\be}(K)}\le C(n, \al, K)\|u\|_{W^{1, 2}(B_1)}.\qedhere
\end{equation*}
\end{proof}

Having the $C^{1,\beta}$ regularity of almost minimizers, we can now
talk about pointwise values of
$$
\partial_{x_n}^+u(x',0)=\lim_{\substack{y\ra (x', 0)\\y\in B_r^+}}
\pa_{x_n}u(y)
$$
for $x'\in B_1'$. The following complementarity condition is of
crucial importance in the study of the free boundary.

\begin{lemma}[Complementarity condition]\label{lem:comp-cond}
  Let $u$ be an almost minimizer for the Signorini problem in $B_1$.
  Then $u$ satisfies the following \emph{complementarity condition}
$$
u\,\pa_{x_n}^+u=0 \quad{on}\quad B'_1.
$$
Moreover, if $x_0\in \Gamma(u)$ then
$$
u(x_0)=0\quad\text{and}\quad |\widehat{\nabla u}(x_0)|=0.
$$
\end{lemma}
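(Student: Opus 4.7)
The plan is to use the $C^{1,\beta}$ regularity of Theorem~\ref{mthm:A} together with a truncation argument that upgrades $u$, wherever $u(\cdot,0)>0$, to an almost harmonic function \emph{across} $\cM$; even symmetry in $x_n$ will then force the normal derivative to vanish. The case $u(x_0)=0$ of the complementarity condition is trivial, so the main task is the case $u(x_0)>0$, and for $x_0\in\Gamma(u)$ the remaining statements follow by a limit argument.

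For $x_0\in B_1'$ with $u(x_0)>0$, I would first invoke the continuity of $u$ (Theorem~\ref{mthm:A}, or already Theorem~\ref{thm:holder}) together with even symmetry to produce a full ball $B_{r_1}(x_0)\Subset B_1$ on which $u>0$. I then claim that $u$ is almost harmonic in $B_{r_1}(x_0)$. To check this, take any $B_\rho(y_0)\Subset B_{r_1}(x_0)$ with $0<\rho<1$ and any $v\in u+W^{1,2}_0(B_\rho(y_0))$, and set $v^*:=v^+=\max(v,0)$. Since $\pa B_\rho(y_0)\subset\{u>0\}$, the trace of $v^*$ on $\pa B_\rho(y_0)$ agrees with $u$ a.e., so $v^*\in u+W^{1,2}_0(B_\rho(y_0))$ and $v^*\ge 0$ on $\cM$, i.e.\ $v^*\in\mathfrak{K}_{0,u}(B_\rho(y_0),\cM)$; moreover $|\D v^+|=|\D v|\,\mathbf{1}_{\{v>0\}}$ a.e.\ gives $J_{B_\rho(y_0)}(v^*)\le J_{B_\rho(y_0)}(v)$. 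Applying the Signorini almost minimizing inequality to $v^*$ yields
\[
J_{B_\rho(y_0)}(u)\le(1+\rho^\alpha)J_{B_\rho(y_0)}(v^*)\le(1+\rho^\alpha)J_{B_\rho(y_0)}(v),
\]
which is precisely the almost harmonic property on $B_{r_1}(x_0)$. Theorem~\ref{thm:Anz}(ii) then delivers $u\in C^{1,\alpha/2}_\loc(B_{r_1}(x_0))$ \emph{across} $\cM$, so $\pa_{x_n}u(x_0)$ is classically defined; differentiating the identity $u(x',x_n)=u(x',-x_n)$ at $x_n=0$ forces $\pa_{x_n}u(x_0)=0$, hence $\pa^+_{x_n}u(x_0)=0$. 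Combined with the trivial case $u(x_0)=0$, this gives $u\,\pa^+_{x_n}u=0$ on $B_1'$.

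For $x_0\in\Gamma(u)$, I would argue as follows. Continuity of $u$ together with $\Gamma(u)\subset\overline{\{u(\cdot,0)=0\}}$ gives $u(x_0)=0$. On $\cM$ the nonnegative $C^1$ function $u|_\cM$ attains its minimum at $x_0$, so the tangential derivatives $\pa_{x_i}u(x_0)$ vanish for $i=1,\ldots,n-1$. Next, $x_0$ is also a limit of points $y_k\in B_1'$ with $u(y_k)>0$ (it lies on the relative boundary of $\{u(\cdot,0)=0\}$), and at each $y_k$ the complementarity just proven gives $\pa^+_{x_n}u(y_k)=0$; continuity of $\pa^+_{x_n}u$ on $B_1'$, inherited from $\widehat{\D u}\in C^{0,\beta}(B_1^+\cup B_1')$ via Theorem~\ref{mthm:A}, then passes this to $x_0$. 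Thus all components of $\widehat{\nabla u}(x_0)$ vanish.

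I expect the main obstacle to be the truncation step of the first part, where one must verify simultaneously that $v^*=v^+$ still has the correct boundary trace $u$ on $\pa B_\rho(y_0)$ \emph{and} still satisfies the Signorini constraint on $\cM$. Both facts rely crucially on $u$ being strictly positive on a \emph{full} ball rather than only on its thin slice $B_{r_1}'(x_0)$---a positivity supplied by the continuity across $\cM$ from Theorem~\ref{mthm:A}. Once the problem is thereby reduced to an unconstrained Dirichlet almost minimizer on $B_{r_1}(x_0)$, the rest is a clean application of Anzellotti's theorem and the even symmetry in $x_n$.
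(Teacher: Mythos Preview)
Your proof is correct and follows essentially the same architecture as the paper's: show that $u$ is almost harmonic in a full neighborhood of any thin point where $u>0$, invoke Anzellotti's $C^{1,\alpha/2}$ regularity across $\cM$, and use even symmetry to kill the normal derivative; then pass to $\Gamma(u)$ by continuity. The one technical difference is in the choice of competitor used to verify almost harmonicity: the paper takes the \emph{harmonic replacement} $v$ of $u$ on $B_\rho(y_0)$ and observes via the minimum principle that $v>0$ on $\overline{B_\rho(y_0)}$, hence $v\in\mathfrak{K}_{0,u}$, whereas you instead truncate an arbitrary competitor $v$ to $v^+$ and use $u>0$ on $\partial B_\rho(y_0)$ to recover the boundary trace. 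Both arguments are short and valid; yours avoids the minimum principle at the cost of a trace verification, while the paper's avoids the trace verification at the cost of the minimum principle. For the second part, the paper simply says it follows from $C^{1,\beta}$ regularity and complementarity, and your written-out argument (tangential derivatives vanish at an interior minimum of $u|_{\cM}$, normal derivative vanishes by approaching from $\{u(\cdot,0)>0\}$) is exactly what is implicitly meant.
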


\begin{proof} Since $u\geq0$ on $B_1'$, the complementarity condition
  will follow once we show that $\partial_{x_n}^+ u$ vanishes where
  $u>0$ on $B_1'$. To this end, let $u(x', 0)>0$ for some
  $x'\in B'_1$. By the continuity of $u$ in $B_1$, (see
  Theorem~\ref{thm:holder}), we have $u>0$ in some open neighborhood
  $U \subset B_1$ of $(x', 0)$. If $B_r(y)\Subset U$ (not necessarily
  centered on $B_1'$) and $v$ is a harmonic replacement of $u$ in
  $B_r(y)$, then by the minimum principle $v>0$ in
  $\overline{B_r(y)}$, and particularly $v>0$ on set
  $B_r(y)\cap B_1'$. Then $v\in\mathfrak{K}_{0,u}(B_r(y),\cM)$ and
  therefore we must have
  $$
  \int_{B_r(y)}|\D u|^2\le \left(1+r^{\al}\right)\int_{B_r(y)}|\D
  v|^2.
  $$
  This means that $u$ is an almost harmonic function in $U$. Hence
  $u\in C^{1, \alpha/2}(U)$ by Theorem~\ref{thm:Anz}. From the even
  symmetry of $u$ in $x_n$, it is then immediate that
  $\pa_{x_n}^+u(x', 0)=\pa_{x_n}u(x', 0)=0$.

  The second part of the lemma now follows by the $C^{1,\beta}$
  regularity and the complementarity condition.
\end{proof}


\section{Weiss- and Almgren-type monotonicity formulas}
\label{sec:weiss-almgren-type}

In the rest of the paper we study the free boundary of almost
minimizers. In this section we introduce important technical tools,
so-called \emph{Weiss-} and \emph{Almgren-type monotonicity formulas},
which play a significant role in our analysis.

We start with Weiss-type monotonicity formulas. They go back to the
works of Weiss \cites{Wei99b,Wei99a} in the case of the classical
obstacle problem and Alt-Caffarelli minimum problem, respectively, and
to \cite{GarPet09} for the solutions of the thin obstacle problems. In
the context of almost minimizers, this type of monotonicity formulas
has been used in a recent paper \cite{DavEngTor17}.

\begin{theorem}[Weiss-type monitonicity formula]\label{thm:weiss}
  Let $u$ be an almost minimizer for the Signorini problem in
  $B_1$. Suppose $x_0\in B'_{1/2}$ and $u(x_0)=0$. For $0<\ka<\ka_0$
  with a fixed $\kappa_0\geq 2$ set
  \begin{align*}
    W_{\ka}(t, u, x_0):=
    &\frac{e^{a t^{\al}}}{t^{n+2\ka-2}}\left[\int_{B_{t}(x_0)}|\D
      u|^2-\kappa\frac{1-b t^\alpha}{t}\int_{\pa B_{t}(x_0)}u^2\right], 
  \end{align*}
  with
$$
a=a_\kappa=\frac{n+2\ka-2}{\al},\quad b=\frac{n+2\ka_0}\alpha.
$$
Then, for $0<t<t_0=t_0(n,\alpha,\kappa_0)$,
\begin{align*}
  \frac{d}{dt}W_\kappa(t, u, x_0)\geq \frac{e^{a t^{\al}}}{t^{n+2\ka-2}}\int_{\partial
  B_t(x_0)} \left(u_\nu-\frac{\kappa(1-b
  t^\alpha)}{t}u\right)^2.
\end{align*}
In particular, $W_{\ka}(t, u, x_0)$ is nondecreasing in $t$ for
$0<t<t_0$.
\end{theorem}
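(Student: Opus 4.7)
The plan is to imitate the standard Weiss-type derivation for solutions of the Signorini problem, as in \cite{GarPet09}, but treat the $1+r^\alpha$ factor in the almost-minimality condition as a perturbation whose cost is absorbed by the factors $e^{at^\alpha}$ and $(1-bt^\alpha)$. Throughout, after translating so that $x_0=0$, I will write $D(t):=\int_{B_t}|\nabla u|^2$ and $H(t):=\int_{\partial B_t}u^2$, noting $D'(t)=\int_{\partial B_t}|\nabla u|^2$ and $H'(t)=\frac{n-1}{t}H(t)+2\int_{\partial B_t}u u_\nu$ by spherical coordinates.

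The first step is to build a competitor via the $\kappa$-homogeneous extension
$$
v(x):=\bigl(|x|/t\bigr)^{\kappa}\,u\!\left(t\,x/|x|\right),
$$
which agrees with $u$ on $\partial B_t$ and which, since $u\ge 0$ on $\cM\cap B_1$ and $tx/|x|\in\cM$ whenever $x\in\cM$, lies in $\mathfrak K_{0,u}(B_t,\cM)$. A direct computation in spherical coordinates gives
$$
\int_{B_t}|\nabla v|^2=\frac{1}{n+2\kappa-2}\left[\frac{\kappa^2}{t}H(t)+t\!\int_{\partial B_t}|\nabla_T u|^2\right],
$$
and using $|\nabla_T u|^2=|\nabla u|^2-u_\nu^2$ together with the almost-minimality $D(t)\le(1+t^\alpha)\int_{B_t}|\nabla v|^2$, I obtain the differential inequality
$$
(n+2\kappa-2)\,D(t)\le(1+t^\alpha)\!\left[\frac{\kappa^2}{t}H(t)+t\,D'(t)-t\!\int_{\partial B_t}u_\nu^2\right].
$$

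The second step is the direct computation of $\frac{d}{dt}W_\kappa(t,u,0)$. Writing $W_\kappa=e^{at^\alpha}I(t)$ with $I$ the bracketed expression, one finds
$$
\frac{d}{dt}W_\kappa=\frac{e^{at^\alpha}}{t^{n+2\kappa-1}}\bigl[\,tD'-(n+2\kappa-2)D+\text{(terms in $H$, $\int u u_\nu$)}\bigr]+a\alpha t^{\alpha-1}W_\kappa\,e^{-at^\alpha}\cdot e^{at^\alpha},
$$
after using $H'=\frac{n-1}{t}H+2\int u u_\nu$. Expanding the target expression
$$
\int_{\partial B_t}\bigl(u_\nu-\tfrac{\kappa(1-bt^\alpha)}{t}u\bigr)^2=\int_{\partial B_t}u_\nu^2-\frac{2\kappa(1-bt^\alpha)}{t}\!\int_{\partial B_t}u u_\nu+\frac{\kappa^2(1-bt^\alpha)^2}{t^2}H(t),
$$
the cross terms $\int u u_\nu$ cancel between the two sides of the desired inequality. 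Substituting the differential inequality from the previous step to eliminate $tD'$ reduces matters to showing
$$
(n+2\kappa-2)\frac{t^{2\alpha}}{1+t^\alpha}\,D(t)+H(t)\,t^{\alpha-1}\!\left[2\kappa(\kappa_0-\kappa+1)+O(t^\alpha)\right]\ge 0
$$
for $0<t<t_0$.

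The main obstacle, and the reason for the specific choices $a=(n+2\kappa-2)/\alpha$ and $b=(n+2\kappa_0)/\alpha$, is exactly to make this last inequality sharp. The role of $a$ is to cancel the leading $(n+2\kappa-2)(1-t^\alpha)D$ contribution against $a\alpha t^\alpha D$, leaving only the harmless $t^{2\alpha}D/(1+t^\alpha)$ error. The role of $b$ is more delicate: the $H$-coefficient that emerges is $\kappa[b\alpha-(n+2\kappa-2)]+O(t^\alpha)=2\kappa(\kappa_0-\kappa+1)+O(t^\alpha)$, which is strictly positive precisely because $\kappa<\kappa_0$; this is where the truncation by $\kappa_0$ is essential. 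Since both $D(t)\ge 0$ and $H(t)\ge 0$, choosing $t_0=t_0(n,\alpha,\kappa_0)$ small enough that the $O(t^\alpha)$ corrections do not overturn the sign of $2\kappa(\kappa_0-\kappa+1)$ completes the proof.
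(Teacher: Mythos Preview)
Your proposal is correct and follows essentially the same approach as the paper: both build the $\kappa$-homogeneous replacement as competitor, compute its Dirichlet energy on $B_t$, feed the almost-minimality inequality into the derivative of $W_\kappa$, and then verify that the specific choices $a=(n+2\kappa-2)/\alpha$ and $b=(n+2\kappa_0)/\alpha$ make the residual nonnegative for small $t$. The only organizational difference is that the paper packages the boundary term through an auxiliary function $\psi(t)=\kappa e^{at^\alpha}(1-bt^\alpha)t^{-(n+2\kappa-1)}$ and verifies a single scalar inequality for $\psi$, whereas you differentiate $W_\kappa$ directly and collect the $D$, $H$, $\int uu_\nu$, and $\int u_\nu^2$ contributions by hand; the underlying algebra is the same. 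One point to tighten: since the theorem asserts $t_0=t_0(n,\alpha,\kappa_0)$ uniformly in $\kappa\in(0,\kappa_0)$, you should check that your $O(t^\alpha)$ error in the $H$-coefficient is controlled uniformly in $\kappa$ (the paper's equivalent reduced inequality has leading term $2\alpha^2(1+\kappa_0-\kappa)\ge 2\alpha^2$, making this uniformity transparent).
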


\begin{remark} It is important to observe that while $a=a_\kappa$
  depends on $\kappa$, the constant $b$ depends only on $\alpha$, $n$
  and $\ka_0$. We also note that in our version of Weiss's
  monotonicity formula, perturbations (from the case of the thin
  obstacle problem) appear in the form of multiplicative factors,
  rather than additive errors as in \cite{DavEngTor17}. Because of the
  multiplicative nature of the perturbations, we can then use the
  one-parametric family of monotonicity formulas
  $\{W_\kappa\}_{0<\kappa<\kappa_0}$ to derive an Almgren-type
  monotonicity formula, see Theorem~\ref{thm:Almgren}.
\end{remark}

\begin{remark}\label{rem:bulky-notat} To avoid bulky notations, we
  will write $W_\kappa(t,u)$ for $W_\kappa(t,u,x_0)$ when $x_0=0$ or
  even simply $W_\kappa(t)$, when both $u$ and $x_0$ are clear from
  the context.
\end{remark}

\begin{proof}
  The proof uses an argument similar to the one in Theorem~1.2 in
  \cite{Wei99a}. Essentially, it follows from a comparison
  \eqref{eq:alm-min-Sig} with special competitors, described below. Without loss
  of generality, assume $x_0=0$. Then for $t\in (0, 1/2)$, define $w$
  by
$$
w(x):=\left(\frac{|x|}{t}\right)^{\ka}u\left(t\frac{x}{|x|}\right),\quad\text{for
}x\in B_t.
$$
Note that $w$ is $\kappa$-homogeneous in $B_t$, i.e.,
$w(\lambda x )=\lambda^\kappa w(\lambda x)$ for $\lambda>0$,
$x,\lambda x\in B_t$, and coincides with $u$ on $\partial B_t$. Also
note that $w\geq 0$ on $B_t'$ and is therefore a valid competitor for
$u$ in \eqref{eq:alm-min-Sig}. We refer to this $w$ as the
\emph{$\kappa$-homogeneous replacement} of $u$ in $B_t$.

Now, in $B_t$, we have
\begin{align*}
  \nabla w(x)=
  \left(\frac{|x|}{t}\right)^{\ka-1}\left[\frac{\ka}{t}u\left(t\frac{x}{|x|}\right)\frac{x}{|x|}+\nabla
  u\left(t\frac{x}{|x|}\right)-\D u\left(t\frac{x}{|x|}\right)\cdot\frac{x}{|x|}\frac{x}{|x|}\right],
\end{align*}
which gives
\begin{align*}
  \int_{B_t}|\D w|^2dx
  &= \int_0^t\int_{\pa B_r}|\D w(x)|^2dS_xdr\\
  &= \int_0^t\int_{\pa B_r}\Bigl(\frac{r}{t}\Bigr)^{2\ka-2}\bigg|\frac{\ka}{t}u\Bigl(t\frac{x}{r}\Bigr) \nu-\Bigl(\D u\Bigl(t\frac{x}{r}\Bigr)\cdot\nu\Bigr)  \nu+\D u\Bigl(t\frac{x}{r}\Bigr)\bigg|^2dS_xdr \\
  &= \int_0^t\int_{\pa B_t}\Bigl(\frac{r}{t}\Bigr)^{n+2\ka-3}\Big| \frac{\ka}{t}u\nu-\big(\D u\cdot \nu\big)\nu+\D u\Big|^2dS_xdr\\
  &= \frac{t}{n+2\ka-2}\int_{\pa B_t}\Big|\D u-\big(\D u\cdot \nu\big)\nu+\frac{\ka}{t}u\nu\Big|^2dS_x \\
  &= \frac{t}{n+2\ka-2}\int_{\pa B_t}\Bigl(|\D u|^2-\big(\D u\cdot \nu\big)^2+\Bigl(\frac{\ka}{t}\Bigr)^2u^2\Bigr)dS_x.
\end{align*}
The latter equality can be rewritten as
\begin{multline}\label{eq:weiss-1}
  \int_{\pa B_t}u^2dS_x =
  \Bigl(\frac{t}{\ka}\Bigr)^2\biggl[\frac{n+2\ka-2}{t}\int_{B_t}|\D
  w|^2dx+ \int_{\pa B_t}\left(u_\nu^2-|\D u|^2\right)dS_x\biggr].
\end{multline}
Since $w$ is a competitor for $u$, we have
\begin{align}\label{eq:weiss-2}
  \int_{B_t}|\D w|^2dx \ge\frac{1}{1+t^{\al}}\int_{B_t}|\D u|^2dx
  \ge (1-t^{\al})\int_{B_t}|\D u|^2 dx
\end{align}
and combining \eqref{eq:weiss-1} and \eqref{eq:weiss-2} yields
\begin{multline}
  \int_{\pa B_t}u^2dS_x
  \ge\Bigl(\frac{t}{\ka}\Bigr)^2\biggl[(n+2\ka-2)\frac{1-t^{\al}}{t}\int_{B_t}|\D
  u|^2dx\\+\int_{\pa B_t}\left(u_\nu^2-|\D
    u|^2\right)dS_x\biggr].\label{eq:weiss-3}
\end{multline}
Multiplying this by $\ka^2e^{at^{\al}}t^{-n-2\ka}$ and rearranging
terms, we obtain
\begin{equation}
  \begin{aligned}
    &\frac{d}{dt}\left(e^{at^{\al}}t^{-n-2\ka+2}\right)\int_{B_t}|\D u|^2dx\\
    &\qquad = -(n+2\ka-2)e^{at^{\al}}t^{-n-2\ka}\left(t-t^{\al+1}\right)\int_{B_t}|\D u|^2dx \\
    &\qquad\ge e^{at^{\al}}t^{-n-2\ka+2}\int_{\pa
      B_t}\left(u_{\nu}^2-|\D
      u|^2\right)dS_x-\ka^2e^{at^{\al}}t^{-n-2\ka}\int_{\pa
      B_t}u^2dS_x.\label{eq:weiss-4}
  \end{aligned}
\end{equation}
Define now an auxiliary function
$$
\psi(t)=\frac{\ka e^{a t^{\al}}(1-bt^{\alpha})}{t^{n+2\ka-1}}.
$$
Then we write
$$
W_\kappa(t, u,0)=e^{at^{\al}}t^{-n-2\ka+2}\int_{B_t}|\D
u|^2dx-\psi(t)\int_{\partial B_t}u^2dS_x
$$
and, using \eqref{eq:weiss-4}, obtain
\begin{align*}
  \frac{d}{dt}W_{\ka}(t, u, 0)
  &=\frac{d}{dt}\left(e^{at^{\al}}t^{-n-2\ka+2}\right)\int_{B_t}|\D
    u|^2dx+e^{at^{\al}}t^{-n-2\ka+2}\int_{\pa B_t}|\D u|^2dS_x\\&\qquad-\psi'(t)\int_{\pa B_t}u^2dS_x-2\psi(t)\int_{\pa B_t}uu_{\nu}dS_x-(n-1)\frac{\psi(t)}{t}\int_{\pa B_t}u^2dS_x
  \\&\ge e^{at^{\al}}t^{-n-2\ka+2}\int_{\pa B_t}\left(u_{\nu}^2-|\D u|^2\right)dS_x-\ka^2e^{at^{\al}}t^{-n-2\ka}\int_{\pa B_t}u^2dS_x \\
  &\qquad+e^{at^{\al}}t^{-n-2\ka+2}\int_{\pa B_t}|\D u|^2dS_x-\psi'(t)\int_{\pa B_t}u^2dS_x\\
  &\qquad-2\psi(t)\int_{\pa B_t}uu_{\nu}dS_x-(n-1)\frac{\psi(t)}{t}\int_{\pa B_t}u^2dS_x\\
  &= e^{at^{\al}}t^{-n-2\ka+2}\int_{\pa B_t}u_{\nu}^2dS_x-2\psi(t)\int_{\pa B_t}uu_{\nu}dS_x\\
  &\qquad-\left(\ka^2e^{at^{\al}}t^{-n-2\ka}+\psi'(t)+(n-1)\frac{\psi(t)}{t}\right)\int_{\pa
    B_t}u^2dS_x.
\end{align*}
Now observe that $\psi(t)$ satisfies the inequality
$$
-\frac{e^{at^{\al}}}{t^{n+2\ka-2}}\left(\ka^2e^{at^{\al}}t^{-n-2\ka}+\psi'(t)+(n-1)\frac{\psi(t)}{t}
\right)-\psi^2(t)\geq0
$$
for $0<t<t_0(n, \alpha,\kappa_0)$ and $0<\kappa<\ka_0$. Indeed, a
direct computation shows that the above inequality is equivalent to
$$
2\alpha^2(1+\kappa_0-\kappa)-(n+2\kappa_0)[(n+2\kappa_0)\kappa-\alpha(n+2\kappa-2)]t^\alpha\geq
0,
$$
which holds for $0<\kappa<\ka_0$ and small $t>0$ such that
$$
2\alpha^2 - 4(n+2\kappa_0)^2\kappa_0t^\alpha\geq 0.
$$
Hence, recalling also the formula for $\psi(t)$, we can conclude that
\begin{align*}
  \frac{d}{dt}W_\kappa(t, u, 0)
  &\geq \frac{e^{at^{\al}}}{t^{n+2\ka-2}}\biggl[\int_{\pa
    B_t}u_{\nu}^2dS_x-2\frac{\kappa(1-bt^\alpha)}{t}\int_{\pa B_t}uu_{\nu}dS_x\\
  &\qquad +\Bigl(\frac{\kappa(1-bt^\alpha)}{t}\Bigr)^2\int_{\partial
    B_t} u^2 dS_x\biggr]\\
  &= \frac{e^{a t^{\al}}}{t^{n+2\ka-2}}\int_{\partial
    B_t} \left(u_\nu-\frac{\kappa(1-b t^\alpha)}{t}u\right)^2,
\end{align*}
for $0<t<t_0(n,\alpha,\kappa_0)$.
\end{proof}

Next, for an almost minimizer $u$ in $B_1$ and $x_0\in B'_{1/2}$,
consider the quantity
$$
N(t, u, x_0):=\frac{t\int_{B_t(x_0)}|\D u|^2}{\int_{\pa
    B_t(x_0)}u^2},\quad 0<t<1/2
$$
which is known as \emph{Almgren's frequency} and goes back to
Almgren's \emph{Big Regularity Paper} \cite{Alm00}. This kind of
quantities have also been used in unique continuation for a class of
elliptic operators \cites{GarLin86,GarLin87} and have been
instrumental in thin obstacle-type problems, starting with the works
\cites{AthCafSal08,CafSalSil08,GarPet09}.

Before proceeding, we observe that Almgren's frequency is well defined
when $x_0$ is a free boundary point, since $\int_{\pa
  B_t(x_0)}u^2>0$. Indeed, otherwise $u=0$ on $\pa B_t(x_0)$ and we
can use $h\equiv0$ in $B_t(x_0)$ as a competitor, to obtain that
$\int_{B_t(x_0)}|\nabla u|^2\leq (1+t^\alpha)0=0$, implying $u\equiv0$
in $B_t(x_0)$, contradicting the assumption that $x_0$ is a free
boundary point.  Next, we also consider a modification of $N$:
\begin{align*}
  \N(t,u,x_0) &:=\frac{1}{1-bt^\alpha}N(t,u,x_0),
   \intertext{where $b$ is as in Theorem~\ref{thm:weiss}, as well as} 
  \widehat N_{\ka_0}(t,u,x_0)&:=\min\{\widetilde N(t),\ka_0\},\quad 0<t<t_0,
\end{align*}
which we call the \emph{truncated frequency}.

For the frequencies $N$, $\N$, and $\widehat N_{\kappa_0}$, we will
follow the same notational conventions as outlined in
Remark~\ref{rem:bulky-notat} for Weiss's functionals $W_\kappa$.

\medskip

With the Weiss type monotonicity formula at hand, we easily obtain the
following monotonicity of $\widehat N_{\ka_0}$.

\begin{theorem}[Almgren-type monotonicity formula]\label{thm:Almgren}
  Let $u$, $\kappa_0$, and $t_0$ be as in Theorem~\ref{thm:weiss}, and
  $x_0$ a free boundary point. Then $\widehat N_{\ka_0}(t,u,x_0)$ is
  nondecreasing in $0<t<t_0$.
\end{theorem}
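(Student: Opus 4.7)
The plan is to derive the Almgren-type monotonicity formula from the one-parameter family of Weiss-type monotonicity formulas $\{W_\kappa\}_{0<\kappa<\kappa_0}$ established in Theorem~\ref{thm:weiss}. The bridge between the two is the observation that the \emph{sign} of $W_\kappa(t,u,x_0)$ encodes a comparison between the truncated frequency $\widetilde N(t,u,x_0)$ and the parameter $\kappa$. Indeed, provided $t$ is small enough that $1-bt^\alpha>0$, unwinding the definitions of $N$, $\widetilde N$, and $W_\kappa$ shows that
\begin{equation*}
W_\kappa(t,u,x_0) \;\gtreqless\; 0 \iff \widetilde N(t,u,x_0)\;\gtreqless\; \kappa.
\end{equation*}

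With this equivalence in hand, I would proceed by contradiction. Fix $x_0$ and suppose $\widehat N_{\kappa_0}(t_1)>\widehat N_{\kappa_0}(t_2)$ for some $0<t_1<t_2<t_0$, where I shrink $t_0$ if necessary so that $1-bt^\alpha>0$ for $t\in(0,t_0)$. Pick any $\kappa$ with $\widehat N_{\kappa_0}(t_2)<\kappa<\widehat N_{\kappa_0}(t_1)$. Since $\widehat N_{\kappa_0}\le\kappa_0$, automatically $\kappa<\kappa_0$, so $\kappa$ is a legitimate parameter for Weiss's formula.

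The rest is a small case analysis on the truncation. From $\kappa<\widehat N_{\kappa_0}(t_1)=\min\{\widetilde N(t_1),\kappa_0\}$ I read off $\widetilde N(t_1)>\kappa$, hence $W_\kappa(t_1,u,x_0)>0$ by the equivalence above. Conversely, $\widehat N_{\kappa_0}(t_2)<\kappa<\kappa_0$ forces $\widetilde N(t_2)=\widehat N_{\kappa_0}(t_2)<\kappa$ (otherwise the minimum would equal $\kappa_0>\kappa$), and hence $W_\kappa(t_2,u,x_0)<0$. But Theorem~\ref{thm:weiss} asserts $W_\kappa(\cdot,u,x_0)$ is nondecreasing on $(0,t_0)$, giving $W_\kappa(t_1,u,x_0)\le W_\kappa(t_2,u,x_0)$, a contradiction.

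I do not expect any serious obstacle: the only delicate point is bookkeeping the sign of $1-bt^\alpha$ (which fixes the choice of $t_0$) and handling the truncation at $\kappa_0$ carefully, which is exactly what forces the intermediate value $\kappa$ to be strictly below $\kappa_0$ and lets one conclude $\widetilde N(t_2)=\widehat N_{\kappa_0}(t_2)$ in the second step.
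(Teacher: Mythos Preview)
Your proof is correct and follows essentially the same approach as the paper: both arguments exploit the sign equivalence $W_\kappa(t)\gtreqless 0 \iff \widetilde N(t)\gtreqless\kappa$ (for $1-bt^\alpha>0$) together with the monotonicity of $W_\kappa$ from Theorem~\ref{thm:weiss}. The paper phrases it as the implication ``$\widetilde N(t)<\kappa\Rightarrow \widetilde N(s)<\kappa$ for all $s<t$'' and then reads off monotonicity of $\widehat N_{\kappa_0}$, while you run an equivalent contradiction argument; the content is the same.
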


\begin{proof}
  We assume $x_0=0$. It is quite important to observe that $t_0$
  depends only on $n$, $\alpha$, and $\kappa_0$. Then, if $\N(t)< \ka$
  for some $t\in (0, t_0)$ and $\ka \in (0, \ka_0)$, then
  \begin{align*}
    W_{\ka}(t)
    &=\frac{e^{a t^{\al}}}{t^{n+2\ka-1}}\left(\int_{\pa
      B_t}u^2\right)\left(N(t)-\ka(1-b t^\alpha)\right)\\
    &=\frac{e^{a t^{\al}}}{t^{n+2\ka-1}}\left(\int_{\pa
      B_t}u^2\right)(1-b t^\alpha)\left(\N(t)-\ka\right)     <0.
  \end{align*}
  By Theorem~\ref{thm:weiss} we also have
  $W_{\ka}(s) \le W_{\ka}(t)<0$ for all $s\in (0, t)$, and thus
  $\N(s)< \ka$. This completes the proof.
\end{proof}
\begin{remark} The proof above is rather indirect and establishes the
  monotonicity of $\widehat{N}_{\kappa_0}$ from that of Weiss-type
  formulas in one-parametric family
  $\{W_\kappa\}_{0<\kappa<\kappa_0}$.  This kind of relation has been
  first observed in \cite{GarPet09}.
\end{remark}


\section{Almgren rescalings and blowups}
\label{sec:almgr-resc-blow}

In this section we prove a lower bound on Almgren's frequency for
almost minimizers at free boundary points. The idea is to consider
appropriate rescalings and blowups of almost minimizers to obtain
solutions of the Signorini problem, for which a bound $N(0+)\geq 3/2$
is known.

Now, let $u$ be an almost minimizer for the Signorini problem in
$B_1$, and $x_0\in B_{1/2}'$ a free boundary point. For $0<r<1/2$
consider the \emph{Almgren rescaling}\footnote{We use the superscript
  $A$ to distinguish this rescaling from the other rescalings, namely,
  homogeneous and almost homogeneous rescalings that we consider
  later.} of $u$ at $x_0$
\begin{align*}
  u_{x_0, r}^A(x):=\frac{u(rx+x_0)}{\Bigl(\frac{1}{r^{n-1}}\int_{\pa
  B_r(x_0)}u^2\Bigr)^{\frac{1}{2}}}, \quad x\in B_{1/(2r)}.
\end{align*}
When $x_0=0$, we also write $u_r^A$ instead of $u_{0,r}^A$. The
Almgren rescalings have the following normalization and scaling
properties
\begin{align*}
  &\|u_{x_0, r}^A\|_{L^2(\pa B_1)}=1\\
  &N(\rho, u_{x_0,
    r}^A)=N(\rho r, u, x_0),\quad \rho<1/(2r).
\end{align*}
We will call the limits of $u_{x_0,r}^A$ over any sequence
$r=r_j\to 0+$ \emph{Almgren blowups} of $u$ at $x_0$ and denote by
$u_{x_0,0}^A$.

\begin{proposition}[Existence of Almgren blowups]\label{prop:exist-Alm-blowup}
  Let $x_0\in B_{1/2}'\cap \Gamma(u)$ be such that
  $\widehat{N}_{\kappa_0}(0+,u,x_0)=\kappa<\kappa_0$. Then every
  sequence of Almgren rescalings $u_{x_0,r_j}^A$, with $r_j\to 0+$
  contains a subsequence, still denoted $r_j$, such that for a
  function
  $u_{x_0,0}^A\in W^{1,2}(B_1)\cap C^{1}_{\loc}(B_{1}^\pm\cup B_1' )$
  \begin{align*}
    u_{x_0,r_j}^A\ra u_{x_0,0}^A &\quad\text{in }W^{1, 2}(B_1),\\
    u_{x_0,r_j}^A\ra u_{x_0,0}^A &\quad\text{in }L^2(\pa B_1),\\
    u_{x_0,r_j}^A\ra u_{x_0,0}^A &\quad\text{in }C^1_{\loc}(B_1^{\pm}\cup B_1').
  \end{align*}
  Moreover, $u_{x_0,0}^A$ is a nonzero solution of the Signorini
  problem in $B_1$, even in $x_n$, and homogeneous of degree $\kappa$
  in $B_1$, i.e.,
$$
u_{x_0,0}^A(\lambda x)=\lambda^\kappa u_{x_0,0}^A(x),
$$
for $\lambda>0$, provided $x,\lambda x\in B_1$.
\end{proposition}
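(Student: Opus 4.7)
The plan is standard blowup extraction, with the frequency and Weiss monotonicity doing the work. Without loss of generality, take $x_0=0$. First, I would record the two key scaling identities for $u_r := u^A_{0,r}$: $\|u_r\|_{L^2(\partial B_1)}=1$ by definition, and $\int_{B_R}|\nabla u_r|^2 = \frac{H(Rr)}{R\,H(r)}\,N(Rr,u,0)$, where $H(t)=\int_{\partial B_t}u^2$. Since $\widehat N_{\kappa_0}(0+)=\kappa<\kappa_0$, the Almgren-type monotonicity formula (Theorem~\ref{thm:Almgren}) gives $N(Rr,u,0)\leq (1-b(Rr)^\alpha)^{-1}\kappa_0\leq 2\kappa_0$ for $r$ small; then the differential identity $H'(t)/H(t) = (n-1)/t + 2N(t,u,0)/t$ (valid modulo the almost-minimizing perturbations) yields $H(Rr)/H(r)\leq C R^{n-1+2\kappa_0}$. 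Hence $u_r$ is uniformly bounded in $W^{1,2}(B_R)$ for every fixed $R>0$ and all sufficiently small $r$.

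Next, since $u_r$ is itself an almost minimizer in $B_{1/(2r)}$ with gauge $\omega_r(\rho)=(r\rho)^\alpha$, Theorem~\ref{mthm:A} (together with the uniform $W^{1,2}$ bounds) produces uniform $C^{1,\beta}_{\loc}(B_1^\pm\cup B_1')$ estimates. By Arzel\`a--Ascoli plus Rellich, along a subsequence $r_j\to 0+$,
\begin{equation*}
u_{r_j}\rightharpoonup u_0^A\ \text{in }W^{1,2}(B_1),\quad u_{r_j}\to u_0^A\ \text{in }L^2(B_1)\cap L^2(\partial B_1)\cap C^1_\loc(B_1^\pm\cup B_1'),
\end{equation*}
and $\|u_0^A\|_{L^2(\partial B_1)}=1$, so $u_0^A\not\equiv 0$. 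Even symmetry in $x_n$ passes to the limit trivially.

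To identify $u_0^A$ as a Signorini solution, fix $B_\rho(y)\Subset B_1$ and a competitor $w\in\mathfrak K_{0,u_0^A}(B_\rho(y),\cM)$. Set $w_j:=w+(u_{r_j}-u_0^A)$, which equals $u_{r_j}$ on $\partial B_\rho(y)$ and, by the uniform convergence $u_{r_j}\to u_0^A$ on $B_1'\cap\overline{B_\rho(y)}$, satisfies $w_j\geq -\varepsilon_j$ on the thin space. Replacing $w_j$ by $\max(w_j,0)$ on a thin neighborhood of $\cM$ (or alternatively approximating $w$ by $w+\varepsilon$ and letting $\varepsilon\to 0$) produces an admissible competitor whose Dirichlet energy differs from $J(w_j)$ by $o(1)$. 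The almost-minimizing inequality then reads
\begin{equation*}
J_{B_\rho(y)}(u_{r_j}) \leq (1+(r_j\rho)^\alpha) \bigl(J_{B_\rho(y)}(w) + o(1)\bigr).
\end{equation*}
Weak lower semicontinuity of $J$ on the left and $r_j\to 0$ on the right give $J_{B_\rho(y)}(u_0^A)\leq J_{B_\rho(y)}(w)$, so $u_0^A$ minimizes $J$ on $\mathfrak K_{0,u_0^A}$, i.e.\ it solves Signorini. Choosing $w=u_0^A$ itself and using that the infimum equals $J(u_0^A)$ upgrades weak $W^{1,2}$ convergence to strong.

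Finally, for $\kappa$-homogeneity, use the scaling identity $N(\rho,u_{r})=N(\rho r,u,0)$ and the Almgren monotonicity to compute, for every $\rho\in(0,1)$,
\begin{equation*}
N(\rho,u_0^A)=\lim_{j\to\infty} N(\rho,u_{r_j})=\lim_{j\to\infty} N(\rho r_j,u,0)=\widehat N_{\kappa_0}(0+,u,0)=\kappa,
\end{equation*}
where strong $W^{1,2}$ convergence in $B_\rho$ and strong $L^2$ convergence on $\partial B_\rho$ (via trace and the $C^1_\loc$ convergence) let us pass to the limit. Since $u_0^A$ is a genuine Signorini solution, its Almgren frequency is the standard monotone one, and constant frequency $\kappa$ on $(0,1)$ forces $u_0^A$ to be $\kappa$-homogeneous by the equality case of the classical Almgren formula for the Signorini problem (see \cite{PetShaUra12}*{Chapter~9}). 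The main obstacle is ensuring the competitor $w_j$ in the limiting step lies in the obstacle class while keeping boundary values fixed; everything else is bookkeeping with the monotonicity formulas already established.
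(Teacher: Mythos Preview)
Your overall structure is right, and the frequency computation at the end matches the paper. Two points deserve comment.

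First, the identity $H'(t)/H(t)=(n-1)/t+2N(t,u,0)/t$ is \emph{not} available for almost minimizers, even ``modulo perturbations.'' That identity requires $\int_{\partial B_t}u\,\partial_\nu u=\int_{B_t}|\nabla u|^2$, which comes from integrating $\Delta u=0$ by parts; an almost minimizer satisfies no PDE, so this step fails outright. Fortunately you do not need it: the statement only asks for convergence in $B_1$, and for $R=1$ your own scaling identity gives $\int_{B_1}|\nabla u_r|^2=N(r,u,0)$, which tends to $\kappa<\kappa_0$ directly. Drop the $R>1$ discussion entirely.

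Second, your route to ``$u_0^A$ solves Signorini'' differs from the paper's and, as you note, has a real admissibility obstacle: neither truncating $w_j$ nor shifting $w$ by $\varepsilon$ preserves the boundary datum on $\partial B_\rho(y)$, so producing a competitor that is simultaneously nonnegative on $\cM$ and equal to $u_{r_j}$ on $\partial B_\rho(y)$ requires a further cutoff argument you have not carried out. The paper sidesteps this completely: take the Signorini replacement $h_{r_j}$ of $u_{r_j}^A$ in $B_1$, use the energy-gap estimate \eqref{eq:holder-3} to get $\int_{B_1}|\nabla(u_{r_j}^A-h_{r_j})|^2\le r_j^\alpha\int_{B_1}|\nabla u_{r_j}^A|^2\to 0$, then use the $C^{1,1/2}$ regularity of genuine Signorini solutions to extract $C^1_{\loc}$ convergence $h_{r_j}\to u_0^A$ and pass the pointwise complementarity conditions to the limit. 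This avoids constructing competitors altogether. Your direct variational argument can be made to work, but the replacement approach is both shorter and free of the issue you flagged.
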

\begin{proof} Without loss of generality, we assume $x_0=0$. From the
  fact that $\widehat{N}(0+,u)=\kappa<\kappa_0$, it follows also that
  $N(0+,u)=\widehat{N}(0+,u)=\kappa$. In particular,
  $N(r_j,u)<\kappa_0$ for large $j$. Then, for such $j$
  $$
  \int_{B_1}|\nabla u_{r_j}^A|^2=N(1,u_{r_j}^A)=N(r_j,u)\leq \kappa_0
$$
and combined with the normalization
$\int_{\partial B_1}(u_{r_j}^A)^2=1$, we see that the sequence
$u_{r_j}^A$ is bounded in $W^{1,2}(B_1)$. Hence, there is a function
$u_0^A\in W^{1, 2}(B_1)$ such that, over a subsequence,
\begin{align*}u_{r_j}^A \to u_0^A&\quad\text{weakly in }W^{1, 2}(B_1),\\
  u_{r_j}^A\to u_0^A&\quad\text{strongly in }L^2(\pa B_1).
\end{align*}
In particular, $\int_{\pa B_1}(u_0^A)^2=1$, implying that
$u_0^A\not\equiv 0$ in $B_1$.

Next, we observe that since $u$ is an almost minimizer in $B_{1}$ with
gauge function $\omega(t)=t^{\al}$, $u_{r}^A$ is also an almost
minimizer in $B_{1/(2r)}$ with gauge function
$\omega_r(t)=(rt)^{\al}$. This is rather easy to see, since $u_r^A(x)$
up to a positive constant factor is $u(r x)$ and the multiplication
(or the division) by a positive number preserves the almost minimizing
property. Since $\omega_r(t)\leq \omega(t)$,
Theorem~\ref{thm:grad-holder} is applicable to rescalings $u_{r_j}^A$,
from where we can deduce that over yet another subsequence,
\begin{align}
  u_{r_j}^A \ra u_0^A\quad \text{in } C^1_{\loc}(B_1^{\pm}\cup B'_1). \label{Almgren-1}
\end{align}
Now, we claim that since the gauge functions
$\omega_r(t)=(rt)^\alpha\to 0$ as $r\to 0$, the blowup $u_0^A$ is a
solution of the Signorini problem in $B_1$. Indeed, for a fixed $r_j$,
let $h_{r_j}$ be the Signorini replacement of $u_{r_j}^A$ in
$B_1$. Then, by repeating the argument as in the proof of
Proposition~\ref{prop:alm-min-Sig-Mor-est}
$$
\int_{B_1}|\nabla (u_{r_j}^A-h_{r_j})|^2\leq
r_j^\alpha\int_{B_1}|\nabla u_{r_j}^A|^2.
$$
This implies that $h_{r_j}\to u_0^A$ weakly in $W^{1,2}(B_1)$. On the
other hand, by the boundedness of the sequence $h_{r_j}$ in
$W^{1,2}(B_1)$, we have also boundedness in
$C^{1,1/2}_\loc(B_1^\pm \cup B_1')$ and hence, over a subsequence,
$h_{r_j}\to u_0^A$ in $C^1_{\loc}(B_1^\pm\cup B_1')$. By this
convergence we then conclude that $u_0^A$ satisfies
\begin{align*}
  \Delta u_0^A=0&\quad\text{in }B_1\setminus B_1'\\*
  u_0^A\geq 0,\quad -\partial_{x_n}^+u_0^A\geq 0,\quad
  u_0^A\partial_{x_n}^+u_0^A=0&\quad\text{on } B_1',
\end{align*}
and hence $u_0^A$ itself solves the Signorini problem in $B_1$.

Using the $C^1_{\loc}$ convergence again, we have that for any
$0<\rho<1$
\begin{align*}
  N(\rho, u_0^A)=\lim_{r_j\ra 0}N(\rho, u_{r_j}^A)=\lim_{r_j\ra 0}N(\rho
  r_j, u)=N(0+, u)=\ka.
\end{align*}
Thus, the Almgren frequency of $u_0^A$ is constant $\kappa$, which is
possible only if $u_0^A$ is a $\kappa$-homogeneous solution of the
Signorini problem in $B_1$, see \cite{PetShaUra12}*{Theorem~9.4}.
\end{proof}

In what follows, it will be sufficient for us to fix $\kappa_0\geq 2$
(say $\kappa_0=2$), in the definition of $\widehat{N}_{\kappa_0}$ and
we will simply write
$$
\widehat N = \widehat N_{\ka_0}.
$$

\begin{lemma}[Minimal frequency]
  \label{lem:min-freq} Let $u$ be an almost minimizer for the
  Signorini problem in $B_1$. If $x_0\in B_{1/2}'\cap \Gamma(u)$, then
$$
\widehat{N}(0+,u,x_0)=\lim_{r\to 0+}\widehat N(r,u,
x_0)\geq\frac{3}{2}.
$$
Consequently, we also have
$$
\widehat{N}(t,u,x_0)\geq 3/2\quad\text{for }0<t<t_0.
$$
\end{lemma}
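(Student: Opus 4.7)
The plan is to argue by contradiction using Almgren blowups, exploiting that such blowups are genuine (not almost) solutions of the Signorini problem.

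First, I observe that $\widehat{N}(0+,u,x_0)$ exists as a limit: indeed, by Theorem~\ref{thm:Almgren} the function $t \mapsto \widehat{N}(t,u,x_0)$ is nondecreasing on $(0,t_0)$, so the limit as $t \to 0+$ exists and equals $\inf_{0 < t < t_0} \widehat{N}(t,u,x_0)$. Set $\kappa := \widehat{N}(0+,u,x_0)$ and suppose, toward a contradiction, that $\kappa < 3/2$. Since we have fixed $\kappa_0 \geq 2$, this gives $\kappa < \kappa_0$, which is the hypothesis required by Proposition~\ref{prop:exist-Alm-blowup}. Also, since $\kappa < \kappa_0$, the truncation is inactive near $0$, and in fact $\widetilde{N}(0+,u,x_0) = \kappa$, hence $N(0+,u,x_0) = \kappa$ as well.

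Next, apply Proposition~\ref{prop:exist-Alm-blowup} to extract, along a subsequence $r_j \to 0+$, an Almgren blowup $u_{x_0,0}^A$. By the proposition, this blowup is a nonzero, $\kappa$-homogeneous solution of the Signorini problem in $B_1$, even in $x_n$. Since $\kappa > 0$ (if $\kappa \le 0$ one derives an easy contradiction from the normalization $\|u_{x_0,r}^A\|_{L^2(\partial B_1)} = 1$ together with the fact that a constant cannot solve Signorini with $u_0^A \not\equiv 0$ and be a valid blowup at a free boundary point), the homogeneity forces $u_{x_0,0}^A(0) = 0$.

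The final step is to invoke the classification of homogeneous solutions of the Signorini problem: any nontrivial, $x_n$-even, $\kappa$-homogeneous solution $w$ of the Signorini problem in $B_1$ with $w(0)=0$ satisfies $\kappa \geq 3/2$. This is a standard consequence of Almgren's frequency formula for solutions of the Signorini problem together with the $C^{1,1/2}$ regularity of Athanasopoulos--Caffarelli \cite{AthCaf04}; see e.g.\ \cite{PetShaUra12}*{Proposition~9.9}. Applying this to $w = u_{x_0,0}^A$ yields $\kappa \geq 3/2$, contradicting our assumption $\kappa < 3/2$. Hence $\widehat{N}(0+,u,x_0) \geq 3/2$. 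The final assertion $\widehat{N}(t,u,x_0) \geq 3/2$ for $0 < t < t_0$ is then immediate from the monotonicity in Theorem~\ref{thm:Almgren}.

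The substantive work has already been done elsewhere: existence of well-behaved blowups is Proposition~\ref{prop:exist-Alm-blowup}, and the minimal-frequency gap for genuine Signorini solutions is the classical result recalled above. The only conceptual point to check carefully is that our definition of $\widehat{N}$ (truncation of the factor-corrected frequency $\widetilde{N}$) behaves as expected at $0+$, namely that $\widehat{N}(0+) < \kappa_0$ implies $\widetilde{N}(0+) = N(0+) = \widehat{N}(0+)$, which is straightforward since $b t^\alpha \to 0$ as $t \to 0+$.
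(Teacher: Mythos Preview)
Your overall strategy is the same as the paper's: argue by contradiction, pass to an Almgren blowup via Proposition~\ref{prop:exist-Alm-blowup}, and use the classification of homogeneous Signorini solutions. However, there is a genuine gap in the step where you invoke the classification.

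You assert that ``any nontrivial, $x_n$-even, $\kappa$-homogeneous solution $w$ of the Signorini problem in $B_1$ with $w(0)=0$ satisfies $\kappa \geq 3/2$.'' This is false as stated. The function $w(x) = -c|x_n|$ with $c>0$ is even in $x_n$, $1$-homogeneous, nontrivial, satisfies $w(0)=0$, and solves the Signorini problem (it is harmonic off $\{x_n=0\}$, vanishes on $B_1'$, and has $-\partial_{x_n}^+ w = c > 0$). So the hypothesis $w(0)=0$ alone does not force $\kappa \geq 3/2$; you must also know that $|\widehat{\nabla w}(0)|=0$, i.e., that $0$ is a \emph{free boundary} point of $w$, not merely a coincidence-set point. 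Your appeal to $C^{1,1/2}$ regularity does not close this: $-c|x_n|$ is $C^{1,1/2}$ on each side of the thin space.

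The missing ingredient is exactly Lemma~\ref{lem:comp-cond}: since $x_0 \in \Gamma(u)$, one has $u(x_0)=0$ and $|\widehat{\nabla u}(x_0)|=0$, hence the same holds for each Almgren rescaling at the origin, and by the $C^1_{\loc}(B_1^\pm \cup B_1')$ convergence in Proposition~\ref{prop:exist-Alm-blowup} one obtains $u_{x_0,0}^A(0)=0$ and $|\widehat{\nabla u_{x_0,0}^A}(0)|=0$. With this extra information, the $\kappa=1$ profile $-c|x_n|$ is ruled out (its one-sided normal derivative at $0$ is $-c \neq 0$), and the classification then gives $\kappa \geq 3/2$. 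This is precisely how the paper argues; once you insert this step, your proof is complete.
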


\begin{proof} As before, let $x_0=0$. Assume to the contrary that
  $\widehat N(0+,u)=\kappa<3/2$. Since $\kappa<\kappa_0$ we can apply
  Proposition~\ref{prop:exist-Alm-blowup} to obtain that over a
  sequence $r_j\to 0+$, $u_{r_j}^A\to u_0^A$ in
  $C^1_{\loc}(B_1^\pm\cup B_1')$, where $u_0^A$ is a nonzero
  $\kappa$-homogeneous solution of the Signorini problem in $B_1$,
  even in $x_n$. Moreover, since $0\in \Gamma(u)$, by
  Lemma~\ref{lem:comp-cond} we have that
  $u(0)=|\widehat{\nabla u}(0)|=0$, implying that
  $u_{r_j}^A(0)=|\widehat{\nabla u_{r_j}^A}(0)|=0$ and, by passing to
  the limit, $u_{0}^A(0)=|\widehat{\nabla u_{0}^A}(0)|=0$. Now, to
  arrive at a contradiction, we argue as in the proof of
  \cite{PetShaUra12}*{Proposition~9.9} to reduce the problem to
  dimension $n=2$, where we can classify all possible homogeneous
  solutions of the Signorini problem, even in $x_n$. The only nonzero
  homogeneous solutions with $\kappa<3/2$ in dimension $n=2$ are
  possible for $\kappa=1$ and have the form $u_0^A(x)=-c x_n$ for some
  $c>0$, but they fails to satisfy the condition
  $|\widehat{\nabla u_{0}^A}(0)|=0$. Thus, we arrived at
  contradiction, implying that $\widehat{N}(0+,u)\geq 3/2$.  Finally,
  applying Theorem~\ref{thm:Almgren}, we obtain
  $\widehat{N}(t,u)\geq \widehat{N}(0+,u)\geq 3/2$, for $0<t<t_0$.
\end{proof}

\begin{corollary}\label{cor:W32-nonneg} Let $u$ be an almost minimizer
  for the Signorini problem in $B_1$ and $x_0$ a free boundary
  points. Then
  $$
  W_{3/2}(t, u, x_0)\geq 0\quad\text{for }0<t<t_0.
  $$
\end{corollary}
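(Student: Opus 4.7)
The plan is to recognize that $W_{3/2}(t,u,x_0)$, up to a manifestly positive prefactor, equals $\widetilde N(t,u,x_0)-3/2$, and then invoke Lemma~\ref{lem:min-freq} which gives exactly the lower bound $\widehat N(t,u,x_0)\ge 3/2$ on the interval $(0,t_0)$.

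Concretely, starting from the defining expression
$$
W_\kappa(t,u,x_0)=\frac{e^{at^{\al}}}{t^{n+2\kappa-2}}\left[\int_{B_t(x_0)}|\D u|^2-\kappa\frac{1-b t^{\al}}{t}\int_{\pa B_t(x_0)}u^2\right],
$$
I would pull out the common factor $\tfrac{1}{t}\int_{\pa B_t(x_0)}u^2$ to rewrite
$$
W_\kappa(t,u,x_0)=\frac{e^{at^{\al}}(1-bt^{\al})}{t^{n+2\kappa-1}}\left(\int_{\pa B_t(x_0)}u^2\right)\bigl(\widetilde N(t,u,x_0)-\kappa\bigr).
$$
This rearrangement is legitimate because $\int_{\pa B_t(x_0)}u^2>0$ at free boundary points, as noted in the paragraph preceding Theorem~\ref{thm:Almgren} (otherwise the almost minimizing inequality against the zero competitor forces $u\equiv 0$ in $B_t(x_0)$, contradicting $x_0\in\Gamma(u)$).

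Now fix $\kappa_0\ge 2$ as in the convention preceding the corollary, so that $b=(n+2\kappa_0)/\alpha$ depends only on $n$, $\alpha$, and $\kappa_0$. After shrinking $t_0=t_0(n,\alpha,\kappa_0)$ if needed so that $1-bt^{\al}>0$ for all $0<t<t_0$, the prefactor $\tfrac{e^{at^{\al}}(1-bt^{\al})}{t^{n+2\kappa-1}}\int_{\pa B_t(x_0)}u^2$ is strictly positive. Taking $\kappa=3/2<\kappa_0$, Lemma~\ref{lem:min-freq} yields $\widehat N(t,u,x_0)\ge 3/2$ for $0<t<t_0$; since $\widehat N=\min\{\widetilde N,\kappa_0\}$ and $3/2<\kappa_0$, this forces $\widetilde N(t,u,x_0)\ge 3/2$ as well. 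Substituting into the factorized identity gives $W_{3/2}(t,u,x_0)\ge 0$ on $(0,t_0)$, as claimed.

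There is essentially no obstacle beyond bookkeeping: the only point requiring slight care is that the $t_0$ needed for the prefactor $1-bt^{\al}$ to be positive may be smaller than the one coming out of Theorem~\ref{thm:weiss}, but since both depend only on $n,\al,\kappa_0$ (and not on $u$ or $x_0$), this is a harmless adjustment that still leaves a universal interval $(0,t_0)$ of validity.
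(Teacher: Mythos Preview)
Your proof is correct and follows essentially the same approach as the paper: both factor $W_{3/2}(t,u,x_0)$ as a positive prefactor times $\widetilde N(t,u,x_0)-3/2$ and then invoke Lemma~\ref{lem:min-freq} (together with the trivial inequality $\widetilde N\ge\widehat N$) to conclude.
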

\begin{proof} We simply observe that
  $\N(t)\geq \widehat{N}(t)\geq 3/2$ for $0<t<t_0$ and hence
  \[
    W_{3/2}(t, u,
    x_0)=\frac{e^{at^{\al}}}{t^{n+2\ka-1}}\left(\int_{\pa
        B_t}u^2\right)(1-b t^\alpha)\left(\widetilde
      N(t)-\frac{3}{2}\right) \ge 0.\qedhere
  \]
\end{proof}


\section{Growth estimates}
\label{sec:growth-estimates}

An important step in the study of the free boundary in the Signorini
problem (and in many other free boundary problems) is the proof of the
optimal regularity of solutions, which in this case is $C^{1,1/2}$ on
each side of the thin space. This allows to make proper blowup
arguments to establish the regularity of the so-called regular part of
the free boundary. However, in the case of almost minimizers, we only
know $C^{1,\beta}$ regularity for some small $\beta>0$ and do not
expect to have anything better. Yet, in this section, we establish the
\emph{optimal growth} of the almost minimizers at free boundary points
with the help of the Weiss-type monotonicity formula and the
epiperimetric inequality.

Finally, we want to point out that the results in this section are
rather immediate in the case of minimizers, as they follow easily from
the differentiation formulas for the quantities involved in the
Almgren's frequency formula. This is completely unavailable for almost
minimizers.

We start by defining a new type of rescalings. Fix $\kappa\geq
3/2$. For a free boundary point $x_0$ in $B_{1/2}'$ and $r>0$, we
define the \emph{$\kappa$-homogeneous rescaling} by
$$
u_{x_0, r}(x):=u_{x_0, r}^{(\kappa)}(x)=
\frac{u(rx+x_0)}{r^{\kappa}},\quad x\in B_{1/(2r)}.
$$
To take advantage of the Weiss-type monotonicity formula, we need a
slight modification of this rescaling. With the help of an auxiliary
function
$$
\phi(r)=\phi_\kappa(r):=e^{-(\kappa b/\alpha)r^\alpha}r^{\kappa},\quad
r>0,
$$
which is a solution of the differential equation
$$
\phi'(r)=\kappa\,\phi(r)\frac{1-b r^\alpha}r,\quad r>0
$$
we define the \emph{$\kappa$-almost homogeneous rescalings} by
$$
u_{x_0, r}^{\phi}(x):=\frac{u(rx+x_0)}{\phi(r)},\quad x\in B_{1/(2r)}.
$$

\begin{lemma}[Weak growth estimate]
  \label{lem:almost-opt-growth}
  Let $u$ be an almost minimizer of the Signorini problem in $B_1$ and
  $x_0\in B'_{1/2}\cap\Gamma(u)$ be such that
  $\widehat{N}(0+, u,x_0)\geq \ka$ for $\kappa\leq\kappa_0$. Then
  \begin{align*}
    \int_{\pa B_t(x_0)}u^2&\le C(n, \al,\kappa_0)\|u\|^2_{W^{1, 2}(B_1)}\left(\log\frac{1}{t}\right)t^{n+2\ka-1},\\
    \int_{B_t(x_0)}|\D u|^2&\le C(n, \al, \kappa_0)\|u\|^2_{W^{1,2}(B_1)}\left(\log\frac{1}{t}\right)t^{n+2\ka-2},
  \end{align*}
  for $0<t<t_0=t_0(n, \al, \kappa_0)$.
\end{lemma}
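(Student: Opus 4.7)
The plan is to combine two-sided bounds on Weiss' functional, extracted from the hypothesis, with a Gronwall-type ODI for the boundary integral $H(t):=\int_{\pa B_t(x_0)}u^2$. The hypothesis $\widehat N(0+,u,x_0)\geq\ka$ together with the monotonicity of $\widehat N$ (Theorem~\ref{thm:Almgren}) forces $\N(t,u,x_0)\geq\ka$, i.e.\ $N(t,u,x_0)\geq\ka(1-bt^{\al})$, for all $0<t<t_0$, which is exactly the statement $W_{\ka}(t,u,x_0)\geq 0$. Combined with the monotonicity of $W_{\ka}$ (Theorem~\ref{thm:weiss}) and its explicit evaluation at the fixed scale $t_0$, this yields
$$0\leq D(t)-\frac{\ka(1-bt^{\al})}{t}H(t)\leq C\|u\|^2_{W^{1,2}(B_1)}\,t^{n+2\ka-2},$$
where $D(t):=\int_{B_t(x_0)}|\D u|^2$ and $C=C(n,\al,\ka_0)$.

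Next, I would differentiate $H$ to obtain $H'(t)=\tfrac{n-1}{t}H(t)+2\int_{\pa B_t(x_0)}uu_{\nu}$ and identify $\int uu_{\nu}$ with $D(t)$ modulo a controlled error, by comparing $u$ with its Signorini replacement $h=h_t$ in $B_t(x_0)$. Since $u=h$ on $\pa B_t(x_0)$ and $\int_{\pa B_t}hh_{\nu}=\int_{B_t}|\D h|^2$ for the Signorini solution $h$ (integration by parts using the complementarity of $h$ on $B_t'$), while the almost-minimality of $u$ gives $\int_{B_t}|\D(u-h)|^2\leq t^{\al}D(t)$ (cf.\ the proof of Proposition~\ref{prop:alm-min-Sig-Mor-est}), an averaging-in-radius argument (picking a good sphere in $[t/2,t]$ via Fubini to pass from the volume bound on $\D(u-h)$ to a boundary bound on $(u-h)_{\nu}$) should yield $\int_{\pa B_t}uu_{\nu}\geq D(t)-C\|u\|^2_{W^{1,2}(B_1)}t^{n+2\ka-2}$, and hence
$$H'(t)\geq\frac{n-1+2\ka(1-bt^{\al})}{t}H(t)-C\|u\|^2_{W^{1,2}(B_1)}\,t^{n+2\ka-2}.$$

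Finally, multiplying by the integrating factor $\mu(t):=t^{-(n-1+2\ka)}\exp(2\ka bt^{\al}/\al)$, which is comparable to $t^{-(n-1+2\ka)}$ on $(0,t_0)$, recasts the ODI as $(H\mu)'(t)\geq -C\|u\|^2_{W^{1,2}(B_1)}/t$, since $\mu(t)t^{n+2\ka-2}\sim 1/t$. Integrating from $t$ to $t_0$ and rearranging produces the first inequality $H(t)\leq C\|u\|^2_{W^{1,2}(B_1)}(\log(1/t))\,t^{n+2\ka-1}$; the second inequality then follows by plugging this estimate back into the upper bound on $D(t)$ of the first step. The hard part will be the identification $\int uu_{\nu}\approx D(t)$: for genuine Signorini minimizers it is an exact integration-by-parts identity, but for almost minimizers one must quantify the boundary defect of the Signorini replacement $h_t$ uniformly in the scale $t$, for which the radius-averaging step from a volume to a boundary estimate seems to be the main technical point requiring care.
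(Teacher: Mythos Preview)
Your setup through the two-sided bound on $W_\kappa$ is correct, but the core step—identifying $\int_{\partial B_t} u u_\nu$ with $D(t)$ up to a controlled error—has a real gap, and it is not merely technical. For an almost minimizer $u$ there is no PDE, hence no integration-by-parts identity linking $\int_{\partial B_t} u u_\nu$ to $D(t)$; you must compare with the Signorini replacement $h_t$ on $B_t(x_0)$, for which the identity does hold. The defect is then $\int_{\partial B_t} u\,(u-h_t)_\nu$, and the only available bound is the \emph{volume} estimate $\int_{B_t}|\nabla(u-h_t)|^2\leq t^\alpha D(t)$. Your Fubini/averaging idea would at best produce a good radius $s\in[t/2,t]$ at which $\int_{\partial B_s}|\nabla(u-h_t)|^2$ is small, but on $\partial B_s$ you lose the boundary match $u=h_t$ (and $h_t$ is not the Signorini replacement in $B_s$), so the decomposition collapses. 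Because the replacement depends on the ball, there is no way to get the needed boundary control at \emph{every} (or a.e.) scale $t$, which is what the ODI and the subsequent integration require. This is precisely the point you flag as ``the hard part,'' and it does not go through as written.

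The paper sidesteps this entirely. Rather than differentiating $H$ and estimating $\int u u_\nu$, it exploits that the lower bound in Theorem~\ref{thm:weiss} is exactly
\[
\frac{d}{dt}W_\kappa(t)\geq \frac{e^{at^\alpha}}{t^{n+2\kappa-2}}\int_{\partial B_t}\Bigl(u_\nu-\tfrac{\kappa(1-bt^\alpha)}{t}u\Bigr)^2,
\]
and that this integrand coincides, up to an explicit factor, with $\bigl|\tfrac{d}{dr}u_{x_0,r}^\phi\bigr|^2$ for the $\kappa$-almost homogeneous rescaling $u_{x_0,r}^\phi(x)=u(rx+x_0)/\phi(r)$, where $\phi$ solves $\phi'/\phi=\kappa(1-br^\alpha)/r$. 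Setting $m(r)=\|u_{x_0,r}^\phi\|_{L^2(\partial B_1)}$, one gets $|m'(r)|\leq e^{cr^\alpha}r^{-1/2}\bigl(\tfrac{d}{dr}W_\kappa(r)\bigr)^{1/2}$ directly from Theorem~\ref{thm:weiss}, with no integration by parts and no comparison to a replacement. Cauchy--Schwarz then yields $|m(t)-m(t_0)|\leq C\bigl(\log\tfrac{t_0}{t}\bigr)^{1/2}W_\kappa(t_0)^{1/2}$, which is the first estimate; the second follows from $W_\kappa(t)\leq W_\kappa(t_0)$ exactly as in your last line. The conceptual point is that Theorem~\ref{thm:weiss} was itself proved by a competitor argument (the $\kappa$-homogeneous replacement), so the variational information has already been encoded in $\tfrac{d}{dt}W_\kappa$, and one never has to confront $\int_{\partial B_t} u u_\nu$ at a fixed scale.
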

\begin{proof} Without loss of generality, assume $x_0=0$. We first
  note that the condition $\widehat{N}(0+,u)\geq \kappa$ implies that
  $\widehat{N}(t,u)\geq \kappa$ for
  $0<t<t_0=t_0(n,\alpha,\kappa_0)$. Then also $\N(t,u)\geq \kappa$ for
  such $t$ and consequently,
  \[
    W_{\kappa}(t, u)=\frac{e^{at^{\al}}}{t^{n+2\ka-1}}\left(\int_{\pa
        B_t}u^2\right)(1-b t^\alpha)\left(\widetilde
      N(t,u)-\kappa\right) \ge 0.
  \]
  Next, for $\phi=\phi_\kappa$, we have that
  \begin{align*}
    \frac{d}{dr}u_{r}^{\phi}(x)
    &=\frac{\nabla u(rx)\cdot x}{\phi(r)}-\frac{u(rx)[\phi'(r)/\phi(r)]}{\phi(r)}\\
    &=\frac{1}{\phi(r)}\left(\nabla u(rx)\cdot
      x-\frac{\kappa(1-br^\alpha)}{r}u(rx)\right).
  \end{align*}
  Now let
$$
m(r)=\left(\int_{\partial B_1} (u_r^\phi(\xi))^2
  dS_\xi\right)^{1/2},\quad r>0.
$$
Then,
$$
m'(r)=\left(\int_{\partial B_1} u_r^\phi(\xi)
  \frac{d}{dr}u_r^\phi(\xi)dS_\xi\right)\left(\int_{\partial B_1}
  (u_r^\phi(\xi))^2 dS_\xi\right)^{-1/2}
$$
and consequently, by Cauchy-Schwarz,
$$
|m'(r)|\leq \left(\int_{\partial
    B_1}\left[\frac{d}{dr}u_r^\phi(\xi)\right]^2dS_\xi\right)^{1/2}.
$$
Hence,
\begin{align*}
  |m'(r)|
  &\leq
    \frac{1}{\phi(r)}\left(\int_{\partial B_1}\left(\nabla u(r\xi)\cdot \xi
    -\frac{\kappa(1-br^\alpha)}{r}u(r \xi)\right)^2  dS_\xi\right)^{1/2}\\
  &=\frac{1}{\phi(r)}\left(\frac{1}{r^{n-1}}\int_{\partial
    B_r}\left(\partial_\nu u(x)-\frac{\kappa(1-br^\alpha)}{r}
    u(x)\right)^2dS_x\right)^{1/2}\\
  &\leq\frac{1}{\phi(r)}\left(\frac{1}{r^{n-1}}\frac{r^{n+1}}{e^{a
    r^\alpha}}\frac{d}{dr}W_{\kappa}(r)\right)^{1/2}=\frac{e^{c
    r^\alpha}}{r^{1/2}}\left(\frac{d}{dr}W_{\kappa}(r)\right)^{1/2}
    ,\quad c=\kappa\frac{b}{\alpha}-\frac{a}{2},
\end{align*}
for $0<r<t_0=t_0(n,\alpha,\kappa_0)$.  Thus, we have shown
$$
|m'(r)|\leq
\frac{e^{cr^\alpha}}{r^{1/2}}\left(\frac{d}{dr}W_{\kappa}(r)\right)^{1/2},\quad
0<r<t_0.
$$
Integrating in $r$ over the interval $(s,t)\subset (0,t_0)$, we obtain
\begin{align*}
  |m(t)-m(s)|
  &\leq \int_{s}^{t}
    \frac{e^{cr^\alpha}}{r^{1/2}}\left(\frac{d}{dr}W_{\kappa}(r)\right)^{1/2}dr\\
  &\leq\left(\int_{s}^{t}\frac{e^{2cr^{\alpha}}}{r}dr\right)^{1/2}\left(\int_{s}^{t}
    \frac{d}{dr}W_{\kappa}(r)\right)^{1/2}\\
  &\leq C_0\left(\log\frac{t}{s}\right)^{1/2}\left[W_{\kappa}(t)-W_{\kappa}(s)\right]^{1/2}.
\end{align*}
In particular (recalling that $W_{\kappa}(s)\geq 0$), we obtain
\begin{align*}
  m(t)&\leq m(t_0) + C_0\left(\log\frac{t_0}{t}\right)^{1/2}\left[W_{\kappa}(t_0)\right]^{1/2}.
\end{align*}
Varying $t_0$ by an absolute factor, we can guarantee that
$$
m(t_0)\leq C(n,\alpha,\kappa_0)\|u\|_{L^2(B_1)},\quad
W_{\kappa}(t_0)\leq C(n,\alpha,\kappa_0)\|u\|_{W^{1,2}(B_1)}^2.
$$
Hence, we can conclude
\[
  \int_{\partial B_t} u^2\leq
  C(n,\alpha,\kappa_0)\|u\|_{W^{1,2}(B_1)}^2
  \left(\log\frac{1}{t}\right)t^{n+2\kappa-1},
\]
for $0<t<t_0=t_0(n,\alpha,\kappa_0)$. This implies the first
bound. The second bound follows immediately from the first one by
using that $W_{\kappa}(t, u)\leq W_{\kappa}(t_0, u)$:
\begin{align*}
  \frac{1}{t^{n+2\kappa-2}}\int_{B_t}|\nabla u|^2
  &\leq\frac{\kappa(1-bt^\alpha)}{t^{n+2\kappa-1}}\int_{\partial B_t}u^2+e^{-at^\alpha}W_{\kappa}(t_0, u)\\
  &\leq
    C(n,\alpha,\kappa_0)\|u\|_{W^{1,2}(B_1)}^2\left(\log\frac{1}{t}\right)
    +\frac{e^{at_0^\alpha}}{t_0^{n+2\kappa-2}}\int_{B_{t_0}}|\nabla u|^2\\
  &\leq C(n,\alpha,\kappa_0)\|u\|_{W^{1,2}(B_1)}^2\left(\log\frac{1}{t}\right).\qedhere
\end{align*}
\end{proof}

The logarithmic term in Lemma~\ref{lem:almost-opt-growth} does not
allow to conclude that the sequence of $\kappa$-homogeneous or almost
homogeneous rescaling is uniformly bounded say in $W^{1,2}(B_1)$. In
the rest of this section we show that in the case of the minimal
frequency $\kappa=3/2$ we can do that with the help of the so-called
epiperimetric inequality for the Signorini problem for the Weiss
energy
$$
W^0_{3/2}(w):=\int_{B_1}|\D w|^2-\frac 32 \int_{\pa B_1}w^2.
$$
To state this result, we let
\begin{equation}\label{eq:epiper-A}
  \mathcal{A}:=\{w\in W^{1, 2}(B_1): w\ge 0\text{ on }B'_1,\ w(x', x_n)=w(x', -x_n)\}
\end{equation}

\begin{theorem}[Epiperimetric inequality]
  There exists $\eta\in(0, 1)$ such that if $w\in\mathcal{A}$ is
  homogeneous of degree $3/2$ in $B_1$, then there exists
  $v\in\mathcal{A}$ with $v=w$ on $\pa B_1$ such that
  \[
    W^0_{3/2}(v)\le(1-\eta)W^0_{3/2}(w).
  \]
\end{theorem}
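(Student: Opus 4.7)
The plan is to take as competitor $v$ the solution of the Signorini problem in $B_1$ with zero obstacle on $\cM$ and boundary data $w|_{\partial B_1}$ --- equivalently, the Dirichlet minimizer on $\{u \in \mathcal{A} : u = w \text{ on } \partial B_1\}$. Then $v \in \mathcal{A}$ and $v = w$ on $\partial B_1$ automatically, and since $w$ is itself a competitor for this minimization,
\[
W^0_{3/2}(v) - W^0_{3/2}(w) = \int_{B_1}\bigl(|\nabla v|^2 - |\nabla w|^2\bigr) \le 0.
\]
This ``soft'' bound already settles the cases $W^0_{3/2}(w) \le 0$ for any choice of $\eta \in (0,1)$. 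The only case where the factor $1-\eta$ has real content is $W^0_{3/2}(w) > 0$, where one must quantify the decrease in a scale-invariant way.

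I would prove the quantitative improvement by contradiction and compactness. Assume failure: there is a sequence of $3/2$-homogeneous $w_k \in \mathcal{A}$, normalized by $\|w_k\|_{L^2(\partial B_1)} = 1$ (both sides of the target inequality are degree-$2$ homogeneous in $w$), whose Signorini replacements $v_k$ satisfy $W^0_{3/2}(w_k) - W^0_{3/2}(v_k) < \tfrac{1}{k} W^0_{3/2}(w_k)$. Strict convexity of the Dirichlet energy, applied to the admissible midpoint $(w_k + v_k)/2$, gives
\[
\int_{B_1} |\nabla (w_k - v_k)|^2 \le 2\bigl(W^0_{3/2}(w_k) - W^0_{3/2}(v_k)\bigr) \to 0,
\]
and together with the boundary-trace equality this forces $w_k - v_k \to 0$ in $W^{1,2}(B_1)$. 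The $W^{1,2}$-bound on $w_k$ coming from $3/2$-homogeneity and the $L^2(S^{n-1})$-normalization lets one extract a subsequence with $w_k \rightharpoonup w_\infty$ weakly in $W^{1,2}(B_1)$, with $w_\infty$ $3/2$-homogeneous, lying in $\mathcal{A}$, satisfying $\|w_\infty\|_{L^2(\partial B_1)} = 1$, and --- as the strong $W^{1,2}$ limit of the Signorini solutions $v_k$ --- itself a Signorini solution in $B_1$. The classification of $3/2$-homogeneous Signorini solutions that are even in $x_n$ (see \cite{PetShaUra12}*{Prop.~9.8}) then forces $w_\infty(x) = c_\infty \Re(x' \cdot e_\infty + i|x_n|)^{3/2}$ for some unit $e_\infty \in \mathbb{R}^{n-1}$ and $c_\infty > 0$.

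The main obstacle is extracting a contradiction at this stage: the limit is already a ``best'' profile with $W^0_{3/2}(w_\infty) = 0$, and the assumed near-equality $W^0_{3/2}(v_k)/W^0_{3/2}(w_k) \to 1$ is a borderline second-order statement. I would handle it by writing $w_k = w_\infty + \e_k \varphi_k$ with $\|\varphi_k\|_{W^{1,2}(B_1)} = 1$, $\e_k \to 0$, and expanding
\[
W^0_{3/2}(w_k) = \e_k^2\, Q_{w_\infty}(\varphi_k) + o(\e_k^2),
\]
where $Q_{w_\infty}$ is the second variation of $W^0_{3/2}$ at $w_\infty$ restricted to the tangent cone to $\mathcal{A}$ at $w_\infty$. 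The kernel of $Q_{w_\infty}$ on this cone is precisely the finite-dimensional tangent space to the orbit $\{c'\Re(x'\cdot e' + i|x_n|)^{3/2}\}_{c'>0,\,e' \in S^{n-2}}$, i.e.\ the $3/2$-eigenspace of the Signorini eigenvalue problem on the half-sphere $S^{n-1}_+$. The next admissible homogeneity above $3/2$ is $2$ --- the frequency gap behind Lemma~\ref{lem:min-freq}, combined with the classification of $\kappa$-homogeneous Signorini solutions --- so $Q_{w_\infty}$ controls the normal component of $\varphi_k$ with a uniform spectral gap of $\tfrac{1}{2}$. The tangential part of $\varphi_k$ is absorbed by re-selecting the base point within the orbit, and on the normal component the Dirichlet-minimizing competitor $v_k$ contracts $Q_{w_\infty}$ by a uniform factor $1 - \eta_0$ with $\eta_0 = \eta_0(n) > 0$, producing $W^0_{3/2}(v_k) \le (1-\eta_0) W^0_{3/2}(w_k)$ for all large $k$ and contradicting the assumption.
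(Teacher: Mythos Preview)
The paper does not actually prove this theorem; it quotes it from the literature (see the paragraph following the statement), citing \cite{GarPetSVG16}, \cite{FocSpa16}, \cite{RueShi17}, and \cite{ColSpoVel17}. Your compactness-plus-second-variation outline is in the spirit of the first two references; the direct approach of \cite{ColSpoVel17} via spherical harmonic decomposition is quite different and yields the explicit constant $\eta=1/(2n+3)$.

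That said, there is a genuine gap in your sketch. You assert that a $W^{1,2}(B_1)$ bound on $w_k$ follows from $3/2$-homogeneity together with the normalization $\|w_k\|_{L^2(\partial B_1)}=1$. This is false: for a $3/2$-homogeneous function $w=|x|^{3/2}\phi(x/|x|)$ one has
\[
\int_{B_1}|\nabla w|^2=\frac{1}{n+1}\int_{\partial B_1}\Bigl(\tfrac{9}{4}\phi^2+|\nabla_S\phi|^2\Bigr),
\]
so fixing $\|\phi\|_{L^2(\partial B_1)}=1$ leaves $\|\nabla_S\phi\|_{L^2(\partial B_1)}$ (hence $W^0_{3/2}(w)$) completely uncontrolled. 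This same missing bound undermines the step where you conclude $w_k-v_k\to 0$ in $W^{1,2}$: your convexity estimate gives $\int_{B_1}|\nabla(w_k-v_k)|^2\le \tfrac{2}{k}W^0_{3/2}(w_k)$, which tends to zero only if $W^0_{3/2}(w_k)$ is bounded. Ruling out $W^0_{3/2}(w_k)\to\infty$ under the contradiction hypothesis is precisely the content that \cite{RueShi17} adds to remove the closeness assumption of \cite{GarPetSVG16} and \cite{FocSpa16}; it requires showing that large high-frequency content in $w_k|_{\partial B_1}$ forces $W^0_{3/2}(v_k)\ll W^0_{3/2}(w_k)$, and this is not addressed in your outline.

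A secondary issue: the second-variation step is more delicate than you indicate because the constraint $w\ge 0$ on $B_1'$ is one-sided. The tangent cone to $\mathcal{A}$ at $w_\infty$ is a genuine convex cone (variations must be nonnegative on the coincidence set $\{w_\infty(\cdot,0)=0\}$, a half-space in $\R^{n-1}$), not a linear subspace, so the spectral-gap argument requires care with admissible directions.
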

This kind of inequalities go back to the work of Weiss \cite{Wei99b},
in the case of the classical obstacle problem. For the Signorini
problem, a version of this theorem was proved in \cite{GarPetSVG16}
and \cite{FocSpa16}. In fact, the theorem above is the version in
\cite{RueShi17}.  The inequality in \cite{GarPetSVG16} and
\cite{FocSpa16} requires $w$ to be close to the blowup profile, but
this can be easily removed by a scaling argument (see
\cite{RueShi17}). We also refer to \cite{ColSpoVel17}, for a more
direct proof of this inequality with an explicit constant
$\eta=1/(2n+3)$.

\medskip Now, with the help of the epiperimetric inequality, we can
prove a decay estimate for the Weiss-type energy functional
$W_{3/2}$. For the rest of the section, we will assume
$$
\kappa_0=2,
$$
which will make some of the constants independent of $\kappa_0$, but the results hold also for any other value of $\kappa_0\geq2$, with possible added dependence of constants on $\kappa_0$.

\begin{lemma}\label{lem:W-gr-est} Let $x_0\in B_{1/2}'$ be a free
  boundary point. Then, there exist $\de=\de(n,\alpha)>0$ such that
$$
0\leq W_{3/2}(t,u,x_0)\leq C t^\de,\quad 0<t<t_0=t_0(n,\alpha),
$$
with $C=C(n,\alpha)\|u\|_{W^{1,2}(B_1)}^2$.
\end{lemma}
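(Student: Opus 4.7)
The nonnegativity $W_{3/2}(t,u,x_0)\geq 0$ is exactly Corollary~\ref{cor:W32-nonneg}, so only the upper bound $W_{3/2}(t,u,x_0)\leq C t^\delta$ requires proof. The idea is to derive a differential inequality for the auxiliary quantity $E(t):=W^0_{3/2}(v_t)$, where $v_t(x):=u(tx+x_0)/t^{3/2}$ is the $3/2$-homogeneous rescaling; integration then produces a power decay. Assume $x_0=0$. A direct rescaling of the energies yields the identity
$$ e^{-at^\alpha}W_{3/2}(t,u) \;=\; E(t) \;+\; \frac{3b\,t^\alpha}{2}\,H(t),\qquad H(t):=\int_{\partial B_1}v_t^2, $$
so it suffices to bound $E(t)$, since Lemma~\ref{lem:almost-opt-growth} already gives $H(t)\le C\log(1/t)\,\|u\|_{W^{1,2}(B_1)}^2$.

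The upper bound on $E(t)$ comes from combining the epiperimetric inequality with the almost minimizing property of $u$. Let $c_t(x):=|x|^{3/2}v_t(x/|x|)$ denote the $3/2$-homogeneous extension of $v_t|_{\partial B_1}$ to $B_1$. Since $v_t$ is even in $x_n$ and $\ge 0$ on $B_1'$, $c_t\in\mathcal{A}$, and the epiperimetric inequality yields $w_t\in\mathcal{A}$ with $w_t=v_t$ on $\partial B_1$ and $W^0_{3/2}(w_t)\le(1-\eta)W^0_{3/2}(c_t)$. The rescaled competitor $W(y):=t^{3/2}w_t(y/t)$ lies in $\mathfrak{K}_{0,u}(B_t,\cM)$, and the almost minimizing inequality \eqref{eq:alm-min-Sig}, after rescaling, becomes $\int_{B_1}|\nabla v_t|^2\le(1+t^\alpha)\int_{B_1}|\nabla w_t|^2$. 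Subtracting $\tfrac{3}{2}\int_{\partial B_1}v_t^2=\tfrac{3}{2}\int_{\partial B_1}w_t^2$ from both sides and invoking the epiperimetric bound leads to
$$ E(t) \;\le\; (1+t^\alpha)(1-\eta)\,W^0_{3/2}(c_t) \;+\; C\,t^\alpha\,H(t). $$

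In parallel, direct differentiation of $E(t)$ in the form $E(t)=t^{-(n+1)}\int_{B_t}|\nabla u|^2-\tfrac{3}{2}t^{-(n+2)}\int_{\partial B_t}u^2$, combined with the polar-coordinate identity $W^0_{3/2}(c_t)=\frac{1}{n+1}\int_{\partial B_1}|\nabla_\tau v_t|^2-\frac{3(2n-1)}{4(n+1)}\int_{\partial B_1}v_t^2$ and completion of the square in $\partial_r v_t-\tfrac{3}{2}v_t$, yields the purely geometric identity
$$ t\,E'(t) \;=\; (n+1)\bigl(W^0_{3/2}(c_t)-E(t)\bigr) \;+\; \int_{\partial B_1}\!\bigl(\partial_r v_t-\tfrac{3}{2}v_t\bigr)^{\!2}. $$
Solving the preceding inequality for $W^0_{3/2}(c_t)$, inserting into this identity, and shrinking $t_0$ so that $(1-\eta)(1+t_0^\alpha)\le 1-\eta/2$, we obtain the differential inequality
$$ t\,E'(t) \;\ge\; c_1\,E(t) \;-\; C\,t^\alpha\,H(t),\qquad c_1:=\frac{(n+1)\eta}{2(1-\eta)}, $$
valid for $0<t<t_0$. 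A standard ODE comparison (divide by $t^{c_1+1}$, integrate from $t$ to $t_0$, and use the logarithmic bound on $H$) gives $E(t)\le C t^\delta$ for any $\delta<\min\{c_1,\alpha\}$, and feeding this back into the first display produces $W_{3/2}(t,u,x_0)\le C t^\delta$. The principal technical care lies in the polar-coordinate bookkeeping for the second displayed identity (which uses no minimality but must exactly match the constant $\frac{3(2n-1)}{4(n+1)}$) and in translating the multiplicative $(1+t^\alpha)$ error from the almost minimizing property into the additive error of the final differential inequality.
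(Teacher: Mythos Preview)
Your proof is correct and follows essentially the same approach as the paper: both derive a differential inequality for the Weiss energy via the epiperimetric inequality combined with the almost minimizing property, then integrate to obtain power decay. The only difference is organizational---you work with the bare quantity $E(t)=W^0_{3/2}(v_t)$ and track the $O(t^\alpha)$ corrections separately (using $e^{-at^\alpha}W_{3/2}(t,u)=E(t)+\tfrac{3b}{2}t^\alpha H(t)$), whereas the paper differentiates the modified functional $W_{3/2}(t,u)$ directly with the factors $e^{at^\alpha}$ and $(1-bt^\alpha)$ carried along; both routes yield a differential inequality of the form $t\,(\text{energy})'\ge c\,(\text{energy})-Ct^{\alpha/2}$ and the same conclusion.
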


\begin{proof} As before, without loss of generality we assume that
  $x_0=0$.

  The proof will follow from a differential inequality that we derive
  by using our earlier computations and the epiperimetric
  inequality. Recalling the proof of the Weiss-type monotonicity
  formula (Theorem~\ref{thm:weiss}), for small $t>0$, we have
  \begin{align*}
    \frac{d}{dt}W_{3/2}(t,u)
    &=\frac{e^{at^\alpha}}{t^{n+1}}\int_{\partial
      B_t} |\nabla
      u|^2-\frac{(n+1)(1-t^\alpha)e^{at^\alpha}}{t^{n+2}}\int_{B_t}|\nabla
      u|^2\\
    &\qquad-\psi'(t)\int_{\partial B_t}
      u^2-(n-1)\frac{\psi(t)}{t}\int_{\partial B_t}
      u^2-2\psi(t)\int_{\partial B_t} u \partial_\nu u\\
    &=-\frac{(n+1)(1-t^\alpha)}{t}W_{3/2}(t,
      u)+\frac{e^{at^\alpha}}{t^{n+1}}\int_{\partial B_t} |\nabla u|^2\\
    &\qquad-\left([(n+1)(1-t^\alpha)+(n-1)]\frac{\psi(t)}{t} +
      \psi'(t)\right)\int_{\partial B_t}u^2\\
    &\qquad-2\psi(t)\int_{\partial B_t}u\partial_\nu u\\
    &\geq -\frac{(n+1)(1-t^\alpha)}{t}W_{3/2}(t, u)\\
    &\qquad +\frac{e^{at^\alpha}(1-bt^\alpha)}{t^{n+1}}
      \begin{multlined}[t]\int_{\partial
          B_t}\biggl(|\nabla u|^2-\frac{3}{t} u\partial_\nu u\\
        -\frac{3}{2t}\left[\frac{(n+1)(1-t^\alpha)+(n-1)}{t}+\frac{\psi'(t)}{\psi(t)}\right]u^2\biggr).
      \end{multlined}
  \end{align*}
  To proceed, note that
$$
\frac{(n+1)(1-t^\alpha)+(n-1)}{t}+\frac{\psi'(t)}{\psi(t)}=\frac{(n-2)+O(t^\alpha)}{t}.
$$
Now, for the homogeneous rescalings
$$
u_t(x)=\frac{u(tx)}{t^{3/2}},
$$
we can write
\begin{align*}
  &\int_{\partial B_t}|\nabla u|^2-\frac{3}{t} u\partial_\nu
    u-\frac{3}{2}\frac{(n-2)+O(t^\alpha)}{t^2} u^2\\
  &\qquad=t^{n}\int_{\partial B_1}|\nabla u_t|^2
    -3u_t\partial_\nu u_t-\frac{3}{2}[(n-2)+O(t^\alpha)]u_t^2\\
  &\qquad=t^{n}\int_{\partial B_1}\left(\partial_\nu
    u_t-\frac{3}{2}u_t\right)^2+(\partial_\tau u_t)^2-\frac{3}{2}\left[\left(n-\frac12\right)+O(t^\alpha)\right]u^2_t,
\end{align*}
where $\partial_\tau u_t$ is the tangential component of $\nabla u_t$
on the unit sphere.  We can summarize for now that
\begin{align*}
  \frac{d}{dt}W_{3/2}(t,u)
  &\geq -\frac{(n+1)(1-t^\alpha)}{t}W_{3/2}(t, u)\\
  &\qquad+\frac{e^{a
    t^\alpha}(1-bt^\alpha)}{t}\int_{\partial B_1}\left[\left(\partial_\nu
    u_t-\frac{3}{2}u_t\right)^2+(\partial_\tau
    u_t)^2-\frac{3}{2}\left(n-\frac12\right)u^2_t\right]\\
  &\qquad+O(t^{\alpha-1})\int_{\partial
    B_1}u_t^2.
\end{align*}
On the other hand, if $w_t$ is a $3/2$-homogeneous replacement of
$u_t$ in $B_1$, i.e.,
$$
w_t(x)=|x|^{3/2}u_t(x/|x|)
$$
then
\begin{align*}
  \int_{\partial B_1} (\partial_\tau
  u_t)^2-\frac{3}{2}\left(n-\frac12\right)u^2_t
  &=\int_{\partial B_1}(\partial_\tau w_t)^2-\frac{3}{2}\left(n-\frac12\right)w^2_t\\&=(n+1)W_{3/2}^0(w_t),
\end{align*} where
$$
W^0_{3/2}(w_t)=\int_{B_1}|\nabla w_t|^2-\frac{3}{2}\int_{\partial
  B_1}w_t^2.
$$
The last equality follows by repeating the arguments in the beginning
of the proof of Theorem~\ref{thm:weiss} with $\kappa=3/2$. Let $v_t$
be the solution of the Signorini problem in $B_1$ with $v_t=u_t=w_t$
on $\partial B_1$. Then by the epiperimetric inequality
$$
W^0_{3/2}(v_t)\leq (1-\eta)W^0_{3/2}(w_t).
$$
On the other hand, since $u$ is an almost minimizer, we have
$$
\int_{B_1}|\nabla u_t|^2\leq (1+t^\alpha)\int_{B_1}|\nabla v_t|^2
$$
and since also $u_t=v_t$ on $\partial B_1$, we have
\begin{align*}
  W_{3/2}(t,u)
  &=\frac{e^{at^\alpha}}{t^{n+1}}\left[\int_{B_t}|\nabla
    u|^2-\frac{(3/2)(1-bt^\alpha)}{t}\int_{\partial B_t}u^2\right]\\
  &\leq
    (1+O(t^\alpha))W^0_{3/2}(v_t)+O(t^\alpha)\int_{\partial
    B_1}u_t^2\\
  &\leq \left(1-\frac{\eta}{2}\right)W_{3/2}^0(w_t)+O(t^\alpha)\int_{\partial B_1}
    u_t^2,\quad\text{for }0<t<t_0=t_0(n,\alpha).
\end{align*}
We can therefore write
\begin{align*}
  \frac{d}{dt}W_{3/2}(t,u)&\geq
                            -\frac{(n+1)(1-t^\alpha)}{t}W_{3/2}(t, u)\\
                          &\qquad+\frac{(n+1)e^{a
                            t^\alpha}(1-bt^\alpha)}{t}W_{3/2}^0(w_t)+O(t^{\alpha-1})\int_{\partial
                            B_1}u_t^2\\
                          &\geq\frac{n+1}{t}\left(-1+\frac{1}{1-\eta/2}+O(t^\alpha)\right)W_{3/2}(t, u)+\frac{O(t^\alpha)}{t^{n+3}}\int_{\partial
                            B_t} u^2\\
                          &\geq \frac{\eta}{4t}\,W_{3/2}(t, u)-Ct^{\alpha/2-1},
\end{align*}
for small $t$, where we have also used the growth estimate in
Lemma~\ref{lem:almost-opt-growth}. Taking now $\de$ such that
$$
0<\de<\min\left\{\frac{\eta}4,\frac{\alpha}{2}\right\},
$$
we have
\begin{multline*}
  \frac{d}{dt}\left[W_{3/2}(t,u)t^{-\de}+\frac{C}{\alpha/2-\de}
    t^{\alpha/2-\de}\right]\\
  \begin{aligned}
    &=t^{-\de}\left(\frac{d}{dt}W_{3/2}(t,u)-\frac{\de}{t}W_{3/2}(t,u)\right)+Ct^{\alpha/2-\de-1}\\
    &\geq t^{-\de-1}\left[\frac{\eta}{4}-\de\right]W_{3/2}(t,u)-Ct^{\alpha/2-\de-1}+Ct^{\alpha/2-\de-1}\\
    &\geq 0,
  \end{aligned}
\end{multline*}
for small $t$, where we have used again that $W_{3/2}(t,u)\geq
0$. Thus, we can conclude that
$$
0\leq W_{3/2}(t,u)\leq C t^\de,\quad 0<t<t_0=t_0(n,\alpha),
$$
with $C=C(n,\alpha)\|u\|_{W^{1,2}(B_1)}^2$.
\end{proof}

Using the estimate on $W_{3/2}(t,u)$ in Lemma~\ref{lem:W-gr-est}, we
can improve on Lemma~\ref{lem:almost-opt-growth} in the case
$\kappa=3/2$.
\begin{lemma}[Optimal growth estimate]
  \label{lem:opt-growth} Let $x_0\in B_{1/2}'$ be a free boundary
  point. Then, for $0<t<t_0=t_0(n,\alpha)$,
  \begin{align*}
    \int_{\partial B_t(x_0)} u^2
    &\leq C(n,\alpha)\|u\|_{W^{1,2}(B_1)}^2t^{n+2},\\
    \int_{B_t(x_0)} |\nabla u|^2&\leq C(n,\alpha)\|u\|_{W^{1,2}(B_1)}^2t^{n+1}.  
  \end{align*}
  
\end{lemma}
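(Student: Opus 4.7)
The plan is to revisit the argument of Lemma~\ref{lem:almost-opt-growth} in the case $\ka=3/2$, using the power decay $W_{3/2}(t,u,x_0)\le C t^{\de}$ from Lemma~\ref{lem:W-gr-est} to absorb the logarithmic factor. Without loss of generality assume $x_0=0$, and keep the notation $\phi=\phi_{3/2}$, $W=W_{3/2}(\cdot,u)$, and $m(r):=\bigl(\int_{\pa B_1}(u_r^{\phi})^2\bigr)^{1/2}$ from the proof of Lemma~\ref{lem:almost-opt-growth}. A direct change of variables gives $\int_{\pa B_r}u^2=\phi(r)^2 r^{n-1}m(r)^2= e^{-3br^{\al}/\al}\,r^{n+2}\,m(r)^2$, so the first estimate reduces to showing that $m$ is bounded on $(0,t_0)$ by $C(n,\al)\|u\|_{W^{1,2}(B_1)}$. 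The key ingredient, already established in the proof of Lemma~\ref{lem:almost-opt-growth}, is the derivative bound
\[
|m'(r)|\le \frac{e^{c r^{\al}}}{r^{1/2}}\,\bigl(W'(r)\bigr)^{1/2},\qquad 0<r<t_0.
\]

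The main step is to integrate this bound using Cauchy--Schwarz with a carefully chosen weight. Fix $\be$ with $0<\be<\de/2$ (e.g.\ $\be=\de/4$). For $0<s<t<t_0$, split the integrand as $(e^{cr^{\al}}/r^{1/2-\be})\cdot(W'(r)^{1/2}/r^{\be})$ and apply Cauchy--Schwarz to get
\[
|m(t)-m(s)|\le \biggl(\int_s^t \frac{e^{2cr^{\al}}}{r^{1-2\be}}\,dr\biggr)^{\!1/2}\!\!\biggl(\int_s^t r^{-2\be}W'(r)\,dr\biggr)^{\!1/2}.
\]
The first factor is at most $C(n,\al)\,t^{\be}$ because $2\be>0$. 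For the second, integrate by parts: since $W\ge 0$ on $(0,t_0)$ by Corollary~\ref{cor:W32-nonneg} and $2\be>0$, the boundary term at $s$ is nonpositive and we obtain
\[
\int_s^t r^{-2\be}W'(r)\,dr\le t^{-2\be}W(t)+2\be\int_s^t r^{-2\be-1}W(r)\,dr\le C\,t^{\de-2\be},
\]
where Lemma~\ref{lem:W-gr-est} and $\de-2\be>0$ ensure the tail integral converges. Multiplying the two factors gives $|m(t)-m(s)|\le C(n,\al)\|u\|_{W^{1,2}(B_1)}\,t^{\de/2}$ uniformly in $s<t$, and therefore $m(t)\le m(t_0)+C\,t_0^{\de/2}$ is bounded on $(0,t_0)$ with the desired dependence on $\|u\|_{W^{1,2}(B_1)}$. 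This proves the first estimate.

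The second estimate follows immediately. Rearranging the definition of $W_{3/2}(t,u)$ gives
\[
\int_{B_t}|\D u|^2=e^{-a t^{\al}}\,t^{n+1}\,W_{3/2}(t,u)+\frac{3(1-bt^{\al})}{2t}\int_{\pa B_t}u^2,
\]
and plugging in $W_{3/2}(t,u)\le Ct^{\de}$ together with the just-established bound on $\int_{\pa B_t}u^2$ yields $\int_{B_t}|\D u|^2\le C(n,\al)\|u\|_{W^{1,2}(B_1)}^2\,t^{n+1}$. The only delicate point in the whole argument is the weight choice in Cauchy--Schwarz: the naive choice $\be=0$ produces precisely the $\log(1/t)$ in Lemma~\ref{lem:almost-opt-growth}, and only the polynomial decay of $W_{3/2}$ (which in turn relies crucially on the epiperimetric inequality) makes the weighted estimate work.
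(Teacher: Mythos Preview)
Your proof is correct. Both your argument and the paper's rest on the same key input---the power decay $W_{3/2}(t)\le Ct^{\de}$ from Lemma~\ref{lem:W-gr-est}---and both aim to upgrade the $\log(1/t)$ factor in Lemma~\ref{lem:almost-opt-growth} to a uniform bound on $m(t)$. The difference is purely in how the $|m'(r)|$ bound is integrated. The paper first applies the unweighted Cauchy--Schwarz (the $\be=0$ choice you mention), obtaining $|m(t)-m(s)|\le C(\log(t/s))^{1/2}t^{\de/2}$, and then removes the remaining log via a dyadic telescoping sum over scales $t/2^j$. You instead insert a weight $r^{-\be}$ with $0<\be<\de/2$ directly into Cauchy--Schwarz and integrate by parts, which absorbs the log in one step and yields $|m(t)-m(s)|\le Ct^{\de/2}$ immediately. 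Your route is slightly more streamlined; the paper's dyadic argument is more elementary and reuses the exact estimate already on hand. For the gradient bound the paper uses $W_{3/2}(t)\le W_{3/2}(t_0)$ rather than $W_{3/2}(t)\le Ct^{\de}$, but this is cosmetic.
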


\begin{proof} We proceed as in the proof of
  Lemma~\ref{lem:almost-opt-growth} up to the estimate
  \begin{align*}
    |m(t)-m(s)|
    &\leq
      C_0\left(\log\frac{t}{s}\right)^{1/2}[W_{3/2}(t,u)-W_{3/2}(s,u)]^{1/2}.
  \end{align*}
  From there, using Lemma~\ref{lem:W-gr-est}, we now have an improved
  bound
  \begin{align*}
    |m(t)-m(s)|&\leq C\left(\log\frac{t}{s}\right)^{1/2}
                 t^{\de/2},\quad s<t<t_0,
  \end{align*}
  with $C=C(n,\alpha)\|u\|_{W^{1,2}(B_1)}$.  Then, by a dyadic
  argument, we can conclude that
$$
|m(t)-m(s)|\leq C t^{\de/2}.
$$
Indeed, let $k=0,1,2,\ldots$ be such that $t/2^{k+1}\leq
s<t/2^{k}$. Then,
\begin{align*}
  |m(t)-m(s)|
  &\leq \sum_{j=1}^k |m(t/2^{j-1})-m(t/2^j)|+|m(t/2^k)-m(s)|\\
  &\leq C(\log2)^{1/2}\sum_{j=1}^{k+1} (t/2^{j-1})^{\de/2}\leq
    C(\log2)^{1/2} \frac{t^{\de/2}}{1-2^{-\de/2}}=C t^{\de/2}.
\end{align*}
In particular, we have
$$
m(t)\leq m(t_0)+Ct_0^{\de/2}\leq C(n,\alpha)\|u\|_{W^{1,2}(B_1)},\quad
t<t_0.
$$
This implies the first bound. The second bound follows immediately
from the first one by using that $W_{3/2}(t, u)\leq W_{3/2}(t_0, u)$:
\begin{align*}
  \frac{1}{t^{n+1}}\int_{B_t}|\nabla u(x)|^2dx
  &\leq\frac{(3/2)(1-bt^\alpha)}{t^{n+2}}\int_{\partial B_t}u(x)^2dS_x+e^{-at^\alpha}W_{3/2}(t_0, u)\\
  &\leq
    C(n,\alpha)\|u\|_{W^{1,2}(B_1)}^2
    +\frac{e^{at_0^\alpha}}{t_0^{n+1}}\int_{B_{t_0}}|\nabla u(x)|^2dx\\*
  &\leq C(n,\alpha)\|u\|_{W^{1,2}(B_1)}^2.\qedhere
\end{align*}
\end{proof}


\section{$3/2$-Homogeneous blowups}
\label{sec:32-homog-blow}

For a free boundary point $x_0\in B'_{1/2}$, we consider again the
$3/2$-almost homogeneous rescalings
$$ u^{\phi}_{x_0, t}(x)=\frac{u(tx+x_0)}{\phi(t)}, \quad x\in B_{1/(2 t)},
$$
with $\phi=\phi_{3/2}$. We now observe that the optimal growth
estimates in Lemma~\ref{lem:opt-growth} implies the boundedness of
this family of rescalings in $W^{1,2}(B_R)$ for any $R>1$. Indeed, the
rescalings above will be defined in $B_R$ if $t<1/(2R)$, and by
Lemma~\ref{lem:opt-growth}, we will have
\begin{align*}
  \int_{B_R}|\D u^{\phi}_{x_0, t}|^2
  &=\frac{e^{{\frac{3b}{\al}t^\al}}}{t^{n+1}}\int_{B_{Rt}(x_0)}|\D u|^2\le C(n, \al)\|u\|^2_{W^{1, 2}(B_1)}R^{n+1},\\
  \int_{\pa B_R}(u^{\phi}_{x_0, t})^2
  &= \frac{e^{{\frac{3b}{\al}t^\al}}}{t^{n+2}}\int_{\pa B_{Rt}(x_0)}u^2\le C(n, \al)\|u\|^2_{W^{1, 2}(B_1)}R^{n+2},
\end{align*}
for $0<t<t_0/R$. Arguing as in the proof of
Proposition~\ref{prop:exist-Alm-blowup}, we have for a sequence
$t=t_j\ra 0+$
$$
u^{\phi}_{x_0, t_j}\ra u^{\phi}_{x_0, 0}\quad\text{in}\quad
C^1_{\loc}(B^{\pm}_R\cup B'_R).
$$
By letting $R\to \infty$ and using Cantor's diagonal argument, we
therefore have that over a subsequence $t=t_j\to 0+$
$$
u^{\phi}_{x_0, t_j}\ra u^{\phi}_{x_0, 0}\quad\text{in}\quad
C^1_{\loc}(\R^{n}_\pm\cup\R^{n-1}).
$$
We call such $u^{\phi}_{x_0, 0}$ a \emph{$3/2$-homogeneous blowup} of
$u$ at $x_0$. The name is explained by the fact that
$$
\lim_{t\to 0}\frac{\phi(t)}{t^{3/2}}=1,
$$
which implies that if we consider the $3/2$-homogeneous rescalings
$$
u_{x_0, t}^{(3/2)}(x)=\frac{u(tx+x_0)}{t^{3/2}},
$$
then we will have
$$
u^{\phi}_{x_0, 0}=\lim_{t_j\ra 0}u^{\phi}_{x_0, t_j}=\lim_{t_j\ra
  0}u_{x_0, t_j}^{(3/2)}=:u_{x_0, 0}^{(3/2)}
$$
and thus $u^{\phi}_{x_0, 0}=u_{x_0, 0}^{(3/2)}$.

\begin{remark}
  \label{rem:kappa-blowup}
  Because of the logarithmic term in the weak growth estimates in
  Lemma~\ref{lem:almost-opt-growth}, at the moment we are unable to
  consider $\kappa$-homogeneous blowups as above for frequencies other
  than $\kappa=3/2$. However, once the logarithmic term is removed,
  the same construction as for $\kappa=3/2$ applies. In particular, we
  note that in Lemma~\ref{lem:opt-est} we prove the optimal growth
  estimates for frequencies $\kappa=2m<\kappa_0$, $m\in\mathbb{N}$,
  enabling us to consider the $\kappa$-homogeneous blowups for these
  values of $\kappa$.
\end{remark}

We show next that the $3/2$-homogeneous blowups are unique at free
boundary points. This is achieved by the control on the ``rotation''
of the rescalings $u^\phi_{x_0, r}(x)$.

\begin{lemma}[Rotation estimate]
  \label{lem:rotation-est} Let $u$ be an almost minimizer for the
  Signorini problem in $B_1$, $x_0\in B_{1/2}'$ a free boundary point,
  and $\de$ as in Lemma~\ref{lem:W-gr-est}. Then for $\kappa=3/2$ and
  $\phi=\phi_{3/2}$
  $$
  \int_{\partial B_1} |u^\phi_{x_0, t}-u^\phi_{x_0, s}|\leq C
  t^{\de/2},\quad s<t<t_0=t_0(n,\alpha),
$$
for $C=C(n,\alpha)\|u\|_{W^{1,2}(B_1)}$.
\end{lemma}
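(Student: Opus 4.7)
The plan is to estimate the pointwise-in-$\xi$ derivative of $r\mapsto u^\phi_{x_0,r}(\xi)$ on $\partial B_1$, then integrate in $r$ using the Weiss-type monotonicity formula together with the decay $W_{3/2}(t,u,x_0)\le Ct^\delta$ from Lemma~\ref{lem:W-gr-est}. The one issue is that the most direct Cauchy--Schwarz argument produces a spurious $(\log(t/s))^{1/2}$ factor, which we then remove by the same dyadic summation used at the end of the proof of Lemma~\ref{lem:opt-growth}.

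Assume $x_0=0$ for notational convenience. As already computed in the proof of Lemma~\ref{lem:almost-opt-growth}, for $\xi\in\partial B_1$,
\[
\frac{d}{dr}u^\phi_r(\xi)=\frac{1}{\phi(r)}\Bigl(\nabla u(r\xi)\cdot\xi-\tfrac{(3/2)(1-br^\alpha)}{r}u(r\xi)\Bigr).
\]
First I would apply Cauchy--Schwarz on $\partial B_1$ and change variables $x=r\xi$ to get
\[
\int_{\partial B_1}\Bigl|\tfrac{d}{dr}u^\phi_r(\xi)\Bigr|\,d\xi
\le C(n)\Bigl(\frac{1}{\phi(r)^2 r^{n-1}}\int_{\partial B_r}\bigl(u_\nu-\tfrac{(3/2)(1-br^\alpha)}{r}u\bigr)^2\Bigr)^{1/2}.
\]
By Theorem~\ref{thm:weiss} the inner surface integral is bounded by $\tfrac{r^{n+1}}{e^{ar^\alpha}}\tfrac{d}{dr}W_{3/2}(r,u)$; since $\phi(r)^2=e^{-3br^\alpha/\alpha}r^3$, simplification yields, for $0<r<t_0(n,\alpha)$,
\[
\int_{\partial B_1}\Bigl|\tfrac{d}{dr}u^\phi_r(\xi)\Bigr|\,d\xi
\le \frac{C(n,\alpha)}{r^{1/2}}\Bigl(\tfrac{d}{dr}W_{3/2}(r,u)\Bigr)^{1/2}.
\]

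Next I would integrate in $r\in(s,t)$ and apply Cauchy--Schwarz in $r$ together with Lemma~\ref{lem:W-gr-est}:
\[
\int_{\partial B_1}|u^\phi_t-u^\phi_s|\,d\xi
\le C\Bigl(\log\tfrac{t}{s}\Bigr)^{1/2}\bigl[W_{3/2}(t,u)-W_{3/2}(s,u)\bigr]^{1/2}
\le C\Bigl(\log\tfrac{t}{s}\Bigr)^{1/2}t^{\delta/2},
\]
with $C=C(n,\alpha)\|u\|_{W^{1,2}(B_1)}$. Specializing to dyadic pairs $(s,t)=(t/2^{j},t/2^{j-1})$ gives the bound $C(\log 2)^{1/2}(t/2^{j-1})^{\delta/2}$ on each such annulus, free of the logarithmic blowup as $s\to 0$.

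Finally, for general $s<t<t_0$, let $k\in\mathbb{Z}_{\ge 0}$ satisfy $t/2^{k+1}\le s<t/2^k$, and write
\[
\int_{\partial B_1}|u^\phi_t-u^\phi_s|\le \sum_{j=1}^{k}\int_{\partial B_1}|u^\phi_{t/2^{j-1}}-u^\phi_{t/2^{j}}|+\int_{\partial B_1}|u^\phi_{t/2^k}-u^\phi_s|.
\]
Each summand is controlled by the dyadic estimate; the geometric series $\sum_{j\ge 1}(t/2^{j-1})^{\delta/2}$ converges to a multiple of $t^{\delta/2}$, giving the desired bound. The only step that requires any care is the correct algebraic relation between $\phi(r)$, the constants $a,b$ and the weight $e^{ar^\alpha}/r^{n+2\kappa-2}$ appearing in $W_{3/2}$, so that the change of variables produces exactly the factor $1/r^{1/2}$ needed for Cauchy--Schwarz in $r$ to yield the logarithm; everything else follows from combining the Weiss monotonicity formula with the decay of $W_{3/2}$ and the dyadic summation device.
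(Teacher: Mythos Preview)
Your proposal is correct and follows essentially the same route as the paper: compute $\frac{d}{dr}u^\phi_r$ on $\partial B_1$, apply Cauchy--Schwarz on the sphere, bound the resulting surface integral by the derivative of $W_{3/2}$ via Theorem~\ref{thm:weiss}, integrate in $r$ with a second Cauchy--Schwarz to obtain the $\bigl(\log\tfrac{t}{s}\bigr)^{1/2}t^{\delta/2}$ bound, and then remove the logarithm by the dyadic summation from Lemma~\ref{lem:opt-growth}. The algebraic bookkeeping you flag (that $\phi(r)^2 r^{n-1}$ against $r^{n+1}/e^{ar^\alpha}$ leaves exactly the $1/r$ needed) is also the paper's computation, with $c=\tfrac{3b}{2\alpha}-\tfrac{a}{2}$.
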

\begin{proof} The proof uses computations similar to the proof of
  Lemma~\ref{lem:almost-opt-growth} combined with the growth estimated
  for $W_{3/2}(t, u)$ in Lemma~\ref{lem:W-gr-est}.  We assume $x_0=0$,
  and have
  \begin{align*}
    \int_{\partial B_1} |u^\phi_t-u^\phi_s|
    &\leq \int_{\partial B_1}
      \int_s^t\left|\frac{d}{dr}u_r^\phi\right|dr=
      \int_s^t\int_{\partial B_1}\left|\frac{d}{dr}u_r^\phi\right| dr\\
    &\leq C_n\int_s^t\left(\int_{\partial B_1}
      \left|\frac{d}{dr}u_r^\phi\right|^2\right)^{1/2}\\
    &\leq C_n
      \left(\int_{s}^{t}\frac{1}{r}dr\right)^{1/2}\left(\int_{s}^{t}r\int_{\partial
      B_1}\left|\frac{d}{dr}u_r^\phi\right|^2\right)^{1/2}\\
    &\leq C_ne^{c
      t^\alpha}\left(\log\frac{t}{s}\right)^{1/2}\left(\int_{s}^t\frac{d}{dr}W_{3/2}(r, u)dr\right)^{1/2}
      ,\qquad c=\frac{3b}{2\alpha}-\frac{a}{2},
  \end{align*}
  where we have re-used the computation made in the proof of
  Lemma~\ref{lem:almost-opt-growth}.  Thus, we obtain
  \begin{align*}
    \int_{\partial B_1} |u^\phi_t-u^\phi_s|
    &\leq C(n,\alpha)\left(\log\frac{t}{s}\right)^{1/2} (W_{3/2}(t,
      u)-W_{3/2}(s, u))^{1/2}\\
    &\leq C \left(\log\frac{t}{s}\right)^{1/2}t^{\de/2}. 
  \end{align*}
  Then, using a dyadic argument as Lemma~\ref{lem:opt-growth}, we can
  conclude that
  $$
  \int_{\partial B_1} |u^\phi_t-u^\phi_s|\leq C t^{\de/2},\quad
  s<t<t_0,
  $$
  as required. Indeed, let $k=0,1,2,\ldots$ be such that
  $t/2^{k+1}\leq s<t/2^{k}$. Then
  \begin{align*}
    \int_{\partial B_1} |u^\phi_t-u^\phi_s|
    &\leq \sum_{j=1}^{k}\int_{\partial B_1}
      \left|u^\phi_{t/2^{j-1}}-u^\phi_{t/2^j}\right|+\int_{\partial B_1}
      \left|u^\phi_{t/2^{k}}-u^\phi_{s}\right|\\
    &\leq
      C(\log2)^{1/2}\sum_{j=1}^{k+1} (t/2^{j-1})^{\de/2}\leq
      C(\log2)^{1/2} \frac{t^{\de/2}}{1-2^{-\de/2}}.
  \end{align*}
  This completes the proof.
\end{proof}

The uniqueness of $3/2$-homogeneous blowup now follows.

\begin{lemma}\label{lem:blowup-rot-est} Let $u_{x_0,0}^\phi$ be a
  blowup at a free boundary point $x_0\in B_{1/2}'$. Then for
  $\kappa=3/2$
  $$
  \int_{\partial B_1} |u_{x_0,t}^\phi - u_{x_0,0}^\phi|\leq C
  t^{\de/2},\quad 0<t<t_0,
$$
where $C=C\left(n,\alpha,\|u\|_{W^{1,2}(B_1)}\right)$ and
$\de=\de(n,\alpha)>0$ are as in Lemma~\ref{lem:rotation-est}. In
particular, the blowup $u_{x_0,0}^\phi$ is unique.
\end{lemma}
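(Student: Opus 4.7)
The plan is to upgrade the rotation estimate of Lemma~\ref{lem:rotation-est}, which compares two rescalings at scales $s<t$, to an estimate comparing the rescaling $u^\phi_{x_0,t}$ with the blowup $u^\phi_{x_0,0}$, and then deduce uniqueness by the triangle inequality.

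First I would fix a particular sequence $t_j\to 0+$ along which the blowup is realized, so that $u^\phi_{x_0,t_j}\to u^\phi_{x_0,0}$ in $C^1_{\loc}(\R^n_\pm\cup\R^{n-1})$. For any $t\in(0,t_0)$ and any $t_j<t$, Lemma~\ref{lem:rotation-est} yields
$$
\int_{\partial B_1}|u^\phi_{x_0,t}-u^\phi_{x_0,t_j}|\leq C t^{\delta/2}.
$$
Since $\partial B_1$ is compact and decomposes into the two closed hemispheres sitting in $\overline{\R^n_\pm}$ (together with the thin equator inside $\R^{n-1}$), the $C^1_\loc(\R^n_\pm\cup\R^{n-1})$-convergence upgrades to uniform convergence on $\partial B_1$. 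Passing to the limit $t_j\to 0+$ inside the integral (e.g.\ by dominated convergence) gives
$$
\int_{\partial B_1}|u^\phi_{x_0,t}-u^\phi_{x_0,0}|\leq C t^{\delta/2},
$$
which is the stated bound.

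For the uniqueness claim, suppose $\widetilde u^\phi_{x_0,0}$ is another blowup obtained along a different sequence $s_k\to 0+$. Applying the argument above to both sequences and using the triangle inequality,
$$
\int_{\partial B_1}|u^\phi_{x_0,0}-\widetilde u^\phi_{x_0,0}|\leq \int_{\partial B_1}|u^\phi_{x_0,0}-u^\phi_{x_0,t}|+\int_{\partial B_1}|u^\phi_{x_0,t}-\widetilde u^\phi_{x_0,0}|\leq 2C t^{\delta/2}
$$
for every $0<t<t_0$. Letting $t\to 0+$ forces $u^\phi_{x_0,0}=\widetilde u^\phi_{x_0,0}$ on $\partial B_1$. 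Since both blowups are continuous $3/2$-homogeneous functions on $\R^n$ (their homogeneity follows from $\phi(t)/t^{3/2}\to 1$ and the construction of $u^{(3/2)}_{x_0,0}$ as discussed just before the lemma), agreement on $\partial B_1$ propagates to all of $\R^n$, giving uniqueness.

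The main obstacle is the passage to the limit $t_j\to 0+$ under the integral on $\partial B_1$: the known convergence is only $C^1$ separately on the two open half-spaces and the open thin space, so one must be slightly careful near the equator $\partial B_1\cap\{x_n=0\}$. This is resolved by splitting $\partial B_1$ into the two closed hemispheres $\partial B_1\cap\{\pm x_n\geq 0\}$, each of which is compact and contained in $\overline{\R^n_\pm}$ where the convergence is uniform up to the boundary; every other ingredient (the rotation estimate itself, the triangle inequality, and the homogeneity extension) is straightforward.
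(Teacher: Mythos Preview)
Your first part is correct and matches the paper: pass to the limit $s=t_j\to 0+$ in the rotation estimate, using that the $C^1_{\loc}(\R^n_\pm\cup\R^{n-1})$ convergence restricts to uniform convergence on each closed hemisphere of $\partial B_1$.

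The uniqueness argument, however, has a gap. You assert that both blowups are $3/2$-homogeneous, citing ``$\phi(t)/t^{3/2}\to 1$ and the construction of $u^{(3/2)}_{x_0,0}$ as discussed just before the lemma.'' That passage only shows that the $\phi$-blowup coincides with the limit of the homogeneous rescalings $u(tx+x_0)/t^{3/2}$; it does \emph{not} show that this limit is itself a $3/2$-homogeneous function. The name ``$3/2$-homogeneous blowup'' is just terminology at that stage. Indeed, homogeneity would follow once one knows the full limit $u^\phi_{x_0,t}\to u^\phi_{x_0,0}$ exists (so that the blowup along $t_j$ and along $\lambda t_j$ coincide), but that is exactly the uniqueness you are trying to prove, so invoking it here is circular.

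The paper closes this gap differently: it first observes (by the same reasoning as in Proposition~\ref{prop:exist-Alm-blowup}) that any blowup is a solution of the Signorini problem in $B_1$. Once your triangle-inequality argument gives $u^\phi_{x_0,0}=\tilde u^\phi_{x_0,0}$ on $\partial B_1$, uniqueness of Signorini solutions with prescribed boundary data forces equality in $B_1$, and then unique continuation of harmonic functions in $\R^n_\pm$ propagates the equality to all of $\R^n$. If you want to keep your approach, you could alternatively deduce homogeneity \emph{a posteriori} from the first part of the lemma (using $u^\phi_{x_0,t}(R\xi)=\frac{\phi(Rt)}{\phi(t)}u^\phi_{x_0,Rt}(\xi)$ and the full-limit convergence on $\partial B_1$), but that requires an extra step you have not supplied.
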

\begin{proof} If $u_{x_0,0}$ is the limit of $u^\phi_{x_0,t_j}$ for
  $t_j\to 0$, then first part of the lemma follows immediately from
  Lemma~\ref{lem:rotation-est}, by taking $s=t_j\to 0$ and passing to
  the limit.

  To see the uniqueness of blowup, we observe that $u_{x_0,0}^\phi$ is
  a solution of the Signorini problem in $B_1$, by arguing as in the
  proof of Lemma~\ref{lem:min-freq} for Almgren blowups. Now, if
  $\tilde{u}_{x_0,0}^\phi$ is another blowup, then from the first part
  of the lemma we will have
$$
\int_{\partial B_1} |\tilde{u}_{x_0,0}^\phi - u_{x_0,0}^\phi|^2=0,
$$
implying that both $\tilde{u}_{x_0,0}^\phi$ and $u_{x_0,0}^\phi$ are
solutions of the Signorini problem in $B_1$ with the same boundary
values on $\partial B_1$. By the uniqueness of such solutions, we have
$\tilde{u}_{x_0,0}^\phi=u_{x_0,0}^\phi$ in $B_1$. The equality
propagates to all of $\R^n$ by the unique continuation of harmonic
functions in $\R^n_\pm$.
\end{proof}

We next show that not only the blowups are unique, but also depend
continuously on a free boundary point.

\begin{lemma}[Continuous dependence of blowups]
  \label{lem:blowup-est} There exists $\rho=\rho(n,\alpha)>0$ such
  that if $x_0,y_0\in B_{\rho}'$ are free boundary points, then
$$
\int_{\partial B_1} |u_{x_0,0}^\phi - u_{y_0,0}^\phi|\leq
C|x_0-y_0|^\g,
$$
with $C=C\left(n,\alpha,\|u\|_{W^{1,2}(B_1)}\right)$ and
$\g=\g(n,\alpha)>0$.
\end{lemma}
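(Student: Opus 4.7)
The plan is to use the triangle inequality with an intermediate scale $t$, interpolating between the two blowups via the rescalings $u^\phi_{x_0,t}$ and $u^\phi_{y_0,t}$ at a common radius $t$, and then optimizing the choice of $t$ as a power of $|x_0-y_0|$.

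First, fix $x_0,y_0\in B_\rho'\cap\Gamma(u)$ and a scale $0<t<t_0$ to be chosen. Writing
\[
u^\phi_{x_0,0}-u^\phi_{y_0,0}=(u^\phi_{x_0,0}-u^\phi_{x_0,t})+(u^\phi_{x_0,t}-u^\phi_{y_0,t})+(u^\phi_{y_0,t}-u^\phi_{y_0,0}),
\]
the two outer differences are each controlled by $Ct^{\delta/2}$ on account of Lemma~\ref{lem:blowup-rot-est} (with $C=C(n,\alpha,\|u\|_{W^{1,2}(B_1)})$ and $\delta=\delta(n,\alpha)$ from Lemma~\ref{lem:W-gr-est}). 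So the task reduces to bounding the middle term, which measures only the difference of centers at a fixed scale.

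For the middle term, I would use the $C^{1,\beta}$ regularity from Theorem~\ref{mthm:A} (equivalently Theorem~\ref{thm:grad-holder}) to obtain that $u$ is Lipschitz on $\overline{B_{1/2}}$ with a constant $L=L(n,\alpha)\|u\|_{W^{1,2}(B_1)}$. Provided $\rho$ and $t_0$ are chosen so that $tx+x_0,\,tx+y_0\in B_{1/2}$ for all $x\in\partial B_1$ (which requires $t<1/4$ and $\rho<1/4$, say), this gives the pointwise bound
\[
|u^\phi_{x_0,t}(x)-u^\phi_{y_0,t}(x)|=\frac{|u(tx+x_0)-u(tx+y_0)|}{\phi(t)}\le\frac{L|x_0-y_0|}{\phi(t)}.
\]
Since $\phi(t)=e^{-(3b/(2\alpha))t^\alpha}t^{3/2}\ge c\,t^{3/2}$ for $t$ small, integration over $\partial B_1$ yields
\[
\int_{\partial B_1}|u^\phi_{x_0,t}-u^\phi_{y_0,t}|\le C\,\frac{|x_0-y_0|}{t^{3/2}}.
\]

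Combining, I arrive at
\[
\int_{\partial B_1}|u^\phi_{x_0,0}-u^\phi_{y_0,0}|\le C\Bigl(t^{\delta/2}+\frac{|x_0-y_0|}{t^{3/2}}\Bigr).
\]
Now I balance the two terms by choosing $t=|x_0-y_0|^{2/(\delta+3)}$, which makes both contributions comparable to $|x_0-y_0|^{\delta/(\delta+3)}$. Taking $\rho=\rho(n,\alpha)$ sufficiently small guarantees $t<t_0$ and that $tx+x_0,tx+y_0\in B_{1/2}$ for all $x\in\partial B_1$ and all $x_0,y_0\in B_\rho'$, so the argument is valid. This proves the lemma with $\gamma=\delta/(\delta+3)$, which depends only on $n$ and $\alpha$.

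The only subtlety I foresee is the justification of the pointwise Lipschitz bound used in the middle term: one must apply the $C^{1,\beta}$ regularity to the evenly extended gradient $\widehat{\nabla u}$ (as in Theorem~\ref{thm:grad-holder}) rather than to $\nabla u$ itself, since $u$ is generally only Lipschitz across $\cM$. Once this is invoked, the rest is a routine optimization argument, and the constants inherit the already-known dependence on $n$, $\alpha$, and $\|u\|_{W^{1,2}(B_1)}$.
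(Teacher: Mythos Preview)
Your proof is correct and follows the same overall triangle-inequality scheme as the paper: split into two ``rotation'' terms controlled by Lemma~\ref{lem:blowup-rot-est} and one ``translation'' term at a common scale $t$, then optimize in $t$.

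The difference lies in how the middle term is handled. You use the pointwise Lipschitz bound coming from the $C^{1,\beta}$ regularity of $u$ (via the boundedness of $\widehat{\nabla u}$), giving
\[
\int_{\partial B_1}|u^\phi_{x_0,t}-u^\phi_{y_0,t}|\le C\,\frac{|x_0-y_0|}{t^{3/2}},
\]
and hence $\gamma=\delta/(\delta+3)$. The paper instead averages the radius $r$ over $[d^\mu,2d^\mu]$ to turn the surface integral into a volume integral, then applies the fundamental theorem of calculus along the segment from $x_0$ to $y_0$ and bounds $\int_{B_{3d^\mu}(x_0)}|\nabla u|$ via Cauchy--Schwarz and the optimal growth estimate of Lemma~\ref{lem:opt-growth}. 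This yields a middle term of size $C d^{1-\mu}$ and, after balancing, the slightly better exponent $\gamma=\delta/(\delta+2)$. Your route is more elementary and avoids the averaging trick, at the cost of a marginally worse (but still positive and $(n,\alpha)$-dependent) H\"older exponent; since the lemma only asserts existence of some $\gamma>0$, this is immaterial. Your caveat about using $\widehat{\nabla u}$ rather than $\nabla u$ is exactly right and is what guarantees the Lipschitz bound across $\cM$.
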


\begin{proof} Let $d=|x_0-y_0|$ and $d^\mu\leq r\leq 2 d^\mu$ with
  $\mu\in(0,1]$ to be determined. By Lemma~\ref{lem:blowup-rot-est} we
  have
  \begin{align*}
    \int_{\partial B_1} |u_{x_0,0}^\phi - u_{y_0,0}^\phi|
    &\leq 2C r^{\de/2}
      +\int_{\partial B_1}|u_{x_0,r}^\phi-u_{y_0,r}^\phi|\\
    &\leq C d^{\mu\de/2}
      +\frac{C}{d^{\mu(n+1/2)}}\int_{\partial B_r}|u(x_0+z)-u(y_0+z)| dS_z
  \end{align*}
  and taking the average over $d^\mu\leq r\leq 2 d^\mu$, we have
  \begin{align*}
    \int_{\partial B_1} |u_{x_0,0}^\phi - u_{y_0,0}^\phi|
    &\leq C d^{\mu\de/2}
      +\frac{C}{d^{\mu(n+3/2)}}\int_{B_{2d^\mu}\setminus B_{d^\mu}}|u(x_0+z)-u(y_0+z)|dz.
  \end{align*}
  On the other hand, by using Lemma~\ref{lem:opt-growth},
  \begin{multline*}
    \int_{B_{2d^\mu}\setminus B_{d^\mu}}|u(x_0+z)-u(y_0+z)|dz\\
    \begin{aligned}
      &\leq \int_{B_{2d^\mu}\setminus B_{d^\mu}}\left|\int_0^1
        \frac{d}{ds}
        u(z+x_0(1-s)+y_0 s)ds\right|dz\\
      &\leq |x_0-y_0|\int_0^1\int_{B_{2d^\mu}}|\nabla
      u(z+x_0(1-s)+y_0s)|dz
      ds\\
      &\leq d\int_0^1\biggl(\int_{B_{2d^\mu}(x_0(1-s)+y_0s)}|\nabla u|\biggr)ds\\
      &\leq d\int_{B_{2d^\mu+d}(x_0)}|\nabla u|\leq
      d\int_{B_{3d^\mu}(x_0)}|\nabla
      u|\\
      &\leq C d^{1+\mu n/2}\biggl(\int_{B_{3d^\mu}(x_0)}|\nabla
      u|^2\biggr)^{1/2}\leq C d^{1+\mu n/2}d^{\mu(n+1)/2}\\
      &\leq C d^{1+\mu (n+1/2)},
    \end{aligned}
  \end{multline*}
  provided $3d^\mu<t_0$, which will hold if $d<\rho(n,\alpha)$.
  
  Combining the estimates, we infer that
  \begin{align*}
    \int_{\partial B_1} |u_{x_0,0}^\phi - u_{y_0,0}^\phi|
    \leq C d^{\mu\de/2}
    +C d^{1-\mu}.
  \end{align*}
  Now choosing $\mu$ so that $\mu \de/2=1-\mu$, that is
  $\mu=1/(1+\de/2)$, we obtain
  $$
  \int_{\partial B_1} |u_{x_0,0}^\phi - u_{y_0,0}^\phi| \leq C
  |x_0-y_0|^\g,\quad x_0,y_0\in B_\rho'
$$
with
\[
  \g=\frac{\de}{\de+2}.  \qedhere
\]
\end{proof}


\section{Regularity of the regular set}
\label{sec:regul-regul-set}

In this section we establish one of the main result of this paper, the
$C^{1, \g}$ regularity of the regular set. In fact, the most technical
part of the proof has already been done in the previous section, where
we proved the uniqueness of the $3/2$-homogeneous blowups, as well as
their H\"older continuous dependence on the free boundary points.

We start by defining the regular set.

\begin{definition}[Regular points]
  \label{def:reg-set}
  For an almost minimizer $u$ for the Signorini problem in $B_1$, we
  say that a free boundary point $x_0$ is \emph{regular} if
  $$
  \widehat{N}(0+,u,x_0)=3/2.
  $$
  Note that since $3/2<2\leq\kappa_0$, we will have that
  $\widehat{N}(r)<\kappa_0$ for small $r>0$, implying that
  $\N(r)=\widehat{N}(r)$ for such $r$ and consequently that
$$
N(0+)=\N(0+)=\widehat{N}(0+)=3/2.
$$
In particular, the condition above does not depend on the choice of $\kappa_0\geq 2$.

We denote the set of all regular points of $u$ by $\mathcal{R}(u)$ and
call it the \emph{regular set}.
\end{definition}

An important ingredient in the analysis of the regular set is the
following nondegeneracy lemma.

\begin{lemma}[Nondegeneracy at regular points]\label{lem:nondeg} Let  $x_0\in B'_{1/2}\cap\cR(u)$ for an almost
  minimizer $u$ for the Signorini problem in $B_1$. Then, for
  $\kappa=3/2$,
  $$
  \liminf_{t\to 0} \int_{\partial B_1} (u^\phi_{x_0,
    t})^2=\liminf_{t\to 0} \frac{1}{t^{n+2}}\int_{\partial B_t(x_0)}
  u^2>0.
  $$
\end{lemma}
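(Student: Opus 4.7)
The plan is to argue by contradiction, assuming $\liminf_{t\to 0+} M(t) = 0$, where
$M(t) := \int_{\partial B_1}(u^\phi_{x_0, t})^2 = \frac{e^{(3b/\alpha)t^\alpha}}{t^{n+2}}\int_{\partial B_t(x_0)}u^2$.
By the rotation estimate (Lemma~\ref{lem:rotation-est}) applied to $m(t) := M(t)^{1/2}$, combined with the uniqueness of the $3/2$-homogeneous blowup (Lemma~\ref{lem:blowup-rot-est}), the limit $\lim_{t\to 0+} m(t)$ exists. Hence the $\liminf$ is actually a limit, and the hypothesis upgrades to $M(t)\to 0$. Equivalently, the unique $3/2$-homogeneous blowup $u^\phi_{x_0, 0}$ vanishes identically on $\R^n$.

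Since $x_0\in\cR(u)$ we have $\widehat N(0+, u, x_0) = 3/2 < \kappa_0$, so Proposition~\ref{prop:exist-Alm-blowup} applies to the Almgren rescalings $u^A_{x_0, t}(x) = u(tx+x_0)/d(t)$, with $d(t) := (t^{1-n}\int_{\partial B_t(x_0)}u^2)^{1/2}$. Along a subsequence $t_j\searrow 0$, we have $u^A_{x_0, t_j}\to u^A_0$ in $C^1_\loc(\R^n_\pm\cup\R^{n-1})$, where $u^A_0$ is a nonzero $3/2$-homogeneous solution of the Signorini problem in $B_1$ normalized so that $\|u^A_0\|_{L^2(\partial B_1)} = 1$. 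By the classification of such solutions (cf. \cite{PetShaUra12}*{Proposition~9.9}), $u^A_0(x) = c_n\Re(x\cdot e + i|x_n|)^{3/2}$ for some unit vector $e\in S^{n-2}$ and a fixed constant $c_n > 0$; in particular, $u^A_0$ is strictly positive on the half-disc $\{x\cdot e > 0\}\cap B'_1$. The two rescalings are linked by the identity $u^\phi_{x_0, t} = M(t)^{1/2}\, u^A_{x_0, t}$, so the assumed degeneracy $M(t)\to 0$ amounts to $d(t) = o(t^{3/2})$, meaning that $u$ decays strictly faster than $t^{3/2}$ near $x_0$ in the $L^2$ sense, even though the Almgren-normalized rescaling still captures a nontrivial $3/2$-homogeneous profile.

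The main obstacle is to convert this inconsistency into a genuine contradiction with $\widehat N(0+, u, x_0) = 3/2$. My strategy is to combine the Weiss-type identity $W_{3/2}(t, u, x_0) = M(t)\,(1+O(t^\alpha))\,(\widetilde N(t) - 3/2)\ge 0$, the decay $W_{3/2}(t, u, x_0)\le C t^\delta$ from Lemma~\ref{lem:W-gr-est}, the optimal growth estimate of Lemma~\ref{lem:opt-growth}, and the pointwise lower bound $u(t_j p + x_0)\ge c(p)\, d(t_j)$ obtained from $C^0$ convergence at any $p$ in the positivity set of $u^A_0$, in order to deduce that $M(t)\to 0$ would force the truncated frequency $\widetilde N(t)$ to remain above $3/2$ by a positive amount in some averaged sense, contradicting $\widetilde N(0+) = 3/2$. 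This is the delicate step of the argument and requires a careful interplay between the Almgren and $3/2$-homogeneous blowups and the rate of convergence of Almgren's frequency at the regular point $x_0$.
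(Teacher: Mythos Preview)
Your proof has a genuine gap: the ``delicate step'' you flag at the end is never carried out, and the strategy you outline for it does not obviously close. From the Weiss identity
\[
W_{3/2}(t)=c(t)\,M(t)\,\bigl(\N(t)-\tfrac32\bigr),\qquad c(t)\to 1,
\]
together with $W_{3/2}(t)\le Ct^{\delta}$ and $M(t)\to 0$, nothing forces $\N(t)-3/2$ to stay bounded away from zero; both numerator and denominator go to zero and the ratio is indeterminate. The pointwise lower bound $u(t_jp+x_0)\ge c(p)\,d(t_j)$ coming from the Almgren blowup is also unhelpful here, since $d(t_j)=o(t_j^{3/2})$ under the contradiction hypothesis, so you only get a lower bound that decays faster than $t_j^{3/2}$.

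The paper's argument avoids all of this and is considerably simpler. The key point you are missing is that the Cauchy-type estimate already gives a \emph{quantitative} rate: from $|m(t)-m(s)|\le Ct^{\delta/2}$ (this is in the proof of Lemma~\ref{lem:opt-growth}, not Lemma~\ref{lem:rotation-est}, which is an $L^1$ estimate) and $m(t_j)\to 0$, one gets $m(t)\le Ct^{\delta/2}$ for all small $t$, i.e.
\[
\int_{\partial B_t(x_0)}u^2\le C\,t^{\,n+2+\delta}.
\]
Now apply the Weiss monotonicity at the slightly higher level $\tilde\kappa=3/2+\delta/4$: the bound above gives $t^{-(n+2\tilde\kappa-1)}\int_{\partial B_t}u^2\to 0$, hence $W_{\tilde\kappa}(0+)\ge 0$, hence $W_{\tilde\kappa}(t)\ge 0$ for small $t$ by Theorem~\ref{thm:weiss}, hence $\N(t)\ge\tilde\kappa$ for small $t$. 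This yields $N(0+)\ge 3/2+\delta/4$, contradicting $x_0\in\cR(u)$. No Almgren blowups are needed.
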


\begin{proof} As before, assume $x_0=0$. In terms of the quantities
  defined in the proofs of Lemmas~\ref{lem:almost-opt-growth} and
  \ref{lem:opt-growth}, we want to prove that
  $$
  \liminf_{t\to 0} m(t)>0.
  $$
  Assume, towards a contradiction, that $m(t_j)\to 0$ for some
  sequence $t_j\to 0$. Recall that by the proof of
  Lemma~\ref{lem:opt-growth}, we have
$$
|m(t)-m(s)|\leq C t^{\de/2},\quad 0<s<t<t_0.
$$
Now, setting $s=t_j\to 0$, we conclude that
$$
|m(t)|\leq Ct^{\de/2},\quad 0<t<t_0.
$$
Equivalently, we can rewrite this as
$$
\int_{\partial B_t} u^2\leq C t^{n+2+\de}.
$$
Next, take $\tilde{\kappa}=3/2+\de/4$ and consider Weiss's
monotonicity formula
$$
W_{\tilde{\kappa}}(t, u)=\frac{e^{a_{\tilde{\kappa}}
    t^{\al}}}{t^{n+2\tilde{\kappa}-2}}\left[\int_{B_{t}}|\D
  u|^2-\tilde{\kappa}\frac{1-b t^\alpha}{t}\int_{\pa B_{t}}u^2\right].
$$
Now observe that
$$
\frac{1}{t^{n+2\tilde{\kappa}-1}}\int_{\pa B_{t}}u^2\leq Ct^{\de/2}\to
0,
$$
which readily implies that
$$
W_{\tilde{\kappa}}(0+, u)\geq 0.
$$
In particular, by monotonicity, $W_{\tilde{\kappa}}(t, u)\geq 0$, for
small $t>0$, which also implies that
$\widetilde N(t, u)\geq \tilde{\kappa}$. But then
$N(0+, u)=\widetilde N(0+, u)\geq \tilde{\kappa}=3/2+\de/4$ contrary
to the assumption in the lemma. This completes the proof.
\end{proof}

The next result provides two important facts: a gap in possible values
of Almgren's frequency $N(0+)$ as well as the classification of
$3/2$-homogeneous blowups.

\begin{proposition}\label{prop:gap-32-hom-classif}
  If $\widehat{N}(0+, u, x_0)=\ka<2$, then $\ka=3/2$ and
  $$
  u^{\phi}_{x_0, 0}(x)=a_{x_0} \Re(x'\cdot \nu_{x_0}+i|x_n|)^{3/2}$$
  for some $a_{x_0}>0$, $\nu_{x_0}\in \partial B_1'$.
\end{proposition}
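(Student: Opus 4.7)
The plan is to derive the gap and classification from the well-developed theory of homogeneous global solutions of the (genuine) Signorini problem, via Almgren blowups, and then transfer the classification to the $3/2$-almost homogeneous blowup $u^{\phi}_{x_0,0}$ by a nondegeneracy argument. Since Lemma~\ref{lem:min-freq} already gives $\widehat N(0+,u,x_0)\ge 3/2$, we know $\kappa\in[3/2,2)\subset(0,\kappa_0)$. First I would apply Proposition~\ref{prop:exist-Alm-blowup} to produce, along some $r_j\to 0+$, an Almgren blowup $u^A_{x_0,0}$: a nonzero $\kappa$-homogeneous solution of the Signorini problem in $B_1$, even in $x_n$. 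The complementarity condition (Lemma~\ref{lem:comp-cond}) forces $u(x_0)=0$ and $|\widehat{\nabla u}(x_0)|=0$, conditions that survive the rescaling and the $C^1_{\loc}$ convergence, so that $u^A_{x_0,0}(0)=0$ and $|\widehat{\nabla u^A_{x_0,0}}(0)|=0$.

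Next I would quote the classification of $\kappa$-homogeneous solutions of the Signorini problem that are even in $x_n$ and have vanishing even gradient at the origin, as developed in \cite{PetShaUra12}*{Chapter~9} and already used in the proof of Lemma~\ref{lem:min-freq} via reduction to dimension $n=2$. That classification exhibits a gap: in the range $\kappa\in[3/2,2)$ the only admissible frequency is $\kappa=3/2$, and the corresponding homogeneous solutions are exactly of the form $c\,\Re(x'\cdot\nu+i|x_n|)^{3/2}$ with $c>0$ and $\nu\in\partial B_1'$. This pins down both $\kappa=3/2$ and the shape of every Almgren blowup of $u$ at $x_0$.

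Finally, I would transfer this to $u^{\phi}_{x_0,0}$. Since $\phi(r)/r^{3/2}\to 1$, the $3/2$-almost homogeneous blowup coincides with the $3/2$-homogeneous blowup $u^{(3/2)}_{x_0,0}$ from Section~\ref{sec:32-homog-blow}. A direct computation identifies the Almgren rescaling with a positive scalar multiple of the $3/2$-homogeneous rescaling, the scalar being $e^{-(3b/(2\alpha))r^\alpha}/m(r)$ for $m(r):=\|u^{\phi}_{x_0,r}\|_{L^2(\partial B_1)}$. Having just established $\kappa=3/2$, the point $x_0$ is a regular point, so the nondegeneracy statement of Lemma~\ref{lem:nondeg} applies and yields $\lim_{r\to 0} m(r)=\|u^{\phi}_{x_0,0}\|_{L^2(\partial B_1)}=:A>0$. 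Passing to the limit then gives
\[
u^{\phi}_{x_0,0}(x) = A\,u^A_{x_0,0}(x) = a_{x_0}\,\Re(x'\cdot\nu_{x_0}+i|x_n|)^{3/2}
\]
with $a_{x_0}:=cA>0$. The main subtlety to watch for is the ordering: Lemma~\ref{lem:nondeg} is stated for regular points, so it can only be invoked after the classification step has already promoted $x_0$ from a generic low-frequency free boundary point to a regular one. The classification step itself is the central ingredient, but it is imported wholesale from the existing theory for the Signorini problem, so no new hard analysis is needed beyond what has been set up in Sections~\ref{sec:almgr-resc-blow}--\ref{sec:32-homog-blow}.
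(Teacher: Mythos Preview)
Your proposal is correct and follows essentially the same route as the paper: produce an Almgren blowup via Proposition~\ref{prop:exist-Alm-blowup}, invoke the classification of homogeneous solutions from \cite{PetShaUra12}*{Proposition~9.9} to force $\kappa=3/2$ and the shape $c\,\Re(x'\cdot\nu+i|x_n|)^{3/2}$, and then link the $\phi$-blowup to the Almgren blowup through the scalar $\mu(r)=m(r)$.

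The one genuine difference is the ordering you flag at the end, and your version is actually the cleaner one. The paper invokes Lemma~\ref{lem:nondeg} to get $\liminf_{r\to 0}\mu(r)>0$ \emph{before} establishing $\kappa=3/2$, whereas Lemma~\ref{lem:nondeg} is stated and proved only for $x_0\in\cR(u)$; its proof yields a contradiction with $N(0+,u,x_0)=3/2$, not with the weaker hypothesis $\kappa<2$. Your arrangement---first extract the frequency gap from the Almgren blowup (which requires no nondegeneracy, only Proposition~\ref{prop:exist-Alm-blowup}), then apply Lemma~\ref{lem:nondeg} once $x_0$ has been identified as regular---avoids this circularity at no extra cost. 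A minor cosmetic point: the scalar relating $u_r^A$ to $u_r^{(3/2)}$ carries $e^{+(3b/(2\alpha))r^\alpha}$ rather than the negative exponent you wrote, but this is irrelevant to the argument since the factor tends to $1$.
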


\begin{proof}
  Without loss of generality, we may assume $x_0=0$. Let $r_j\to 0+$
  be a sequence such that $u_{r_j}^\phi\to u_{0}^\phi$ in
  $C^{1}_\loc(\R^n_\pm\cup \R^{n-1})$. Comparing $3/2$-almost
  homogeneous and Almgren rescalings, we have
$$
u_r^\phi(x)=u_r^A(x)\mu(r),\quad
\mu(r):=\frac{\left(\frac{1}{r^{n-1}}\int_{\partial B_r}
    u^2\right)^{1/2}}{\phi(r)}.
$$
By the optimal growth estimate (Lemma~\ref{lem:opt-growth}) and the
nondegeneracy at regular points (Lemma~\ref{lem:nondeg}) we have
$$
0<\liminf_{r\to 0+}\mu(r)\leq \limsup_{r\to 0+}\mu(r)<\infty.
$$
Thus, we may assume that, over a subsequence of $r_j$,
$\mu(r_j)\to\mu_0\in (0,\infty)$, and therefore
$$
u_{r_j}^\phi\to \mu_0 u_0^A\quad\text{in }C^{1}_{\loc}(B_1^\pm\cup
B_1'),
$$
where $u_0^A$ is an Almgren blowup of $u$ at $x_0=0$. Now, since
$\kappa<\kappa_0$, we can apply
Proposition~\ref{prop:exist-Alm-blowup} to obtain that $u_0^A$ is a
nonzero $\kappa$-homogeneous solution of the Signorini problem in
$B_1$, even in $x_n$-variable. Next, applying
Lemma~\ref{lem:min-freq}, we have $3/2\leq\kappa<2$ and thus by
\cite{PetShaUra12}*{Proposition~9.9}, we must have $\ka=3/2$ and
$$
u_0^A(x)=C_n\Re(x'\cdot\nu_{0}+i|x_n|)^{3/2}
$$ 
for some $C_n>0$, $\nu_0\in \partial B_1'$. (The constant $C_n$ comes
from the normalization $\int_{\partial B_1} (u_0^A)^2=1$.)  Thus,
$$
u_0^\phi(x)=a_0 \Re(x'\cdot\nu_0+i|x_n|)^{3/2}\quad\text{in }B_1
$$
with $a_0=C_n\mu_0$. By the unique continuation of harmonic functions
in $\R^n_\pm$, we obtain that the above formula for $u_0^\phi$
propagates to all of $\R^n$.
\end{proof}

Proposition~\ref{prop:gap-32-hom-classif} has an immediate corollary.

\begin{corollary}[Almgren frequency gap]
  \label{cor:gap} Let $u$ be an almost minimizer for the Signorini
  problem in $B_1$ and $x_0$ a free boundary point.  Then either
  $$
  \widehat N(0+,u)=3/2\quad\text{or}\quad \widehat N(0+,u)\geq 2.
  $$
\end{corollary}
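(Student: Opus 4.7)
The plan is to obtain this dichotomy as an essentially immediate consequence of Proposition~\ref{prop:gap-32-hom-classif}. First I note that the limit $\widehat N(0+,u,x_0)$ exists by the monotonicity in Theorem~\ref{thm:Almgren}, and by Lemma~\ref{lem:min-freq} it satisfies $\widehat N(0+,u,x_0) \ge 3/2$. So the task reduces to ruling out values in the open interval $(3/2,2)$.

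I would then split into two cases. If $\widehat N(0+,u,x_0) \ge 2$, we are done. Otherwise, setting $\kappa := \widehat N(0+,u,x_0)$, we have $\kappa < 2 = \kappa_0$ (recall the convention $\kappa_0 = 2$ fixed after Proposition~\ref{prop:exist-Alm-blowup}), so the hypothesis of Proposition~\ref{prop:gap-32-hom-classif} is satisfied and that proposition forces $\kappa = 3/2$. Combined, this is precisely the claimed dichotomy.

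I do not anticipate any real obstacle, since all the substantive work has already been carried out inside Proposition~\ref{prop:gap-32-hom-classif}: the boundedness of the ratio $\mu(r)$ between $3/2$-almost homogeneous and Almgren rescalings (furnished by the optimal growth from Lemma~\ref{lem:opt-growth} and the nondegeneracy from Lemma~\ref{lem:nondeg}) allows the identification of $u^\phi_{x_0,0}$ with a multiple of an Almgren blowup, after which the classification of homogeneous solutions of the Signorini problem with frequency below $2$ from \cite{PetShaUra12}*{Proposition~9.9} pins the frequency to $3/2$. The corollary is therefore merely the abstract repackaging of that proposition as a gap statement for $\widehat N(0+,u,x_0)$.
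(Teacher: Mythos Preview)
Your proposal is correct and matches the paper's approach exactly: the paper simply records this as an immediate corollary of Proposition~\ref{prop:gap-32-hom-classif}, and your first two paragraphs spell that out cleanly. Your third paragraph is extra commentary on the internals of that proposition rather than part of the argument for the corollary; note in particular that the conclusion $\kappa=3/2$ there comes from the Almgren blowup classification via \cite{PetShaUra12}*{Proposition~9.9}, so the nondegeneracy Lemma~\ref{lem:nondeg} is not actually needed to deduce the gap itself.
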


Yet another important fact is as follows.

\begin{corollary} The regular set $\mathcal{R}(u)$ is a relatively
  open subset of the free boundary.
\end{corollary}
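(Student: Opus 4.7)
The plan is to combine the Almgren frequency gap (Corollary~\ref{cor:gap}) with the monotonicity of the truncated frequency (Theorem~\ref{thm:Almgren}) and its continuous dependence on the center when the radius is held fixed. The underlying idea is that being regular is an open condition in the one-parameter frequency: points where the frequency at some small positive scale is strictly below $2$ automatically have $\widehat N(0+)=3/2$ by the gap.

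First I would fix $x_0\in \mathcal{R}(u)$, so that $\widehat N(0+,u,x_0)=3/2$. By Theorem~\ref{thm:Almgren}, the function $r\mapsto \widehat N(r,u,x_0)$ is nondecreasing on $(0,t_0)$, and by Definition~\ref{def:reg-set} it converges to $3/2$ as $r\to 0+$. Thus I can pick $r_*\in(0,t_0)$ small enough that $\widehat N(r_*,u,x_0)<7/4$.

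The main step is to show that $y\mapsto \widehat N(r_*,u,y)$ is continuous at $x_0$. Both $y\mapsto \int_{B_{r_*}(y)}|\nabla u|^2$ and $y\mapsto \int_{\partial B_{r_*}(y)}u^2$ depend continuously on the center, since by Theorem~\ref{mthm:A} the functions $u$ and $|\nabla u|^2$ are continuous and locally bounded in $B_1$. Moreover the denominator $\int_{\partial B_{r_*}(x_0)}u^2$ is strictly positive: as noted right after the definition of $N$ in Section~\ref{sec:weiss-almgren-type}, if this integral vanished at a free boundary point then testing \eqref{eq:alm-min-Sig} against the trivial competitor $h\equiv 0$ on $B_{r_*}(x_0)$ would force $u\equiv 0$ in $B_{r_*}(x_0)$, contradicting $x_0\in \Gamma(u)$. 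Hence $N(r_*,u,\cdot)$, and therefore $\widehat N(r_*,u,\cdot)$, is continuous at $x_0$.

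Consequently there is an open neighborhood $U$ of $x_0$ such that $\widehat N(r_*,u,y)<2$ for every $y\in U$. For any free boundary point $y\in U\cap \Gamma(u)$, applying the monotonicity of Theorem~\ref{thm:Almgren} at $y$ gives $\widehat N(0+,u,y)\le \widehat N(r_*,u,y)<2$, and the gap of Corollary~\ref{cor:gap} then forces $\widehat N(0+,u,y)=3/2$, i.e.\ $y\in \mathcal{R}(u)$. This proves that $\mathcal{R}(u)$ is relatively open in $\Gamma(u)$. The only delicate point is the continuity of $\widehat N(r_*,u,\cdot)$ at $x_0$, which however reduces entirely to the strict positivity of $\int_{\partial B_{r_*}(x_0)}u^2$ combined with standard continuous dependence of volume and surface integrals on the center; everything else is an immediate consequence of results already in hand.
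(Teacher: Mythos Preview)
Your proof is correct and follows essentially the same approach as the paper: continuity of $y\mapsto \widehat N(r_*,u,y)$ at a fixed small radius, combined with the monotonicity of $\widehat N$ and the frequency gap of Corollary~\ref{cor:gap}. The paper packages the first two ingredients slightly more abstractly by noting that $x\mapsto \widehat N(0+,u,x)$ is upper semicontinuous (as the infimum of continuous functions), from which the openness of $\cR(u)=\{x\in\Gamma(u):\widehat N(0+,u,x)<2\}$ is immediate; your argument is precisely the unpacking of this upper semicontinuity.
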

\begin{proof} For a fixed $0<t<t_0$, the mapping
  $x\mapsto \widehat N(t,u,x)$ is continuous on $\Gamma(u)$. Then, by
  the monotonicity of $\widehat N$, the mapping
  $x\mapsto \widehat{N}(0+,u,x_0)$ is upper semicontinuous on
  $\Gamma(u)$. Moreover, by Proposition~\ref{prop:gap-32-hom-classif},
  $$
  \cR(u)=\{x\in\Gamma(u): \widehat{N}(0+,u,x)<2\},
  $$
  which implies that $\cR(u)$ is relatively open in $\Gamma(u)$.
\end{proof}

The combination of Proposition~\ref{prop:gap-32-hom-classif} and
Lemma~\ref{lem:blowup-est} implies the following lemma.

\begin{lemma}\label{lem:blowup-hol}
  Let $u$ be an almost minimizer for the Signorini problem in $B_1$,
  and $x_0\in\mathcal{R}(u)$. Then there exists $\rho>0$, depending on
  $x_0$ such that $B_\rho'(x_0)\cap \Gamma(u)\subset \mathcal{R}(u)$
  and if
  $u^{\phi}_{\bx, 0}(x)=a_{\bx} \Re(x'\cdot\nu_{\bx}+i|x_n|)^{3/2}$ is
  the unique $3/2$-homogeneous blowup at
  $\bx \in B'_{\rho}(x_0)\cap \Gamma(u)$, then
  \begin{align*}
    |a_{\bx}-a_{\by}| \le C_0|\bx-\by|^{\g},\\
    |\nu_{\bx}-\nu_{\by}| \le C_0|\bx-\by|^{\g},
  \end{align*}
  for any $\bx, \by\in B'_{\rho}(x_0) \cap \Gamma(u)$ with a constant
  $C_0$ depending on $x_0$.
\end{lemma}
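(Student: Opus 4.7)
The plan is to combine the openness of $\cR(u)$ in $\Gamma(u)$, the classification of $3/2$-homogeneous blowups (Proposition~\ref{prop:gap-32-hom-classif}), the rotation estimate of Lemma~\ref{lem:blowup-est}, and the nondegeneracy at regular points (Lemma~\ref{lem:nondeg}), so as to reduce the desired H\"older bounds to a finite-dimensional bi-Lipschitz property of the map $(a,\nu)\mapsto a q_\nu$ on a compact parameter set, where $q_\nu(\xi):=\Re(\xi'\cdot\nu+i|\xi_n|)^{3/2}$.

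First, since $\cR(u)$ is relatively open in $\Gamma(u)$, I would choose $\rho>0$ small enough that $B'_\rho(x_0)\cap\Gamma(u)\subset\cR(u)$ and that Lemma~\ref{lem:blowup-est}, applied after a translation centered at $x_0$, gives
\begin{equation*}
  \int_{\partial B_1}|u^\phi_{\bx,0}-u^\phi_{\by,0}|\le C|\bx-\by|^\g,\qquad \bx,\by\in B'_\rho(x_0)\cap\Gamma(u),
\end{equation*}
with $C$ depending on $x_0$. By Proposition~\ref{prop:gap-32-hom-classif} we then have $u^\phi_{\bx,0}=a_\bx q_{\nu_\bx}$ with $a_\bx>0$ and $\nu_\bx\in\partial B_1'$. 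I would next verify that $a_\bx$ stays in a compact interval $[\bar a,M]$ on a (possibly smaller) neighborhood of $x_0$. Indeed, $|q_\nu|\le 1$ on $\partial B_1$ and $\int_{\partial B_1}q_\nu^2$ equals a fixed dimensional constant $c_n^2>0$ for every $\nu\in\partial B_1'$ (by rotational invariance of $\partial B_1$ about the $x_n$-axis), so $\|u^\phi_{\bx,0}\|_{L^2(\partial B_1)}=c_n a_\bx$; combined with Lemma~\ref{lem:opt-growth} this yields the upper bound $a_\bx\le M$. For the lower bound, interpolating the $L^1$ rotation estimate with the uniform $L^\infty$ bound on blowups gives $L^2$-continuity of $\bx\mapsto u^\phi_{\bx,0}$, hence continuity of $a_\bx$; as $a_{x_0}>0$ by Lemma~\ref{lem:nondeg}, shrinking $\rho$ enforces $a_\bx\ge\bar a:=a_{x_0}/2$.

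The crux of the proof is the following uniform non-degeneracy:
\begin{equation*}
  |a_1-a_2|+|\nu_1-\nu_2|\le C_*\int_{\partial B_1}|a_1 q_{\nu_1}-a_2 q_{\nu_2}|,\qquad a_i\in[\bar a,M],\ \nu_i\in\partial B_1',
\end{equation*}
with $C_*=C_*(n,\bar a,M)$. Combined with the $L^1$ rotation estimate above, this immediately yields the claimed H\"older bounds with the same exponent $\g$ and a constant $C_0$ depending on $x_0$. To prove the non-degeneracy I would consider the smooth map $\Phi(a,\nu):=a q_\nu$ from $(0,\infty)\times\partial B_1'$ into $L^1(\partial B_1)$: a direct computation shows that $q_\nu$ together with the tangential derivatives of $q_\nu$ along $\partial B_1'$ are linearly independent as functions on $\partial B_1$, so the Fr\'echet differential of $\Phi$ at any $(a,\nu)$ with $a>0$ is injective. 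Since $\Phi$ is globally injective on the compact set $[\bar a,M]\times\partial B_1'$, the inverse function theorem together with a standard compactness argument upgrades this pointwise injectivity to the uniform bi-Lipschitz estimate above. The main obstacle is precisely this quantitative non-degeneracy step; all other ingredients are already in place from the preceding sections.
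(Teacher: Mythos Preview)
Your argument is correct. The paper itself does not give a self-contained proof here but simply refers to the argument of Lemma~7.5 in \cite{GarPetSVG16}; your proposal supplies exactly the kind of argument that reference contains, so the approaches coincide in substance.

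The one stylistic difference is in how you extract $(a_{\bx},\nu_{\bx})$ from $u^{\phi}_{\bx,0}$: rather than computing $a_{\bx}$ and $\nu_{\bx}$ separately by hand (e.g.\ reading off $a_{\bx}$ from $\|u^{\phi}_{\bx,0}\|_{L^2(\partial B_1)}$ and then isolating $\nu_{\bx}$), you package everything into a single bi-Lipschitz property of the parametrization $\Phi(a,\nu)=a\,q_\nu$ on the compact set $[\bar a,M]\times\partial B_1'$, obtained from global injectivity of $\Phi$, injectivity of $D\Phi$, and compactness. This is a clean and robust way to do it; your check that $q_\nu$ and its tangential $\nu$-derivatives are linearly independent on $\partial B_1$ (which can be seen by restricting to $\{x_n=0,\ x'\cdot\nu>0\}$ and dividing by $(x'\cdot\nu)^{1/2}$) is the only point requiring a short computation, and it goes through. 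The preliminary continuity argument you use to secure the lower bound $a_{\bx}\ge a_{x_0}/2$ is also fine, since the $L^1$--$L^\infty$ interpolation gives continuity of $\bx\mapsto\|u^{\phi}_{\bx,0}\|_{L^2(\partial B_1)}=c_n a_{\bx}$.
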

\begin{proof} The proof follows by repeating the argument in Lemma~7.5
  in \cite{GarPetSVG16}.
\end{proof}

Now we are ready to prove the main result on the regularity of the
regular set.

\begin{theorem}[$C^{1,\gamma}$ regularity of the regular set]
  \label{thm:C1g-regset}
  Let $u$ be an almost minimizer for the Signorini problem in
  $B_1$. Then, if $x_0\in B_{1/2}'\cap\mathcal{R}(u)$, there exists
  $\rho>0$, depending on $x_0$ such that, after a possible rotation of
  coordinate axes in $\R^{n-1}$, one has
  $B'_{\rho}(x_0)\cap \Gamma(u) \subset \mathcal{R}(u)$, and
  \begin{align*}
    B'_{\rho}(x_0) \cap \Gamma(u)
    &= B'_{\rho}(x_0)\cap \{x_{n-1}=g(x_1,\ldots, x_{n-2})\},
  \end{align*}
  for $g\in C^{1, \g}(\R^{n-2})$ with an exponent
  $\g=\g(n, \al)\in (0, 1)$.

\end{theorem}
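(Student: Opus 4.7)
The plan is to combine the classification and Hölder-continuous dependence of the $3/2$-homogeneous blowups (Lemma~\ref{lem:blowup-hol}) with a directional monotonicity argument in a cone of tangential directions, following the blueprint established for the Signorini problem in \cite{GarPetSVG16}. Fix $x_0\in B_{1/2}'\cap\cR(u)$. After a rotation of coordinates in $\R^{n-1}$, we may assume $\nu_{x_0}=e_{n-1}$. By Lemma~\ref{lem:blowup-hol}, after shrinking $\rho$ we may assume that for every $\bx\in B_\rho'(x_0)\cap\Gamma(u)$ the unique blowup has the form $u^\phi_{\bx,0}(x)=a_{\bx}\Re(x'\cdot\nu_{\bx}+i|x_n|)^{3/2}$ with $a_{\bx}\ge a_{x_0}/2>0$ and $|\nu_{\bx}-e_{n-1}|<1/4$.

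First I would establish a uniform cone monotonicity property: there exist $r_1,c_1>0$ such that for every $\bx\in B_\rho'(x_0)\cap\Gamma(u)$ and every unit vector $e\in\R^{n-1}$ with $e\cdot\nu_{\bx}\ge 1/2$, one has $\partial_e u\ge 0$ on $B_{r_1}'(\bx)$. For the explicit blowup profile, an elementary calculation yields $\partial_e u^\phi_{\bx,0}\ge c_2>0$ on $B_1'\setminus B_{1/4}'$ (say), uniformly in $\bx$ and in $e$ in the given cone, using $a_{\bx}\ge a_{x_0}/2$ and the closeness of $\nu_{\bx}$ to $e_{n-1}$. I would then transfer this sign property to the rescaling $u^\phi_{\bx,r}$ for all $r$ smaller than a uniform $r_1>0$: by a compactness argument, if this failed there would exist sequences $\bx_k\to\bx_\infty$ and $r_k\to 0$ with $u^\phi_{\bx_k,r_k}$ converging in $C^1_{\loc}$ (as in Section~\ref{sec:32-homog-blow}) to some solution of the Signorini problem violating $\partial_e \geq 0$ at some point; but Lemmas~\ref{lem:blowup-rot-est} and \ref{lem:blowup-est} force the limit to coincide with $u^\phi_{\bx_\infty,0}$, a contradiction. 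Rescaling back gives $\partial_e u\ge 0$ on $B_{r_1}'(\bx)$.

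Once cone monotonicity at a uniform scale is available, a standard argument shows that $\Lambda(u)\cap B_{r_1}'(\bx)$ is a subgraph in direction $\nu_{\bx}$: in particular, after possibly shrinking $r_1$, the free boundary $\Gamma(u)\cap B_{r_1}'(x_0)$ is the graph $\{x_{n-1}=g(x_1,\ldots,x_{n-2})\}$ of a Lipschitz function $g$ with $\operatorname{Lip}(g)\le \sqrt{3}$ (the reciprocal of the cotangent of the cone aperture). By the same monotonicity, at any $\bx\in\Gamma(u)\cap B_{r_1}'(x_0)$ the tangent hyperplane to $\Gamma(u)$ exists and equals $\nu_{\bx}^\perp$ (restricted to $\R^{n-1}$), whence $\nabla g(\bx')$ is determined by $\nu_{\bx}$ via an explicit smooth formula. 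The Hölder estimate $|\nu_{\bx}-\nu_{\by}|\le C_0|\bx-\by|^\gamma$ from Lemma~\ref{lem:blowup-hol} then translates into $|\nabla g(\bx')-\nabla g(\by')|\le C|\bx'-\by'|^\gamma$, which yields $g\in C^{1,\gamma}$ and completes the proof.

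The main obstacle will be the uniform cone monotonicity step: the pointwise convergence $u^\phi_{\bx,r}\to u^\phi_{\bx,0}$ provided by Section~\ref{sec:32-homog-blow} must be made uniform in $\bx$, which requires carefully coupling the rotation estimate of Lemma~\ref{lem:rotation-est} with the continuous dependence of blowups on the base point from Lemma~\ref{lem:blowup-est}, together with the optimal growth and nondegeneracy bounds to rule out degeneration of the $a_{\bx}$ factor. Everything else is a transcription of the Signorini case.
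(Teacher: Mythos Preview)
Your compactness argument for the cone monotonicity step does not go through, and this is where the approach diverges from the paper in a way that matters for almost minimizers.  You assert that $\partial_e u^\phi_{\bx,0}\ge c_2>0$ on $B_1'\setminus B_{1/4}'$, but this is false: on the thin space the blowup is $a_{\bx}(x'\cdot\nu_{\bx})_+^{3/2}$, so its tangential derivative $\partial_e u^\phi_{\bx,0}(x',0)=\tfrac32 a_{\bx}(e\cdot\nu_{\bx})(x'\cdot\nu_{\bx})_+^{1/2}$ vanishes on the half-space $\{x'\cdot\nu_{\bx}\le 0\}$, which meets any annulus.  Consequently your contradiction argument fails: if $\partial_e u^\phi_{\bx_k,r_k}(y_k)<0$ and $(\bx_k,r_k,y_k)\to(\bx_\infty,0,y_\infty)$, the $C^1$ convergence only gives $\partial_e u^\phi_{\bx_\infty,0}(y_\infty)\le 0$, which is perfectly compatible with $y_\infty$ lying in the coincidence set of the blowup.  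For genuine solutions of the Signorini problem one can sometimes recover $\partial_e u\ge0$ by the maximum principle applied to the harmonic function $\partial_e u$, but for almost minimizers there is no PDE and no maximum principle, so that route is closed as well.

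The paper avoids this difficulty by working not with tangential derivatives but with two quantities that \emph{are} strictly positive on the relevant half-cones of the blowup.  After proving (Step~1) the uniform $C^1$ closeness of $u^\phi_{\bx,r}$ to $u^\phi_{\bx,0}$ that you also invoke, it observes that $u^\phi_{\bx,0}(\cdot,0)\ge c_\e>0$ on the compact set $\cC_\e(\nu_{\bx})\cap\partial B_{1/2}'$ and that $-\partial_{x_n}^+u^\phi_{\bx,0}\ge c_\e>0$ on $-\cC_\e(\nu_{\bx})\cap\partial B_{1/2}'$.  These strict lower bounds survive the $C^1$ perturbation, yielding $u(\cdot,0)>0$ on $\bx+(\cC_\e(\nu_{\bx})\cap B_{r_\e}')$ and $-\partial_{x_n}^+u(\cdot,0)>0$ on $\bx-(\cC_\e(\nu_{\bx})\cap B_{r_\e}')$.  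The second conclusion is then converted into $u(\cdot,0)=0$ on the negative cone via the complementarity condition (Lemma~\ref{lem:comp-cond}), which is exactly the ingredient that has no analogue in your directional-derivative scheme.  From there the Lipschitz graph and then the $C^{1,\gamma}$ upgrade via Lemma~\ref{lem:blowup-hol} proceed as you describe.
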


\begin{proof}
  The proof of the theorem is similar to that of Theorem~1.2 in
  \cite{GarPetSVG16}. However, we provide full details since there are
  technical differences.

  \medskip\noindent \emph{Step 1.}  By relative openness of $\cR(u)$
  in $\Gamma(u)$, for small $\rho>0$ we have
  $B'_{2\rho}(x_0)\cap \Ga(u)\subset \cR(u)$.  We then claim that for
  any $\e>0$, there is $r_{\e}>0$ such that for
  $\bx\in B'_{\rho}(x_0)\cap \Ga(u)$, $r<r_{\e}$, we have that for
  $\phi=\phi_{3/2}$
  $$
  \|u^{\phi}_{\bx, r}-u^{\phi}_{\bx,
    0}\|_{C^1(\overline{B_1^{\pm}})}<\e.
  $$
  Assuming the contrary, there is a sequence of points
  $\bx_j\in B'_{\rho}(x_0)\cap \Ga(u)$ and radii $r_j\ra 0$ such that
  $$
  \|u^{\phi}_{\bx_j, r_j}-u^{\phi}_{\bx_j,
    0}\|_{C^1(\overline{B_1^{\pm}})}\ge \e_0
  $$
  for some $\e_0>0$. Taking a subsequence, if necessary, we may assume
  $\bx_j\ra \bx_0\in \overline{B'_{\rho}(x_0)}\cap \Ga(u)$.  Using
  estimates \eqref{eq:holder-6}, \eqref{eq:grad-holder-1} and
  Lemma~\ref{lem:opt-growth}, we can see that $u_{\bx_j, r_j}^\phi$
  are uniformly bounded in $C^{1, \be}(B_2^{\pm}\cup B'_2)$.  Thus, we
  may assume that for some $w$
  \begin{align*}
    u^{\phi}_{\bx_j, r_j}\ra w \quad \text{in }C^1(\overline{B_1^{\pm}}).
  \end{align*}
  By arguing as in the proof of
  Proposition~\ref{prop:exist-Alm-blowup}, we see that the limit $w$
  is a solution of the Signorini problem in $B_1$.  Further, by
  Lemma~\ref{lem:blowup-rot-est}, we have
$$
\|u^{\phi}_{\bx_j, r_j} - u^{\phi}_{\bx_j, 0}\|_{L^1(\pa B_1)}\ra 0.
$$
On the other hand, by Lemma~\ref{lem:blowup-hol}, we have
$$
u^{\phi}_{\bx_j,0}\ra u^{\phi}_{\bx_0, 0}\quad \text{in }
C^1(\overline{B^{\pm}_1}),
$$
and thus
$$
w=u^{\phi}_{\bx_0, 0}\quad \text{on }\partial B_1.
$$
Since both $w$ and $u^{\phi}_{\bx_0, 0}$ are solutions of the
Signorini problem, they must coincide also in $B_1$.  Therefore
$$
u^{\phi}_{\bx_j, r_j}\ra u^{\phi}_{\bx_0, 0}\quad \text{in
}C^1(\overline{B_1^{\pm}}),
$$
implying also that
$$
\|u^{\phi}_{\bx_j,
  r_j}-u^{\phi}_{\bx_j,0}\|_{C^1(\overline{B_1^{\pm}})}\to 0,
$$
which contradicts our assumption.

\medskip\noindent \emph{Step 2.}  As \cite{GarPetSVG16}, for a given
$\e>0$ and a unit vector $\nu\in \R^{n-1}$ define the cone
$$
\cC_{\e}(\nu)=\{x'\in \R^{n-1}: x'\cdot\nu > \e|x'|\}.
$$ By Lemma~\ref{lem:blowup-hol}, we may assume
$a_{\bx}\ge\frac{a_{x_0}}{2}$ for $\bx\in B'_{\rho}(x_0)\cap \Ga(u)$
by taking $\rho$ small. For such $\rho$ we then claim that for any
$\e>0$ there is $r_{\e}>0$ such that for any
$\bx\in B'_{\rho}(x_0)\cap \Ga(u)$ we have
$$\bx+\left(\cC_{\e}(\nu_{\bx})\cap B'_{r_{\e}}\right)\subset
\{u(\cdot, 0)>0\}.$$ Indeed, denoting
$\cK_{\e}(\nu)=\cC_{\e}\cap \partial B'_{1/2}$, we have for some
universal $C_{\e}>0$
\begin{align*}
  \cK_{\e}(\nu_{\bx})\Subset\{u^{\phi}_{\bx, 0}(\cdot,0)>0\}
  \cap B'_1\quad \text{and}\quad u^{\phi}_{\bx, 0}(\cdot, 0)\ge
  a_{\bx}C_{\e}\ge \frac{a_{x_0}}{2}C_{\e}\quad \text{on }
  \cK_{\e}(\nu_{\bx}). 
\end{align*}
Since $\frac{a_{x_0}}{2}C_{\e}$ is independent of $\bx$, by Step 1 we
can find $r_{\e}>0$ such that for $r<2r_{\e}$,
$$
u^{\phi}_{\bx, r}(\cdot, 0)>0\quad \text{on } \cK_{\e}(\nu_{\bx}).
$$
This implies that for $r<2r_{\e}$,
$$
u(\cdot, 0)>0\quad \text{on}\quad
\bx+r\cK_{\e}(\nu_{\bx})=\bx+\left(\cC_{\e}(\nu_{\bx})\cap \partial
  B'_{r/2}\right).
$$
Taking the union over all $r<2r_{\e}$, we obtain
$$
u(\cdot, 0)>0\quad \text{on }\bx+\left(\cC_{\e}(\nu_{\bx})\cap
  B'_{r_{\e}}\right).
$$

\medskip\noindent \emph{Step 3.}  We claim that for given $\e>0$,
there exists $r_{\e}>0$ such that for any
$\bar{x}\in B_{\rho}'(x_0)\cap \Gamma(u)$ we have
$\bx-\left(\cC_{\e}(\nu_{\bar{x}})\cap B'_{r_{\e}}\right)\subset
\{u(\cdot, 0)=0\}$.

Indeed, we first note that
$$
-\pa^+_{x_n}u^{\phi}_{\bx, 0}\ge a_{\bx}C_{\e} >
\left(\frac{a_{x_0}}{2}\right)C_{\e}\quad \text{on
}-\mathcal{K}_{\e}(\nu_{\bx})$$ for a universal constant
$C_{\e}>0$. From Step 1, there exists $r_{\e}>0$ such that for
$r<2r_{\e}$,
$$
-\pa^+_{x_n}u^{\phi}_{\bx, r}(\cdot, 0)>0\quad \text{on
}-\mathcal{K}_{\e}(\nu_{\bx}).
$$
By arguing as in Step 2, we obtain
$$
-\pa^+_{x_n}u(\cdot, 0)>0\quad\text{on } \bx-\left(\cC(\nu_{\bx})\cap
  B'_{r_{\e}}\right).
$$
By the complementarity condition in Lemma~\ref{lem:comp-cond}, we
therefore conclude that
$$
\bx-\left(\cC(\nu_{\bx})\cap B'_{r_{\e}}\right) \subset
\{-\pa^+_{x_n}u(\cdot, 0)>0\}\subset \{u(\cdot, 0)=0\}.
$$

\medskip\noindent \emph{Step 4.}  By rotation in $\R^{n-1}$ we may
assume $\nu_{x_0}=e_{n-1}$. For any $\e>0$, by
Lemma~\ref{lem:blowup-hol} again, we can take
$\rho_{\e}=\rho(x_0, \e)$, possibly smaller than $\rho$ in the
previous steps, such that
$$\cC_{2\e}(e_{n-1})\cap B'_{r_{\e}} \subset \cC_{\e}(\nu_{\bx})\cap
B'_{r_{\e}}\quad\text{for }\bx\in B'_{\rho_{\e}}(x_0)\cap \Ga(u). $$
By Step 2 and Step 3, for $\bx\in B'_{\rho_{\e}}(x_0)\cap \Ga(u)$,
\begin{align*}
  \bx+\left(\cC_{2\e}(e_{n-1})\cap B'_{r_{\e}}\right) \subset \{u(\cdot, 0)>0\},\\
  \bx-\left(\cC_{2\e}(e_{n-1})\cap B'_{r_{\e}}\right) \subset \{u(\cdot, 0)=0\}.
\end{align*}
Now, fixing $\e=\e_0$, by the standard arguments, we conclude that
there exists a Lipschitz function $g: \R^{n-2}\ra \R$ with
$|\D g|\le C_n/\e_0$ such that
\begin{align*}
  B'_{\rho_{\e_0}}(x_0) \cap \{u(\cdot, 0)=0\}=B'_{\rho_{\e_0}}(x_0)\cap \{x_{n-1}\le g(x'')\},\\
  B'_{\rho_{\e_0}}(x_0) \cap \{u(\cdot,
  0)>0\}=B'_{\rho_{\e_0}}(x_0)\cap
  \{x_{n-1}> g(x'')\}.
\end{align*}

\medskip\noindent \emph{Step 5.}  Taking $\e\ra 0$ in Step 4, $\Ga(u)$
is differentiable at $x_0$ with normal $\nu_{x_0}$. Recentering at any
$\bx\in B'_{\rho_{\e_0}}(x_0)\cap \Ga(u)$, we see that $\Ga(u)$ has a
normal $\nu_{\bx}$ at $\bx$. By Lemma~\ref{lem:blowup-hol}, we
conclude that $g$ in Step 4 is $C^{1, \g}$. This completes the proof
of the theorem.
\end{proof}


\section{Singular points}
\label{sec:singular-points}

In this section we study the set of so-called singular free boundary
points. An important technical tool to accomplish this is the
logarithmic epiperimetric inequality of \cite{ColSpoVel17}. We use it
for two purposes: to establish the optimal growth at singular points
as well as the rate of convergence of rescalings to blowups,
ultimately implying a structural theorem for the singular set.

\begin{definition}[Singular points]
  Let $u$ be an almost minimizer for the Signorini problem in
  $B_1$. We say that a free boundary point $x_0$ is \emph{singular} if
  the coincidence set $\Lambda(u)=\{u(\cdot,0)=0\}\subset B_1'$ has
  zero $H^{n-1}$-density at $x_0$, i.e.,
$$
\lim_{r\ra 0+}\frac{H^{n-1}\left(\Ld(u)\cap
    B_r'(x_0)\right)}{H^{n-1}(B'_r(x_0))}=0.
$$
By using Almgren's rescalings $u_{x_0,r}^A$, we can rewrite this
condition as
$$
\lim_{r\ra 0+}H^{n-1}(\Ld(u_{x_0, r}^A)\cap B'_1)=0.
$$
We denote the set of all singular points by $\Sigma(u)$ and call it
the \emph{singular set}.
\end{definition}

Throughout the section we will assume that
$$
\kappa_0>2.
$$
We can take $\kappa_0$ as large as we like, however, we have to
remember that the constants in $\widehat{N}=\widehat{N}_{\kappa_0}$
and $W_\kappa$ do depend on $\kappa_0$.

We then have the following characterization of singular points,
similar to Proposition~9.22 in \cite{PetShaUra12} for the solutions of
the Signorini problem.

\begin{proposition}[Characterization of singular points]\label{prop:sing-char}
  Let $u$ be an almost minimizer for the Signorini problem in $B_1$,
  and $x_0\in B_{1/2}'\cap \Gamma(u)$ be such that
  $\widehat N(0+, u, x_0)=\ka<\kappa_0$. Then the following statements
  are equivalent.
  \begin{enumerate}[label=\textup{(\roman*)},leftmargin=*,widest=iii]
  \item $x_0\in \Sigma(u)$.
  \item any Almgren blowup of $u$ at $x_0$ is a nonzero polynomial
    from the class
    \begin{multline*}
      \mathcal{Q}_{\ka}=\{q: \text{$q$ is homogeneous polynomial of
        degree
        $\kappa$ such that}\\
      \La q=0,\ q(x', 0)\ge 0,\ q(x', x_n)=q(x', -x_n)\}.
    \end{multline*}
  \item $\ka=2m$ for some $m\in \mathbb{N}$.
  \end{enumerate}
\end{proposition}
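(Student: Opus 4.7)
My plan is to prove the equivalences via the cycle (i) $\Rightarrow$ (ii) $\Rightarrow$ (iii) $\Rightarrow$ (ii) $\Rightarrow$ (i), paralleling the classical treatment for Signorini solutions in \cite{PetShaUra12}*{Proposition~9.22}. The common setup in every direction is that, since $\kappa<\kappa_0$, Proposition~\ref{prop:exist-Alm-blowup} already yields nonzero $\kappa$-homogeneous Almgren blowups $u^A_{x_0,0}$ that solve the Signorini problem in $B_1$ and are even in $x_n$, with $u^A_{x_0,r_j}\to u^A_{x_0,0}$ in $C^1_{\loc}(B_1^\pm\cup B_1')$ along suitable sequences $r_j\to 0+$. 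The implication (ii) $\Rightarrow$ (iii) is immediate from the definition of $\mathcal{Q}_\kappa$.

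For (i) $\Rightarrow$ (ii), I would fix any Almgren blowup $u^A_{x_0,0}$. Using the scaling $\Lambda(u^A_{x_0,r})\cap B_1' = \frac{1}{r}(\Lambda(u)\cap B_r'(x_0)-x_0)$, the singular-point hypothesis gives $H^{n-1}(\Lambda(u^A_{x_0,r_j})\cap B_1')\to 0$, and, combined with uniform $C^1$-convergence on $B_1'$, this forces the positivity set $\{u^A_{x_0,0}(\cdot,0)>0\}$ to be dense in $B_1'$. The Signorini complementarity condition, which the blowup inherits as a Signorini solution, then gives $\partial^+_{x_n}u^A_{x_0,0}=0$ on this dense set and, by continuity, on all of $B_1'$. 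Coupled with even symmetry in $x_n$, this means $u^A_{x_0,0}$ extends to a global $\kappa$-homogeneous harmonic function, so it is a harmonic polynomial of degree $\kappa\in\mathbb{N}$. Finally, $u^A_{x_0,0}(x',0)$ is a nontrivial nonnegative homogeneous polynomial on $\R^{n-1}$, which forces $\kappa$ to be even, and thus $u^A_{x_0,0}\in\mathcal{Q}_\kappa$.

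For (ii) $\Rightarrow$ (i), assuming that some, hence all (by (iii) $\Rightarrow$ (ii) below) Almgren blowups lie in $\mathcal{Q}_\kappa$, the zero set of $q(\cdot,0)$ for any nonzero $q\in\mathcal{Q}_\kappa$ has empty interior in $\R^{n-1}$ and zero $H^{n-1}$-measure; a compactness/subsequence argument then propagates the density-zero conclusion to $u$ at $x_0$. The main obstacle of the proof is the classification step (iii) $\Rightarrow$ (ii): every $2m$-homogeneous Signorini solution in $\R^n$ that is even in $x_n$ and nonnegative on $\R^{n-1}$ must be a polynomial from $\mathcal{Q}_{2m}$. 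This is a standard fact about homogeneous Signorini solutions (see \cite{GarPet09} and \cite{PetShaUra12}*{Proposition~9.22}) which I would invoke as a black box; crucially, it depends only on Signorini theory rather than on any almost-minimizer estimate, so it applies directly to the blowup without modification.
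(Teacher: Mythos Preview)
Your overall cycle and the treatment of (ii) $\Rightarrow$ (iii), (iii) $\Rightarrow$ (ii), and (ii) $\Rightarrow$ (i) are sound and agree with the paper's strategy of reducing to \cite{PetShaUra12}*{Proposition~9.22}. There is, however, a genuine gap in your argument for (i) $\Rightarrow$ (ii).

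You assert that $H^{n-1}(\Lambda(u^A_{x_0,r_j})\cap B_1')\to 0$ together with uniform $C^1$-convergence forces the positivity set $\{u^A_{x_0,0}(\cdot,0)>0\}$ to be dense in $B_1'$. This implication is not valid from those two ingredients alone: nothing prevents $u^A_{x_0,r_j}(\cdot,0)$ from being strictly positive off a null set (so the coincidence sets have vanishing measure) while converging in $C^1$ to a limit that vanishes identically on an open subset of $B_1'$. You invoke complementarity only for the blowup, which is indeed a Signorini solution, but that gives you nothing until \emph{after} density is known; it cannot supply the density claim itself.

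The paper avoids the density route and works directly with the normal derivative. If $\partial^+_{x_n}u^A_{x_0,0}$ were not identically zero on $B_1'$, then $\{-\partial^+_{x_n}u^A_{x_0,0}(\cdot,0)>\rho\}\cap B_{1-\rho}'$ has $H^{n-1}$-measure at least some $\delta>0$ for suitable $\rho$, and by $C^1$-convergence the same holds for $-\partial^+_{x_n}u^A_{x_0,r_j}$ for large $j$. The point your argument misses is that the complementarity condition of Lemma~\ref{lem:comp-cond} applies to the \emph{rescalings} $u^A_{x_0,r_j}$ themselves (they are almost minimizers), giving $\{-\partial^+_{x_n}u^A_{x_0,r_j}(\cdot,0)>0\}\subset\Lambda(u^A_{x_0,r_j})$ and hence a uniform lower bound on $H^{n-1}(\Lambda(u^A_{x_0,r_j})\cap B_1')$, contradicting (i). Once $\partial^+_{x_n}u^A_{x_0,0}\equiv 0$ is established, the blowup is harmonic across $B_1'$ and the remainder of your argument goes through.
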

Note that for $\kappa<\kappa_0$, the condition
$\widehat{N}(0+)=\kappa$ is equivalent to ${N}(0+)=\kappa$.

\begin{proof}
  Without loss of generality we may assume $x_0=0$. By
  Proposition~\ref{prop:exist-Alm-blowup}, any Almgren blowup $u_0^A$
  of $u$ at $0$ is a nonzero global solution of the Signorini problem,
  homogeneous of degree $\ka$. Moreover $u_0^A$ is a
  $C^1_{\text{loc}}$ limit of Almgren rescalings $u_{r_j}^A$ in
  $B_1^\pm\cup B_1'$. Because of that, most parts of the proof of this
  proposition are just the repetitions of Proposition~9.22 in
  \cite{PetShaUra12}. Thus, by following Proposition~9.22 in
  \cite{PetShaUra12}, we can easily see the implications (ii)
  $\Rightarrow$ (iii), (iii) $\Rightarrow$ (ii), (ii) $\Rightarrow$
  (i). Moreover, in the proof of the remaining implication (i)
  $\Rightarrow$ (ii), the only nontrivial part is that any blowup
  $u_0^A$ is harmonic in $B_1$. But this comes from the
  complementarity condition in Lemma~\ref{lem:comp-cond}. Indeed,
  assuming (i), we claim that
$$
\pa_{x_n}^+u_0^A=0\quad\text{in}\quad B'_1.
$$
Otherwise,
$$
H^{n-1}\left(\{-\pa_{x_n}^+u_0^A(\cdot, 0)>0\}\cap B'_1\right)\ge\de
$$ for some $\de>0$. Then using the continuity from the below we also have that for some $\rho>0$, 
$$
H^{n-1}\left(\{-\pa_{x_n}^+u_0^A(\cdot, 0)>\rho\}\cap
  B'_{1-\rho}\right)\ge\de/2.
$$
Using $C^1_{\text{loc}}$ convergence $u_{r_j}^A\ra u_0^A$ in
$B_1^\pm\cup B_1'$ and applying the complementarity condition in
Lemma~\ref{lem:comp-cond} to rescalings $u_{r_j}^A$, we obtain that
for small $r_j$,
$$ H^{n-1}\left(\Ld(u_{r_j}^A)\cap B'_1\right)\ge
H^{n-1}\left(\{-\pa_{x_n}^+u_{r_j}^A(\cdot, 0)>0\}\cap B'_1\right)\ge
\de/4,
$$
which contradicts (i).  Now recalling that $u_0^A$ is a solution of
the Signorini problem, even in $x_n$-variable, it satisfies
$$
\La u_0^A=2(\pa_{x_n}^+u_0^A)H^{n-1}|_{\Ld(u_0^A)}=0\quad
\text{in}\quad B_1.
$$
By homogeneity, we obtain that $u_0^A$ is harmonic in all of $\R^n$,
and we complete the proof as in \cite{PetShaUra12}.
\end{proof}

In order to study the singular set, in view of
Proposition~\ref{prop:sing-char}, we need to refine the growth
estimate in Lemma~\ref{lem:almost-opt-growth} by removing the
logarithmic term in the case when $\kappa=2m<\kappa_0$,
$m\in\mathbb{N}$. In the case $\kappa=3/2$ we were able to do so by
proving a decay estimate for $W_{3/2}$ with the help of the
epiperimetric inequality. In the case $\kappa=2m$ we will use the
so-called \emph{logarithmic epiperimetric inequality} for the Weiss
energy
$$
W^0_{\ka}(w)=\int_{B_1}|\D w|^2-\ka\int_{\pa B_1}w^2,\quad \kappa=2m,\
m\in\mathbb{N}
$$
that first appeared in \cite{ColSpoVel17}. To state this result, we
recall the notation
$$
\mathcal{A}=\{w\in W^{1, 2}(B_1): w\ge 0\text{ on }B'_1,\ w(x',
x_n)=w(x', -x_n)\}.
$$

\begin{theorem}[Logarithmic epiperimetric inequality]
  \label{thm:log-epi}
  Let $\kappa=2m$, $m\in\mathbb{N}$ and $w\in\mathcal{A}$ be
  homogeneous of degree $\kappa$ in $B_1$ such that
  $w\in W^{1,2}(\partial B_1)$ and
  $$
  \int_{\partial B_1} w^2\leq 1,\quad |W_{\kappa}^0(w)|\leq 1.
$$
There is constant $\e=\e(n,\kappa)>0$ and a function
$v\in \mathcal{A}$ with $v=w$ on $\partial B_1$ such that
$$
W^0_{\kappa}(v)\leq
W^0_\kappa(w)(1-\e|W^0_{\kappa}(w)|^\gamma),\quad\text{where
}\gamma=\frac{n-2}{n}.
$$
\end{theorem}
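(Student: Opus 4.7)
The plan is to adapt the competitor construction of Colombo--Spolaor--Velichkov \cite{ColSpoVel17} to the thin obstacle setting. The starting point is a spectral decomposition on $\partial B_1$: expand the trace $\phi := w|_{\partial B_1}$ in the orthonormal basis of eigenfunctions of the natural spherical operator associated to the Signorini problem, whose eigenvalues $\lambda_j$ correspond to admissible homogeneity degrees of solutions. For a single eigenmode, extending by its natural homogeneity $\lambda_j$ makes the extension harmonic in $B_1$ and its Dirichlet energy equals $\lambda_j$ times the spherical $L^2$-norm of the trace (modulo the tangential term), whereas the rigid $\kappa$-homogeneous extension replaces $\lambda_j$ by $\kappa$. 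The difference $\lambda_j - \kappa$ is negative for $\lambda_j < \kappa$ and positive for $\lambda_j > \kappa$, so replacing the rigid with the natural extension gives a strict linear reduction of Weiss energy for every mode away from the eigenvalue $\kappa$.

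Accordingly I would split $\phi = p + \psi$, where $p$ is the $L^2(\partial B_1)$ projection onto the $\kappa$-eigenspace and $\psi$ lies in its orthogonal complement, and form the candidate competitor $v := P + \Psi^{\mathrm{nat}}$, where $P$ is the $\kappa$-homogeneous extension of $p$ and $\Psi^{\mathrm{nat}}$ is the mode-by-mode natural (harmonic) extension of $\psi$. On $\partial B_1$ the function $v$ equals $\phi$, and the preceding mode-by-mode computation yields
\[
 W^0_\kappa(v) \le (1-c)\, W^0_\kappa(w), \qquad c = c(n,\kappa) > 0,
\]
\emph{provided} $v \in \mathcal{A}$. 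This is the clean linear epiperimetric inequality, available in the unconstrained setting.

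The remaining obstruction is twofold. First, the projection $P$ need not lie in $\mathcal{Q}_\kappa$, since it may fail the sign constraint $P(\cdot,0)\geq 0$; it must be further projected onto the convex cone $\mathcal{Q}_\kappa$ inside the $\kappa$-eigenspace, and the orthogonal direction contributes zero at the linear level because every such mode has eigenvalue exactly $\kappa$. Second, even if $P \in \mathcal{Q}_\kappa$, the perturbation $\Psi^{\mathrm{nat}}$ may drive $v$ negative on $B_1'$ near the thin coincidence set $\{P(\cdot,0) = 0\}$. To restore admissibility one truncates $\Psi^{\mathrm{nat}}$ in an $\eta$-neighborhood $U_\eta$ of this set on $B_1'$; the Dirichlet penalty is bounded by $\|\Psi^{\mathrm{nat}}\|_{L^\infty}^2 \cdot |U_\eta|$. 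Nondegeneracy of the harmonic polynomial $P$ gives $|U_\eta| \lesssim \eta$, and the $L^\infty$ norm is controlled via a Sobolev-type embedding on $\partial B_1$ in terms of the spherical $L^2$-norm of $\psi$, which is itself comparable to $|W^0_\kappa(w)|^{1/2}$. Optimizing the truncation width $\eta$ against the linear gain produces the corrected inequality with exponent $\gamma = (n-2)/n$.

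The main obstacle is the combined treatment of these two issues. The loss of the linear improvement for the tangential direction at the eigenvalue $\kappa$ forces one to extract a \emph{nonlinear} quantitative gain using the polynomial structure of $\mathcal{Q}_\kappa$ and the obstacle complementarity; this is the mechanism that degrades the classical linear epiperimetric (of \cite{Wei99b, GarPetSVG16}) to the logarithmic one here. Reproducing the sharp exponent $\gamma = (n-2)/n$ rather than merely some positive $\gamma$ is the technical heart of the argument in \cite{ColSpoVel17}: it results from an isoperimetric balance between the $(n-1)$-dimensional measure of the truncation region on the thin space and the $L^2 \hookrightarrow L^{2n/(n-2)}$ Sobolev embedding used to control $\|\Psi^{\mathrm{nat}}\|_{L^\infty}$, and is dimensional in nature.
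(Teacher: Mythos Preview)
The paper does not give its own proof of Theorem~\ref{thm:log-epi}; it is quoted from \cite{ColSpoVel17} and used as a black box in Section~\ref{sec:singular-points}. So there is no in-paper argument to compare against, only the cited source.

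Your outline does follow the architecture of \cite{ColSpoVel17}: spectral decomposition of the trace on $\partial B_1$, harmonic (natural-homogeneity) extension of the off-resonant modes to get a linear gain, projection of the resonant mode onto the cone $\mathcal{Q}_\kappa$, and a modification to restore admissibility on $B_1'$. That much is correct in spirit.

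There is, however, a genuine gap in your mechanism for the exponent. You claim to control $\|\Psi^{\mathrm{nat}}\|_{L^\infty}$ via ``a Sobolev-type embedding on $\partial B_1$ in terms of the spherical $L^2$-norm of $\psi$'', and then invoke $L^2 \hookrightarrow L^{2n/(n-2)}$. But $L^{2n/(n-2)}$ is not $L^\infty$, and in dimension $n\ge 3$ there is no embedding of $W^{1,2}(\partial B_1)$ into $L^\infty$; since $\psi$ lives in the infinite-dimensional orthogonal complement of the $\kappa$-eigenspace, no such pointwise bound is available. Consequently your truncation estimate ``cost $\lesssim \|\Psi^{\mathrm{nat}}\|_{L^\infty}^2\,|U_\eta|$'' cannot be closed as written, and the optimization producing $\gamma=(n-2)/n$ does not go through.

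In \cite{ColSpoVel17} the exponent does not come from an $L^\infty$ bound on the full off-resonant part. The pointwise control that is actually used lives inside the \emph{finite-dimensional} $\kappa$-eigenspace (where $L^2$ and $L^\infty$ are equivalent), namely on the tangential component of the resonant mode after projection onto $\mathcal{Q}_\kappa$; the high and low modes are handled by an interpolated extension whose admissibility is restored by a different device than the naive truncation you describe. The exponent $(n-2)/n$ then emerges from a quantitative \L ojasiewicz-type estimate on the sublevel sets of polynomials $q\in\mathcal{Q}_\kappa$ on $B_1'$, balanced against the spectral-gap gain, not from a Sobolev exponent. You have correctly identified where the difficulty lies, but the bridge you propose across it does not hold.
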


To simplify the notations, in the results below all constants will
depend on $n$, $\alpha$, $\kappa$, $\kappa_0$, as well as
$\|u\|_{W^{1,2}(B_1)}$, unless stated otherwise, in addition to other
quantities. Thus, when we write $C=C(\sigma)$, we mean
$C=C(n,\alpha,\kappa,\kappa_0,\|u\|_{W^{1,2}(B_1)},\sigma)$.

\medskip The next lemma allows to apply the logarithmic epiperimetric
inequality, without the constraints.

\begin{lemma}\label{lem:epi-ineq} Let $u$ be an almost minimizer for
  the Signorini problem in $B_1$ such that $0\in \Gamma(u)$ and
  $\widehat{N}(0+, u)=\ka<\kappa_0$, $\ka=2m$, $m\in\mathbb{N}$. For
  $0<r<1$, let
  $$
  u_r(x)=u_r^{(\kappa)}(x)=\frac{u(rx)}{r^{\ka}},\quad
  w_r(x)=|x|^{\ka}u_r\left(\frac x{|x|}\right).
  $$
  Suppose that for a given $0\leq\sigma\leq 1$, there is $C=C(\sigma)$
  such that
  $$
  \int_{\pa B_r}u^2\le C\left(\log \frac 1r\right)^{\si}r^{n+2\ka-1}.
$$
Then there is a constant $\e=\e(\sigma)>0$ and $h\in \mathcal{A}$ with
$h=w_r$ on $\pa B_1$ such that
\begin{enumerate}[label=\textup{(\roman{*})},leftmargin=*,widest=ii]
\item If $|W^0_{\ka}(w_r)| \ge \int_{\pa B_1}w_r^2$, then
  $$
  W_{\ka}^0(h)\le \left(1-\e \right)W^0_{\ka}(w_r)
  $$
\item If $|W^0_{\ka}(w_r)| \le 2\int_{\pa B_1}w_r^2$, then
  $$W_{\ka}^0(h)\le W_{\ka}^0(w_r)\left(1-\e\left(\log\frac{1}{r}\right)^{-\si\g}|W_{\ka}^0(w_r)|^{\g}\right),\quad\text{where }\g=\frac{n-2}n.$$
\end{enumerate}
\end{lemma}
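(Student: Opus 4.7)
The plan is to apply Theorem~\ref{thm:log-epi} after normalizing $w_r$ so that the size constraints $\int_{\partial B_1} w^2\le 1$ and $|W_\ka^0(w)|\le 1$ hold, and then to invert the normalization to obtain the desired competitor $h$ for $w_r$ itself. Since both $W_\ka^0$ and $\int_{\partial B_1}(\cdot)^2$ are homogeneous of degree $2$, the rescaling $w_r\mapsto \lambda w_r$ preserves $\ka$-homogeneity and membership in $\mathcal{A}$ while multiplying each of the two quantities by $\lambda^2$. The natural choice will be
\[
\lambda^2 := \min\Bigl( \tfrac{1}{\int_{\partial B_1}w_r^2},\ \tfrac{1}{|W_\ka^0(w_r)|}\Bigr),
\]
after noting that the degenerate case $W_\ka^0(w_r)=0$ is trivial (take $h=w_r$, both conclusions holding vacuously) and that $\int_{\partial B_1}w_r^2>0$ since $0\in\Gamma(u)$.

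Theorem~\ref{thm:log-epi} applied to $\lambda w_r$ will then produce $\tilde v\in\mathcal{A}$ with $\tilde v=\lambda w_r$ on $\partial B_1$ and
\[
W_\ka^0(\tilde v)\le W_\ka^0(\lambda w_r)\bigl(1-\e\,|W_\ka^0(\lambda w_r)|^\g\bigr).
\]
Setting $h:=\tilde v/\lambda\in\mathcal{A}$ gives $h=w_r$ on $\partial B_1$ and, by the quadratic scaling,
\[
W_\ka^0(h)\le W_\ka^0(w_r)\bigl(1-\e\,\lambda^{2\g}|W_\ka^0(w_r)|^\g\bigr).
\]
The two conclusions of the lemma then reduce to establishing a suitable lower bound on the prefactor $\lambda^{2\g}$.

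In case (i), the assumption $|W_\ka^0(w_r)|\ge\int_{\partial B_1}w_r^2$ forces $\lambda^2=1/|W_\ka^0(w_r)|$, whence $\lambda^{2\g}|W_\ka^0(w_r)|^\g=1$ and the bracketed factor simplifies to $(1-\e)$. In case (ii), the assumption $|W_\ka^0(w_r)|\le 2\int_{\partial B_1}w_r^2$ gives $\lambda^2\ge 1/(2\int_{\partial B_1}w_r^2)$ in either branch of the minimum; combined with the growth hypothesis rewritten via a change of variables as
\[
\int_{\partial B_1} w_r^2=\frac{1}{r^{n+2\ka-1}}\int_{\partial B_r}u^2\le C(\si)\bigl(\log\tfrac{1}{r}\bigr)^\si,
\]
this yields $\lambda^{2\g}\ge c(\si)\bigl(\log(1/r)\bigr)^{-\si\g}$ and the stated inequality.

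The argument is essentially a routine rescaling plus a black-box application of Theorem~\ref{thm:log-epi}, so no serious obstacle arises. The only real point of care is that the prefactor $\lambda^{2\g}$ receive a quantified lower bound in terms of $\log(1/r)$ in case (ii), which is precisely where the hypothesized growth estimate on $\int_{\partial B_r}u^2$ enters. The sign of $W_\ka^0(w_r)$ poses no issue, since $|W_\ka^0(w_r)|^\g\ge 0$ keeps the bracketed factor at most $1$ regardless.
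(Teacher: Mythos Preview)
Your proof is correct and follows essentially the same approach as the paper: normalize $w_r$ so that Theorem~\ref{thm:log-epi} applies, then scale the resulting competitor back. The only cosmetic difference is that the paper normalizes by $A^{1/2}$ with $A:=\int_{\partial B_1}w_r^2+|W_\ka^0(w_r)|$, whereas you use $\lambda^2=1/\max\bigl(\int_{\partial B_1}w_r^2,|W_\ka^0(w_r)|\bigr)$; since $\max\le A\le 2\max$, these choices are interchangeable up to constants.
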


\begin{proof} Let $A= \int_{\pa B_1}w_r^2\,+|W_{\ka}^0(w_r)|$. Then by
  Theorem~\ref{thm:log-epi} applied to $w_r/A^{1/2}$, there is
  $h\in \mathcal{A}$ such that $h=w_r$ on $\pa B_1$ and
$$
W_{\ka}^0(h)\le W_{\ka}(w_r)^0\left(1-\e
  A^{-\g}|W_{\ka}^0(w_r)|^{\g}\right).
$$
If $|W^0_{\ka}(w_r)| \ge \int_{\pa B_1}w_r^2$, then
$A\le 2|W^0_{\ka}(w_r)|$, implying
$$
W_{\ka}^0(h)\le W^0_{\ka}(w_r)\left(1-\e 2^{-\g}\right).
$$
If $|W^0_{\ka}(w_r)| \le 2\int_{\pa B_1}w_r^2$, then
$$ A\le 3\int_{\pa B_1}w_r^2=\frac{3}{r^{n+2\ka-1}}\int_{\pa
  B_r}u^2\le C(\sigma)\left(\log\frac{1}{r}\right)^{\si}.
$$ This completes the proof.
\end{proof}

Now we show that the logarithmic epiperimetric inequality, combined
with a growth estimate for $u$, implies a growth estimate on
$W_{\kappa}(t,u)$.  This is the first part of a bootstrapping argument
that gradually decreases the power of $\log(1/t)$ in the bound for
$u$.

\begin{lemma}\label{lem:W-est} Let $u$ be an almost minimizer for the
  Signorini problem in $B_1$ such that $0\in \Gamma(u)$ and
  $\widehat{N}(0+, u)=\ka<\kappa_0$, $\ka=2m$, $m\in
  \mathbb{N}$. Suppose that for some $0\leq\sigma\leq1$
$$
\int_{\pa B_r}u^2\le C(\sigma)\left(\log \frac
  1r\right)^{\si}r^{n+2\ka-1},\quad 0<r<r_0(\sigma).
$$
Then,
$$
0\le W_{\ka}(t, u)\le C(\sigma)\left(\log \frac 1
  t\right)^{-\frac{1-\si\g}{\g}}, \quad 0<t<t_0(\sigma).
$$
\end{lemma}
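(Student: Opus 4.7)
The plan is to prove the nonnegativity and the logarithmic upper bound separately. The nonnegativity $W_{\ka}(t, u) \geq 0$ follows at once from $\widehat{N}(0+, u) = \ka < \ka_0$ and the Almgren-type monotonicity of Theorem \ref{thm:Almgren}: these give $\N(t, u) \geq \ka$ for $0 < t < t_0$, and the factorization
\begin{equation*}
W_{\ka}(t, u) = \frac{e^{at^{\al}}(1 - bt^{\al})}{t^{n+2\ka-1}}\left(\int_{\pa B_t} u^2\right)\big(\N(t, u) - \ka\big)
\end{equation*}
is nonnegative for small $t$, exactly as in Corollary \ref{cor:W32-nonneg}.

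For the upper bound, I would follow the scheme of Lemma \ref{lem:W-gr-est}, replacing the classical epiperimetric inequality there with the logarithmic version of Lemma \ref{lem:epi-ineq}. The first ingredient is a lower bound for the derivative of $W_{\ka}$, obtained by redoing the computation in the proof of Theorem \ref{thm:weiss} with the $\ka$-homogeneous rescalings $u_t(x) = u(tx)/t^{\ka}$ and their $\ka$-homogeneous extensions $w_t(x) = |x|^{\ka} u_t(x/|x|)$; this yields
\begin{equation*}
\frac{d}{dt} W_{\ka}(t, u) \geq -\frac{C}{t} W_{\ka}(t, u) + \frac{n+2\ka-2}{t}(1 + O(t^{\al})) W^0_{\ka}(w_t) + O(t^{\al-1}) \int_{\pa B_1} u_t^2.
\end{equation*}
The second ingredient uses the almost minimality: letting $v_t$ be the solution of the Signorini problem in $B_1$ with $v_t = u_t$ on $\pa B_1$, comparison with any competitor gives
\begin{equation*}
W_{\ka}(t, u) \leq (1 + O(t^{\al})) W^0_{\ka}(v_t) + O(t^{\al}) \int_{\pa B_1} u_t^2.
\end{equation*}
Applying Lemma \ref{lem:epi-ineq} to $w_t$ supplies an $h \in \mathcal{A}$ with $h = w_t$ on $\pa B_1$; since $v_t$ minimizes the Dirichlet energy over $\mathcal{A}$-competitors with these boundary values, $W^0_{\ka}(v_t) \leq W^0_{\ka}(h)$, and combining this with the two displays above produces, in the nontrivial Case (ii) of Lemma \ref{lem:epi-ineq}, a lower bound $W^0_{\ka}(w_t) \geq W_{\ka}(t, u) + c(\log(1/t))^{-\si\g} W_{\ka}(t, u)^{1+\g} - (\mathrm{errors})$.

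Feeding this back into the derivative estimate yields the Bernoulli-type differential inequality
\begin{equation*}
\frac{d}{dt} W_{\ka}(t, u) \geq \frac{c}{t}\left(\log\frac{1}{t}\right)^{-\si\g} W_{\ka}(t, u)^{1+\g} - C t^{\al-1}\left(\log\frac{1}{t}\right)^{\si},
\end{equation*}
where the additive error uses the hypothesis $\int_{\pa B_r} u^2 \leq C(\log(1/r))^{\si} r^{n+2\ka-1}$ to bound $\int_{\pa B_1} u_t^2$. Case (i) of Lemma \ref{lem:epi-ineq} instead produces a linear inequality $\frac{d}{dt} W_{\ka}(t, u) \geq \frac{c}{t} W_{\ka}(t, u) - Ct^{\al-1}(\log(1/t))^{\si}$, which leads to a polynomial-in-$t$ decay far stronger than the target logarithmic bound; hence only Case (ii) is binding.

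The final step is to solve the Case (ii) ODE: the substitution $g(t) = W_{\ka}(t, u)^{-\g}$ turns it (modulo the absorbable additive error) into $-g'(t) \geq c\g\, t^{-1}(\log(1/t))^{-\si\g}$, and integrating from $t$ to $t_0$ together with $\int_t^{t_0} s^{-1}(\log(1/s))^{-\si\g}\, ds \gtrsim (\log(1/t))^{1-\si\g}$ (valid since $\si\g < 1$) gives $g(t) \gtrsim (\log(1/t))^{1-\si\g}$, which is the claimed bound $W_{\ka}(t, u) \leq C(\log(1/t))^{-(1-\si\g)/\g}$. The main technical obstacle I anticipate is the clean absorption of the error term $C t^{\al-1}(\log(1/t))^{\si}$: as in the proof of Lemma \ref{lem:W-gr-est}, this is handled by passing to a corrected functional of the form $W_{\ka}(t, u) + Kt^{\de}$ for a suitable $\de \in (0, \al)$, exploiting that $t^{\de}$ decays strictly faster than any negative power of $\log(1/t)$ and therefore does not disturb the asymptotic rate.
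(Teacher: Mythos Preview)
Your proposal is correct and follows essentially the same route as the paper: nonnegativity via the frequency monotonicity, the derivative bound for $W_\ka$ combined with the comparison $W_\ka(t,u)\le (1+O(t^\al))W^0_\ka(v_t)+O(t^\al)\int_{\pa B_1}u_t^2$ and the logarithmic epiperimetric inequality applied to $w_t$, leading to the Bernoulli-type differential inequality you wrote, and then integration after the substitution $g=W_\ka^{-\g}$.

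The only place where the paper differs from your sketch is in the handling of the additive error. Instead of the additive correction $W_\ka+Kt^{\de}$ you suggest (which was the device in Lemma~\ref{lem:W-gr-est} for the \emph{linear} differential inequality), the paper works with the truncated quantity $\widehat W_\ka(t)=\max\{W_\ka(t,u),t^{\al/8}\}$ and shows that
\[
-\widehat W_\ka(t)^{-\g}e^{-t^{\al/4}}+C\Bigl(\log\tfrac1t\Bigr)^{1-\si\g}
\]
is nondecreasing. The point is that the error term $-C_3t^{\al/2-1}$ is harmless exactly when $W_\ka(t,u)\gtrsim t^{\al/8}$, and when $W_\ka(t,u)<t^{\al/8}$ the bound is already far better than logarithmic; the $\max$ and the factor $e^{-t^{\al/4}}$ (which absorbs the remaining $-C_1t^{\al-1}W_\ka$ term you dropped) encode this case distinction cleanly. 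Your additive correction can be made to work too, but because the inequality is nonlinear in $W_\ka$ you would need a convexity step to compare $(W_\ka+Kt^\de)^{1+\g}$ with $W_\ka^{1+\g}$ and then still distinguish the regime $W_\ka\ll t^\de$---at which point you have effectively reproduced the paper's truncation argument.
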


\begin{proof} We first observe that $W_{\ka}(t, u)\geq 0$ for
  $0<t<t_0$, which follows easily from the condition
  $\widehat N(0+,u)=\ka<\kappa_0$, see the beginning of the proof of
  Lemma~\ref{lem:almost-opt-growth}.

  Next, recall that in the proof of Lemma~\ref{lem:W-gr-est}, we have
  used epiperimetric inequality to show that
  $0\le W_{3/2}(t, u)\le Ct^{\de}$. This followed by obtaining a
  differential inequality for $W_{3/2}$. Thus, if for $0<t<t_0$, if
  alternative (i) holds in Lemma~\ref{lem:epi-ineq}, i.e.,\
  $W_{\ka}^0(h)\le (1-\e)W^0_{\ka}(w_t)$, by arguing in the same way,
  we can show that
  \begin{equation}\label{eq:ddtW-case-1}
    \frac{d}{dt}W_\kappa(t,u)\geq \frac{\e/4}{t}W_{\kappa}(t,u)-C t^{\alpha/2-1},
  \end{equation}
  for $C=C(\sigma)$.

  Suppose now the alternative (ii) holds in Lemma~\ref{lem:epi-ineq}
  for some $0<t<t_0$. Then, following the computations in
  Lemma~\ref{lem:W-gr-est}, we have
  \begin{align*}
    \frac d{dt}W_{\ka}(t, u)\ge
    &-\frac{(n+2\ka-2)(1-t^{\al})}{t}W_{\ka}(t, u)\\
    &\qquad +\frac{e^{at^{\al}}(1-bt^{\al})}t\int_{\pa B_1}(\pa_{\nu}u_t-\ka u_t)^2+(\pa_{\tau}u_t)^2-\ka(n+\ka-2)u_t^2\\
    &\qquad +(2\ka_0+n)t^{\al-1}\int_{\pa B_1}u_t^2.
  \end{align*}
  For $w_t$ as in the statement of Lemma~\ref{lem:epi-ineq}, by
  following the computations in the proof of Theorem~\ref{thm:weiss},
  we have the identity
$$
\int_{\pa
  B_1}(\pa_{\tau}u_t)^2-\ka(n+\ka-2)u_t^2=(n+2\ka-2)W^0_{\ka}(w_t).
$$
This gives
\begin{multline}\label{eq:W-grow-est-1}
  \frac d{dt}W_{\ka}(t, u)\ge-\frac{(n+2\ka-2)(1-t^{\al})}{t}W_{\ka}(t, u)\\
  +\frac{e^{at^{\al}}(1-bt^{\al})}t(n+2\ka-2)W^0_{\ka}(w_t)+(2\ka_0+n)t^{\al-1}\int_{\pa
    B_1}u_t^2.
\end{multline}
Let now $v_t$ be the solution of the Signorini problem in $B_1$ with
$v_t=u_t=w_t$ on $\pa B_1$. Then
\begin{equation}\label{eq:W-grow-est-2}
  \begin{aligned}
    (1+t^{\al})W_{\ka}^0(w_t) &\ge (1+t^{\al})W_{\ka}^0(v_t)\\
    &\ge \int_{B_1}|\D u_t|^2-\kappa(1+t^{\al})\int_{\pa B_1}u_t^2\\
    &= W^0_{\ka}(u_t)-\kappa t^{\al}\int_{\pa B_1}u_t^2\\
    &= e^{-at^{\al}}W_{\ka}(t, u)-\ka(b+1)t^{\al}\int_{\pa B_1}u_t^2.
  \end{aligned}
\end{equation}
Now, if
$$
e^{-at^{\al}}W_{\ka}(t, u)-\ka(b+1)t^{\al}\int_{\pa B_1}u_t^2\le 0,
$$
then by Lemma~\ref{lem:almost-opt-growth} we have
\begin{align}\label{eq:ddtW-case15}
  W_{\ka}(t, u)&\le e^{at^{\al}}\ka(b+1)t^{\al}\int_{\pa B_1}u_t^2\\
               &\le Ct^\alpha\left(\log\frac1t\right)\leq C t^{\alpha/2}.\notag
\end{align}
We then proceed under the assumption
$$
e^{-at^{\al}}W_{\ka}(t, u)-\ka(b+1)t^{\al}\int_{\pa B_1}u_t^2> 0,
$$
which also implies $$W_{\ka}^0(w_t)>0.$$ Now, applying
Lemma~\ref{lem:epi-ineq}, we have
\begin{equation}
  \begin{aligned}
    &W^0_{\ka}(w_t)\ge
    W_{\ka}^0(v_t)+\e\left(\log\frac{1}t\right)^{-\si\g}W^0_{\ka}(w_t)^{\g+1}\\
    &\ge \frac 1{1+t^{\al}}\left[e^{-at^{\al}}W_{\ka}(t, u)-\ka(b+1)t^{\al}\int_{\pa B_1}u_t^2\right]\\
    &\qquad+\e\left(\log \frac{1}t\right)^{-\si\g}\left(\frac 1{1+t^{\al}}\right)^{\g+1}\times\\&\qquad\qquad\times\left[e^{-at^{\al}}W_{\ka}(t, u)-\ka(b+1)t^{\al}\int_{\pa B_1}u_t^2\right]^{\g+1}\\
    &\ge (1-t^{\al})\left[e^{-at^{\al}}W_{\ka}(t, u)-\ka(b+1)t^{\al}\int_{\pa B_1}u_t^2\right]\\
    &\qquad +\e\left(\log \frac{1}t\right)^{-\si\g}(1-t^{\al})^{\g+1}
    \times\\&\qquad\qquad\times
    \left[\frac{\left(e^{-at^{\al}}W_{\ka}(t, u)\right)^{\g+1}}{2^{\g}}-\left( \ka(b+1)t^{\al}\int_{\pa B_1}u_t^2  \right)^{\g+1}\right]\\
    &=(1-t^{\al})e^{-at^{\al}}W_{\ka}(t, u)\\
    &\qquad+\frac{\e}{2^{\g}}\left(\log \frac{1}t\right)^{-\si\g}(1-t^{\al})^{\g+1}e^{-a(\g+1)t^{\al}}W_{\ka}(t, u)^{\g+1}\\
    &\qquad -(1-t^{\al})\ka(b+1)t^{\al}\int_{\pa B_1}u_t^2\\
    &\qquad-\e\left(\log\frac{1}t\right)^{-\si\g}(1-t^{\al})^{\g+1}\ka^{\gamma+1}(b+1)^{\g+1}t^{\al(\g+1)}\left(\int_{\pa
        B_1}u_t^2\right)^{\g+1},\label{eq:W-grow-est-3}
  \end{aligned}
\end{equation}
where we used \eqref{eq:W-grow-est-2} in the second inequality and the
convexity of $x\mapsto x^{\g+1}$ on $\R_+$ in the third inequality.
Now \eqref{eq:W-grow-est-1} and \eqref{eq:W-grow-est-3}, together with
Lemma~\ref{lem:almost-opt-growth}, yield
\begin{multline}\label{eq:ddtW-case-2}
  \frac d{dt}W_{\ka}(t, u)\ge -C_1t^{\al-1}W_{\ka}(t,
  u)\\+C_2t^{-1}\left(\log \frac 1 t\right)^{-\si\g}W_{\ka}(t,
  u)^{\g+1}-C_3t^{\al/2-1},
\end{multline}
where $C_i=C_i(\sigma)$. Summarizing, we have that at every
$0<t<t_0(\sigma)$, either \eqref{eq:ddtW-case-1},
\eqref{eq:ddtW-case-2}, or the bound \eqref{eq:ddtW-case15}
holds. Further note that by the growth estimate in
Lemma~\ref{lem:almost-opt-growth}, the bound \eqref{eq:ddtW-case-1}
implies \eqref{eq:ddtW-case-2} for sufficiently small $t$ and thus we
may assume that \eqref{eq:ddtW-case-2} holds for all $0<t<t_0$ for
which $W_\kappa(t,u)> C t^{\alpha/2}$.

To proceed, let $0<t<t_0$ be such that
$W_\kappa(t,u)\geq t^{\alpha/8}$. Then the bound
\eqref{eq:ddtW-case-2} holds and we can derive that for
$C=\frac{\g C_2}{2(1-\si\g)}$, we have
\begin{align*}
  &\frac d{dt}\left(-W_{\ka}(t, u)^{-\g}e^{-t^{\al/4}}+C\left(\log \frac 1 t\right)^{1-\si\g}\right)\\
  &\qquad= W_{\ka}(t, u)^{-\g-1}e^{-t^{\al/4}}
    \left(\g\frac{d}{dt}W_{\ka}(t, u)+\frac{\al}4 W_{\ka}(t, u)t^{\al/4-1}\right)\\
  &\qquad\qquad-C(1-\si\g)t^{-1}\left(\log \frac 1 t\right)^{-\si\g}\\
  &\qquad\ge W_{\ka}(t, u)^{-\g}e^{-t^{\al/4}}t^{\al/4-1}\left(\frac{\al}4-\g C_1t^{3\al/4}-\frac{\g C_3t^{\al/4}}{W_{\ka}(t, u)}\right)\\
  &\qquad\qquad+\left(\log\frac 1t\right)^{-\si\g}t^{-1}\left(e^{-t^{\al/4}}\g C_2-C(1-\si\g)\right)\\
  &\qquad\ge 0,
\end{align*}
$0<t<t_0=t_0(\sigma)$. Since also the function
$-{t^{-\gamma(\alpha/8)}}e^{-t^{\al/4}}+C\left(\log \frac 1
  t\right)^{1-\si\g}$ is nondecreasing for small $t$, denoting
$$\widehat{W}_{\ka}(t, u)=\max\{W_{\ka}(t, u),t^{\alpha/8}\},$$
we obtain that the function
$$
-\widehat{W}_{\ka}(t, u)^{-\g}e^{-t^{\al/4}}+C\left(\log \frac 1
  t\right)^{1-\si\g}
$$
is nondecreasing on $(0,t_0)$. Hence,
\begin{align*}
  -\widehat{W}_{\ka}(t, u)^{-\g}e^{-t^{\al/4}}+C\left(\log\frac
  1t\right)^{1-\si\g}
  &\le -\widehat{W}_{\ka}(t_0,u)^{-\g}e^{-t_0^{\al/4}}+C\left(\log\frac
    1{t_0}\right)^{1-\si\g}\\
  &\le C\left(\log\frac 1{t_0}\right)^{1-\si\g}.
\end{align*}
If $0<t<t_0^2$, then
$\left(\log\frac 1{t_0}\right)^{1-\si\g}<\left(\frac
  12\right)^{1-\si\g}\left(\log\frac 1t\right)^{1-\si\g}$, implying
that
$$
-\widehat{W}_{\ka}(t, u)^{-\g}e^{-t^{\al/4}}\le
C\left(\left(1/2\right)^{1-\si\g}-1\right)\left(\log\frac 1
  t\right)^{1-\si\g}
$$
and hence
\[
  W_{\ka}(t, u)\leq \widehat{W}_{\ka}(t, u)\le
  C\left(1-\left(1/2\right)^{1-\si\g}\right)^{-\frac1\g}\left(\log\frac
    1t\right)^{-\frac{1-\si\g}{\g}}.\qedhere
\]
\end{proof}

\begin{lemma}\label{lem:boun-est}
  If $u$ is as in Lemma~\ref{lem:W-est} with $\frac{2}{n-2}<\si\le 1$,
  then there exist positive $C=C(\sigma)$, $t_0=t_0(\sigma)$ such
  that
  $$ \int_{\pa B_t}u^2\le C\left(\log \frac 1
    t\right)^{\si-\frac{2}{n-2}}t^{n+2\kappa-1},\quad 0<t<t_0.
$$
\end{lemma}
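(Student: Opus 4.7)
The plan is to convert the decay estimate $W_\ka(t,u)\le C(\log(1/t))^{-\be}$ provided by Lemma~\ref{lem:W-est}, where $\be := (1-\si\g)/\g = n/(n-2)-\si$, into a pointwise bound on $m(t):=\bigl(\int_{\pa B_1}(u^{\p}_t)^2\bigr)^{1/2}$, and then translate this into the claimed estimate via the rescaling identity $\int_{\pa B_t}u^2 = t^{n-1}\p(t)^2 m(t)^2$, with $\p(t)^2\le Ct^{2\ka}$ for small $t$. The exponent matches because $1-\be = \si-2/(n-2)$.

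First, I would record the pointwise differential inequality
$$|m'(r)| \le \frac{e^{cr^{\al}}}{\sqrt{r}}\bigl(W_\ka'(r,u)\bigr)^{1/2}, \qquad c = \frac{\ka b}{\al}-\frac{a_\ka}{2},$$
whose derivation is identical to the one carried out for $\ka = 3/2$ in the proof of Lemma~\ref{lem:almost-opt-growth}. Integrating over $(s,t)\subset(0,t_0)$ and applying Cauchy--Schwarz yields
$$|m(t)-m(s)| \le C_0 \bigl(\log(t/s)\bigr)^{1/2}\bigl(W_\ka(t,u)-W_\ka(s,u)\bigr)^{1/2}\le C_0 \bigl(\log(t/s)\bigr)^{1/2}W_\ka(t,u)^{1/2},$$
using $W_\ka(s,u)\ge 0$. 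Substituting $s = t^2$, so that $\log(t/s) = \log(1/t)$, and invoking Lemma~\ref{lem:W-est} gives the one-step estimate
$$|m(t) - m(t^2)| \le C\bigl(\log(1/t)\bigr)^{(1-\be)/2},$$
with $1-\be = \si-2/(n-2)>0$ by hypothesis.

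Next I would iterate along the sequence $t_j := t_0^{2^j}$ for a fixed small $t_0$. Since $t_{j+1} = t_j^2$ and $\log(1/t_j) = 2^j\log(1/t_0)$, a telescoping sum produces
$$|m(t_N)-m(t_0)| \le C\bigl(\log(1/t_0)\bigr)^{(1-\be)/2}\sum_{j=0}^{N-1} 2^{j(1-\be)/2},$$
and the geometric sum is dominated by its last term (as $(1-\be)/2>0$), giving $m(t_N)\le m(t_0) + C(\log(1/t_N))^{(1-\be)/2}$. For an arbitrary small $t$, I pick $N$ with $t_{N+1}<t\le t_N$; the ratio $\log(1/t_N)/\log(1/t)$ lies in $[1/2,1]$, and one final application of the Cauchy--Schwarz bound on $(t,t_N)$ extends the estimate to $m(t)\le C(\log(1/t))^{(1-\be)/2}$ for all sufficiently small $t$. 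Squaring and multiplying by $t^{n-1}\p(t)^2$ then produces the claim.

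The main subtlety is choosing the right iteration scale: a naive dyadic scheme $s = t/2$ would produce a sum of the form $\sum_{k}(\log(1/t)-k\log 2)^{-\be/2}$ which, approximated by an integral, behaves like $(\log(1/t))^{1-\be/2}$ and yields the suboptimal exponent $1-\be/2$ instead of $(1-\be)/2$. Iterating on the logarithmic scale via $s = t^2$ is precisely the rescaling under which a single Cauchy--Schwarz step delivers the sharp one-step gain $(\log(1/t))^{(1-\be)/2}$, and the geometric growth of $\log(1/t_j)$ then makes the telescoping sum dominated by its last term.
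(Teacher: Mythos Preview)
Your proof is correct and follows essentially the same route as the paper: both use the bound $|m(t)-m(s)|\le C(\log(t/s))^{1/2}W_\ka(t,u)^{1/2}$ from Lemma~\ref{lem:almost-opt-growth}, feed in the decay $W_\ka(t,u)\le C(\log(1/t))^{-(1-\si\g)/\g}$ from Lemma~\ref{lem:W-est}, and then sum over an exponentially dyadic partition of the interval (your sequence $t_j=t_0^{2^j}$ is the same scheme as the paper's $2^{-2^k}$, just with a different base point). Your remark explaining why a naive dyadic decomposition $s=t/2$ would lose the sharp exponent is a nice addition not present in the paper's proof.
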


\begin{proof}
  Going back to the proof and notations of
  Lemma~\ref{lem:almost-opt-growth}, we have that for $0<s<t<t_0$
$$
|m(t)-m(s)|\le C\left(\log\frac t
  s\right)^{1/2}\left(W_{\ka}(t)-W_{\ka}(s)\right)^{1/2}.
$$
Let now $0\le j\le i$ be such that $2^{-2^{i+1}}<t\le 2^{-2^{i}}$,
$2^{-2^{j+1}}<t_0\le 2^{-2^{j}}$. Then
\begin{multline*}
  |m(t_0)-m(t)|\\*
  \begin{aligned}
    &\le |m(t_0)-m(2^{-2^{j+1}})|+|m(2^{-2^{i}})-m(t)|+\sum_{k=j+1}^{i-1}|m(2^{-2^{k}})-m(2^{-2^{k+1}})|\\
    &\le \sum_{k=0}^i C\left[\log \left(2^{-2^{k}}\right)-\log \left(2^{-2^{k+1}}\right)\right]^{1/2}\left[W_{\ka}\left(2^{-2^{k}}\right)-W_{\ka}\left(2^{-2^{k+1}}\right)\right]^{1/2}\\
    &\le C\sum_{k=0}^i 2^{k/2}W_{\ka}\left(2^{-2^{k}}\right)^{1/2}\\
    &\le C\sum_{k=0}^i 2^{(1-\frac{1-\si\g}{\g})k/2}\\
    &\le C2^{(\si-\frac{2}{n-2})i/2}\\
    &\le C\left(\log \frac1t\right)^{\frac12(\si-\frac{2}{n-2})}.
  \end{aligned}
\end{multline*}
Note that in the fifth inequality we have used that
$1-\frac{1-\si\g}{\g}= \si-\frac{2}{n-2}>0$.  Thus
$$
m(t)\le
m(t_0)+C\left(\log\frac1t\right)^{\frac12(\si-\frac{2}{n-2})}\le
C\left(\log\frac1t\right)^{\frac12(\si-\frac{2}{n-2})}.
$$
This implies the desired result.
\end{proof}

Lemma~\ref{lem:W-est} and Lemma~\ref{lem:boun-est} imply the
following.

\begin{corollary}[Bootstraping]
  \label{cor:boun-est}
  Let $u$ be an almost minimizer for the Signorini problem in $B_1$
  such that $0\in\Gamma(u)$ and $\widehat N(0+, u)=\ka<\kappa_0$,
  $\ka=2m$, $m\in \mathbb{N}$. Suppose that for
  $\frac 2{n-2}<\si\le 1$
$$
\int_{\pa B_t}u^2\le C(\sigma)\left(\log\frac
  1t\right)^{\si}t^{n+2\ka-1},\quad 0<t<t_0(\sigma).
$$
Then
$$
\int_{\pa B_t}u^2\le C'(\sigma)\left(\log\frac 1t\right)^{\si-\frac
  2{n-2}}t^{n+2\ka-1},\quad 0<t<t_0'(\sigma).
$$
\end{corollary}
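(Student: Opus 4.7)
The plan is to simply chain Lemmas \ref{lem:W-est} and \ref{lem:boun-est} together, since the hypothesis of the corollary is exactly the hypothesis of Lemma \ref{lem:W-est} (restricted to the range $2/(n-2) < \sigma \le 1$), and the conclusion of Lemma \ref{lem:boun-est} is exactly the conclusion we want. There is essentially no new content to supply beyond pointing out this concatenation.

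More concretely, first I would invoke Lemma \ref{lem:W-est} on the assumed bound $\int_{\partial B_t} u^2 \le C(\sigma)(\log 1/t)^{\sigma} t^{n+2\kappa-1}$, which produces
\[
0 \le W_\kappa(t,u) \le C(\sigma)\left(\log \tfrac{1}{t}\right)^{-\frac{1-\sigma\gamma}{\gamma}}, \qquad 0<t<t_0(\sigma),
\]
with $\gamma = (n-2)/n$. The exponent here rewrites as
\[
1 - \frac{1-\sigma\gamma}{\gamma} \;=\; \sigma - \frac{2}{n-2},
\]
so the strict inequality $\sigma > 2/(n-2)$ translates into the crucial fact that $1 - (1-\sigma\gamma)/\gamma > 0$. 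This is precisely what is needed in the dyadic summation step of Lemma \ref{lem:boun-est}, where the geometric sum $\sum_{k=0}^{i} 2^{(1-(1-\sigma\gamma)/\gamma)k/2} = \sum_{k=0}^{i} 2^{(\sigma - 2/(n-2))k/2}$ is dominated by its last term of order $(\log 1/t)^{(\sigma - 2/(n-2))/2}$, and squaring through the definition of $m(t)$ yields the asserted $(\log 1/t)^{\sigma - 2/(n-2)}$ factor.

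Thus the proof amounts to: apply Lemma \ref{lem:W-est} to the hypothesis, then apply Lemma \ref{lem:boun-est} whose assumption on $\int_{\partial B_t}u^2$ is identical to ours and whose proof already incorporates the $W_\kappa$ estimate just established. There is no obstacle; the only bookkeeping is to verify the arithmetic relating $-(1-\sigma\gamma)/\gamma$ and $\sigma - 2/(n-2)$, which is what makes the strict lower bound $\sigma > 2/(n-2)$ on the admissible range exactly the threshold for the bootstrap to gain a fixed amount in the logarithmic exponent at each iteration.
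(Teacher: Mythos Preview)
Your proposal is correct and matches the paper's approach exactly: the corollary is stated as an immediate consequence of Lemmas~\ref{lem:W-est} and~\ref{lem:boun-est}, and the paper gives no proof beyond that assertion. Your verification of the exponent arithmetic $1 - (1-\sigma\gamma)/\gamma = \sigma - 2/(n-2)$ is also correct and makes explicit why the restriction $\sigma > 2/(n-2)$ is needed.
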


\begin{lemma}[Optimal growth estimate at sigular points]
  \label{lem:opt-est}
  Let $u$ be an almost minimizer for the Signorini problem in $B_1$
  such that $0\in\Gamma(u)$ and $\widehat N(0+, u)=\ka<\kappa_0$,
  $\ka=2m$, $m\in \mathbb{N}$. Then, for $0<t<t_0$,

\begin{align*}
  &\int_{\pa B_t}u^2\le Ct^{n+2\ka-1},\\
  &\int_{B_t}|\D u|^2\le Ct^{n+2\ka-2}.
\end{align*}
\end{lemma}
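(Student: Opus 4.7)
\noindent\textbf{Proof proposal for Lemma~\ref{lem:opt-est}.}
The plan is to remove the logarithmic factor from the weak growth estimate of Lemma~\ref{lem:almost-opt-growth} by iterating Corollary~\ref{cor:boun-est} finitely many times, and then to close the final gap by revisiting the dyadic sum that underlies Lemma~\ref{lem:boun-est}. Since $\widehat N(0+,u)=\kappa=2m<\kappa_0$, Lemma~\ref{lem:almost-opt-growth} applies and gives
\[
\int_{\partial B_t}u^2\le C\Bigl(\log\frac 1t\Bigr)t^{n+2\kappa-1},\qquad 0<t<t_0,
\]
which is the starting point with $\sigma_0=1$.

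Next, I would apply Corollary~\ref{cor:boun-est} as many times as the threshold $\sigma>\tfrac{2}{n-2}$ allows, obtaining a decreasing sequence of admissible exponents $\sigma_k=1-\tfrac{2k}{n-2}$. After $k^\ast=\lceil\tfrac{n-2}{2}\rceil$ iterations (or fewer, if $n$ is small enough that the initial $\sigma_0$ already falls below the threshold), I obtain some $\sigma_\ast<\tfrac{2}{n-2}$ together with the estimate
\[
\int_{\partial B_t}u^2\le C\Bigl(\log\frac 1t\Bigr)^{\sigma_\ast}t^{n+2\kappa-1},\qquad 0<t<t_0.
\]
Feeding this into Lemma~\ref{lem:W-est} yields $W_\kappa(t,u)\le C(\log 1/t)^{-(1-\sigma_\ast\gamma)/\gamma}$ with the exponent $(1-\sigma_\ast\gamma)/\gamma>1$ since $\sigma_\ast<2/(n-2)=(1-\gamma)/\gamma$.

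To upgrade to the optimal bound, I would rerun the dyadic telescoping argument from the proof of Lemma~\ref{lem:boun-est}: with $m(t):=\bigl(t^{1-n-2\kappa}\int_{\partial B_t}u^2\bigr)^{1/2}$ and $2^{-2^{i+1}}<t\le 2^{-2^i}$,
\[
|m(t_0)-m(t)|\le C\sum_{k=0}^{i}2^{k/2}\,W_\kappa(2^{-2^k},u)^{1/2}\le C\sum_{k=0}^{i}2^{k(\sigma_\ast-2/(n-2))/2}.
\]
The crucial point is that, unlike in Lemma~\ref{lem:boun-est}, the geometric ratio $\sigma_\ast-\tfrac{2}{n-2}$ is now strictly negative, so the series converges and $m$ is bounded uniformly on $(0,t_0)$. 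This yields $\int_{\partial B_t}u^2\le C\,t^{n+2\kappa-1}$. The gradient bound is then immediate from the monotonicity of $W_\kappa$: since $W_\kappa(t,u)\le W_\kappa(t_0,u)\le C$,
\[
\frac{1}{t^{n+2\kappa-2}}\int_{B_t}|\nabla u|^2\le \frac{\kappa(1-bt^\alpha)}{t^{n+2\kappa-1}}\int_{\partial B_t}u^2+e^{-at^\alpha}W_\kappa(t_0,u)\le C.
\]

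The main obstacle I anticipate is the borderline situation in which the exponent $\sigma_k=1-\tfrac{2k}{n-2}$ produced by the iteration lands exactly on the threshold $\tfrac{2}{n-2}$ (this can happen for particular even values of $n$). At that value the series in the dyadic argument is only logarithmically divergent and Corollary~\ref{cor:boun-est} cannot be invoked once more. To bypass this, I would perform one additional ``half'' improvement: in the very first application of Corollary~\ref{cor:boun-est}, replace the starting exponent $\sigma_0=1$ by any $\sigma_0'\in(\tfrac{2}{n-2},1)$ (which is trivially implied by Lemma~\ref{lem:almost-opt-growth}), so that the resulting sequence $\sigma_k'$ skips the precise threshold value and terminates strictly below it, allowing the convergent-sum argument above to go through unchanged.
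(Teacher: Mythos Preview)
Your overall strategy is exactly the paper's: start from Lemma~\ref{lem:almost-opt-growth} with $\sigma_0=1$, iterate Corollary~\ref{cor:boun-est} until the exponent drops below $\tfrac{2}{n-2}$, feed the result into Lemma~\ref{lem:W-est}, and then rerun the exponentially-dyadic sum from Lemma~\ref{lem:boun-est}, which now converges. The gradient bound via $W_\kappa(t)\le W_\kappa(t_0)$ is also what the paper does. So the main line of argument is correct and matches the paper.

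The gap is in your handling of the borderline case. You propose to start from some $\sigma_0'\in\bigl(\tfrac{2}{n-2},1\bigr)$, claiming this is ``trivially implied'' by Lemma~\ref{lem:almost-opt-growth}. That is backwards: for small $t$ one has $(\log\tfrac1t)^{\sigma_0'}<(\log\tfrac1t)$, so the bound with exponent $\sigma_0'<1$ is \emph{stronger} than the one you have, and certainly not a free consequence of it. Moreover, for $n=4$ the interval $\bigl(\tfrac{2}{n-2},1\bigr)=(1,1)$ is empty, so the fix is vacuous precisely in the first problematic dimension. The paper avoids the threshold differently: it observes that in Lemma~\ref{lem:boun-est} the improvement $\sigma\mapsto\sigma-\tfrac{2}{n-2}$ can be weakened to $\sigma\mapsto\sigma-c$ for any $0<c<\tfrac{2}{n-2}$ (one just bounds the geometric sum more crudely, and the hypothesis becomes $\sigma>c$ rather than $\sigma>\tfrac{2}{n-2}$). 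With this milder step one can always reach a $\sigma$ strictly below $\tfrac{2}{n-2}$, including when $n=4$.
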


\begin{proof}
  Starting with $\sigma=1$ in Lemma~\ref{lem:almost-opt-growth} and
  repeatedly applying Corollary~\ref{cor:boun-est}, we find
  $0<\si\leq \min\{\frac{2}{n-2},1\}$ such that
$$
\int_{\pa B_t}u^2\le C\left(\log\frac 1t\right)^{\si}t^{n+2\ka-1},
\quad 0<t<t_0.
$$
In fact, we can make $\sigma$ to be strictly less than $\frac{2}{n-2}$
by noticing that in Lemma~\ref{lem:boun-est} we can replace
$\frac{2}{n-2}$ by any smaller positive number.  Then by
Lemma~\ref{lem:W-est}
$$
0\le W_{\ka}(t, u)\le C\left(\log \frac 1
  t\right)^{-\frac{1-\si\g}{\g}}.$$ Recall also that for $0<s<t<t_0$
$$
|m(t)-m(s)|\le C\left(\log \frac t
  s\right)^{1/2}\left(W_{\ka}(t)-W_{\ka}(s)\right)^{1/2}.
$$ 
We then again consider the exponentially dyadic decomposition as in
the proof of Lemma~\ref{lem:boun-est}.  Let $0\le j\le i$ be such that
$2^{-2^{i+1}}\le s/{t_0}<2^{-2^{i}}$ and
$2^{-2^{j+1}}\le t/{t_0}<2^{-2^{j}}$. Then,
\begin{equation}
  \label{eq:opt-est-1}
  \begin{aligned}
    |m(t)-m(s)|&\le C\sum_{k=j}^i2^{k/2}W_{\ka}(2^{-2^k}t_0)^{1/2}\\
    &\le C\sum_{k=j}^{\infty}2^{\left(1-\frac{1-\si\g}{\g} \right)k/2}\\
    &\le C2^{\left(\si-\frac 2{n-2}\right)j/2}\\
    &\le C\left(\log\frac 1t\right)^{\left(\si-\frac 2{n-2}\right)/2}.
  \end{aligned}
\end{equation}
Particularly,
$$ m(t)\le
m(t_0)+C\left(\log\frac1{t_0}\right)^{\left(\si-\frac{2}{n-2}\right)/2}.
$$
This gives the first bound. The second bound is obtained from the
first one by arguing as at the end of
Lemma~\ref{lem:almost-opt-growth}.
\end{proof}

\begin{remark}
  The growth estimates in Lemma~\ref{lem:opt-est} enable us to
  consider \emph{$\ka$-homogeneous blowups}
$$
u^{\phi}_{t_j}\ra u^{\phi}_0\quad\text{in}\quad
C^1_{\loc}(\R^n_\pm\cup\R^{n-1}).
$$
for $t=t_j\ra 0+$, similar to $3/2$-homogeneous blowups, defined at
the beginning of Section~\ref{sec:growth-estimates}, see
Remark~\ref{rem:kappa-blowup}.
\end{remark}

\begin{proposition}\label{prop:blowup-rot-est} Let $u$ be an almost minimizer for the Signorini problem in $B_1$ such
  that $0\in\Gamma(u)$ and $\widehat N(0+, u)=\ka<\kappa_0$, $\ka=2m$,
  $m\in \mathbb{N}$. Then there exist $C>0$ and $t_0>0$ such that
  $$ \int_{\pa B_1}|u^{\phi}_t-u^{\phi}_s|\le C\left(\log\frac
    1t\right)^{-\frac{1-\g}{2\g}}, \quad 0<t<t_0.
$$
In particular the blowup $u^{\phi}_0$ is unique.
\end{proposition}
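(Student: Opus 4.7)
The plan is to adapt the rotation estimate of Lemma~\ref{lem:rotation-est} (which handled $\kappa=3/2$) to the singular exponent $\kappa=2m$, using the optimal growth and $W_\kappa$-decay now available at singular points.

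First, I would redo verbatim the pointwise computation from the proof of Lemma~\ref{lem:rotation-est}. Differentiating $u_r^\phi(x) = u(rx)/\phi(r)$ in $r$ produces $\partial_\nu u - (\kappa(1-br^\alpha)/r)u$, whose $L^2(\partial B_r)$-norm is controlled by the derivative of Weiss's energy through the key inequality
$$\frac{d}{dr}W_\kappa(r,u) \ge \frac{e^{ar^\alpha}}{r^{n+2\kappa-2}}\int_{\partial B_r}\Bigl(\partial_\nu u - \frac{\kappa(1-br^\alpha)}{r}u\Bigr)^2$$
of Theorem~\ref{thm:weiss}. Two applications of Cauchy--Schwarz, one on $\partial B_1$ and one on $(s,t)$, then yield
$$\int_{\partial B_1}|u_t^\phi - u_s^\phi| \le C\Bigl(\log\frac{t}{s}\Bigr)^{1/2}\bigl(W_\kappa(t,u)-W_\kappa(s,u)\bigr)^{1/2}, \quad 0<s<t<t_0.$$

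Next, I would feed in the decay of $W_\kappa$. Lemma~\ref{lem:opt-est} provides $\int_{\partial B_t}u^2 \le Ct^{n+2\kappa-1}$, which is exactly the hypothesis of Lemma~\ref{lem:W-est} with $\sigma = 0$; hence
$$0 \le W_\kappa(t,u) \le C\bigl(\log(1/t)\bigr)^{-1/\gamma}, \quad 0<t<t_0.$$
To eliminate the $(\log(t/s))^{1/2}$ factor, I would run the exponentially dyadic decomposition already used in the proof of Lemma~\ref{lem:opt-est}. For fixed small $t$, let $j\ge 0$ satisfy $2^{-2^{j+1}} \le t/t_0 < 2^{-2^j}$; splitting $(s,t)$ across the break points $2^{-2^k}t_0$ and inserting the $W_\kappa$-decay gives
$$\int_{\partial B_1}|u_t^\phi - u_s^\phi| \le C \sum_{k\ge j} 2^{k/2}\, W_\kappa(2^{-2^k}t_0)^{1/2} \le C\sum_{k\ge j} 2^{k(1-1/\gamma)/2}.$$
Because $\gamma = (n-2)/n < 1$ we have $1-1/\gamma = -2/(n-2) < 0$, so the geometric series converges and is comparable to its first term $2^{j(1-1/\gamma)/2}$. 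Using $2^j \asymp \log(1/t)$ and $(1/\gamma-1)/2 = (1-\gamma)/(2\gamma)$ produces exactly the stated bound.

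For uniqueness, the rotation estimate above shows $\{u_t^\phi\}_{t\downarrow 0}$ is Cauchy in $L^1(\partial B_1)$. If $u_{t_j}^\phi \to u_0^\phi$ and $u_{s_k}^\phi \to \tilde u_0^\phi$ along two sequences tending to $0$, the two blowups must agree on $\partial B_1$. Both are solutions of the Signorini problem in $B_1$ (by the Signorini-replacement argument already used for $3/2$-homogeneous blowups in Lemma~\ref{lem:blowup-rot-est}), so they coincide in $B_1$ by uniqueness of Signorini solutions with prescribed boundary data, and then on all of $\R^n$ by unique continuation of harmonic functions across $\R^n\setminus\R^{n-1}$. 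The only delicate point in the scheme is that the dyadic sum closes \emph{only} because $\gamma<1$; any weaker decay on $W_\kappa$ would give a divergent series, which is why removing the logarithm from the growth of $u$ (Lemma~\ref{lem:opt-est}, itself obtained through the bootstrap driven by the logarithmic epiperimetric inequality) was the essential prerequisite.
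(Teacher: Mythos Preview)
Your proposal is correct and follows essentially the same route as the paper: recall the rotation estimate $\int_{\partial B_1}|u_t^\phi-u_s^\phi|\le C(\log(t/s))^{1/2}(W_\kappa(t)-W_\kappa(s))^{1/2}$ from Lemma~\ref{lem:rotation-est} (valid for any $0<\kappa<\kappa_0$), apply Lemma~\ref{lem:W-est} with $\sigma=0$ via Lemma~\ref{lem:opt-est} to get $W_\kappa(t)\le C(\log(1/t))^{-1/\gamma}$, and then close with the exponentially dyadic summation of Lemma~\ref{lem:opt-est}. The uniqueness argument is also the one used in Lemma~\ref{lem:blowup-rot-est}.
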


\begin{proof}
  Using Lemma~\ref{lem:opt-est}, we apply Lemma~\ref{lem:W-est} with
  $\si=0$ to obtain
  $$ 0\le W_{\ka}(t, u)\le C\left(\log\frac 1t\right)^{-\frac{1}{\g}}.
$$
Recall now the estimate
$$
\int_{\pa B_1}|u^{\phi}_t-u^{\phi}_s|\le C\left(\log \frac t
  s\right)^{1/2}\left(W_{\ka}(t)-W_{\ka}(s)\right)^{1/2},
$$
for $0<s<t<t_0$, that we proved in Lemma~\ref{lem:rotation-est} in the
case $\kappa=3/2$ -- the proof actually works for any
$0<\kappa<\kappa_0$. Then, applying the exponentially dyadic argument
as in the proof of Lemma~\ref{lem:opt-est}, we obtain
\[
  \int_{\pa B_1}|u^{\phi}_t-u^{\phi}_s|\le C\left(\log\frac
    1t\right)^{-\frac{1-\g}{2\g}}.\qedhere
\]
\end{proof}

\begin{lemma}[Nondegeneracy]\label{lem:non-deg} Let $0$ be a free
  boundary point of $u$ such that $\widehat N(0+, u)=\ka$, $\ka=2m$,
  $m\in \mathbb{N}$. Then
  $$ \liminf_{t\ra 0}\int_{\pa B_1}(u^{\phi}_t)^2=\liminf_{t\ra
    0}\frac 1{t^{n+2\ka-1}}\int_{\pa B_t}u^2>0.
$$
\end{lemma}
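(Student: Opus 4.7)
The plan is to adapt the contradiction argument of Lemma~\ref{lem:nondeg} to the singular case. Suppose $\liminf_{t\to 0}m(t)=0$, where $m(t):=\|u^\phi_t\|_{L^2(\pa B_1)}$, and pick $t_j\to 0^+$ with $m(t_j)\to 0$. First I would promote this to $m(t)\to 0$: Proposition~\ref{prop:blowup-rot-est} yields $L^1(\pa B_1)$-convergence of $u^\phi_t$ to a unique blowup $u^\phi_0$, and the $C^{1,\be}$ estimate of Theorem~\ref{thm:grad-holder} gives a uniform $L^\infty(\pa B_1)$-bound; $L^p$-interpolation then upgrades this to $L^2(\pa B_1)$-convergence, so $m(t)\to\|u^\phi_0\|_{L^2(\pa B_1)}=0$ and $u^\phi_0\equiv 0$. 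Combined with the subsequential $C^1_\loc(\R^n_\pm\cup\R^{n-1})$ convergence from Proposition~\ref{prop:exist-Alm-blowup} and $\ka$-homogeneity, $u^\phi_0\equiv 0$ on $\R^n$.

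Next, the rotation estimate of Proposition~\ref{prop:blowup-rot-est} with $s=t_j\to 0$ yields
\[
m(t)\le C\bigl(\log(1/t)\bigr)^{-(1-\g)/(2\g)},
\]
equivalently $\int_{\pa B_t}u^2\le C(\log(1/t))^{-(1-\g)/\g}\,t^{n+2\ka-1}$, improving Lemma~\ref{lem:opt-est}. I would then iterate: the proof of Lemma~\ref{lem:W-est} extends to negative $\si$ in its hypothesis $\int_{\pa B_r}u^2\le C(\log(1/r))^\si r^{n+2\ka-1}$---in case~(ii) of Lemma~\ref{lem:epi-ineq} the factor $(\log(1/r))^{-\si\g}$ only becomes more favorable---yielding $W_\ka(t,u)\le C(\log(1/t))^{-(1-\si\g)/\g}$. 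Combined with the exponentially-dyadic estimate from the proof of Proposition~\ref{prop:blowup-rot-est}, this gives a stronger log bound on $m$, which fed back gives a still stronger bound on $W_\ka$, and so on. Iterating yields $m(t)\le C_k(\log(1/t))^{-k}$ for every $k\in\mathbb{N}$.

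Finally, mimicking the end of Lemma~\ref{lem:nondeg}, for $\tilde\ka=\ka+\e$ with $\e>0$ small and $\tilde\ka<\ka_0$ one analyzes
\[
W_{\tilde\ka}(t,u)\approx t^{-2\e}\bigl[W_\ka(t,u)-\e\,m(t)^2\bigr],
\]
and aims at $W_{\tilde\ka}(0+,u)\ge 0$, so that Theorem~\ref{thm:weiss} forces $\N(0+,u)\ge \tilde\ka>\ka$, contradicting $\widehat N(0+,u)=\ka$. The main obstacle is precisely this last step: purely logarithmic decay of $m(t)$---however strongly iterated---never defeats the polynomial factor $t^{-2\e}$. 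Handling this delicate balance, most plausibly through a Monneau-type monotonicity for almost minimizers that pins $u$ close to a non-zero $p\in\mathcal{Q}_\ka$ in an averaged sense (so that $u^\phi_0\ne 0$ is forced directly), is where the singular case genuinely departs from the regular one, and where the logarithmic epiperimetric inequality does its essential work.
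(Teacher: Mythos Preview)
Your approach has a genuine gap, which you yourself identify: the bootstrapping of Lemma~\ref{lem:W-est} and Proposition~\ref{prop:blowup-rot-est} produces only logarithmic decay of $m(t)$, and no amount of logarithmic decay forces $W_{\tilde\ka}(0+,u)\ge 0$ for any $\tilde\ka>\ka$. The argument of Lemma~\ref{lem:nondeg} relied essentially on the \emph{polynomial} decay $m(t)\le Ct^{\de/2}$ coming from the epiperimetric inequality; in the singular case the logarithmic epiperimetric inequality simply does not supply enough. Your suggested fallback via a Monneau-type formula is plausible in spirit but is not carried out, and in fact the paper does not establish such a formula for almost minimizers.

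The paper's argument (following \cite{ColSpoVel17}) sidesteps this obstruction entirely by changing the object to which the rotation estimate is applied. Assume, for contradiction, that $\frac{1}{r_j^{n+2\ka-1}}\int_{\pa B_{r_j}}u^2\to 0$ along some $r_j\to 0$. Pass to the Almgren rescalings $u_{r_j}^A$; by Proposition~\ref{prop:exist-Alm-blowup} and Proposition~\ref{prop:sing-char} these converge (over a subsequence) in $C^1_\loc$ to a nonzero $\ka$-homogeneous polynomial $q$ with $\|q\|_{L^2(\pa B_1)}=1$. Now each $u_{r_j}^A$ is itself an almost minimizer with gauge $(r_jt)^\al$ and with $\|u_{r_j}^A\|_{W^{1,2}(B_1)}$ bounded uniformly in $j$; hence Proposition~\ref{prop:blowup-rot-est} applies to each $u_{r_j}^A$ with a constant $C$ \emph{independent of $j$}, giving a unique $\ka$-homogeneous blowup $q_{r_j}$ and
\[
\int_{\pa B_1}\bigl|[u_{r_j}^A]^\phi_t-q_{r_j}\bigr|\le C\Bigl(\log\tfrac1t\Bigr)^{-\frac{1-\g}{2\g}},\qquad 0<t<t_0.
\]
Evaluating along $t=\rho_i:=r_i/r_j\to 0$ and unwinding the scaling, one computes $\int_{\pa B_1}q_{r_j}^2=\bigl(r_j^{n+2\ka-1}/\!\int_{\pa B_{r_j}}u^2\bigr)\cdot\lim_i r_i^{-(n+2\ka-1)}\int_{\pa B_{r_i}}u^2=0$, so $q_{r_j}\equiv 0$ for every $j$. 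But then, since $q$ is $\ka$-homogeneous with $\|q\|_{L^2(\pa B_1)}=1$,
\[
1=\frac{1}{\rho^{n+2\ka-1}}\int_{\pa B_\rho}q^2
\le C\|q\|_{L^\infty(\pa B_1)}\biggl[\Bigl(\tfrac{1}{\rho^{n+2\ka-1}}\!\int_{\pa B_\rho}|q-u_{r_j}^A|^2\Bigr)^{1/2}
+\Bigl(\log\tfrac1\rho\Bigr)^{-\frac{1-\g}{2\g}}\biggr],
\]
and choosing first $\rho$ small, then $r_j=r_j(\rho)$ small (so that $u_{r_j}^A$ is $C^1$-close to $q$ on $\pa B_\rho$), makes the right-hand side less than $1$. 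The point is that the normalization $\|u_{r_j}^A\|_{L^2(\pa B_1)}=1$ guarantees a nonzero limit $q$ regardless of the degeneracy assumption, and the \emph{uniformity in $j$} of the rotation estimate is what converts the merely logarithmic control into a contradiction.
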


\begin{proof} We use the approach of \cite{ColSpoVel17}*{Lemma~7.2}.
  Assume to the contrary that for some $r_j\searrow 0+$
  $$ \lim_{j\ra \infty}\frac 1{r_j^{n+2\ka-1}}\int_{\pa B_{r_j}}u^2=0.
$$
Consider then the corresponding Almgren rescalings $u_{r_j}^A(x)$. By
Proposition~\ref{prop:exist-Alm-blowup}, over a subsequence,
$u_{r_j}^A\ra q$ for some blowup $q$. By a characterization of
singular points in Proposition~\ref{prop:sing-char}, $q$ is
$\ka$-homogeneous and is normalized by $\|q\|_{L^2(\pa B_1)}=1$. Next,
for each Almgren rescaling $u_{r_j}^A$ consider its $\ka$-almost
homogeneous rescalings
$$
[u_{r_j}^A]^{\phi}_t:=\frac{u_{r_j}^A(tx)}{\phi(t)}.
$$
Since $u_{r_j}^A$ is an almost minimizer in $B_{1/{r_j}}$ with gauge
function $\omega(t)=(r_jt)^{\al}$, we have
$$
N(0+, u_{r_j}^A)=\lim_{s\ra 0+}N(s, u_{r_j}^A)=\lim_{s\ra 0+}N(r_js,
u)=N(0+, u)=\ka.
$$
Thus, by Proposition~\ref{prop:blowup-rot-est}, over subsequences,
$[u_{r_j}^A]^{\phi}_t$ converges to a unique blowup $q_{r_j}$ and
$$
\int_{\pa B_1}\left|[u_{r_j}^A]^{\phi}_t-q_{r_j}\right|\le
C\left(\log\frac 1t\right)^{-\frac{1-\g}{2\g}}, \quad 0<t<t_0.
$$
Notice that since $\|u_{r_j}^A\|_{W^{1, 2}(B_1)}$ is uniformly
bounded, the constant $C$ is independent of $r_j$, $t$. Now we fix
$r_j$, and consider a sequence
$\{\rho_i\}_{i=1}^{\infty}=\{r_i/r_j\}_{i=1}^{\infty}$. Note that up
to subsequence, $[u_{r_j}^A]^{\phi}_{\rho_i}\ra q_{r_j}$ as
$\rho_i \ra 0$, by the uniqueness.  Then
\begin{align*} \int_{\pa B_1}q_{r_j}^2 &= \lim_{\rho_i\ra 0}\frac1{\rho_i^{n+2\ka-1}}\int_{\pa B_{\rho_i}}(u_{r_j}^A)^2\\
                                       &= \frac{r_j^{n+2\ka-1}}{\int_{\pa B_{r_j}}u^2}\lim_{i\ra \infty}\frac{1}{(r_j\rho_i)^{n+2\ka-1}}\int_{\pa B_{r_j\rho_i}}u^2\\
                                       &= \frac{r_j^{n+2\ka-1}}{\int_{\pa B_{r_j}}u^2}\lim_{i\ra \infty}\frac{1}{r_i^{n+2\ka-1}}\int_{\pa B_{r_i}}u^2\\
                                       &=0
\end{align*}
by the contradiction assumption. Thus, $q_{r_j}=0$ on $\pa B_1$, and
hence
$$
\int_{\pa B_1}\left|[u_{r_j}^A]^{\phi}_t\right|\le C\left(\log\frac
  1t\right)^{-\frac{1-\g}{2\g}}.
$$
Now for any $\rho>0$ and $r_j$,
\begin{align*}
  1&= \frac1{\rho^{n+2\ka-1}}\int_{\pa B_{\rho}}q^2\\
   &\le \frac{\|q\|_{L^\infty(\partial B_\rho)}}{\rho^{\ka}}\frac1{\rho^{n+\ka-1}}\int_{\pa B_{\rho}}|q|\\
   &\le \|q\|_{L^{\infty}(\pa B_{1})}\left[ \frac1{\rho^{n+\ka-1}}\int_{\pa B_{\rho}}|q-u_{r_j}^A|+\frac1{\rho^{n+\ka-1}}\int_{\pa B_{\rho}}|u_{r_j}^A|\right]\\
   &\le \|q\|_{L^{\infty}(\pa B_1)}\left[\frac1{\rho^{n+\ka-1}}C_n\rho^{\frac{n-1}2}\left(\int_{\pa B_{\rho}}|q-u_{r_j}^A|^2\right)^{1/2}+e^{-\left(\frac{\ka b}{\al}\right)\rho^{\al}}\int_{\pa B_1}\left|[u_{r_j}^A]^{\phi}_{\rho}\right|\right]\\
   &\le C\|q\|_{L^{\infty}(\pa B_1)}\left[\left(\frac1{\rho^{n+2\ka-1}}\int_{\pa B_{\rho}}|q-u_{r_j}^A|^2  \right)^{1/2}+\left(\log\frac1{\rho}\right)^{-\frac{1-\g}{2\g}}\right].
\end{align*}
Note that $u_{r_j}^A\ra q$ in $C^1_{\text{loc}}(B_1^{\pm}\cup
B_1')$. We choose first $\rho>0$ small and then $r_j=r_j(\rho)>0$
small to reach a contradiction.
\end{proof}

The nondegeneracy implies the following important fact, which enables
the use of the Whitney Extension Theorem in the proof of the
structural theorem on the singular set (Theorem~\ref{thm:sing} below).

\medskip For $\ka=2m<\kappa_0$, $m\in \mathbb{N}$, we denote
$$
\Sigma_{\ka}(u):=\{x_0\in \Si(u):N(0+, u, x_0)=\ka\}.
$$

\begin{lemma}\label{lem:sing-closed}
  The set $\Sigma_{\ka}(u)$ is of topological type $F_{\si}$; i.e., it
  is a countable union of closed sets.
\end{lemma}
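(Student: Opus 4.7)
The plan is to realize $\Sigma_\kappa(u)$ as an explicit countable union of closed sets by exploiting two-sided growth bounds on $\int_{\partial B_\rho(x_0)} u^2$ that are now available at singular points of frequency $\kappa$. Specifically, for each integer $j \geq 1$, I would set
\[
E_j := \Bigl\{ x_0 \in B_{1/2}'\cap\Gamma(u) : \tfrac{1}{j}\rho^{n+2\kappa-1} \leq \int_{\partial B_\rho(x_0)} u^2 \leq j\,\rho^{n+2\kappa-1} \text{ for all } 0<\rho<1/j\Bigr\}
\]
and claim $\Sigma_\kappa(u) = \bigcup_{j=1}^\infty E_j$, with each $E_j$ closed.

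First I would verify $\Sigma_\kappa(u)\subset \bigcup_j E_j$. The upper bound on $\int_{\partial B_\rho} u^2$ for each $x_0\in\Sigma_\kappa(u)$ is exactly the optimal growth estimate from Lemma~\ref{lem:opt-est}, while the matching lower bound is the nondegeneracy from Lemma~\ref{lem:non-deg} (applied at $x_0$ after recentering); together these force $x_0\in E_j$ for some $j=j(x_0)$ large enough. For the reverse inclusion $\bigcup_j E_j \subset \Sigma_\kappa(u)$, I would argue that the two-sided bound in the definition of $E_j$ pins down the frequency: if $\widehat N(0+,u,x_0)>\kappa$, a blowup argument using Proposition~\ref{prop:exist-Alm-blowup} together with the optimal (or even the weak) growth of Lemma~\ref{lem:almost-opt-growth} at the higher frequency contradicts the lower bound for small $\rho$; conversely, the upper bound, combined with monotonicity of $W_\kappa$ (Theorem~\ref{thm:weiss}), forces $\widehat N(0+,u,x_0)\geq \kappa$. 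Thus $\widehat N(0+,u,x_0)=\kappa=2m<\kappa_0$, and by Proposition~\ref{prop:sing-char} this is equivalent to $x_0\in\Sigma_\kappa(u)$.

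The main step is closedness of each $E_j$. Suppose $x_k\in E_j$ with $x_k\to x_0$. Since $u\in C^{1,\beta}_{\loc}$ (Theorem~\ref{thm:grad-holder}), we have $u(x_0)=0$ and $\nabla u(x_0)=0$; moreover $\Gamma(u)$ is the topological boundary of $\Lambda(u)$ in $\cM$, hence closed, so $x_0\in \Gamma(u)$. For each fixed $\rho\in(0,1/j)$, the map $x\mapsto \int_{\partial B_\rho(x)} u^2$ is continuous, so the bounds
\[
\tfrac{1}{j}\rho^{n+2\kappa-1} \leq \int_{\partial B_\rho(x_k)} u^2 \leq j\,\rho^{n+2\kappa-1}
\]
pass to the limit, giving $x_0\in E_j$.

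The main obstacle I anticipate is the matching of frequencies in the second step: one must carefully convert the uniform two-sided bound on $\int_{\partial B_\rho} u^2$ into the identity $\widehat N(0+,u,x_0)=\kappa$, which requires invoking both the Weiss monotonicity (to get $\widehat N(0+)\geq\kappa$ from the upper bound) and a blowup/Almgren rescaling argument (to rule out $\widehat N(0+)>\kappa$ via the lower bound). Everything else is bookkeeping with continuity.
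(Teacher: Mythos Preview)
Your approach is essentially the paper's: write $\Sigma_\kappa(u)=\bigcup_j E_j$ with $E_j$ the free boundary points carrying a uniform two-sided bound $\tfrac1j\le \rho^{-(n+2\kappa-1)}\int_{\partial B_\rho(x_0)}u^2\le j$, invoke Lemmas~\ref{lem:opt-est} and~\ref{lem:non-deg} for the inclusion $\Sigma_\kappa(u)\subset\bigcup_j E_j$, and use the weak growth of Lemma~\ref{lem:almost-opt-growth} to rule out $\widehat N(0+)>\kappa$. Two small fixes are needed. First, replace the open ball $B_{1/2}'$ by the closed balls $\overline{B'_{1-1/j}}$: as written your $E_j$ need not be closed (limit points can land on $\partial B_{1/2}'$) and $\bigcup_j E_j$ misses $\Sigma_\kappa(u)\setminus B_{1/2}'$. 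Second, the paper defines $E_j$ as a subset of $\Sigma_\kappa(u)$ rather than of $\Gamma(u)$; this makes the reverse inclusion trivial and lets one get $\widehat N(0+,u,x_0)\ge\kappa$ in the closedness step simply by upper semicontinuity of $x\mapsto\widehat N(0+,u,x)$ (since the approximating points already have frequency $\kappa$). Your alternative---deducing $\widehat N(0+)\ge\kappa$ directly from the upper bound via Weiss---is valid but should be spelled out: if $\widehat N(0+)=\kappa'<\kappa$, pick $\kappa''\in(\kappa',\kappa)$; the upper bound gives $t^{-(n+2\kappa''-1)}\int_{\partial B_t}u^2\to 0$, and since $\widetilde N(t)\to\kappa'$ is bounded this forces $W_{\kappa''}(t)\to 0$, so by monotonicity $W_{\kappa''}(t)\ge 0$, i.e.\ $\widetilde N(t)\ge\kappa''>\kappa'$, a contradiction.
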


\begin{proof} For $j\in\mathbb{N}$, $j\geq 2$, let
  $$
  E_j:=\Bigl\{x_0\in \Sigma_{\ka}(u)\cap \overline{B_{1-1/j}}:
  \frac1j\le \frac{1}{\rho^{n+2\ka-1}}\int_{\pa B_{\rho}(x_0)}u^2 \le
  j\ \text{for}\ 0<\rho<\frac{1}{2j}\Bigr\}.
  $$
  Then by Lemma~\ref{lem:opt-est} and Lemma~\ref{lem:non-deg},
  $\Sigma_{\ka}(u)=\bigcup_{j=2}^{\infty}E_j$. We now claim that $E_j$
  is closed for any $j\geq 2$. Indeed, take a sequence $x_i\in E_j$
  such that $x_i\ra x_0$ as $i\ra \infty$. Then
  $x_0\in \overline{B_{1-1/j}}$ and for every $0<\rho<1/(2j)$, by the
  local uniform continuity of $u$,
  \begin{equation}\label{eq:F-sigma-ineq-x0}
    \frac{1}{\rho^{n+2\ka-1}}\int_{\pa B_{\rho}(x_0)}u^2=\lim_{i\ra \infty}\frac{1}{\rho^{n+2\ka-1}}\int_{\pa B_{\rho}(x_i)}u^2\in \left[\frac1j, j\right].
  \end{equation}
  Next, since $\Gamma(u)$ is relatively closed in $B_1'$, we also know
  that $x_0\in\Gamma(u)$. Moreover, since $N(0+, u, x_i)=\ka$ and the
  function $x\mapsto \widehat N(0+, u, x)$ is upper semicontinuous, we
  have
$$
\ka=\limsup\limits_{i\ra \infty}\widehat N(0+, u, x_i)\le \widehat
N(0+, u, x_0).
$$
If $\widehat N(0+, u, x_0)=\ka'>\ka$, then by
Lemma~\ref{lem:almost-opt-growth},
$$ \frac{1}{\rho^{n+2\ka-1}}\int_{\pa B_{\rho}(x_0)}u^2\le
C\rho^{2(\ka'-\ka)}\left(\log\frac1\rho\right)\ra 0\quad\text{as
}\rho\ra 0,
$$
which contradicts \eqref{eq:F-sigma-ineq-x0}. Therefore,
$\widehat N(0+, u, x_0)=\ka$ and consequently $x_0\in E_j$. Hence,
$E_j$ is closed, $j=2,3,\ldots$, implying that $\Sigma_\kappa(u)$ is
$F_\sigma$.
\end{proof}

To state the main result of this paper concerning the singular points,
we need to introduce the following notations. For $\ka=2m<\kappa_0$,
$m\in \mathbb{N}$ and $x_0\in \Sigma_\kappa(u)$, we define
$$
d^{(\ka)}_{x_0}:=\dim\{\xi\in \R^{n-1}:\xi\cdot
\D_{x'}u_{x_0}^\phi(x', 0)\equiv 0\ \text{on}\ \R^{n-1}\},
$$
which has the meaning of the dimension of $\Sigma_\kappa(u)$ at $x_0$,
and where $u_{x_0}^\phi$ is the unique $\kappa$-homogeneous blowup at
$x_0$. In fact, $d_{x_0}^{(\kappa)}$ is the dimension of the linear
subspace $\Sigma_\kappa (u_{x_0}^\phi)\subset \R^{n-1}$. Since
$u_{x_0}^\phi$ is a nonzero solution of the Signorini problem, it
cannot vanish identically on $\R^{n-1}$ (see \cite{GarPet09}) and
therefore $d_{x_0}^{(\kappa)}<n-1$.

For $d=0,1,\ldots,n-2$, we denote
$$
\Si^d _{\ka}(u):=\{x_0\in \Si_{\ka}(u): d^{(\ka)}_{x_0}=d\}.
$$

\begin{theorem}[Structure of the singular set]\label{thm:sing}
  Let $u$ be an almost minimizer for the Signorini problem in
  $B_1$. Then for every $\ka=2m<\kappa_0$, $m\in \mathbb{N}$, and
  $d=0,1,\ldots,n-2$, the set $\Sigma_{\ka}^d(u)$ is contained in the
  union of countably many submanifolds of dimension $d$ and class
  $C^{1, \log}$.
\end{theorem}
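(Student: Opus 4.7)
The plan is to follow the by-now-standard strategy of Garofalo-Petrosyan \cite{GarPet09} / Colombo-Spolaor-Velichkov \cite{ColSpoVel17} (adapted here to the almost-minimizer setting), which combines the uniqueness and quantitative rate of convergence of blowups established in the preceding sections with Whitney's extension theorem and the implicit function theorem.

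\textbf{Step 1. Quantitative Taylor expansion.} For $x_0 \in \Sigma_\kappa(u)$, Proposition~\ref{prop:sing-char} identifies each blowup with a polynomial $q_{x_0} \in \mathcal{Q}_\kappa$, and Proposition~\ref{prop:blowup-rot-est} shows that $q_{x_0}$ is unique. Combining the rotation estimate there, the optimal growth estimate of Lemma~\ref{lem:opt-est}, and the nondegeneracy of Lemma~\ref{lem:non-deg}, I would derive a pointwise expansion of the form
$$
\left(\dashint_{\partial B_r(x_0)} |u - q_{x_0}(\cdot - x_0)|^2\right)^{1/2} \le C\,r^{\kappa}\,\sigma(r), \qquad \sigma(r) := \Bigl(\log \tfrac{1}{r}\Bigr)^{-\frac{1-\gamma}{2\gamma}},
$$
for $0<r<r_0$, uniformly on compact subsets of $\Sigma_\kappa(u)$.

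\textbf{Step 2. Continuous dependence of the blowup on the base point.} Working on each compact piece $E_j \subset \Sigma_\kappa(u)$ provided by Lemma~\ref{lem:sing-closed}, I would apply the estimate of Step~1 at two nearby base points $x_0, y_0 \in E_j$ and compare the two polynomial expansions on a ball of radius $r \sim |x_0-y_0|$. Using the equivalence of norms on the finite-dimensional space $\mathcal{Q}_\kappa$, this yields
$$
\|q_{x_0} - q_{y_0}\|_{L^\infty(B_1)} \le C\,\sigma(|x_0-y_0|),
$$
with a uniform constant depending only on $E_j$.

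\textbf{Step 3. Whitney extension.} Interpreting the polynomial field $x_0 \mapsto q_{x_0}$ on $E_j$ as a jet $(P^{\beta}_{x_0})_{|\beta| \le \kappa}$ via $P^{\beta}_{x_0} = \partial^{\beta} q_{x_0}(0)$, the Taylor expansion of Step~1 and the continuous dependence of Step~2 give precisely the Whitney compatibility conditions with modulus $\sigma$. The Whitney extension theorem for $C^{\kappa,\sigma}$ therefore yields a function $F \in C^{\kappa,\sigma}(\mathbb{R}^{n-1})$ such that $\partial^{\beta} F(x_0) = \partial^{\beta} q_{x_0}(0)$ for every $x_0 \in E_j$ and every $|\beta| \le \kappa$.

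\textbf{Step 4. Implicit function theorem.} A point $x_0 \in \Sigma_\kappa(u)$ lies in $\Sigma^d_\kappa(u)$ iff the linear subspace $V_{x_0} := \{\xi \in \mathbb{R}^{n-1} : \xi \cdot \nabla_{x'} q_{x_0}(x',0) \equiv 0\}$ has dimension exactly $d$. Decomposing $E_j$ further into countably many pieces on which a fixed ordered collection of $(n-1)-d$ partial derivatives $\{\partial_{x_{i_1}} q_{x_0}(x',0), \dots, \partial_{x_{i_{n-1-d}}} q_{x_0}(x',0)\}$ of order $\kappa-1$ spans a complementary subspace, and using the Whitney extension $F$ to realize these partials as $C^{1,\sigma}$ functions on $\mathbb{R}^{n-1}$, the implicit function theorem gives that $\Sigma^d_\kappa(u) \cap E_j$ is locally contained in a $d$-dimensional submanifold of class $C^{1,\sigma} = C^{1,\log}$. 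Summing over $j \in \mathbb{N}$ and over the countable decomposition gives the required countable-union structure.

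\textbf{Main obstacle.} The hardest point is Step~2: turning the rotation estimate (which controls a single base point at a time) into a Hölder-like modulus in the base point, at the right scale to feed into Whitney's theorem. All the analytic content of the proof — the logarithmic epiperimetric inequality, the bootstrapped optimal growth, and the rotation bound — is already packaged in Proposition~\ref{prop:blowup-rot-est} and Lemma~\ref{lem:opt-est}, so the remaining work is largely a careful comparison argument on balls of radius $|x_0-y_0|$ combined with finite-dimensional norm equivalence in $\mathcal{Q}_\kappa$.
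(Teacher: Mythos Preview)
Your proposal is correct and follows the same overall strategy as the paper's proof (rate of convergence to the blowup $\to$ continuous dependence of $q_{x_0}$ on $x_0$ $\to$ Whitney extension $\to$ implicit function theorem, with the paper deferring the last two steps to \cite{ColSpoVel17} and \cite{Fef09}).

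The one place the arguments diverge is Step~2. You derive the log-H\"older dependence of $q_{x_0}$ by applying the Taylor expansion of Step~1 at both $x_0$ and $y_0$ on overlapping balls of radius $\sim|x_0-y_0|$ and subtracting; this is the classical \cite{GarPet09} route, and it works here because the constants in Proposition~\ref{prop:blowup-rot-est} and Lemma~\ref{lem:opt-est} are uniform over $\Sigma_\kappa\cap B_{1/2}'$. The paper instead splits $\|q_{x_1}-q_{x_2}\|_{L^1(\partial B_1)}$ through the rescalings $u^\phi_{x_i,t}$ at the intermediate scale $t=|x_1-x_2|^{1/(2(\kappa-\beta))}$ and controls the cross term $\|u^\phi_{x_1,t}-u^\phi_{x_2,t}\|_{L^1(\partial B_1)}$ using the $C^{1,\beta}$ regularity from Theorem~\ref{thm:grad-holder}. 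Both routes yield the same logarithmic modulus; yours avoids invoking the $C^{1,\beta}$ estimate at this stage, while the paper's avoids having to compare the two shifted polynomial expansions directly. One small correction to your Step~2: the difference $q_{x_0}(\cdot-x_0)-q_{y_0}(\cdot-y_0)$ is a polynomial of degree $\le\kappa$ but is \emph{not} an element of $\mathcal{Q}_\kappa$, so apply the norm equivalence on the full space of polynomials of degree $\le\kappa$ and then read off the homogeneous top-degree part to obtain $\|q_{x_0}-q_{y_0}\|_{L^\infty(B_1)}\le C\,\sigma(|x_0-y_0|)$; the same estimate on the lower-degree parts gives exactly the remaining Whitney compatibility conditions needed in Step~3.
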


\begin{proof} Let $\ka=2m$, $m\in \mathbb{N}$. For
  $x\in \Sigma_{\kappa}(u)\cap B_{1/2}'$, let
  $q_{x}\in\mathcal{Q}_\kappa$ denote the unique $\kappa$-homogeneous
  blowup of $u$. By the optimal growth (Lemma~\ref{lem:opt-est}) and
  the nondegeneracy (Lemma~\ref{lem:non-deg}), we can write
  $$
  q_{x}=\lambda_x q_x^A,\quad \lambda_x>0,\quad
  \|q_x^A\|_{L^2(\partial B_1)}=1,
  $$
  where $q_{x}^A\in\mathcal{Q}_\kappa$ is the corresponding Almgren
  blowup. We want to show that the $q_x$, $q_x^A$, $\lambda_x$ depend
  continuously on $x\in \Sigma_\kappa$, with a logarithmic modulus of
  continuity.

  Let $x_1, x_2\in \Si_{\ka}(u)\cap B_{1/2}$. Then for $t>0$, to be
  chosen below, we can write
  \begin{equation}\label{eq:sing-2}
    \begin{multlined}
      \|q_{x_1}-q_{x_2}\|_{L^1(\pa B_1)}\le \|q_{x_1}-u^{\phi}_{x_1,
        t}\|_{L^1(\pa B_1)}\\\qquad+\|u^{\phi}_{x_1, t}-u^{\phi}_{x_2,
        t}\|_{L^1(\pa B_1)} +\|u^{\phi}_{x_2, t}-q_{x_2}\|_{L^1(\pa
        B_1)}.
    \end{multlined}
  \end{equation}
  By Proposition~\ref{prop:blowup-rot-est}, we have
  \begin{equation}\label{eq:sing-4}
    \|q_{x}-u^{\phi}_{x, t}\|_{L^1(\pa B_1)}\le C\left(\log\frac 1t\right)^{-\frac 1{n-2}}
  \end{equation}
  for $x\in \Sigma_\kappa(u)\cap B_{1/2}'$. This controls the first
  and third term on the right hand side of \eqref{eq:sing-2} To
  estimate the middle term, we observe that
  \begin{multline*}
    \|u^{\phi}_{x_1, t}-u^{\phi}_{x_2, t}\|_{L^1(\pa B_1)}\\\le
    \frac{e^{\left(\frac{\ka b}{\al}
        \right)t^{\al}}}{t^{\ka}}\int_{\pa B_1}\int_0^1\left|\D
      u(x_1+tz+r(x_2-x_1))\right| |x_1-x_2|\,dr\,dS_z
  \end{multline*}
  for any $0<t<1/2$. Recalling that $\D u(x_1)=0$ and
  $u\in C^{1, \be}(B_1^{\pm}\cup B'_1)$, we have
  \begin{multline*}
    |\D u(x_1+tz+r(x_2-x_1)|\\*\le C|tz+r(x_2-x_1)|^{\be}\le
    C(t+|x_1-x_2|)^{\be}\le C|x_1-x_2|^{\frac{\be}{2(\ka-\be)}}
  \end{multline*}
  if we choose $t=|x_1-x_2|^{\frac {1}{2(\ka-\be)}}$ and have
  $|x_1-x_2|<\left(1/2\right)^{2(\ka-\be)}$. This gives
  \begin{equation}
    \|u^{\phi}_{x_1, t}-u^{\phi}_{x_2, t}\|_{L^1(\pa B_1)}\le \frac{C}{t^{\ka}}|x_1-x_2|^{\frac{\be}{2(\ka-\be)}}|x_1-x_2|\le C|x_1-x_2|^{1/2}.
    \label{eq:sing-3}
  \end{equation}
  Combining \eqref{eq:sing-2}, \eqref{eq:sing-3}, and
  \eqref{eq:sing-4}, we obtain
  \begin{equation}\label{eq:sing-5}
    \|q_{x_1}-q_{x_2}\|_{L^1(\pa B_1)}\le C\left(\log\frac 1{|x_1-x_2|}\right)^{-\frac 1{n-2}}.
  \end{equation}
  Next, by Lemma~\ref{lem:opt-est}, for any
  $x\in \Sigma_\kappa(u)\cap B_{1/2}'$ and small $t$
$$
\int_{\pa B_1}(u^{\phi}_{x, t})^2\le C
$$
with $C$ independent of $x$, and passing to the limit as $t\to \infty$
obtain the bound
$$
\lambda_x^2=\int_{\pa B_1}q_x^2\le C
$$
Moreover, since $q_x$ is a $\kappa$-homogeneous harmonic polynomial,
we also have
\begin{equation}\label{eq:sing-15}
  \|q_{x}\|_{L^\infty(B_1)}\leq C(n,\kappa)\|q_{x}\|_{L^2(\partial
    B_1)}\leq C.
\end{equation}
Then, by combining \eqref{eq:sing-5} and \eqref{eq:sing-15}, we have
\begin{equation}
  \begin{aligned}
    |\la_{x_1}-\la_{x_2}| &\le |\la_{x_1}^2-\la_{x_2}^2|^{1/2}\leq
    \left(\int_{\partial
        B_1}|q_{x_1}^2-q_{x_2}^2|\right)^{1/2}\\
    &\leq
    \|q_{x_1}+q_{x_2}\|_{L^\infty(B_1)}^{1/2}\|q_{x_1}-q_{x_1}\|_{L^1(\partial
      B_1)}^{1/2}\\
    &\le C\left(\log\frac 1{|x_1-x_2|}\right)^{-\frac1{2(n-2)}}.
  \end{aligned}
  \label{eq:sing-8}
\end{equation}
Finally, we want to estimate $q_{x_1}^A-q_{x_2}^A$. By writing
\begin{align*}
  \|q_{x_1}-q_{x_2}\|_{L^1(\pa B_1)}&= \int_{\pa B_1}|\la_{x_1}q_{x_1}^A-\la_{x_2}q_{x_2}^A|\\
                                    &= \int_{\pa B_1}|\la_{x_1}(q_{x_1}^A-q_{x_2}^A)+(\la_{x_1}-\la_{x_2})q_{x_2}^A|\\
                                    &\ge \la_{x_1}\int_{\pa B_1}|q_{x_1}^A-q_{x_2}^A|-|\la_{x_1}-\la_{x_2}|\int_{\pa B_1}|q_{x_2}^A|,
\end{align*}
we estimate
\begin{equation}
  \begin{aligned}
    \la_{x_1}\int_{\pa B_1}|q_{x_1}^A-q_{x_2}^A|&\le \|q_{x_1}-q_{x_2}\|_{L^1(\pa B_1)}+|\la_{x_1}-\la_{x_2}|\int_{\pa B_1}|q_{x_2}^A|\\
    &\le \|q_{x_1}-q_{x_2}\|_{L^1(\pa B_1)}+C(n)|\la_{x_1}-\la_{x_2}|\\
    &\le C\left(\log\frac 1{|x_1-x_2|}\right)^{-\frac1{2(n-2)}},
  \end{aligned}
  \label{eq:sing-9}
\end{equation}
where we used $\|q_{x_2}^A\|_{L^2(\pa B_1)}=1$ in the second
inequality and \eqref{eq:sing-5} and the bound \eqref{eq:sing-8} in
the third inequality. Next, using that $q_x^A$ are
$\kappa$-homogeneous harmonic polynomials, we
have
$$\|q_{x_1}^A-q_{x_2}^A\|_{L^\infty(B_1)}\leq
C\|q_{x_1}^A-q_{x_2}^A\|_{L^1(\partial B_1)},$$ which combined with
\eqref{eq:sing-9} gives
\begin{equation}\label{eq:sing-20}
  \lambda_{x_1}\|q_{x_1}^A-q_{x_2}^A\|_{L^\infty(B_1)}\leq C\left(\log\frac 1{|x_1-x_2|}\right)^{-\frac1{2(n-2)}}. 
\end{equation}
Now we fix $x_0\in \Si_{\ka}(u)\cap B_{1/4}'$. Then by
\eqref{eq:sing-8}, there exists
$\de=\de(x_0)\in \left(0, \left(1/2\right)^{2(\ka-\be)+1}\right)$ such
that $\la_x\ge 1/2\la_{x_0}$ if
$x\in\Sigma_\kappa(u)\cap B_\delta'(x_0)$. Then by \eqref{eq:sing-20},
we conclude that
\begin{equation}\label{eq:sing-10}
  \|q_{x_1}^A-q_{x_2}^A\|_{L^{\infty}(B_1)}\le C\left(\log\frac 1{|x_1-x_2|}\right)^{-\frac1{2(n-2)}},\quad x_1, x_2\in \Si_{\ka}(u)\cap B_{\de}(x_0).
\end{equation}
Notice that the constant $C$ does not depend on $x_1$, $x_2$, but both
$C$ and $\delta$ do depend on $x_0$.

Once we have the estimates \eqref{eq:sing-8} and \eqref{eq:sing-10},
as well as Lemma~\ref{lem:sing-closed}, we can apply the Whitney
Extension Theorem of Fefferman's \cite{Fef09}, to complete the proof,
see e.g., the proof of Theorem~5 in \cite{ColSpoVel17}.
\end{proof}

\appendix


\section{Some examples of almost minimizers}
\label{sec:ex-drift}

\begin{example} If $u$ is a minimizer of the functional
$$
\int_D a(x)|\nabla u|^2
$$
over the set $\mathfrak{K}_{\psi,g}(D,\cM)$ with strictly positive
$a\in C^{0,\alpha}(\overline{D})$, $0<\alpha\leq1$, then $u$ is an
almost minimizer for the Signorini problem with a gauge function
$\omega(r)=C r^\alpha$.
\end{example}
\begin{proof} This is rather immediate.
\end{proof}
\begin{example}\label{ex:drift} Let $u$ be a solution of the Signorini
  problem for the Laplacian with drift with the velocity field
  $b\in L^p(B_1)$, $p>n$:
  \begin{align*}
    -\Delta u + b(x)\nabla u=0&\quad\text{in }B_1^\pm\\
    -\partial_{x_n}u\geq 0,\quad u\geq 0,\quad
    u\partial_{x_n}u=0&\quad\text{on }B_1',
  \end{align*}
  even in $x_n$-variable. We understand this in the weak sense that
  $u$ satisfies the variational inequality
$$
\int_{B_1}\nabla u\nabla(w-u)+(b(x)\nabla u) (w-u)\geq 0,
$$
for any competitor $w\in\mathfrak{K}_{0,u}(B_1,B_1')$, i.e.
$w\in u+W^{1,2}_0(B_1)$ such that $w\geq 0$ on $B_1'$ in the sense of
traces. Then $u$ is an almost minimizer for the Signorini problem with
$\psi=0$ on $\cM=\R^{n-1}\times\{0\}$ and a gauge function
$\omega(r)=Cr^{1-n/p}$.
\end{example}
\begin{proof} Let $B_r(x_0)\Subset B_1$ and
  $w\in\mathfrak{K}_{0,u}(B_r(x_0),B_1')$. Extending $w$ as equal to
  $u$ in $B_1\setminus B_r(x_0)$, and applying the variational
  inequality for $u$, we obtain
  \begin{equation}\label{eq:ex-u-var-ineq}
    \int_{B_r(x_0)}\nabla u\nabla(w-u)+b(x)\nabla u(w-u)\geq 0.
  \end{equation}
  Let $v$ be the Signorini replacement of $u$ on $B_r(x_0)$. Then $v$
  satisfies the variational inequality
  \begin{equation}\label{eq:ex-v-var-ineq}
    \int_{B_r(x_0)} \nabla v\nabla (w-v)\geq 0,
  \end{equation}
  for all $w$ as above. Now, taking $w=u\pm (u-v)^+$ in
  \eqref{eq:ex-u-var-ineq} we will have
$$
\int_{B_r(x_0)}\nabla u\nabla (u-v)^+ +(b(x)\nabla u)(u-v)^+=0.
$$
Next, taking $w=v+(u-v)^+$ in \eqref{eq:ex-v-var-ineq}, we have
$$
\int_{B_r(x_0)}\nabla v\nabla (u-v)^+\geq 0.
$$
Taking the difference, we then obtain
$$
\int_{B_r(x_0)}|\nabla(u-v)^+|^2\leq -\int_{B_r(x_0)}b(x)\nabla
u(u-v)^+.
$$
Similarly, taking $w=v\pm (v-u)^+$ in \eqref{eq:ex-v-var-ineq} and
$w=u+(v-u)^+$ in \eqref{eq:ex-u-var-ineq} and subtracting the
resulting inequalities, we obtain
$$
\int_{B_r(x_0)}|\nabla(v-u)^+|^2\leq \int_{B_r(x_0)}b(x)\nabla
u(v-u)^+.
$$
Hence, combining the inequalities above, we arrive at
$$
\int_{B_r(x_0)}|\nabla(v-u)|^2\leq \int_{B_r(x_0)}|b(x)| |\nabla u|
|v-u|.
$$
Then, applying H\"older's inequality, we have for $p>n$
$$
\int_{B_r(x_0)}|\nabla(v-u)|^2\leq \|b\|_{L^p(B_r(x_0))}\|\nabla
u\|_{L^2(B_r(x_0))}\|v-u\|_{L^{p^*}(B_r(x_0))},
$$
with $p^*=2p/(p-2)$. Next, since $v-u\in W^{1,2}_0(B_1)$, from the
Sobolev's inequality we have
$$
\|v-u\|_{L^{p^*}(B_r(x_0))}\leq C_{n,p}
r^{1-n/p}\|\nabla(v-u)\|_{L^2(B_r(x_0))}
$$
and hence we can conclude that
$$
\int_{B_r(x_0)}|\nabla(v-u)|^2\leq Cr^{2(1-n/p)}\int_{B_r(x_0)}|\nabla
u|^2
$$
with $C=C_{n,p}\|b\|_{L^p(B_1)}^2$. This implies
\begin{align*}
  &\int_{B_r(x_0)}|\nabla u|^2-\int_{B_r(x_0)}|\nabla
    v|^2=\int_{B_r(x_0)} (\nabla u+\nabla v)(\nabla u-
    \nabla v)\\
  &\qquad\leq Cr^\gamma \int_{B_r(x_0)}(
    |\nabla u|^2+|\nabla
    v|^2)+Cr^{-\gamma}\int_{B_r(x_0)}|\nabla (v-u)|^2\\
  &\qquad\leq Cr^\gamma \int_{B_r(x_0)}(|\nabla u|^2+|\nabla
    v|^2)+Cr^{2(1-n/p)-\gamma}\int_{B_r(x_0)}|\nabla u|^2,
\end{align*}
where we have used Young's inequality in the second line.  Choosing
$\gamma=1-n/p$ we then deduce that for small enough
$0<r<r_0(n,p,\|b\|_{L^p(B_1)})$
$$
\int_{B_r(x_0)}|\nabla u|^2\leq (1+Cr^{1-n/p})\int_{B_r(x_0)}|\nabla
v|^2
$$
with $C=C_{n,p}\|b\|_{L^p(B_1)}^2$.
\end{proof}


\begin{bibdiv}
\begin{biblist}

\bib{Alm76}{article}{
   author={Almgren, F. J., Jr.},
   title={Existence and regularity almost everywhere of solutions to
   elliptic variational problems with constraints},
   journal={Mem. Amer. Math. Soc.},
   volume={4},
   date={1976},
   number={165},
   pages={viii+199},
   issn={0065-9266},
   review={\MR{0420406}},
   doi={10.1090/memo/0165},
}

\bib{Alm00}{book}{
   author={Almgren, Frederick J., Jr.},
   title={Almgren's big regularity paper},
   series={World Scientific Monograph Series in Mathematics},
   volume={1},
   note={$Q$-valued functions minimizing Dirichlet's integral and the
   regularity of area-minimizing rectifiable currents up to codimension 2;
   With a preface by Jean E. Taylor and Vladimir Scheffer},
   publisher={World Scientific Publishing Co., Inc., River Edge, NJ},
   date={2000},
   pages={xvi+955},
   isbn={981-02-4108-9},
   review={\MR{1777737}},
} 

\bib{Amb97}{book}{
   author={Ambrosio, Luigi},
   title={Corso introduttivo alla teoria geometrica della misura ed alle
   superfici minime},
   language={Italian},
   series={Appunti dei Corsi Tenuti da Docenti della Scuola. [Notes of
   Courses Given by Teachers at the School]},
   publisher={Scuola Normale Superiore, Pisa},
   date={1997},
   pages={ii+144},
   review={\MR{1736268}},
}

\bib{Anz83}{article}{
   author={Anzellotti, Gabriele},
   title={On the $C^{1,\alpha }$-regularity of $\omega $-minima of
   quadratic functionals},
   language={English, with Italian summary},
   journal={Boll. Un. Mat. Ital. C (6)},
   volume={2},
   date={1983},
   number={1},
   pages={195--212},
   review={\MR{718371}},
}

\bib{AthCaf04}{article}{
   author={Athanasopoulos, I.},
   author={Caffarelli, L. A.},
   title={Optimal regularity of lower dimensional obstacle problems},
   language={English, with English and Russian summaries},
   journal={Zap. Nauchn. Sem. S.-Peterburg. Otdel. Mat. Inst. Steklov.
   (POMI)},
   volume={310},
   date={2004},
   number={Kraev. Zadachi Mat. Fiz. i Smezh. Vopr. Teor. Funkts. 35
   [34]},
   pages={49--66, 226},
   issn={0373-2703},
   translation={
      journal={J. Math. Sci. (N.Y.)},
      volume={132},
      date={2006},
      number={3},
      pages={274--284},
      issn={1072-3374},
   },
   review={\MR{2120184}},
   doi={10.1007/s10958-005-0496-1},
}

\bib{AthCafMil18}{article}{
   author={Athanasopoulos, Ioannis},
   author={Caffarelli, Luis},
   author={Milakis, Emmanouil},
   title={On the regularity of the non-dynamic parabolic fractional obstacle
   problem},
   journal={J. Differential Equations},
   volume={265},
   date={2018},
   number={6},
   pages={2614--2647},
   issn={0022-0396},
   review={\MR{3804726}},
   doi={10.1016/j.jde.2018.04.043},
}

\bib{AthCafSal08}{article}{
   author={Athanasopoulos, I.},
   author={Caffarelli, L. A.},
   author={Salsa, S.},
   title={The structure of the free boundary for lower dimensional obstacle
   problems},
   journal={Amer. J. Math.},
   volume={130},
   date={2008},
   number={2},
   pages={485--498},
   issn={0002-9327},
   review={\MR{2405165}},
   doi={10.1353/ajm.2008.0016},
}

\bib{Bom82}{article}{
   author={Bombieri, Enrico},
   title={Regularity theory for almost minimal currents},
   journal={Arch. Rational Mech. Anal.},
   volume={78},
   date={1982},
   number={2},
   pages={99--130},
   issn={0003-9527},
   review={\MR{648941}},
   doi={10.1007/BF00250836},
}

\bib{BanSVGZel17}{article}{
   author={Banerjee, Agnid},
   author={{Smit Vega Garcia}, Mariana},
   author={Zeller, Andrew K.},
   title={Higher regularity of the free boundary in the parabolic Signorini
   problem},
   journal={Calc. Var. Partial Differential Equations},
   volume={56},
   date={2017},
   number={1},
   pages={Art. 7, 26},
   issn={0944-2669},
   review={\MR{3592762}},
   doi={10.1007/s00526-016-1103-7},
} 

\bib{Caf79}{article}{
   author={Caffarelli, L. A.},
   title={Further regularity for the Signorini problem},
   journal={Comm. Partial Differential Equations},
   volume={4},
   date={1979},
   number={9},
   pages={1067--1075},
   issn={0360-5302},
   review={\MR{542512}},
   doi={10.1080/03605307908820119},
}

\bib{CafRosSer17}{article}{
   author={Caffarelli, Luis},
   author={{Ros-Oton}, Xavier},
   author={Serra, Joaquim},
   title={Obstacle problems for integro-differential operators: regularity
   of solutions and free boundaries},
   journal={Invent. Math.},
   volume={208},
   date={2017},
   number={3},
   pages={1155--1211},
   issn={0020-9910},
   review={\MR{3648978}},
   doi={10.1007/s00222-016-0703-3},
}

\bib{CafSil07}{article}{
   author={Caffarelli, Luis},
   author={Silvestre, Luis},
   title={An extension problem related to the fractional Laplacian},
   journal={Comm. Partial Differential Equations},
   volume={32},
   date={2007},
   number={7-9},
   pages={1245--1260},
   issn={0360-5302},
   review={\MR{2354493}},
   doi={10.1080/03605300600987306},
}

\bib{CafSalSil08}{article}{
   author={Caffarelli, Luis A.},
   author={Salsa, Sandro},
   author={Silvestre, Luis},
   title={Regularity estimates for the solution and the free boundary of the
   obstacle problem for the fractional Laplacian},
   journal={Invent. Math.},
   volume={171},
   date={2008},
   number={2},
   pages={425--461},
   issn={0020-9910},
   review={\MR{2367025}},
   doi={10.1007/s00222-007-0086-6},
 }

 \bib{ColSpoVel17}{article}{
   author={Colombo, Maria},
   author={Spolaor, Luca},
   author={Velichkov, Bozhidar},
   title={Direct epiperimetric inequalities for the thin obstacle
     problem and applications},
   status={preprint},
   date={2017-09},
   eprint={\arXiv{1709.03120}}
}

\bib{DanGarPetTo17}{article}{
   author={Danielli, Donatella},
   author={Garofalo, Nicola},
   author={Petrosyan, Arshak},
   author={To, Tung},
   title={Optimal regularity and the free boundary in the parabolic
   Signorini problem},
   journal={Mem. Amer. Math. Soc.},
   volume={249},
   date={2017},
   number={1181},
   pages={v + 103},
   issn={0065-9266},
   isbn={978-1-4704-2547-0},
   isbn={978-1-4704-4129-6},
   review={\MR{3709717}},
   doi={10.1090/memo/1181},
 }

\bib{DanPetPop18}{article}{
  author={Danielli, Donatella},
  author={Petrosyan, Arshak},
  author={Pop, Camelia},
  title={Obstacle problems for nonlocal operators},
  status={to appear},
  journal={Contemp. Math.},
  pages={22~pp.},
  date={2018},
  eprint={\arXiv{1709.10384}},
}

\bib{DavEngTor17}{article}{
  author={David, Guy},
  author={Engelstein, Max},
  author={Toro, Tatiana},
   title={Free boundary regularity for almost-minimizers},
   date={2017-02},
   eprint={\arXiv{1702.06580}},
   status={preprint},
}

\bib{DavTor15}{article}{
   author={David, G.},
   author={Toro, T.},
   title={Regularity of almost minimizers with free boundary},
   journal={Calc. Var. Partial Differential Equations},
   volume={54},
   date={2015},
   number={1},
   pages={455--524},
   issn={0944-2669},
   review={\MR{3385167}},
   doi={10.1007/s00526-014-0792-z},
}
	
\bib{DeSSav16}{article}{
   author={{De Silva}, Daniela},
   author={Savin, Ovidiu},
   title={Boundary Harnack estimates in slit domains and applications to
   thin free boundary problems},
   journal={Rev. Mat. Iberoam.},
   volume={32},
   date={2016},
   number={3},
   pages={891--912},
   issn={0213-2230},
   review={\MR{3556055}},
   doi={10.4171/RMI/902},
}

\bib{DeSSav18}{article}{
   author={{De Silva}, Daniela},
   author={Savin, Ovidiu},
   title={Thin one-phase almost minimizers},
   status={preprint},
   date={2018-12}
   eprint={\arXiv{1812.03094}},
}
 
\bib{DeSSav19}{article}{
    author={{De Silva}, Daniela},
    author={Savin, Ovidiu},
    title={Almost minimizers of the one-phase free boundary problem},
    status={preprint},
    date={2019-01}
    eprint={\arXiv{1901.02007}},
}

\bib{deQTav18}{article}{
  author={de Queiroz, Olivaine S.},
  author={Tavares, Leandro S.},
  title={Almost minimizers for semilinear free boundary problems with
    variable coefficients},
  journal={Math. Nachr.},
  volume={291},
  number={10},
  date={2018},
  pages={1486--1501},
  doi={https://doi.org/10.1002/mana.201600103},
}

\bib{DolEspFus96}{article}{
   author={Dolcini, A.},
   author={Esposito, L.},
   author={Fusco, N.},
   title={$C^{0,\alpha}$ regularity of $\omega$-minima},
   language={English, with Italian summary},
   journal={Boll. Un. Mat. Ital. A (7)},
   volume={10},
   date={1996},
   number={1},
   pages={113--125},
   review={\MR{1386250}},
}

\bib{DuvLio76}{book}{
   author={Duvaut, G.},
   author={Lions, J.-L.},
   title={Inequalities in mechanics and physics},
   note={Translated from the French by C. W. John;
     Grundlehren der Mathematischen Wissenschaften, 219},
   publisher={Springer-Verlag, Berlin-New York},
   date={1976},
   pages={xvi+397},
   isbn={3-540-07327-2},
   review={\MR{0521262}},
}

\bib{DuzGasGro00}{article}{
   author={Duzaar, Frank},
   author={Gastel, Andreas},
   author={Grotowski, Joseph F.},
   title={Partial regularity for almost minimizers of quasi-convex
   integrals},
   journal={SIAM J. Math. Anal.},
   volume={32},
   date={2000},
   number={3},
   pages={665--687},
   issn={0036-1410},
   review={\MR{1786163}},
   doi={10.1137/S0036141099374536},
}

\bib{EspLeoMin04}{article}{
   author={Esposito, Luca},
   author={Leonetti, Francesco},
   author={Mingione, Giuseppe},
   title={Sharp regularity for functionals with $(p,q)$ growth},
   journal={J. Differential Equations},
   volume={204},
   date={2004},
   number={1},
   pages={5--55},
   issn={0022-0396},
   review={\MR{2076158}},
   doi={10.1016/j.jde.2003.11.007},
}

\bib{EspMin99}{article}{
   author={Esposito, L.},
   author={Mingione, G.},
   title={A regularity theorem for $\omega$-minimizers of integral
   functionals},
   language={English, with English and Italian summaries},
   journal={Rend. Mat. Appl. (7)},
   volume={19},
   date={1999},
   number={1},
   pages={17--44},
   issn={1120-7183},
   review={\MR{1710133}},
}

\bib{Fef09}{article}{
   author={Fefferman, Charles},
   title={Extension of $C^{m,\omega}$-smooth functions by linear
   operators},
   journal={Rev. Mat. Iberoam.},
   volume={25},
   date={2009},
   number={1},
   pages={1--48},
   issn={0213-2230},
   review={\MR{2514337}},
   doi={10.4171/RMI/568},
}

\bib{FocSpa16}{article}{
   author={Focardi, Matteo},
   author={Spadaro, Emanuele},
   title={An epiperimetric inequality for the thin obstacle problem},
   journal={Adv. Differential Equations},
   volume={21},
   date={2016},
   number={1-2},
   pages={153--200},
   issn={1079-9389},
   review={\MR{3449333}},
 }

\bib{FocSpa18}{article}{
   author={Focardi, Matteo},
   author={Spadaro, Emanuele},
   title={On the measure and the structure of the free boundary of the lower
   dimensional obstacle problem},
   journal={Arch. Ration. Mech. Anal.},
   volume={230},
   date={2018},
   number={1},
   pages={125--184},
   issn={0003-9527},
   review={\MR{3840912}},
   doi={10.1007/s00205-018-1242-4},
}
 
\bib{FocSpa18corr}{article}{
   author={Focardi, Matteo},
   author={Spadaro, Emanuele},
   title={Correction to: on the measure and the structure of the free
   boundary of the lower dimensional obstacle problem},
   journal={Arch. Ration. Mech. Anal.},
   volume={230},
   date={2018},
   number={2},
   pages={783--784},
   issn={0003-9527},
   review={\MR{3842059}},
   doi={10.1007/s00205-018-1273-x},
}

\bib{GarLin86}{article}{
   author={Garofalo, Nicola},
   author={Lin, Fang-Hua},
   title={Monotonicity properties of variational integrals, $A_p$ weights
   and unique continuation},
   journal={Indiana Univ. Math. J.},
   volume={35},
   date={1986},
   number={2},
   pages={245--268},
   issn={0022-2518},
   review={\MR{833393}},
   doi={10.1512/iumj.1986.35.35015},
 }
 
 \bib{GarLin87}{article}{
   author={Garofalo, Nicola},
   author={Lin, Fang-Hua},
   title={Unique continuation for elliptic operators: a
   geometric-variational approach},
   journal={Comm. Pure Appl. Math.},
   volume={40},
   date={1987},
   number={3},
   pages={347--366},
   issn={0010-3640},
   review={\MR{882069}},
   doi={10.1002/cpa.3160400305},
}
		
\bib{GarPet09}{article}{
   author={Garofalo, Nicola},
   author={Petrosyan, Arshak},
   title={Some new monotonicity formulas and the singular set in the lower
   dimensional obstacle problem},
   journal={Invent. Math.},
   volume={177},
   date={2009},
   number={2},
   pages={415--461},
   issn={0020-9910},
   review={\MR{2511747}},
   doi={10.1007/s00222-009-0188-4},
}

\bib{GarPetSVG16}{article}{
   author={Garofalo, Nicola},
   author={Petrosyan, Arshak},
   author={{Smit Vega Garcia}, Mariana},
   title={An epiperimetric inequality approach to the regularity of the free
   boundary in the Signorini problem with variable coefficients},
   language={English, with English and French summaries},
   journal={J. Math. Pures Appl. (9)},
   volume={105},
   date={2016},
   number={6},
   pages={745--787},
   issn={0021-7824},
   review={\MR{3491531}},
   doi={10.1016/j.matpur.2015.11.013},
}

\bib{GarPetPopSVG17}{article}{
   author={Garofalo, Nicola},
   author={Petrosyan, Arshak},
   author={Pop, Camelia A.},
   author={{Smit Vega Garcia}, Mariana},
   title={Regularity of the free boundary for the obstacle problem for the
   fractional Laplacian with drift},
   journal={Ann. Inst. H. Poincar\'{e} Anal. Non Lin\'{e}aire},
   volume={34},
   date={2017},
   number={3},
   pages={533--570},
   issn={0294-1449},
   review={\MR{3633735}},
   doi={10.1016/j.anihpc.2016.03.001},
}

\bib{GarSVG14}{article}{
   author={Garofalo, Nicola},
   author={Smit Vega Garcia, Mariana},
   title={New monotonicity formulas and the optimal regularity in the
   Signorini problem with variable coefficients},
   journal={Adv. Math.},
   volume={262},
   date={2014},
   pages={682--750},
   issn={0001-8708},
   review={\MR{3228440}},
   doi={10.1016/j.aim.2014.05.021},
}

\bib{GiaGiu82}{article}{
   author={Giaquinta, Mariano},
   author={Giusti, Enrico},
   title={On the regularity of the minima of variational integrals},
   journal={Acta Math.},
   volume={148},
   date={1982},
   pages={31--46},
   issn={0001-5962},
   review={\MR{666107}},
   doi={10.1007/BF02392725},
}
 
\bib{GiaGiu84}{article}{
   author={Giaquinta, Mariano},
   author={Giusti, Enrico},
   title={Quasiminima},
   journal={Ann. Inst. H. Poincar\'{e} Anal. Non Lin\'{e}aire},
   volume={1},
   date={1984},
   number={2},
   pages={79--107},
   issn={0294-1449},
   review={\MR{778969}},
}
 
\bib{Giu03}{book}{
   author={Giusti, Enrico},
   title={Direct methods in the calculus of variations},
   publisher={World Scientific Publishing Co., Inc., River Edge, NJ},
   date={2003},
   pages={viii+403},
   isbn={981-238-043-4},
   review={\MR{1962933}},
   doi={10.1142/9789812795557},
 }
 
\bib{HanLin97}{book}{
   author={Han, Qing},
   author={Lin, Fanghua},
   title={Elliptic partial differential equations},
   series={Courant Lecture Notes in Mathematics},
   volume={1},
   publisher={New York University, Courant Institute of Mathematical
   Sciences, New York; American Mathematical Society, Providence, RI},
   date={1997},
   pages={x+144},
   isbn={0-9658703-0-8},
   isbn={0-8218-2691-3},
   review={\MR{1669352}},
}

\bib{JeoPet19b}{article}{
   author={Jeon, Seongmin},
   author={Petrosyan, Arshak},
   title={Almost minimizers for certain fractional variational problems},
   pages={26pp},
   date={2019},
   status={preprint},
 }

\bib{Kin81}{article}{
   author={Kinderlehrer, David},
   title={The smoothness of the solution of the boundary obstacle problem},
   journal={J. Math. Pures Appl. (9)},
   volume={60},
   date={1981},
   number={2},
   pages={193--212},
   issn={0021-7824},
   review={\MR{620584}},
}

\bib{KocPetShi15}{article}{
   author={Koch, Herbert},
   author={Petrosyan, Arshak},
   author={Shi, Wenhui},
   title={Higher regularity of the free boundary in the elliptic Signorini
   problem},
   journal={Nonlinear Anal.},
   volume={126},
   date={2015},
   pages={3--44},
   issn={0362-546X},
   review={\MR{3388870}},
   doi={10.1016/j.na.2015.01.007},
}

\bib{KocRueShi16}{article}{
   author={Koch, Herbert},
   author={R\"{u}land, Angkana},
   author={Shi, Wenhui},
   title={The variable coefficient thin obstacle problem: Carleman
   inequalities},
   journal={Adv. Math.},
   volume={301},
   date={2016},
   pages={820--866},
   issn={0001-8708},
   review={\MR{3539391}},
   doi={10.1016/j.aim.2016.06.023},
}

\bib{KocRueShi17a}{article}{
   author={Koch, Herbert},
   author={R\"{u}land, Angkana},
   author={Shi, Wenhui},
   title={The variable coefficient thin obstacle problem: higher regularity},
   journal={Adv. Differential Equations},
   volume={22},
   date={2017},
   number={11-12},
   pages={793--866},
   issn={1079-9389},
   review={\MR{3692912}},
}

\bib{KocRueShi17b}{article}{
   author={Koch, Herbert},
   author={R\"{u}land, Angkana},
   author={Shi, Wenhui},
   title={The variable coefficient thin obstacle problem: optimal regularity
   and regularity of the regular free boundary},
   journal={Ann. Inst. H. Poincar\'{e} Anal. Non Lin\'{e}aire},
   volume={34},
   date={2017},
   number={4},
   pages={845--897},
   issn={0294-1449},
   review={\MR{3661863}},
   doi={10.1016/j.anihpc.2016.08.001},
}

\bib{Min06}{article}{
   author={Mingione, Giuseppe},
   title={Regularity of minima: an invitation to the dark side of the
   calculus of variations},
   journal={Appl. Math.},
   volume={51},
   date={2006},
   number={4},
   pages={355--426},
   issn={0862-7940},
   review={\MR{2291779}},
   doi={10.1007/s10778-006-0110-3},
}

\bib{PetPop15}{article}{
   author={Petrosyan, Arshak},
   author={Pop, Camelia A.},
   title={Optimal regularity of solutions to the obstacle problem for the
   fractional Laplacian with drift},
   journal={J. Funct. Anal.},
   volume={268},
   date={2015},
   number={2},
   pages={417--472},
   issn={0022-1236},
   review={\MR{3283160}},
   doi={10.1016/j.jfa.2014.10.009},
}

\bib{PetShaUra12}{book}{
   author={Petrosyan, Arshak},
   author={Shahgholian, Henrik},
   author={Uraltseva, Nina},
   title={Regularity of free boundaries in obstacle-type problems},
   series={Graduate Studies in Mathematics},
   volume={136},
   publisher={American Mathematical Society, Providence, RI},
   date={2012},
   pages={x+221},
   isbn={978-0-8218-8794-3},
   review={\MR{2962060}},
   doi={10.1090/gsm/136},
 }

\bib{PetZel15}{article}{
   author={Petrosyan, Arshak},
   author={Zeller, Andrew},
   title={Boundedness and continuity of the time derivative in the
     parabolic Signorini problem},
   status={to appear},
   journal={Math. Res. Let.},
   pages={8},
   date={2015-12},
   eprint={\arXiv{1512.09173}},
}

\bib{RueShi17}{article}{
   author={R\"{u}land, Angkana},
   author={Shi, Wenhui},
   title={Optimal regularity for the thin obstacle problem with
   $C^{0,\alpha}$ coefficients},
   journal={Calc. Var. Partial Differential Equations},
   volume={56},
   date={2017},
   number={5},
   pages={Art. 129, 41},
   issn={0944-2669},
   review={\MR{3689152}},
   doi={10.1007/s00526-017-1230-9},
}

\bib{Sig59}{article}{
   author={Signorini, A.},
   title={Questioni di elasticit\`{a} non linearizzata e semilinearizzata},
   language={Italian},
   journal={Rend. Mat. e Appl. (5)},
   volume={18},
   date={1959},
   pages={95--139},
   review={\MR{0118021}},
}

\bib{Sil07}{article}{
   author={Silvestre, Luis},
   title={Regularity of the obstacle problem for a fractional power of the
   Laplace operator},
   journal={Comm. Pure Appl. Math.},
   volume={60},
   date={2007},
   number={1},
   pages={67--112},
   issn={0010-3640},
   review={\MR{2270163}},
   doi={10.1002/cpa.20153},
 }

 \bib{Ura85}{article}{
   author={Ural\cprime tseva, N. N.},
   title={H\"{o}lder continuity of gradients of solutions of parabolic equations
   with boundary conditions of Signorini type},
   language={Russian},
   journal={Dokl. Akad. Nauk SSSR},
   volume={280},
   date={1985},
   number={3},
   pages={563--565},
   issn={0002-3264},
   review={\MR{775926}},
}

\bib{Wei99a}{article}{
   author={Weiss, Georg Sebastian},
   title={Partial regularity for a minimum problem with free boundary},
   journal={J. Geom. Anal.},
   volume={9},
   date={1999},
   number={2},
   pages={317--326},
   issn={1050-6926},
   review={\MR{1759450}},
   doi={10.1007/BF02921941},
 }

 \bib{Wei99b}{article}{
   author={Weiss, Georg S.},
   title={A homogeneity improvement approach to the obstacle problem},
   journal={Invent. Math.},
   volume={138},
   date={1999},
   number={1},
   pages={23--50},
   issn={0020-9910},
   review={\MR{1714335}},
   doi={10.1007/s002220050340},
 }
\end{biblist}
\end{bibdiv}
\end{document}